\newcommand{\apref}[3]{\hyperref[#2]{#1\ref*{#2}#3}} 
\newcommand{\textbmat}[4]{\left[\begin{smallmatrix} #1&#2 \\ #3&#4
\end{smallmatrix}\right]}
\newcommand{\bmat}[4]{\begin{bmatrix} #1&#2\\#3&#4\end{bmatrix}}
\newcommand{\h}{\mathbb{H}}
\DeclareMathOperator{\Ext}{ext}
\DeclareMathOperator{\Int}{int}
\DeclareMathOperator{\height}{ht}
\DeclareMathOperator{\cl}{cl}
\DeclareMathOperator{\Cod}{Cod}
\DeclareMathOperator{\NC}{NC}
\DeclareMathOperator{\D}{D}
\DeclareMathOperator{\vb}{vb}
\DeclareMathOperator{\vc}{vc}
\DeclareMathOperator{\bd}{bd}
\DeclareMathOperator{\BS}{BS}
\DeclareMathOperator{\CS}{CS}
\DeclareMathOperator{\DS}{DS}
\DeclareMathOperator{\NIC}{NIC}
\DeclareMathOperator{\Sides}{Sides}
\DeclareMathOperator{\Seq}{Seq}
\DeclareMathOperator{\cyl}{cyl}
\newcommand{\pre}{\text{pre}}
\newcommand{\hg}{\overline \h^g}
\newcommand{\chg}{\cl_{\overline \h^g}}
\newcommand{\bhg}{\partial_g}
\newcommand{\rueck}{\hspace{-.9mm}}
\newcommand{\fd}{\mc F}
\newcommand{\pch}{\mc A}
\newcommand{\fpch}{\mathbb A}
\newcommand{\ch}{\mc B}
\newcommand{\fch}{\mathbb B}
\newcommand{\choices}{\mathbb S}
\newcommand{\shmap}{\mathbb T}
\newcommand{\rd}{\text{red}}
\newcommand{\st}{\text{st}}
\newcommand{\all}{\text{all}}
\newcommand{\bk}{\text{bk}}
\newcommand\N{\mathbb{N}}
\newcommand\Q{\mathbb{Q}}
\newcommand\R{\mathbb{R}}
\newcommand\Z{\mathbb{Z}}
\newcommand\C{\mathbb{C}}
\newcommand{\mc}[1]{\mathcal #1}
\newcommand{\wt}{\widetilde}
\newcommand{\wh}{\widehat}
\DeclareMathOperator{\PSL}{PSL}
\DeclareMathOperator{\PGL}{PGL}
\newcommand{\sceq}{\mathrel{\mathop:}=}
\newcommand{\seqc}{\mathrel{=\mkern-4.5mu{\mathop:}}}
\DeclareMathOperator{\Ima}{Im}
\DeclareMathOperator{\Rea}{Re}
\newcommand{\mat}[4]{\begin{pmatrix} #1&#2\\#3&#4\end{pmatrix}}
\newcommand{\textmat}[4]{\left(\begin{smallmatrix} #1&#2 \\ #3&#4
\end{smallmatrix}\right)}
\DeclareMathOperator{\id}{id}
\newcommand\mminus{\!\smallsetminus\!}
\newcommand\ie{\mbox{i.\,e., }}
\newcommand\eg{\mbox{e.\,g., }}
\newcommand\wrt{\mbox{w.\,r.\,t.\@ }}
\newcommand{\eps}{\varepsilon}
\DeclareMathOperator{\pr}{pr}
\DeclareMathOperator{\PGamma}{P\Gamma}
\DeclareMathOperator{\Fct}{Fct}
\theoremstyle{plain}
\newtheorem{prop}{Proposition}[section]
\newtheorem{lemma}[prop]{Lemma}
\newtheorem{thm}[prop]{Theorem}
\newtheorem{cor}[prop]{Corollary}
\theoremstyle{definition}
\newtheorem{defi}[prop]{Definition}
\newtheorem{defirem}[prop]{Definition and Remark}
\newtheorem{example}[prop]{Example}
\newtheorem{construction}[prop]{Construction}
\newtheorem{constrdefi}[prop]{Construction and Definition}
\theoremstyle{remark}
\newtheorem{remark}[prop]{Remark}
\begin{document}

\title[Symbolic dynamics]{Symbolic dynamics for the geodesic flow on two-dimensional hyperbolic good orbifolds}
\author{Anke D.\@ Pohl}
\address{Mathematisches Institut, Georg-August-Universit\"at G\"ottingen,  Bunsenstr. 3-5, 37073 G\"ottingen}
\email{pohl@uni-math.gwdg.de}
\thanks{The author acknowledges support from the International Research Training Group 1133 ``Geometry and Analysis of Symmetries'' and the Volkswagen Foundation}
\date{}
\subjclass[2010]{Primary 37D40; Secondary 37B10, 37C30}
\keywords{symbolic dynamics, cross section, geodesic flow, transfer operator, orbifolds}
\begin{abstract}
We construct cross sections for the geodesic flow on the orbifolds $\Gamma\backslash\h$ which are tailor-made for the requirements of transfer operator approaches to Maass cusp forms and Selberg zeta functions. Here, $\h$ denotes the hyperbolic plane and $\Gamma$ is a nonuniform geometrically finite Fuchsian group (not necessarily a lattice, not necessarily arithmetic) which satisfies an additional condition of geometric nature. The construction of the cross sections is uniform, geometric, explicit and algorithmic.
\end{abstract}

\maketitle



\section{Introduction}

The construction and use of symbolic dynamics in various setups has a long history. It goes back to work of Hadamard \cite{Hadamard} in 1898. Since then, symbolic dynamics found influence in several fields. One of these are transfer operator approaches to Maass cusp forms (and other modular forms and functions) and Selberg zeta functions. These transfer operator approaches have their origin in thermodynamic formalism. They provide a link between the classical and the quantum dynamical systems of the considered orbifolds. In this article, we construct symbolic dynamics which are tailor-made for these transfer operator approaches. In the following we briefly present the initial example of such a transfer operator approach and recall the state of the field prior to the existence of the symbolic dynamics constructed here. Then we state the most important properties of the symbolic dynamics provided in this article and we list the progress made using these for transfer operator approaches.

\subsubsection*{Transfer operator approaches prior to the symbolic dynamics constructed here}
The modular group $\PSL_2(\Z)$ was the first Fuchsian lattice for which the complete transfer operator approach to both Maass cusp forms and the Selberg zeta function could be established. This approach is a  combination of ground-breaking work by Artin, Series, Mayer, Chang and Mayer, and Lewis and Zagier. In \cite{Series}, Series, based on work by Artin \cite{Artin}, provided a symbolic dynamics for the geodesic flow on the modular surface $\PSL_2(\Z)\backslash \h$ which relates this flow to the Gauss map
\[
 F\colon [0,1]\setminus\Q  \to  [0,1]\setminus\Q, \quad x  \mapsto \tfrac{1}{x} - \left\lfloor\tfrac{1}{x}\right\rfloor
\]
in a way that periodic geodesics on $\PSL_2(\Z)\backslash \h$ correspond to finite orbits of $F$. Here, $\h$ denotes the hyperbolic plane. Mayer \cite{Mayer_zeta, Mayer_thermo, Mayer_thermoPSL} investigated the transfer operator (weighted evolution operator) with parameter $\beta\in\C$ associated to $F$, which here takes the form  
\[
 \big(\mc L_{F,\beta} f\big)(x) = \sum_{n\in\N} (x+n)^{-2\beta} f\left(\frac{1}{x+n}\right).
\]
He found a Banach space $\mc B$ such that for $\Rea \beta > \tfrac12$, the transfer operator $\mc L_{F,\beta}$ acts on $\mc B$, is nuclear of order $0$, and has a meromorphic extension $\wt{\mc L}_{F,\beta}$ to the whole complex $\beta$-plane with values in nuclear operators of order $0$ on $\mc B$. He showed that the Selberg zeta function $Z$ is represented by the product of the Fredholm determinants of $\pm\wt{\mc L}_{F,\beta}$:
\begin{equation}\label{factorMayer}
  Z(\beta) = \det(1-\wt{\mc L}_{F,\beta})\cdot \det(1+ \wt{\mc L}_{F,\beta}).
\end{equation}
On the other side, Lewis and Zagier \cite{Lewis_Zagier} (see Lewis \cite{Lewis} for even Maass cusp forms) proved that Maass cusp forms for $\PSL_2(\Z)$ with eigenvalue $\beta(1-\beta)$ are linearly isomorphic to real-analytic functions $f$ on $\R_{>0}$ which satisfy the functional equation
\begin{equation}\label{LZ}
 f(x) = f(x+1) + (x+1)^{-2\beta} f\left(\frac{x}{x+1}\right)
\end{equation}
and are of strong decay at $0$ and $\infty$. Functions of this kind are called period functions for $\PSL_2(\Z)$. Using this characterization of Maass cusp forms, Chang and Mayer \cite{Chang_Mayer_transop} as well as Lewis and Zagier \cite{Lewis_Zagier} deduced that even resp.\@ odd Maass cusp forms for $\PSL_2(\Z)$ are linearly isomorphic to the $\pm 1$-eigenspaces of Mayer's transfer operator. Efrat \cite{Efrat_spectral} proved this earlier on a spectral level, that is,  the parameters $\beta$ for which there exist a $\pm 1$-eigenfunction of $\wt{\mc L}_{F,\beta}$ are just the spectral parameters of even resp.\@ odd Maass cusp forms. In addition, Bruggeman \cite{Bruggeman} provided a hyperfunction approach to the period functions for $\PSL_2(\Z)$.

By using representations and a hyperfunction approach, Deitmar and Hilgert \cite{Deitmar_Hilgert} could induce the period functions for $\PSL_2(\Z)$ to finite index subgroups and show the correspondence between the period functions for these subgroups and Maass cusp forms. Also using representations, Chang and Mayer \cite{Chang_Mayer_eigen, Chang_Mayer_extension} extended the transfer operator of the modular group to its finite index subgroups and represented the Selberg zeta function as the Fredholm determinant of the induced transfer operator family. For Hecke congruence subgroups, the combination of \cite{Hilgert_Mayer_Movasati} and \cite{Fraczek_Mayer_Muehlenbruch} shows a close relation between eigenfunctions of certain transfer operators and these period functions.

Moreover, the following transfer operator approaches to the Selberg zeta function have been established:
\begin{itemize}
\item Pollicott \cite{Pollicott} considered the transfer operator family associated to the Series symbolic dynamics for cocompact lattices. Here the coding sequences are not unique, so the Fredholm determinant of the arising transfer operator family is not exactly the Selberg zeta function. Instead one has a representation of the form 
\[
 Z(s) = h(s) \det(1-\mc L_s),
\]
where $h$ is a function compensating for multiple codings.
\item Fried \cite{Fried_triangle} developed a symbolic dynamics for the billiard flow on $\Lambda\backslash \h$ for triangle groups $\Lambda \subset \PGL_2(\R)$, and induced it to its finite index subgroups. The Fredholm determinant of the associated transfer operator family represents the Selberg zeta function without compensation factor.
\item Morita \cite{Morita_transfer} used a modified Bowen-Series symbolic dynamics for a wide class of non-cocompact lattices and investigated the associated transfer operator families. Here the problem with multiple codings arises again.
\item Mayer \textit{et.\@ al.\@} \cite{Mayer_Stroemberg, Mayer_Muehlenbruch_Stroemberg} used a symbolic dynamics related to Rosen continued fractions for Hecke triangle groups $\Gamma$. Also this has multiple codings.
\end{itemize}

Direct transfer operator approaches to Maass cusp forms were not known. However, related to this field of investigations is a characterization of Maass cusp forms in parabolic $1$-cohomology for any Fuchsian lattice provided by Bruggeman, Lewis and Zagier \cite{BLZ_part2} (see \cite{Bunke_Olbrich_1, Bunke_Olbrich_2,Deitmar_Hilgert_cohom} for earlier relations between cohomology spaces and Maass cusp forms or automorphic forms). 

\subsubsection*{Main properties of the symbolic dynamics constructed here}
The symbolic dynamics for the geodesic flow on the orbifolds $\Gamma\backslash\h$ provided in this article enjoy several properties which make them well-adapted for transfer operator approaches. The most important ones are the following:
\begin{itemize}
\item The symbolic dynamics arise from cross sections for the geodesic flow. The labels (the alphabet of the symbolic dynamics) arise in a natural geometric way. They encode the location of the next intersection of the cross section and the geodesic under consideration.
\item The alphabet used in the coding is finite, all labels are elements of $\Gamma$.
\item The symbolic dynamics is conjugate to a discrete dynamical system $F$ on subsets of $\R$. The map $F$ is piecewise real-analytic. The action on the pieces (intervals) are given by M\"obius transformations with the labels. The boundary points of the intervals are cuspidal.
\item In turn, the associated transfer operators have only finitely many terms. They are finite sums of the action of principal series representation by elements formed from the labels in an algorithmic way.
\item All periodic geodesics are coded, and their coding sequences are unique. Thus, compensation factors for multiple codings do not occur.
\item The construction allows a number of choices which result in different symbolic dynamics models and transfer operator families. To some degree, this freedom can be used to control properties of the transfer operators. 
\item The first step in the construction consists of the construction of a fundamental domain of a specific type. Then the construction is a finite number of algorithmic steps. This enables constructive proofs. Moreover, it allows us to test conjectures with the help of a computer.
\end{itemize}

\subsubsection*{New transfer operator approaches using the symbolic dynamics constructed here}
The transfer operator families which arise from the symbolic dynamics constructed in this article can be used for direct and constructive transfer operator approaches to Maass cusp forms. For Hecke triangle groups this is shown in \cite{Moeller_Pohl}, including separate period functions for odd resp.\@ even Maass cusp forms. In particular, the functional equation \eqref{LZ} is just the defining equation for $1$-eigenfunctions of the transfer operators for $\PSL_2(\Z)$. For the Hecke congruence subgroups $\Gamma_0(p)$, $p$ prime, this transfer operator approach to Maass cusp forms is performed in \cite{Pohl_mcf_Gamma0p}. Finally, in \cite{Pohl_mcf_general} it is shown for all Fuchsian lattices which are admissible here, including a discussion of the relation between period functions arising from different choices in the construction, as well as a brief comparison to period functions arising from the other methods mentioned above. 

In addition, transfer operator approaches to Selberg zeta functions are possible. Using a certain acceleration procedure of the symbolic dynamics, in \cite{Moeller_Pohl}, the Selberg zeta function for Hecke triangle groups is represented as the Fredholm determinant of the arising transfer operator family. Compatibility with a specific orientation-reversing Riemannian isometry on $\h$ allows a factorization of the Fredholm determinant as in \eqref{factorMayer}. For the modular group, these results reproduce Mayer's transfer operator.

In \cite{Pohl_spectral_hecke} it is proven, by extending the symbolic dynamics to one of a billiard flow, that this factorization for general Hecke triangle groups corresponds to the splitting into odd and even spectrum and that both Fredholm determinants are Fredholm determinants of transfer operator families belonging to the billiard flow. This extends Efrat's results \cite{Efrat_spectral} to Hecke triangle groups. Further results on transfer operator approaches to Selberg zeta functions will appear in future work.

\subsubsection*{Outline of this article} 
In Section~\ref{sec_prelims} below we provide the necessary preliminaries on the geometry of the hyperbolic plane and hyperbolic orbifolds as well as on fundamental domains and symbolic dynamics. In Section~\ref{sec_sketch} below we present a sketch the construction of the symbolic dynamics, discrete dynamical system and transfer operator families. Then Sections~\ref{sec_preH}-\ref{sec_arithm} below contain a detailed proof of the construction, and in Section~\ref{sec_transop} below we briefly discuss the structure of the arising transfer operators.

\section{Preliminaries}\label{sec_prelims}

\subsection{Two-dimensional hyperbolic good orbifolds}
We use the upper half plane
\[
\h \sceq \{ z\in \C \mid \Ima z > 0\}
\]
with the Riemannian metric given by the line element $ds^2 = y^{-2}(dx^2+dy^2)$ as model for the two-dimensional real hyperbolic space. The associated Riemannian metric will be denoted by $d_\h$. We identify the group of orientation-preserving Riemannian isometries with $\PSL_2(\R)$ via the well-known left action
\[
\PSL_2(\R) \times \h  \to  \h,\quad \left( g= \bmat{a}{b}{c}{d}, z \right)  \mapsto  g.z=\frac{az+b}{cz+d}.
\]
Let $\Gamma$ be a Fuchsian group, that is, a discrete subgroup of $\PSL_2(\R)$. The orbit space
\[
 Y\sceq \Gamma\backslash \h
\]
is naturally endowed with the structure of a good Riemannian orbifold. We call $Y$ a \textit{two-dimensional hyperbolic good orbifold}. The orbifold $Y$ inherits all geometric properties of $\h$ that are $\Gamma$-invariant. Vice versa, several geometric entities of $Y$ can be understood as the $\Gamma$-equivalence class of the corresponding geometric entity on $\h$. In particular,  the unit tangent bundle $SY$ of $Y$ can be identified with the orbit space of the induced $\Gamma$-action on the unit tangent bundle $S\h$ of $\h$.

We consider all geodesics to be parametrized by arc length. For $v\in S\h$ let $\gamma_v$ denote the (unit speed) geodesic on $\h$ determined by $\gamma'_v(0)=v$. The (unit speed) geodesic flow on $\h$ will be denoted by $\Phi$, hence 
\[
\Phi \colon \R\times S\h  \to  S\h, \quad (t,v)  \mapsto  \gamma'_v(t).
\]
Let $\pi\colon \h\to Y$ and $\pi\colon S\h\to SY$ denote the canonical projection maps (there will be no danger in using the same notation for both). Then the geodesic flow on $Y$ is given by 
\[
 \wh\Phi \sceq \pi \circ \Phi \circ (\id\times \pi^{-1}) \colon \R\times SY\to SY.
\]
Here, $\pi^{-1}$ is an arbitrary section of $\pi$. One easily sees that $\wh \Phi$ does not depend on the choice of $\pi^{-1}$.
Throughout, we use the convention that if $e$ denotes an element belonging in some sense to $\h$ (like geodesics on $\h$, subsets of $\h$), then $\wh e$ denotes the corresponding element belonging to $Y$.

The one-point compactification of the closure of $\h$ in $\C$ will be denoted by $\overline \h^g$, hence
\[
\overline \h^g = \{ z\in \C \mid \Ima z \geq 0\} \cup \{\infty\}.
\]
It is homeomorphic to the geodesic compactification of $\h$. The action of $\PSL_2(\R)$ extends continuously to the boundary $\partial_g \h = \R\cup \{\infty\}$ of $\h$ in $\overline \h^g$.

We let $\overline \R \sceq \R\cup \{ \pm\infty\}$ denote the two-point compactification of $\R$ and extend the ordering of $\R$ to  $\overline \R$ by the definition $-\infty < a < \infty$ for each $a\in\R$.

Let $I$ be an interval in $\R$. A curve $\alpha \colon I\to H$ is called a \textit{geodesic arc} if $\alpha$ can be extended to a geodesic. The image of a geodesic arc is called a \textit{geodesic segment}. If $\alpha$ is a geodesic, then $\alpha(\R)$ is called a \textit{complete geodesic segment}. A geodesic segment is called \textit{non-trivial} if it contains more than one element. The geodesic segments of the geodesics on $\h$ are the semicircles centered on the real line and the vertical lines. 

If $\alpha\colon I\to \h$ is a geodesic arc and $a<b$ are the boundary points of $I$ in $\overline \R$, then the points 
\[
\alpha(a) \sceq \lim_{t\to a}\alpha(t) \in \hg 
\qquad\text{and}\qquad
\alpha(b) \sceq \lim_{t\to b} \alpha(t) \in \hg
\]
are called the \textit{endpoints}%
\index{endpoints}\index{geodesic arc! endpoints}\index{geodesic segment! endpoints}
of $\alpha$ and of the associated geodesic segment $\alpha(I)$. 
The geodesic segment $\alpha(I)$ is often denoted as\label{def_geodsegm}
\[
\alpha(I) = 
\begin{cases}
[\alpha(a),\alpha(b)] & \text{if $a,b\in I$,}
\\
[\alpha(a), \alpha(b)) & \text{if $a\in I$, $b\notin I$,}
\\
(\alpha(a), \alpha(b)] & \text{if $a\notin I$, $b\in I$,}
\\
(\alpha(a), \alpha(b)) & \text{if $a,b\notin I$.}
\end{cases}
\]
If $\alpha(a),\alpha(b) \in \bhg \h$, it will always be made clear whether we refer to a geodesic segment or an interval in $\R$.

Let $U$ be a subset of $\h$. The closure of $U$ in $\h$ is denoted by $\overline U$ or $\cl(U)$, its boundary is denoted by $\partial U$, and its interior is denoted by $U^\circ$. To increase clarity, we denote the closure of a subset $V$ of $\hg$ in $\hg$  by $\overline V^g$ or $\chg V$. Moreover, we set $\bhg V \sceq \overline V^g \cap \bhg \h$. For a subset $W\subseteq \R$ let $\Int_\R(W)$ denote the interior of $W$ in $\R$ and $\partial_\R W$ the boundary of $W$ in $\R$. If $X$ is a subset of $\bhg \h$, then $\Int_g (X)$ denotes the interior of $X$ in $\bhg \h$. If $X\subseteq\R$, then $\Int_g (X) = \Int_\R (X)$.

For two sets $A,B$, the complement of $B$ in $A$ is denoted by $A\mminus B$. 
In contrast, if $\Gamma$ acts on $A$, the space of left cosets is written as $\Gamma\backslash A$.

\subsection{Fundamental domains}
Let $\Gamma$ be a Fuchsian group.  A subset $\mc F$ of $\h$ is a \textit{fundamental region} in $\h$ for $\Gamma$  if \label{fund_defi_gen}
\begin{enumerate}[(F1)]
\item\label{F1} the set $\mc F$ is open in $\h$,
\item\label{F2} the members of the family $\{g\mc F\mid g\in\Gamma\}$ are pairwise disjoint, and
\item\label{F3} $\h=\bigcup \{ g\overline{\mc F} \mid g\in\Gamma\}$.
\end{enumerate}
If, in addition, $\mc F$ is connected, then it is a \textit{fundamental domain} for $\Gamma$ in $\h$. The Fuchsian group $\Gamma$ is called \textit{geometrically finite} if there exists a convex fundamental region for $\Gamma$ in $\h$ with finitely many sides.
In this article we will use isometric fundamental regions (Ford fundamental regions), whose construction and existence we recall in the following.

Let
\[
 \Gamma_\infty \sceq \{g\in\Gamma\mid g.\infty=\infty\}
\]
denote the stabilizer subgroup of $\infty$ in $\Gamma$. Let $g=\textbmat{a}{b}{c}{d}\in\Gamma\setminus\Gamma_\infty$, which implies that $c\not=0$. Let $|\cdot|$ denote the Euclidean norm on $\C$. The \textit{isometric sphere} of $g$ is defined as
\begin{align*}
 I(g) &\sceq \{z\in H \mid |g'(z)| = 1 \} = \{ z\in H\mid |cz+d|= 1\}.
\end{align*}
The \textit{exterior} of $I(g)$ is the set
\[
 \Ext I(g) \sceq \{ z\in H\mid |cz+d|>1\},
\]
and
\[
 \Int I(g) \sceq \{ z\in H \mid |cz+d|<1\}
\]
is its \textit{interior}. If the representative of $g$ is chosen such that $c>0$, then the isometric sphere $I(g)$ is the complete geodesic segment with endpoints $-\tfrac{d}{c} - \tfrac{1}{c}$ and $-\tfrac{d}{c}+\tfrac{1}{c}$.  If $z_0=x_0+iy_0$ is an element of $I(g)$, then the geodesic segment $(z_0,\infty)$ is contained in $\Ext I(g)$, and the geodesic segment $(x_0,z_0)$ belongs to $\Int I(g)$. Moreover, 
\[
\h = \Ext I(g) \cup I(g) \cup \Int I(g)
\]
is a partition of $H$ into convex subsets such that $\partial \Ext I(g)  = I(g) = \partial \Int I(g)$.

Let
\[
 \mc K \sceq \mc K_\Gamma\sceq \bigcap_{g\in\Gamma\setminus\Gamma_\infty} \Ext I(g)
\]
denote the common part of the exteriors of all isometric spheres of $\Gamma$.

We call a point $z\in \bhg\h$ a \textit{cuspidal point} for $\Gamma$ if $\Gamma$ contains a parabolic element that stabilizes $z$. In other words, $z$ is called cuspidal if it is a representative of a cusp of $\Gamma$ or, equivalently, of $Y = \Gamma\backslash\h$. If $\infty$ is cuspidal for $\Gamma$, then 
\[
 \Gamma_\infty = \left\langle \bmat{1}{\lambda}{0}{1}\right\rangle
\]
for some $\lambda>0$. For each $r\in\R$, the set 
\[
 \mc F_\infty(r) \sceq \mc F_{\infty,\Gamma}(r) \sceq (r,r+\lambda) + i\R_{>0}
\]
is a fundamental domain for $\Gamma_\infty$ in $\h$. The following proposition states the existence of isometric fundamental domains. This proposition is well-known. For proofs in various generalities we refer to, e.g., \cite{Ford, Katok_fuchsian, Ratcliffe, Pohl_isofunddom, Pohl_diss}.

\begin{prop}\label{prop_isomdom}
Let $\Gamma$ be a geometrically finite Fuchsian group that has $\infty$ as cuspidal point. Then, for any $r\in\R$, the set
\[
 \mc F(r) \sceq \mc F_\infty(r) \cap \mc K
\]
is a convex fundamental domain for $\Gamma$ in $\h$ with finitely many sides.
\end{prop}

\subsection{Symbolic dynamics}\label{sec_symdyn}

As before, let $\Gamma$ be a Fuchsian group and set $Y=\Gamma\backslash \h$. Let $\widehat \CS$ (``cross section'') be a subset of $SY$. Suppose that $\wh\gamma$ is a geodesic on $Y$. If $\wh\gamma'(t)\in\wh\CS$, then we say that $\wh\gamma$ \textit{intersects $\wh\CS$ in $t$}. Further, $\wh\gamma$ is said to \textit{intersect $\widehat \CS$ infinitely often in the future} 
if we find a sequence $(t_n)_{n\in\N}$ with $t_n\to\infty$ as $n\to\infty$ and $\wh\gamma'(t_n) \in \wh \CS$ for all $n\in\N$. Analogously, $\wh\gamma$ is said to \textit{intersect $\widehat \CS$ infinitely often in the past} if we find
a sequence $(t_n)_{n\in\N}$ with $t_n\to-\infty$ as $n\to\infty$ and
$\wh\gamma'(t_n) \in\widehat \CS$ for all $n\in\N$. Let $\mu$ be a
measure on the space of geodesics on $Y$. The set $\wh\CS$ is called a \textit{cross section} (w.r.t.\@ $\mu$) for the geodesic flow $\widehat \Phi$ if
\begin{enumerate}[(C1)]
\item \label{C1} $\mu$-almost every geodesic $\wh\gamma$ on $Y$ intersects
$\widehat \CS$ infinitely often in the past and in the future,
\item \label{C2} each intersection of $\wh\gamma$ and $\widehat \CS$
is \textit{discrete in time:} if $\wh\gamma'(t) \in\widehat \CS$,
then there is $\eps > 0$ such that
$\wh\gamma'( (t-\eps, t+\eps) )\cap \widehat \CS = \{ \wh\gamma'(t) \}$.
\end{enumerate}

We call a subset $\wh U$ of $Y$ a \textit{totally geodesic suborbifold of $Y$} if $\pi^{-1}(\wh U)$ is a totally geodesic submanifold of $\h$. Let $\pr\colon SY\to Y$ denote the canonical projection on base points. 
If $\pr(\widehat \CS)$ is a totally geodesic suborbifold of $Y$
and $\widehat \CS$ does not contain elements tangent to $\pr(\widehat
\CS)$, then $\widehat \CS$ automatically satisfies (\apref{C}{C2}{}). 

Suppose that $\widehat \CS$ is a cross section for $\widehat \Phi$. If, in addition,
$\widehat \CS$ satisfies the property that \textit{each}
geodesic intersecting $\widehat \CS$ at all intersects it infinitely
often in the past and in the future, then $\widehat \CS$ will be called a
\textit{strong cross section}, otherwise a \textit{weak cross
section}.  Clearly, every weak cross section contains a strong cross
section.

The \textit{first return map} of $\widehat\Phi$ \wrt the strong cross
section $\widehat \CS$ is the map
\[
R\colon\widehat \CS  \to  \widehat \CS, \quad \wh v  \mapsto  \widehat{\gamma_v}'(t_0),
\]
where $v\in S\h$ with $\pi(v) = \wh v$, $\pi(\gamma_v) = \widehat{\gamma_v}$, and
\[
t_0 \sceq \min \left\{ t>0 \left\vert\  \wh\gamma'_v(t) \in\wh\CS\right.\right\}
\]
is the \textit{first return time} of $\hat v$ or $\widehat{\gamma_v}$. This definition requires that $t_0=t_0(\wh v)$ exists for each $\wh v\in \wh\CS$, which will indeed be the case in our situation. For a weak cross section $\widehat
\CS$, the first return map can only be defined on a subset of
$\widehat \CS$. In general, this subset is larger than the maximal
strong cross section contained in $\widehat \CS$.

Suppose that $\widehat \CS$ is a strong cross section and let $\Sigma$ be
an at most countable set. Decompose $\widehat \CS$ into a disjoint
union $\bigcup_{\alpha\in \Sigma}\widehat \CS_\alpha$. To each $\wh v\in
\widehat \CS$ we assign the (two-sided infinite) \textit{coding
sequence} $(a_n)_{n\in\Z} \in \Sigma^\Z$ defined by
\[
a_n \sceq \alpha \quad\text{if and only if $R^n(\wh v) \in \widehat \CS_\alpha$.}
\]
Note that $R$ is invertible  and let $\Lambda$ be the set of all
sequences that arise in this way. Then $\Lambda$ is invariant under
the left shift 
\[
\sigma\colon \Sigma^\Z \to \Sigma^\Z,\quad \left( \sigma\big((a_n)_{n\in\Z} \big)\right)_k \sceq a_{k+1}.
\]
Suppose that the map
$\widehat \CS\to \Lambda$  is also injective, which it will be in our
case. Then we have the inverse map $\Cod\colon \Lambda \to \widehat
\CS$ which maps a coding sequence to the element in $\widehat \CS$ it
was assigned to. Obviously, the diagram
\[
\xymatrix{
\wh\CS \ar[r]^R & \wh\CS
\\
\Lambda \ar[u]^{\Cod} \ar[r]^\sigma & \Lambda \ar[u]_{\Cod}
}
\]
commutes. The pair
$(\Lambda,\sigma)$ is called a \textit{symbolic dynamics} for $\widehat\Phi$ with alphabet $\Sigma$. If $\widehat \CS$ is only a weak cross section and
hence $R$ is only partially defined, then $\Lambda$ also contains
one- or two-sided finite coding sequences.

Let $\CS'$ be a \textit{set of representatives} for the cross section $\widehat
\CS$, that is, $\CS'$ is a subset of $SH$ such that $\pi\vert_{\CS'}$ is a bijection
$\CS'\to \widehat \CS$. 
Relative to $\CS'$, we define the map $\tau\colon \widehat \CS\to
\bhg \h\times \bhg \h$ by 
\[
\tau(\wh v) \sceq (\gamma_v(\infty), \gamma_v(-\infty))
\]
where $v\sceq(\pi\vert_{\CS'})^{-1}(\hat v)$. 
For some
cross sections $\widehat \CS$ it is possible to choose $\CS'$ in such a
way that $\tau$ is a bijection between $\widehat \CS$ and some subset
$\widetilde \DS$ of $\R\times\R$.  In this case the dynamical system
$(\widehat \CS, R)$ is conjugate to $(\widetilde \DS, \widetilde F)$ by $\tau$,
where $\widetilde F  \sceq \tau\circ R\circ \tau^{-1}$ is the
induced selfmap on $\widetilde \DS$ (partially defined if $\widehat \CS$
is only a weak cross section). Moreover, to construct a symbolic
dynamics for $\widehat\Phi$, one can start with a decomposition of
$\widetilde \DS$ into pairwise disjoint subsets $\widetilde D_\alpha$,
$\alpha\in \Sigma$.

Finally, let $(\Lambda, \sigma)$ be a symbolic dynamics with
alphabet $\Sigma$. Suppose that we have a map $i\colon\Lambda \to \DS$ for
some $\DS\subseteq \R$ such that $i\big( (a_n)_{n\in\Z}\big)$ depends only on
$(a_n)_{n\in\N_0}$, a (partial) selfmap $F\colon \DS\to \DS$, and a
decomposition of $\DS$ into a disjoint union $\bigcup_{\alpha\in \Sigma}
D_\alpha$ such that
\[ F\big( i( (a_n)_{n\in\Z} )\big) \in D_\alpha \quad\Leftrightarrow\quad a_1 = \alpha\]
for all $(a_n)_{n\in\Z}\in\Lambda$. Then $F$, more precisely the
triple $\big(F,i, (D_\alpha)_{\alpha\in \Sigma}\big)$, is called a
\textit{generating function for the future part} of
$(\Lambda,\sigma)$. If such a generating function exists, then the
future part of a coding sequence is independent of the past part.

\section{A sketch of the construction}\label{sec_sketch}

In this section we provide a sketch of the construction of cross sections and symbolic dynamics, illustrated by examples. For further examples we refer to \cite{Hilgert_Pohl, Moeller_Pohl, Pohl_mcf_Gamma0p, Pohl_mcf_general}. As we will see already in this sketch, the cuspidal point $\infty$ has a distinguished role. Throughout let $\Gamma$ be a geometrically finite Fuchsian group with $\infty$ as cuspidal point.

\subsection*{Additional condition on $\Gamma$}
To state the additionally required condition on $\Gamma$ we need a few notions. The \textit{height} of a point $z\in\h$ is defined to be
\[
 \height(z) \sceq \Ima z.
\]
Let $g=\textbmat{a}{b}{c}{d} \in \Gamma\mminus\Gamma_\infty$ with $c$ chosen positive, and consider its isometric sphere
\[
 I(g) = \left\{ z\in\h \ \left\vert\ \left|z+\frac{d}{c}\right| = \frac1c \right.\right\}.
\]
Its point of maximal height
\[
 s= -\frac{d}{c} + i \frac1c
\]
is called the \textit{summit} of $I(g)$.

Recall the set 
\[
 \mc K = \bigcap_{g\in\Gamma\mminus\Gamma_\infty} \Ext I(g),
\]
which is the subset of $\h$ contained in the exterior of all isometric spheres of $\Gamma$. We call an isometric sphere $I$ of $\Gamma$ \textit{relevant} if $I$ contributes nontrivially to the boundary of $\mc K$, that is, if $I\cap\partial\mc K$ contains a submanifold of $\h$ of codimension $1$. If the isometric sphere $I$ is relevant, then $I\cap\partial\mc K$ is called its \textit{relevant part}. An endpoint of the relevant part of a relevant isometric sphere is called a \textit{vertex} of $\mc K$. 

From now on we impose the following condition on $\Gamma$:
\begin{equation}\tag{A}\label{A4}
\begin{minipage}{6cm}
For each relevant isometric sphere, its summit is contained in $\partial \mathcal K$ but not a vertex of $\mc K$.
\end{minipage}
\end{equation}

\begin{example}\label{Gammaex}
\begin{enumerate}[{\rm (i)}]
\item\label{Hecke} For $n\in\N$, $n\geq 3$, let $\lambda_n\sceq 2\cos\tfrac{\pi}{n}$. The \textit{Hecke triangle group $G_n$} is the subgroup of $\PSL_2(\R)$ which is generated by 
\[
 S\sceq \bmat{0}{-1}{1}{0} \quad\text{and}\quad T_n\sceq \bmat{1}{\lambda_n}{0}{1}.
\]
The relevant isometric spheres are $I(S)$ and its $T_n^k$-translates, $k\in\Z$.
\begin{figure}[h]
\begin{center}
\includegraphics*{Hecke5.9}
\end{center}
\caption{The set $\mc K$ for $G_5$.}
\end{figure}
\item\label{Gamma05} Let 
\[
\PGamma_0(5)\sceq \left\{ \bmat{a}{b}{c}{d}\in\PSL_2(\Z) \left\vert\  c\equiv 0 \mod 5 \vphantom{\bmat{a}{b}{c}{d}\in\PSL(2,\Z)}   \right.\right\}.
\]
The isometric spheres are the sets
\[
I_{c,d} = \left\{ z\in \h\left\vert\ \left|z+\tfrac{d}{5c} \right|= \tfrac{1}{5c} \right.\right\}
\]
where $c\in\N$ and $d\in\Z$. The set $\mc K$ is given by (see Figure~\ref{Gamma05_K})
\[
\mc K = \bigcap_{d\in\Z}  \left\{ z\in \h\left\vert\ \left|z+\tfrac{d}{5} \right| > \tfrac{1}{5} \right.\right\}.
\]
\begin{figure}[h]
\begin{center}
\includegraphics*{Gamma05.8} 
\end{center}
\caption{The set $\mc K$ for $\PGamma_0(5)$.}\label{Gamma05_K}
\end{figure}
\item \label{Nichtcof} Let $S\sceq \textbmat{0}{-1}{1}{0}$ and $T\sceq \textbmat{1}{4}{0}{1}$, and denote by $\Gamma$ the subgroup of $\PSL_2(\R)$ which is generated by $S$ and $T$. The set $\mc K$ is indicated in Figure~\ref{KNichtcof}.
\begin{figure}[h]
\begin{center}
\includegraphics*{Nichtcof.2} 
\end{center}
\caption{The set $\mc K$ for the group $\Gamma$ from Example~\ref{Gammaex}\eqref{Nichtcof}.}\label{KNichtcof}
\end{figure}
\end{enumerate}
\end{example}

\subsection*{The first cross section}

The (first) cross section $\wh\CS$ for the geodesic flow on $\Gamma\backslash\h$ will be defined with the help of a very specific tesselation of $\mc K$. For that we note that there are two kinds of vertices of $\mc K$. Let $v$ be a vertex of $\mc K$. If $v\in\h$, then we call $v$ an \textit{inner vertex}, otherwise $v$ is called an \textit{infinite vertex}. Now we construct a family $\mc G$ of geodesic segments in the following way: Whenever $v$ is an infinite vertex of $\mc K$, then the (complete) geodesic segment $(v,\infty)$ belongs to this family. Moreover, whenever $s$ is the summit of a relevant isometric sphere, then the geodesic segment $(s,\infty)$ belongs to this family. Let $\wt\CS$ denote the set of unit tangent vectors on $\h$ which are based on the geodesic segments in this family but which are not tangent to any of these geodesic segments. In other words, $\wt\CS$ consists of the unit tangent vectors whose base points are on any of these geodesic segments and which point to the right or the 
left, but not straight up or down. Then
\[
 \wh\CS \sceq \pi(\wt\CS)
\]
is a weak cross section for the geodesic flow. By eliminating a certain set of vectors from it, we will also construct a strong cross section.

\subsection*{Construction of a set of representatives}
The family $\mc G$ of geodesic segments induces a tesselation of $\mc K$ into hyperbolic triangles, quadrilateral and strips. We call these tesselation elements \textit{precells in $\h$}. The union of certain finite subfamilies of these precells in $\h$ form isometric fundamental regions or even, if the union is connected, isometric fundamental domains. This relation is useful for the following two reasons.

Pick such a subfamily of precells in $\h$ and let $\CS'_\pre$ denote the set of unit tangent which are based on the vertical sides of these precells and point into them. Then the link to isometric fundamental regions yields that $\CS'_\pre$ is a set of representatives for $\wh\CS$. This set, however, is not very useful for coding purposes. For this reason, we use another property of isometric fundamental regions to modify the set of representatives.

Isometric fundamental regions allow to define cycles of their sides as in the Poincar\'e Fundamental Polyhedron Theorem. The acting elements in these cycles are just the generating elements of the relevant isometric spheres which contribute to the boundary of the fundamental region. Using these cycles we glue translates of precells in $\h$ to ideal polyhedrons in $\h$, that is, polyhedrons all of which vertices are in $\bhg\h$. Even more, all vertices are in $\Gamma.\infty$. At this point, Condition~\eqref{A4} is essential. We call these polyhedron \textit{cells in $\h$}. All the arising cells in $\h$ have two vertical sides, and all the non-vertical sides are $\Gamma$-translates of some vertical sides of some cells in $\h$. The translation elements can be determined by an algorithms from the generators of the relevant isometric spheres.

Lifting this construction to the unit tangent bundle $S\h$ and using this to redistribute $\CS'_\pre$ allows us to find a set of representatives $\CS'$ such that $\CS'$ is the disjoint union
\[
 \CS'= \bigcup_{\alpha\in A} \CS'_\alpha
\]
for some finite index set $A$ such that for each $\alpha \in A$ there exists a vertical side (a complete geodesic segment) of some cell in $\h$ such that $\CS'_\alpha$ is the set of unit tangent vectors based on this side and pointing into the cell.

The definition of precells in $\h$ and the construction of cells in $\h$ from precells is based on ideas in \cite{Vulakh}. Our construction differs from Vulakh's in three important aspects: We define three kinds of precells in $H$ of which only one are precells in sense of Vulakh. Finally, contrary to Vulakh, we extend the considerations to precells and cells in unit tangent bundle. 

\begin{example}\label{Heckecont}\label{exprecells1}
The set
 \[
\fd_n\sceq \left\{ z\in \h\ \left\vert\ |z|>1,\ |\Rea z| < \tfrac{\lambda_n}{2}\right.\right\}
\]
is an isometric fundamental domain for the Hecke triangle group $G_n$ (see Figure~\ref{FHecke}). 
\begin{figure}[h]
\begin{center}
\includegraphics*{Hecke5.16} 
\end{center}
\caption{An isometric fundamental domain $\mathcal F_5$ for $G_5$ in $\h$.}\label{FHecke}
\end{figure}
Hecke triangle groups have only one precell in $\h$, up to equivalence under 
\[
 \big( G_n\big)_\infty = \left\langle T_n \right\rangle.
\]
It is indicated in Figure~\ref{precellHecke}.
\begin{figure}[h]
\begin{center}
\includegraphics*{Hecke5.17} 
\end{center}
\caption{The precell $\pch$ of $G_5$.}\label{precellHecke}
\end{figure}
\end{example}
Figure~\ref{decompHecke} provides an idea of the lifting and redistribution procedure.
\begin{figure}[h]
\begin{center}
\includegraphics*{Hecke5.18} \hspace{1cm}
\includegraphics*{Hecke5.19}
\end{center}
\caption{Lifting to $S\h$ and defining $\CS'$ for $G_5$.}\label{decompHecke}
\end{figure}

\subsection*{Discrete dynamical system on the boundary and associated transfer operator families}
The relation between isometric fundamental regions and the specific way of lifting to $S\h$ yields the following property of $\CS'$. Whenever $L$ is the side of some $\Gamma$-translate of some cell in $\h$ and $L'$ is the set of unit tangent vectors based on that side such that all vectors are pointing into this $\Gamma$-translate or all vectors pointing out of the $\Gamma$-translate, then there is a unique pair $(\alpha,g)\in A\times \Gamma$ such that $L'= g.\CS'_\alpha$. Both, $\alpha$ and $g$ can be determined algorithmically. Further, $\Gamma.\CS'$ is just the union of all such $L'$.

These properties allow to read off the induced discrete dynamical system $F$ on the boundary of $\h$. It is given by finitely many local diffeomorphisms of the form
\begin{align}\label{localdiffeo}
 \nonumber\big( (I_{\alpha_1}\times J_{\alpha_1}) \cap g^{-1}.(I_{\alpha_2}\times J_{\alpha_2}) \big) \times\{\alpha_1\} &\to \big( g.(I_{\alpha_1}\times J_{\alpha_1}) \cap (I_{\alpha_2}\times J_{\alpha_2})\big)\times\{\alpha_2\},
\\
 (x,y,\alpha_1) &\mapsto (g.x,g.y,\alpha_2),
\end{align}
where $\alpha_1, \alpha_2\in A$, $I_{\alpha_1}, I_{\alpha_2}, J_{\alpha_1}, J_{\alpha_2}$ are intervals, and $g\in\Gamma$ depends on $\alpha_1,\alpha_2$.

The associated transfer operator $\mc L_{F,\beta}$ with parameter $\beta\in\C$ is then
\[
 \mc L_{F,\beta}f(w) = \sum_{u\in F^{-1}(w)}\frac{f(u)}{|F'(u)|^\beta},
\]
defined on spaces of functions $f$ on the domain of $F$. A more explicit formula is provided in Section~\ref{sec_transop}.

\begin{example}
Figure~\ref{nextlastHecke} shows the translates of $\CS'$ for the Hecke triangle group $G_5$. Here, we set
\[
U_n \sceq T_nS = \bmat{\lambda_n}{-1}{1}{0}. 
\]
For general Hecke triangle groups $G_n$, the picture is similar. 
\begin{figure}[h]
\begin{center}
\includegraphics*{Hecke5.15} 
\end{center}
\caption{The shaded parts are translates of $\CS'$ (in unit tangent bundle) as indicated.}\label{nextlastHecke}
\end{figure}
If one restricts here to the forward-direction of the geodesic flow, then the discrete dynamical system for $G_n$ is  
\[
F\colon\R_{>0}\mminus G_n.\infty\to \R_{>0}\mminus G_n.\infty,
\]
given by the bijections
\[
 (g_k.0,g_k.\infty)\mminus G_n.\infty \to \R_{>0}\mminus G_n.\infty,\quad x\mapsto g_k^{-1}.x
\]
for $k=1,\ldots, n-1$ with
\[
 g_k \sceq U_n^kS.
\]
The associated transfer operator $\mc L_{F,\beta}$ with parameter $\beta\in\C$ is then
\[
 \mc L_{F,\beta}f(x) = \sum_{k=1}^{n-1} \big(g'_k(x)\big)^\beta f(g_k.x),
\]
defined on functions $f\colon \R_{>0}\mminus G_n.\infty \to \C$. For the modular group $\PSL_2(\Z) = G_3$ the transfer operator becomes
\[
 \mc L_{F,\beta}f(x) = f(x+1) + (x+1)^{-2\beta}f\left(\frac{x}{x+1}\right).
\]
The $1$-eigenfunctions of this transfer operator are characterized by the functional equation~\eqref{LZ}. In \cite{Moeller_Pohl}, the relation between eigenfunctions of this transfer operator and period functions, as well as the relation to Mayer's transfer operator and the factorization \eqref{factorMayer} are discussed in detail.
\end{example}

\subsection*{The second cross section and the reduced discrete dynamical system}
For some lattices $\Gamma$ it may happen that in some of the local diffeomorphisms of the form \eqref{localdiffeo} we have $g=\id$. To avoid that and at the same time to be able to eliminate the bits $\{\alpha_1\}$ and $\{\alpha_2\}$ from the domain and range of $F$, we will shrink the cross section $\wh\CS$ and the set of representatives $\CS'$ in an algorithmic way to deduce a new cross section $\CS_\rd$ with set of representatives $\CS'_\rd$. In terms of notions introduced only later, we will eliminate all vectors from $\wh\CS$ with $\id$ in the label.

\subsection*{A group that does not satisfy (A)}\label{example}
The previous examples as well as those from \cite{Hilgert_Pohl, Pohl_mcf_Gamma0p, Pohl_mcf_general} show that there are several geometrically finite Fuchsian groups with $\infty$ as cuspidal point and which satisfy the condition \eqref{A4}. We now provide an example of a geometrically finite Fuchsian group with $\infty$ as cuspidal point, which does not satisfy \eqref{A4}. 

Let $\Gamma$ be the subgroup of $\PSL_2(\R)$ which is generated by the matrices
\[
t \sceq \bmat{1}{\tfrac{17}{11}}{0}{1},\quad g_1 \sceq \bmat{18}{-5}{11}{-3} \quad\text{and}\quad g_2\sceq \bmat{3}{-1}{10}{-3}.
\]
Set 
\[
g_3 \sceq g_1g_2 = \bmat{4}{-3}{3}{-2},
\]
let $\fd_\infty \sceq \big(\tfrac{2}{11}, \tfrac{19}{11}\big) + i\R^+$ and (see Figure~\ref{vulakhF})
\[
\fd\sceq \fd_\infty \cap \Ext I(g_1) \cap \Ext I(g_1^{-1}) \cap \Ext I(g_2) \cap \Ext I(g_1g_2) \cap \Ext I\big( (g_1g_2)^{-1}\big).
\]
\begin{figure}[h]
\includegraphics*{vulakh.1} 
\caption{The fundamental domain $\mc F$}\label{vulakhF}
\end{figure}

We indicated the points
\[
\begin{array}{lll}
v_0 \sceq \infty & v_3 \sceq \tfrac{3}{10}+ \tfrac{i}{10}  & v_6 \sceq \tfrac{91}{55} + i\tfrac{\sqrt{24}}{55}
\\
v_1 \sceq \tfrac{2}{11} & v_4 \sceq \tfrac{19}{55} + i\tfrac{\sqrt{24}}{55} & v_7 \sceq \tfrac{19}{11}
\\
v_2\sceq \tfrac{14}{55} + i \tfrac{\sqrt{24}}{55} & v_5 \sceq 1
\end{array}
\]
and the geodesic segments
\[
\begin{array}{ccc}
s_1 \sceq [v_0,v_1] & s_4 \sceq [v_3,v_4] & s_7 \sceq [v_6,v_7]
\\
s_2 \sceq [v_1, v_2] & s_5\sceq [v_4,v_5] & s_8 \sceq [v_7,v_0].
\\
s_3 \sceq [v_2,v_3] & s_6 \sceq [v_5,v_6]
\end{array}
\]

\begin{prop}\label{Ffunddom}
The set $\fd$ is a fundamental domain for $\Gamma$ in $\h$.
\end{prop}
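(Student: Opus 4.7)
The plan is to apply Poincar\'e's Polyhedron Theorem (cf.\@ Maskit, used already in Examples~\ref{Hecke} and \ref{Nichtcof}) to the closed convex polygon $\overline{\fd}$ with the side-pairing read off from Lemma~\ref{countexpair}. The set $\fd$ is manifestly convex, being the intersection of an open vertical strip with the exteriors of finitely many isometric spheres; its closure is a finite-sided hyperbolic polygon whose sides $s_1$, $s_2$, $s_3\cup s_4$, $s_5$, $s_6$, $s_7$, $s_8$ are paired by $t$, $g_1$, $g_2$, $g_3$, respectively. Note that $s_3\cup s_4$ is a single side of $\overline{\fd}$ because both segments lie on the semicircle $I(g_2)=\{z\in H\mid |z-3/10|=1/10\}$ and meet at its summit $v_3$, so $s_3\cup s_4$ is a smooth geodesic arc through $v_3$; moreover $g_2$ acts as a half-turn about $v_3$ exchanging $s_3$ and $s_4$.

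Next, one traces the cycles of vertices induced by the side-pairing and verifies the conditions of Poincar\'e's theorem at each. The infinite vertex $v_0=\infty$ forms a one-element cycle with transformation $t$, which is parabolic. The infinite vertex $v_5=1$ forms a one-element cycle with transformation $g_3$, which is parabolic since $\tr g_3=2$ and has fixed point $v_5$. The infinite vertices $v_1$ and $v_7$ form a length-two cycle with transformation $g_1^{-1}t=\textmat{-3}{4/11}{-11}{1}$, which has trace $-2$ and fixed point $v_1=2/11$, and is therefore parabolic. The inner vertex $v_3$ forms a one-element cycle with transformation $g_2$; since $\tr g_2=0$ one has $g_2^2=\id$ in $\PSL(2,\R)$, and the interior angle at $v_3$ is $\pi=2\pi/2$. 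Finally, the inner vertices $v_2,v_4,v_6$ form a length-three cycle whose transformation $g_2^{-1}g_3^{-1}g_1$ simplifies, via $g_3=g_1g_2$ and $g_2^2=\id$, to $g_2^{-2}=\id$; here Poincar\'e's theorem requires that the hyperbolic angle sum $\alpha(v_2)+\alpha(v_4)+\alpha(v_6)$ equal $2\pi$, which is checked by a direct computation of the three angles between the relevant pairs of semicircles centered on $\R$.

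Poincar\'e's theorem then yields that $\overline{\fd}$ is an exact convex fundamental polyhedron for $\langle t,g_1,g_2,g_3\rangle=\langle t,g_1,g_2\rangle=\Gamma$, so $\fd$ itself is a fundamental domain for $\Gamma$ in $H$. The hard part will be the angle-sum verification at $\{v_2,v_4,v_6\}$, which requires an explicit hyperbolic-angle computation rather than a mere trace argument; all remaining cycle conditions reduce to routine matrix calculations, and the side identifications have already been established in Lemma~\ref{countexpair}.
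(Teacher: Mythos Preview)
Your proposal is correct and follows essentially the same approach as the paper: both apply Poincar\'e's Theorem (in Maskit's form) to the polygon $\overline{\fd}$ with the side-pairing of Lemma~\ref{countexpair}, verify the parabolic cycle conditions at $v_0$, $v_5$, and $\{v_1,v_7\}$, the elliptic condition at $v_3$, and the angle-sum condition at the cycle $\{v_2,v_4,v_6\}$ with trivial cycle transformation. The only minor difference is in the verification of the angle sum $\alpha(v_2)+\alpha(v_4)+\alpha(v_6)=2\pi$: you propose a direct computation of the three hyperbolic angles between the intersecting isometric spheres, whereas the paper instead gives a geometric patching argument, showing that suitable $g_1^{-1}$- and $g_2$-translates of small neighborhoods of $v_6$ and $v_4$ in $\overline{\fd}$ fit together with a neighborhood of $v_2$ in $\overline{\fd}$ to fill a full Euclidean disk around $v_2$---which establishes the angle sum without computing any individual angle.
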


\begin{proof}
The sides of $\fd$ are $s_1,s_2, s_3\cup s_4, s_5,s_6,s_7$ and $s_8$. Further we have the side-pairings $ts_1 = s_8$, $g_1s_2 = s_7$, $g_2s_3 = s_4$ and $g_3s_5=s_6$, where $tv_0=v_0$, $tv_1=v_7$, $g_1v_1=v_7$, $g_1v_2=v_6$, $g_2v_2=v_4$, $g_2v_3=v_3$, $g_3v_4=v_6$ and $g_3v_5=v_5$. Now Poincar\'e's Fundamental Polyhedron Theorem (see e.g.\@ \cite{Maskit}) yields that $\fd$ is a fundamental domain for the group generated by $t,g_1,g_2$ and $g_3$. This group is exactly $\Gamma$.
\end{proof}

\begin{prop}
$\Gamma$ does not satisfy \eqref{A4}.
\end{prop}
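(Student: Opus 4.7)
The plan is to exhibit a relevant isometric sphere whose summit lies in the open interior of another isometric sphere, hence strictly outside $\overline{\mathcal K}$. The candidate will be $I(g_1)$ with summit $\tfrac{3}{11} + \tfrac{i}{11}$, which turns out to lie well inside $I(g_2)$.

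First I establish that $\mathcal F = \mathcal F_\infty \cap \mathcal K$. By Proposition~\ref{Ffunddom}, $\mathcal F$ is a fundamental domain for $\Gamma$ in $H$; by Theorem~\ref{havefd}, $\mathcal F_\infty \cap \mathcal K$ is another. Since $\mathcal K$ is cut out by intersecting the exteriors of all isometric spheres, while $\mathcal F$ uses only five of them, we get $\mathcal F_\infty \cap \mathcal K \subseteq \mathcal F$. Both are open and convex fundamental domains for the same action, so they must agree. From this identification, the side $s_2 = [v_1, v_2]$ (Lemma~\ref{countexpair}) lies in $I(g_1) \cap \partial\mathcal K$: for any $z$ in the open arc $(v_1, v_2) \subseteq \mathcal F_\infty$, a small ball about $z$ is cleanly split by $I(g_1)$ into a part in $\mathcal F = \mathcal F_\infty \cap \mathcal K$ (on the $\Ext I(g_1)$ side; for small enough radius the ball avoids the other four spheres) and a part in $\Int I(g_1) \subseteq H \setminus \overline{\mathcal K}$. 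So $z \in \partial\mathcal K$, and $I(g_1) \cap \partial\mathcal K \supseteq (v_1, v_2)$ is codimension-one in $H$; thus $I(g_1)$ is relevant.

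Lemma~\ref{isomsummit} places the summit of $I(g_1)$ at $s = \tfrac{3}{11} + \tfrac{i}{11}$. The sphere $I(g_2)$ is the semicircle of radius $\tfrac{1}{10}$ centered at $\tfrac{3}{10}$, and a direct calculation gives
\[
\left| s - \tfrac{3}{10} \right|^2 = \left( \tfrac{3}{11} - \tfrac{3}{10} \right)^2 + \left( \tfrac{1}{11} \right)^2 = \tfrac{9}{12100} + \tfrac{100}{12100} = \tfrac{109}{12100} < \tfrac{121}{12100} = \left( \tfrac{1}{10} \right)^2.
\]
Hence $s \in \Int I(g_2)$. Since $\mathcal K \subseteq \Ext I(g_2)$, we have $\overline{\mathcal K} \cap \Int I(g_2) = \emptyset$; in particular $s \notin \overline{\mathcal K}$, and therefore $s \notin \partial\mathcal K$. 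This violates \eqref{A4}.

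The only subtle point is the identification $\mathcal F = \mathcal F_\infty \cap \mathcal K$, which rests on the general fact that two open fundamental domains of the same action, one contained in the other, must coincide. Given this, the relevance of $I(g_1)$ is a direct consequence of $s_2$ being a non-trivial side of $\mathcal F$, and the failure of \eqref{A4} reduces to the elementary estimate $109 < 121$.
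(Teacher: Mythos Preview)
Your proof is correct and follows essentially the same approach as the paper's. Both identify $\mathcal F$ as the isometric fundamental domain $\mathcal F_\infty\cap\mathcal K$, deduce that $I(g_1)$ is relevant (via its side $s_2$), and then verify by direct computation that the summit $\tfrac{3+i}{11}$ lies in $\Int I(g_2)$; the paper is simply terser, asserting the first two steps without the containment-of-fundamental-domains argument or the explicit neighborhood check that you supply.
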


\begin{proof}
Proposition~\ref{Ffunddom} states that $\fd$ is a fundamental domain for $\Gamma$ in $\h$. Its shape and side-pairings yield that it is an isometric fundamental domain. Therefore, the isometric sphere $I(g_1)$ is relevant and $s_1$ is its relevant part. The summit of $I(g_1)$ is $s\sceq \tfrac{3+i}{11}$. One easily calculates that $s\in \Int I(g_2)$. Thus, $\Gamma$ does not satisfy \eqref{A4}.
\end{proof}

In \cite{Vulakh}, Vulakh states that each geometrically finite subgroup of $\PSL_2(\R)$ for which $\infty$ is a cuspidal point satisfies \eqref{A4}. The previous example shows that this statement is not right. This property is crucial for the results in \cite{Vulakh}. Thus, Vulakh's constructions do not apply to such a huge class of groups as he claims.

\section{Precells in $\h$}\label{sec_preH}

Let $\Gamma$ be a geometric finite Fuchsian group with $\infty$ as cuspidal point and which satisfies Condition~\eqref{A4}.  To avoid empty statements suppose that $\Gamma\not=\Gamma_\infty$, which means that there are relevant isometric spheres. 

In this section we introduce the notion of precells in $\h$ and basal families of precells in $\h$. Moreover, we discuss their relation to fundamental regions and study some of their properties. We recall that 
\[
 \mc K \sceq \bigcap_{g\in\Gamma\mminus\Gamma_\infty} \Ext I(g).
\]

\subsection{The structure of $\mc K$}\label{sec_boundint}

We start with a short consideration of the vertex structure of $\mc K$. 

The set of all isometric spheres of $\Gamma$ need not be locally finite. For example, in the case of the modular group $\PSL_2(\Z)$, each neighborhood of $0$ in $\hg$ contains infinitely many isometric spheres. However, from $\Gamma$ being geometrically finite, it follows immediately that the set of \textit{relevant} isometric spheres is locally finite. In turn, the set of infinite vertices of $\mc K$ has no accumulation points. Moreover, if $v$ is an inner vertex of $\mc K$, then there are exactly two distinct relevant isometric spheres such that $v$ is a common endpoint of their relevant parts. If $v$ is an infinite vertex, then two situations can occur. If $v$ is an endpoint of the relevant parts of two distinct relevant isometric spheres, we call $v$ a \textit{two-sided infinite vertex}. Otherwise we call $v$ a \textit{one-sided infinite vertex}.

Straightforward geometric arguments prove the following statement on the local situation at one-sided infinite vertices of $\mc K$. For this let 
\[
\pr_\infty\colon \hg\mminus\{\infty\} \to \R,\quad \pr_\infty(z) \sceq z-\height(z) = \Rea z
\]
denote the geodesic projection from $\infty$ to $\bhg\h$. For $a,b\in\R$ we set
\[
 \langle a, b\rangle \sceq
\begin{cases}
[a,b] & \text{if $a\leq b$,}
\\
[b,a] & \text{otherwise.} 
\end{cases}
\]

\begin{prop}\label{boundint}
Let $v$ be a one-sided infinite vertex of $\mc K$. Then there exists a unique one-sided infinite vertex $w$ of $\mc K$ such that the strip $\pr_\infty^{-1}(\langle v, w\rangle) \cap \h$ is contained in $\mc K$. In particular, $\pr_\infty^{-1}(\langle v,w\rangle)$ does not intersect any isometric sphere in $\h$, and, of all vertices of $\mc K$, $\pr_\infty^{-1}(\langle v, w\rangle)$ contains only $v$ and $w$.
\end{prop}

For any one-sided infinite vertex $v$ of $\mc K$, we call the interval $\langle v, w\rangle$ from Proposition~\ref{boundint} a \textit{boundary interval}, and we call $w$ the one-sided infinite vertex \textit{adjacent} to $v$. Boundary intervals will be needed for the definition and investigation of strip precells defined below.

\begin{example}
Recall the group $\Gamma$ from Example~\ref{Gammaex}\eqref{Nichtcof}. The boundary intervals of $\mc K$ are the intervals $[1+4m,3+4m]$ for each $m\in\Z$. 
\end{example}

\subsection{Precells in $\h$ and basal families}\label{sec_summit}
We now define the notion of precells in $\h$. 

\begin{defi}\label{def_preH}
Let $v$ be a vertex of $\mathcal K$.  Suppose first that $v$ is an inner vertex or a two-sided infinite vertex. Then there are (exactly) two relevant isometric spheres $I_1$, $I_2$ with relevant parts $[a_1,v]$ resp.\@ $[v, b_2]$. Let $s_1$ resp.\@ $s_2$ be the summit of $I_1$ resp.\@ $I_2$. By Condition~\eqref{A4}, the summits $s_1$ and $s_2$ do not coincide with $v$.

If $v$ is a two-sided infinite vertex, then define $\pch_1$ to be the hyperbolic triangle\footnote{We consider the boundary of the triangle in $\h$ to belong to it.} with vertices $v$, $s_1$ and $\infty$, and  define $\pch_2$ to be the hyperbolic triangle with vertices $v$, $s_2$ and $\infty$. The sets $\pch_1$ and $\pch_2$ are the \textit{precells in $\h$} attached to $v$. Precells arising in this way are called \textit{cuspidal}. 

If $v$ is an inner vertex, then let $\pch$ be the hyperbolic quadrilateral with vertices $s_1$, $v$, $s_2$ and $\infty$. The set $\pch$ is the \textit{precell in $\h$} attached to $v$. Precells that are constructed in this way are called \textit{non-cuspidal}.

Suppose now that $v$ is a one-sided infinite vertex. Then there exist exactly one relevant isometric sphere $I$ with relevant part $[a,v]$ and a unique one-sided infinite vertex $w$ other than $v$ such that $\pr_\infty^{-1}(\langle v, w\rangle)$ does not contain vertices other than $v$ and $w$ (see Proposition~\ref{boundint}). Let $s$ be the summit of $I$.

Define $\pch_1$ to be the hyperbolic triangle with vertices $v$, $s$ and $\infty$, and define $\pch_2$ to be the vertical strip $\pr_\infty^{-1}(\langle v,w\rangle) \cap \h$. The sets $\pch_1$ and $\pch_2$ are the \textit{precells in $\h$} attached to $v$. The precell $\pch_1$ is called \textit{cuspidal}, and $\pch_2$ is called a \textit{strip precell}.
\end{defi}

\begin{example}\label{exprecells2}
The precells in $\h$ of the congruence group $\PGamma_0(5)$ from Example~\ref{Gammaex}\eqref{Gamma05} are indicated in Figure~\ref{precellsGamma05} up to $\PGamma_0(5)_\infty$-equivalence.
\begin{figure}[h]
\begin{center}
\includegraphics*{Gamma05.3} 
\end{center}
\caption{Precells in $\h$ of $\PGamma_0(5)$.}\label{precellsGamma05}
\end{figure}

The inner vertices of $\mc K$ are 
\[
 v_k = \frac{2k+1}{10} + i\frac{\sqrt{3}}{10}, \qquad k=1,2,3,
\]
and their translates under $\PGamma_0(5)_\infty$. The summits of the indicated isometric spheres are 
\[
 m_k = \frac{k}{5} + \frac{i}{5}, \qquad k=1,\ldots, 4.
\]
The group $\PGamma_0(5)$ has cuspidal as well as non-cuspidal precells in $\h$, but no strip precells.
\end{example}

\begin{example}\label{exprecells3}
The precells in $\h$ of the group $\Gamma$ from Example~\ref{Gammaex}\eqref{Nichtcof} are up to $\Gamma_\infty$-equivalence one strip precell $\pch_1$ and two cuspidal precells $\pch_2,\pch_3$ as indicated in Figure~\ref{precellsNichtcof}.
\begin{figure}[h]
\begin{center}
\includegraphics*{Nichtcof.3} 
\end{center}
\caption{Precells in $\h$ of $\Gamma$.}\label{precellsNichtcof}
\end{figure}
Here, $v_1=-3$, $v_2=-1$, $v_3=1$ and $m=i$.
\end{example}

Condition~\eqref{A4} yields that the relation between different precells and between the precells and $\mc K$ are well-structured.

\begin{prop}\label{precellsum}
\begin{enumerate}[{\rm (i)}]
\item \label{precellsproj}
If $\pch$ is a precell in $\h$, then 
\[
\pch = \pr_\infty^{-1}\big(\pr_\infty(\pch)\big) \cap \overline{\mc K} \quad\text{and}\quad \pch^\circ = \pr_\infty^{-1}\big(\pr_\infty(\pch^\circ)\big) \cap \mc K.
\]
\item \label{boundarypreH}
If two precells in $\h$ have a common point, then either they are identical or they coincide exactly at a common vertical side. 
\item \label{decompK}
The set $\overline{\mathcal K}$ is the  essentially disjoint union of all precells in $\h$,
\[
 \overline{\mathcal K} = \bigcup \{ \pch \mid \text{$\pch$ precell in $\h$} \},
\]
and $\mc K$ contains the disjoint union of the interiors of all precells in $\h$, 
\[
 \bigcup\{ \pch^\circ \mid \text{$\pch$ precell in $\h$} \} \subseteq \mc K.
\]
\end{enumerate}
\end{prop}

\begin{proof}
We use the notation from Definition~\ref{def_preH} to discuss the relation between precells and $\mc K$. 

Suppose first that $\pch$ is a non-cuspidal precell in $\h$ attached to the inner vertex $v$. Condition~\eqref{A4} implies that the summits $s_1$ and $s_2$ are contained in the relevant parts of the relevant isometric spheres $I_1$ and $I_2$, respectively. Therefore they are contained in $\partial\mc K$. Hence $\pch$ is the subset of $\overline{\mc K}$ with the two vertical sides $[s_1,\infty]$ and $[s_2,\infty]$, and the two non-vertical ones $[s_1,v]$ and $[v,s_2]$. The geodesic projection of $\pch$ from $\infty$ is 
\[ \pr_\infty(\pch) = \langle \Rea s_1, \Rea s_2\rangle. \]

Suppose now that $\pch$ is a cuspidal precell in $\h$ attached to the infinite vertex $v$. Then $\pch$ has two vertical sides, namely $[v,\infty]$ and $[s,\infty]$, and a single non-vertical side, namely $[v,s]$. As for non-cuspidal precells we find that $[v,s]$ is contained in the relevant part of some relevant isometric sphere, and hence $\pch$ is a subset of $\overline{\mc K}$. The geodesic projection of $\pch$ from $\infty$ is 
\[
 \pr_\infty(\pch) = \langle\Rea s , v\rangle.
\]
Suppose finally that $\pch$ is the strip precell $\pr_\infty^{-1}( \langle v, w\rangle) \cap \h$. Then $\pch$ is attached to the two vertices $v$ and $w$. It has the two vertical sides $[v,\infty]$ and $[w,\infty]$ and no non-vertical ones. The geodesic projection of $\pch$ from $\infty$ is
\[
 \pr_\infty(\pch) = \langle v,w\rangle.
\]
By Proposition~\ref{boundint}, $\pch$ is contained in $\mc K$. From these observations, the statements follow easily.
\end{proof}

Before we investigate the relation between precells in $\h$ and fundamental regions in Theorem~\ref{precellsH} below, we state a few properties of isometric spheres. Their proofs are straightforward.

\begin{lemma}\label{isoprops}
Let $g\in\Gamma\mminus\Gamma_\infty$. 
\begin{enumerate}[{\rm (i)}]
\item We have $g.I(g) = I(g^{-1})$.
\item If $s$ the summit of $I(g)$, then $g.s$ is the summit of $I(g^{-1})$.
\item The geodesic projection $\pr_\infty(s)$ of the summit $s$ of $I(g)$ is the center $g^{-1}.\infty$ of $I(g)$.
\item If $I(g)$ is relevant with relevant part $[a,b]$, then $I(g^{-1})$ is relevant with relevant part $g.[a,b] = [g.a,g.b]$.
\end{enumerate}
\end{lemma}

Let $\Lambda$ be a Fuchsian group. A subset $\fd$ of $H$ is called a \textit{closed} fundamental region
for $\Lambda$ in $\h$ if $\fd$ is closed and $\fd^\circ$ is a fundamental region for $\Lambda$ in $\h$. If, in addition, $\fd^\circ$ is connected, then $\fd$ is said to be a \textit{closed fundamental domain} for $\Lambda$ in $\h$. Note that if $\fd$ is a non-connected fundamental region for $\Lambda$ in $\h$, then $\overline \fd$ can happen to be a closed fundamental domain.

Let $\{A_j\mid j\in J\}$ be a family of real submanifolds (possibly with boundary) of $\h$ or $\hg$, and let $n\sceq \max\{\dim A_j \mid j\in J\}$. We call the union 
\[
 \bigcup_{j\in J} A_j
\]
\textit{essentially disjoint} if for each $i,j\in J$, $i\not=j$, the intersection $A_i\cap A_j$ is contained in a real submanifold (possibly with boundary) of dimension $n-1$. 

\begin{thm}\label{precellsH}
There exists a set $\{ \pch_j \mid j\in J\}$, indexed by $J$, of precells in $\h$ such that the (essentially disjoint) union $\bigcup_{j\in J}\pch_j$ is a closed fundamental region for $\Gamma$ in $\h$. The set $J$ is finite and its cardinality does not depend on the choice of the specific set of precells. The set $\{ \pch_j \mid j\in J\}$ can be chosen such that $\bigcup_{j\in J} \pch_j$ is a closed fundamental domain for $\Gamma$ in $\h$. In each case, the (disjoint) union $\bigcup_{j\in J} \pch_j^\circ$ is a fundamental region for $\Gamma$ in $\h$.
\end{thm}

\begin{proof}
By Proposition~\ref{precellsum}\eqref{boundarypreH}, the union of each family of pairwise different precells in $\h$ is essentially disjoint. Let $r$ be the center of some relevant isometric sphere $I$. Let $r\sceq \Rea s$ for the summit $s$ of some relevant sphere of $\Gamma$ or let $r\sceq \Rea v$ for an infinite vertex $v$ of $\mc K$. Let $\lambda > 0$ be the unique positive number such that 
\[
 \Gamma_\infty = \left\langle \bmat{1}{\lambda}{0}{1}\right\rangle.
\]
From Lemma~\ref{isoprops} and Proposition~\ref{precellsum} one easily deduces that 
\[ 
\overline{\fd(r)} = \overline{\fd_\infty(r)} \cap \overline{\mc K} = \left( [r,r+\lambda]+i\R_{>0}\right) \cap \overline{\mc K}
\] 
decomposes into a finite set $\fpch\sceq \{ \pch_j\mid j\in J\}$ of precells in $\h$. Clearly, $\overline{\fd(r)}$ is a closed fundamental domain.

Let $\fpch_2\sceq \{ \pch_k\mid k\in K\}$ be a set of precells in $\h$ such that $\fd\sceq \bigcup_{k\in K}\pch_k$ is a closed fundamental region for $\Gamma$ in $\h$. Proposition~\ref{precellsum}\eqref{decompK} implies that 
\[
\overline{\mc K} = \bigcup\{ h.\pch \mid h\in\Gamma_\infty,\ \pch\in\fpch\}.
\]
Let $\pch_k\in\fpch_2$ and pick $z\in\pch_k^\circ$. Then there exists $h_k\in\Gamma_\infty$ and $j_k\in J$ such that $h_k.z\in \pch_{j_k}$. Therefore $h_k.\pch_k^\circ \cap \pch_{j_k} \not=\emptyset$. The $\Gamma_\infty$-invariance of $\mc K$ shows that $h_k.\pch_k$ is a precell in $\h$. Then Proposition~\ref{precellsum}\eqref{boundarypreH} implies that $h_k.\pch_k = \pch_{j_k}$, and in turn $h_k$ and $j_k$ are unique. We will show that the map $\varphi\colon K\to J$, $k\mapsto j_k$, is a bijection. To show that $\varphi$ is injective suppose that there are $l,k\in K$ such that $j_k=j_l\seqc j$. Then $h_l.\pch_l = \pch_j = h_k.\pch_k$, hence $h_k^{-1}h_l.\pch_l =\pch_k$. In particular, $h_k^{-1}h_l.\pch_l^\circ \cap \pch_k^\circ \not=\emptyset$. Since $\bigcup_{h\in K} \pch_h^\circ \subseteq \fd^\circ$ and $\fd^\circ$ is a fundamental region, it follows that $h_k^{-1}h_l=\id$ and $l=k$. Thus, $\varphi$ is injective. To show surjectivity let $j\in J$ and $z\in \pch_j^\circ$. Then there exists $g\in\Gamma$ and 
$k\in K$ such that $g.z\in \pch_k$. On the other hand, $\pch_k = h_k^{-1}.\pch_{j_k}$. Hence $h_kg.\pch_j^\circ \cap \pch_{j_k}\not=\emptyset$. Since $\pch_j$ and $\pch_{j_k}$ are convex polyhedrons, it follows that $h_kg.\pch_j^\circ \cap \pch_{j_k} \not=\emptyset$. Since $\fd(r)$ is a fundamental region and $\pch_j^\circ, \pch_{j_k}^\circ\subseteq \fd(r)$, we find that $h_kg=\id$ and $j=j_k$. Hence, $\varphi$ is surjective. It follows that $\# K = \# J$.

It remains to show that the disjoint union $P\sceq \bigcup_{k\in K} \pch_k^\circ$ is a fundamental region for $\Gamma$ in $\h$. Obviously, $P$ is open and contained in $\fd^\circ$. This shows that $P$ satisfies (\apref{F}{F1}{})  and (\apref{F}{F2}{}). Since $\overline{(\pch^\circ)} = \pch$ for each precell in $\h$ and $K$ is finite, it follows that 
\[
\overline{P} = \overline{\bigcup_{k\in K} \pch_k^\circ} = \bigcup_{k\in K} \pch_k = \fd.
\]
Hence, $P$ satisfies (\apref{F}{F3}{}) as well, and thus it is a fundamental region for $\Gamma$ in $\h$.
\end{proof}

Each set $\fpch \sceq \{ \pch_j \mid j\in J\}$, indexed by $J$, of precells in $\h$ with the property that $\fd\sceq \bigcup_{j\in J} \pch_j$ is a closed fundamental region is called a \textit{basal family of precells in $\h$} or a \textit{family of basal precells in $\h$}. If, in addition, $\fd$ is connected, then $\fpch$ is called a \textit{connected basal family of precells in $\h$} or a \textit{connected family of basal precells in $\h$}.

\begin{example}
Recall the Examples~\ref{exprecells1}, \ref{exprecells2} and \ref{exprecells3}.
The set $\{\pch\}$ of precells in $\h$ for $G_n$ is a connected basal family of precells in $\h$, as well as $\{\pch(v_0),\ldots, \pch(v_4)\}$ for $\PGamma_0(5)$, and $\{\pch_1,\pch_2,\pch_3\}$ for $\Gamma$.
\end{example}

The proof of Theorem~\ref{precellsH} shows the following statements. Let 
\[
 t_\lambda \sceq \bmat{1}{\lambda}{0}{1}
\]
with $\lambda>0$ be such that $\Gamma_\infty = \langle t_\lambda\rangle$.

\begin{cor}\label{props_preH}
Let $\fpch$ be a basal family of precells in $\h$.
\begin{enumerate}[{\rm (i)}]
\item For each precell $\pch$ in $\h$ there exists a unique pair $(\pch',m)\in \fpch\times \Z$ such that $t_\lambda^m.\pch' = \pch$.
\item For each $\pch\in\fpch$ choose an element $m(\pch)\in \Z$. Then $\big\{ t_\lambda^{m(\pch)}.\pch\ \big\vert\  \pch\in\fpch\big\}$ is a basal family of precells in $H$. For each $\pch\in\fpch$, the precell $t_\lambda^{m(\pch)}.\pch$ is of the same type as $\pch$.
\item The set $\overline{\mc K}$ is the essentially disjoint union $\bigcup\{ h.\pch \mid h\in\Gamma_\infty,\ \pch\in\fpch\}$.
\end{enumerate}
\end{cor}

\subsection{The tesselation of $\h$ by basal families of precells}\label{sec_tess}

The following proposition is crucial for the construction of cells in $\h$ from precells in $\h$. Note that the element $g\in\Gamma\mminus\Gamma_\infty$ in this proposition depends not only on $\pch$ and $b$ but also on the choice of the basal family $\fpch$ of precells in $\h$. In this section we will use the proposition as one ingredient for the proof that the family of $\Gamma$-translates of all precells in $\h$ is a tesselation of $\h$.

\begin{prop}\label{structureglue} Let $\fpch$ be a basal family of precells in $\h$.
Let $\pch\in\fpch$ be a basal precell that is not a strip precell, and suppose that $b$ is a non-vertical side of $\pch$. Then there is a unique element $g\in\Gamma\mminus\Gamma_\infty$ such that $b\subseteq I(g)$ and $g.b$ is the non-vertical side of some basal precell $\pch'\in\fpch$. If $\pch$ is non-cuspidal, then $\pch'$ is non-cuspidal, and, if $\pch$ is cuspidal, then $\pch'$ is cuspidal.
\end{prop}

\begin{proof} Let $I$ be the (relevant) isometric sphere with $b\subseteq I$. We will show first that there is a generator $g$ of $I$ such that $g.b$ is a side of some basal precell. Then $g.b\subseteq g.I(g) = I(g^{-1})$, which implies that $g.b$ is a non-vertical side.

Let $h\in\Gamma\mminus\Gamma_\infty$ be any generator of $I$, let $s$ be the summit of $I$ and $v$ the vertex of $\mathcal K$ that $\pch$ is attached to. Then $b=[v,s]$. Further, $b$ is contained in the relevant part of $I=I(h)$. By Lemma~\ref{isoprops}, the set $h.b=[h.v,h.s]$ is contained in the relevant part of the relevant isometric sphere $I(h^{-1})$, the point $h.v$ is a vertex of $\mathcal K$ and $h.s$ is the summit of $I(h^{-1})$. Thus, there is a unique precell $\pch_h$ with non-vertical side $h.b$. By Corollary~\ref{props_preH}, there is a unique basal precell $\pch'$ and a unique $m\in\Z$ such that 
\[
\pch_h = t_\lambda^m.\pch' = \pch' + m\lambda.
\]
Then $t_\lambda^{-m}h.b$ is a non-vertical side of $\pch'$, and $t_\lambda^{-m}h.b$ is contained in the relevant part of the relevant isometric sphere $I(h^{-1}) - m\lambda = I(h^{-1}t_\lambda^m) = I((t_\lambda^{-m}h)^{-1})$. Clearly, also $g\sceq t_\lambda^{-m}h$ is a generator of $I$. 

To prove the uniqueness of $g$, let $k$ be any generator of $I$. Then there exists a unique $n\in\Z$ such that $k=t_\lambda^n h$. Thus, $k.b = t_\lambda^nh.b = h.b+n\lambda$ and therefore $\pch_k = \pch_h + n\lambda$. Then 
\[
 \pch_k = \pch' + m\lambda + n\lambda = t_\lambda^{m+n}. \pch',
\]
and $t_\lambda^{-(m+n)}k$ is a generator of $I$ such that $t_\lambda^{-(m+n)}k.b$ is a side of some basal precell. Moreover,
\[
 t_\lambda^{-(m+n)}k = t_\lambda^{-m}t_\lambda^{-n}k = t_\lambda^{-m}h = g.
\]
This shows the uniqueness. 

The basal precell $\pch'$ cannot be a strip precell, since it has a non-vertical side. Finally, $\pch$ is cuspidal if and only if $v$ is an infinite vertex. This is the case if and only if $gv$ is an infinite vertex, which is equivalent to $\pch'$ being cuspidal. This completes the proof.
\end{proof}

\begin{lemma}\label{intervert}
Let $\pch$ be a precell in $\h$. Suppose that $S$ is a vertical side of $\pch$. Then there exists a precell $\pch'$ in $\h$ such that $S$ is a side of $\pch'$ and $\pch'\not=\pch$. In this case, $S$ is a vertical side of $\pch'$.
\end{lemma}

\begin{prop} \label{intersectionprecells}
Let $\pch_1, \pch_2$ be two precells in $\h$ and let $g_1,g_2 \in\Gamma$. Suppose that  $g_1.\pch_1 \cap g_2.\pch_2 \not=\emptyset$. Then we have either $g_1.\pch_1 = g_2.\pch_2$ and $g_1g_2^{-1} \in \Gamma_\infty$, or $g_1.\pch_1 \cap g_2.\pch_2$ is a common side of $g_1.\pch_1$ and $g_2.\pch_2$, or $g_1.\pch_1\cap g_2.\pch_2$ is a point which is the endpoint of some side of $g_1.\pch_1$ and some side of $g_2.\pch_2$. If $S$ is a common side of $g_1.\pch_1$ and $g_2.\pch_2$, then $g_1^{-1}.S$ is a vertical side of $\pch_1$ if and only if $g_2^{-1}.S$ is a vertical side of $\pch_2$.
\end{prop}

\begin{proof}
W.l.o.g.\@ $g_1=\id$. Let $\fpch$ be a basal family of precells in $\h$. Corollary~\ref{props_preH} shows that we may assume that $\pch_1\in\fpch$. Let $S_1=[a_1,\infty]$ and $S_2=[a_2,\infty]$ be the vertical sides of $\pch_1$. If $\pch_1$ has non-vertical sides, let these be $S_3=[b_1,b_2]$ resp.\@ $S_3=[b_1,b_2]$ and $S_4=[b_3,b_4]$. Lemma~\ref{intervert} shows that we find precells $\pch'_1$ and $\pch'_2$ such that $\pch'_1\not=\pch_1\not=\pch'_2$ and $S_1$ is a vertical side of $\pch'_1$ and $S_2$ is a vertical side of $\pch'_2$. Proposition~\ref{structureglue} shows that there exist $(h_3,\pch'_3)\in \Gamma\times\fpch$ resp.\@ $(h_3,\pch'_3), (h_4,\pch'_4) \in\Gamma\times\fpch$ such that $h_3.S_3$ is a non-vertical side of $\pch'_3$ and $h_4.S_4$ is a non-vertical side of $\pch'_4$ and $h_3.\pch_1\not=\pch'_3$ and $h_4.\pch_1\not=\pch'_4$. Recall that each precell in $\h$ is a convex polyhedron. Therefore $\pch_1\cup \pch'_1$ is a polyhedron with $(a_1,\infty)$ in its interior, and $\pch_1\cup h_3^{-1}.
\pch'_3$ is a polyhedron with $(b_1,b_2)$ in its interior. Likewise for $\pch_1\cup \pch'_2$ and $\pch_1\cup h_4^{-1}.\pch'_4$. 

Suppose first that $\pch_1^\circ\cap g_2.\pch_2\not=\emptyset$. By Corollary~\ref{props_preH} there exists a pair $(h,\pch') \in\Gamma_\infty\times\fpch$ such that $\pch_2=h.\pch'$. Then $\pch_1^\circ \cap g_2h.\pch' \not=\emptyset$. Since $\pch'$ is a convex polyhedron, $\pch_1^\circ \cap g_2h .(\pch')^\circ \not=\emptyset$. Now $\bigcup\big\{ (\wh\pch\,)^\circ \ \big\vert\  \wh\pch\in\fpch\big\}$ is a fundamental region for $\Gamma$ in $\h$ (see Theorem~\ref{precellsH}). Therefore, $g_2h=\id$ and, by Proposition~\ref{precellsum}\eqref{boundarypreH}, $\pch_1=\pch'$. Hence $g_2^{-1}\in\Gamma_\infty$ and $\pch_1 = g_2.\pch_2$.

Suppose now that $\pch_1^\circ \cap g_2.\pch_2 = \emptyset$. If $g_2.\pch_2\cap (a_1,\infty) \not=\emptyset$, then $g_2.\pch_2 \cap (\pch'_1)^\circ \not=\emptyset$ and the argument from above shows that $\pch'_1 = g_2.\pch_2$ and $g_2\in\Gamma_\infty$. From this it follows that $S_1$ is a vertical side of $\pch_2$. 

If $g_2.\pch_2 \cap (b_1,b_2) \not=\emptyset$, then $g_2.\pch_2 \cap h_3^{-1}.( \pch'_3)^\circ \not= \emptyset$. As before, $g_2h_3\in\Gamma_\infty$ and $g_2.\pch_2 = h_3^{-1}.\pch'_3$. Then $S_3$ is a non-vertical side of $\pch_2$. The argumentation for $g_2.\pch_2 \cap (a_2,\infty) \not=\emptyset$ and $g_2.\pch_2\cap (b_3,b_4) \not=\emptyset$ is analogous. 

It remains the case that $g_2.\pch_2$ intersects $\pch_1$ is an endpoint $v$ of some side of $\pch_1$. By symmetry of arguments, $v$ is an endpoint of some side of $\pch_2$. This completes the proof.
\end{proof}

We call a family $\{ S_j \mid j\in J\}$ of polyhedrons in $\h$ a \textit{tesselation} of $\h$ if
\begin{enumerate}[(T1)]
\item\label{T1} $\h= \bigcup_{j\in J}S_j$, and
\item\label{T2} if $S_j\cap S_k\not=\emptyset$ for some $j,k\in J$, then either $S_j=S_k$ or $S_j\cap S_k$ is a common side or vertex of $S_j$ and $S_k$.
\end{enumerate}

\begin{cor}\label{precellsHtess}
Let $\fpch$ be a basal family of precells in $\h$. Then 
\[
\Gamma.\fpch = \{ g.\pch \mid g\in\Gamma,\ \pch\in\fpch\}
\]
is a tesselation of $\h$ which satisfies in addition the property that if $g_1.\pch_1 = g_2.\pch_2$, then $g_1=g_2$ and $\pch_1=\pch_2$.
\end{cor}

\begin{proof}
Let $\fd\sceq \bigcup\{\pch\mid \pch\in\fpch\}$. Theorem~\ref{precellsH} states that $\fd$ is a closed fundamental region for $\Gamma$ in $\h$, hence $\bigcup_{g\in\Gamma}g.\fd=\h$. This proves (\apref{T}{T1}{}). Property (\apref{T}{T2}{}) follows directly from Proposition~\ref{intersectionprecells}. Now let $(g_1,\pch_1),(g_2,\pch_2)\in\Gamma\times\fpch$ with $g_1.\pch_1=g_2.\pch_2$. Then $g_1.\pch_1^\circ = g_2.\pch_2^\circ$. Recalling that $\fd^\circ$ is a fundamental region for $\Gamma$ in $\h$ and that $\pch_1^\circ,\pch_2^\circ\subseteq \fd^\circ$, we get that $g_1=g_2$ and $\pch_1=\pch_2$.
\end{proof}

\section{Cells in $\h$}\label{sec_cellsH}

Let $\Gamma$ be a geometrically finite subgroup of $\PSL_2(\R)$ of which $\infty$ is a cuspidal point and which satisfies \eqref{A4}. Suppose that the set of relevant isometric spheres is non-empty. Let $\fpch$ be a basal family of precells in $\h$. To each basal precell in $\h$ we assign a cell in $\h$, which is an essentially disjoint union of certain $\Gamma$-translates of certain basal precells. More precisely, using Proposition~\ref{structureglue} we define so-called cycles in $\fpch\times\Gamma$ in the same way as the cycles in the Poincar\'e Fundamental Polyhedron Theorem are defined (see e.g.\@ \cite{Maskit}). These are certain finite sequences of pairs $(\pch,h)\in\fpch\times\Gamma\mminus\Gamma_\infty$ such that each cycle is determined up to cyclic permutation by any pair which belongs to it. Moreover, if $(\pch, h_\pch)$ is an element of some cycle, then $h_\pch$ is an element in $\Gamma\mminus\Gamma_\infty$ assigned to $\pch$ by Proposition~\ref{structureglue} (or $h_\pch=\id$ if $\pch$ is a 
strip 
precell). Conversely, if $h_\pch$ is an element assigned to $\pch$ by Proposition~\ref{structureglue}, then $(\pch, h_\pch)$ determines a cycle in $\fpch\times\Gamma\mminus\Gamma_\infty$. 

One of the crucial properties of each cell in $\h$ is that it is a convex polyhedron with non-empty interior of which each side is a complete geodesic segment. This fact is mainly due to the condition \eqref{A4} of $\Gamma$. The other two important properties of cells in $\h$ are that each non-vertical side of a cell is a $\Gamma$-translate of some vertical side of some cell in $\h$ and that the family of $\Gamma$-translates of all cells in $\h$ is a tesselation of $\h$.

\subsection{Cycles in $\mathbb A\times\Gamma$}

Let $\pch\in\fpch$ be a non-cuspidal precell in $\h$. The definition of precells shows that $\pch$ is attached to a unique (inner) vertex $v$ of $\mathcal K$, and $\pch$ is the unique precell attached to $v$. Therefore we set $\pch(v) \sceq \pch$. Further, $\pch$ has two non-vertical sides $b_1$ and $b_2$. Let $\{k_1(\pch),k_2(\pch)\}$ be the two elements in $\Gamma\mminus\Gamma_\infty$ given by Proposition~\ref{structureglue} such that $b_j\in I(k_j(\pch))$ and $k_j(\pch).b_j$ is a non-vertical side of some basal precell. 
Necessarily, the isometric spheres $I(k_1(\pch))$ and $I(k_2(\pch))$ are different, therefore $k_1(\pch) \not= k_2(\pch)$. The set $\{ k_1(\pch), k_2(\pch)\}$ is uniquely determined by Proposition~\ref{structureglue}, the assignment $\pch\mapsto k_1(\pch)$ clearly depends on the enumeration of the non-vertical sides of $\pch$. Now $w\sceq k_j(\pch).v$ is an inner vertex. Let $\pch(w)$ be the (unique non-cuspidal) basal precell attached to $w$. Since one non-vertical side of $\pch(w)$ is $k_j(\pch).b_j$, which is contained in the relevant isometric sphere $I(k_j(\pch)^{-1})$, and $k_j(\pch)^{-1}k_j(\pch).b_j=b_j$ is a non-vertical side of some basal precell, namely of $\pch$, one of the elements in $\Gamma\mminus\Gamma_\infty$ assigned to $\pch(w)$ by Proposition~\ref{structureglue} is $k_j(\pch)^{-1}$.

\begin{construction}\label{elemsequences}
Let $\pch\in\fpch$ be a non-cuspidal precell and suppose that $\pch=\pch(v)$ is attached to the vertex $v$ of $\mathcal K$. We assign to $\pch$ two sequences $(h_j)$ of elements in $\Gamma\mminus\Gamma_\infty$ using the following algorithm:
\begin{itemize}
\item[(step 1)] Let $v_1 \sceq v$ and let $h_1$ be either $k_1(\pch)$ or $k_2(\pch)$. Set $g_1\sceq \id$, $g_2\sceq h_1$ and carry out (step 2).
\item[(step j)] Set $v_j \sceq g_j.v$ and $\pch_j\sceq \pch(v_j)$. Let $h_j$ be the element in $\Gamma\mminus\Gamma_\infty$ such that $\big\{ h_j, h_{j-1}^{-1}\big\} = \big\{ k_1(\pch_j), k_2(\pch_j)\big\}$. Set $g_{j+1}\sceq h_jg_{j}$.
If $g_{j+1} =\id$, then the algorithm stops. If $g_{j+1} \not=\id$, then carry out (step $j+1$).
\end{itemize}
\end{construction}

\begin{example}\label{exsequences}
Recall the Hecke triangle group $G_n$ and its basal family $\fpch=\{\pch\}$ of precells in $\h$ from Example~\ref{exprecells1}. 
The two sequences assigned to $\pch$ are $(U_n)_{j=1}^n$ and $\big(U_n^{-1}\big)_{j=1}^n$.
\end{example}

The statements of Propositions~\ref{propsequences}-\ref{stripseq} follow as in the Poincar\'e Fundamental Polyhedron Theorem, using Lemma~\ref{isoprops} and elementary convex geometry. For this reason we omit the detailed proofs.

\begin{prop}\label{propsequences}
Let $\pch=\pch(v)$ be a non-cuspidal basal precell. 
\begin{enumerate}[{\rm (i)}]
\item The sequences from Construction~\ref{elemsequences} are finite. In other words, the algorithm for the construction of the sequences always terminates.
\item Both sequences have same length, say $k\in \N$.
\item\label{psiii} Let $(a_j)_{j=1,\ldots, k}$ and $(b_j)_{j=1,\ldots, k}$ be the two sequences assigned to $\pch$. Then they are inverse to each other in the following sense: For each $j=1,\ldots, k$ we have $a_j = b_{k-j+1}^{-1}$.
\item\label{psiv} For $j=1,\ldots,k$ set $c_{j+1}\sceq a_ja_{j-1}\cdots a_2a_1$, $d_{j+1}\sceq b_jb_{j-1}\cdots b_2b_1$ and $c_1 \sceq \id \seqc d_1$. Then 
\[
\ch \sceq \bigcup_{j=1}^k c_j^{-1}.\pch\big(c_jv\big) = \bigcup_{j=1}^k d_j^{-1}.\pch\big(d_jv\big).
\]
Further, both unions are essentially disjoint, and $\ch$ is the polyhedron with the (pairwise distinct) vertices (in this order) $c_1^{-1}.\infty$, $c_2^{-1}.\infty$, $\ldots$, $c_k^{-1}.\infty$ resp.\@ $d_1^{-1}.\infty$, $d_2^{-1}.\infty$, $\ldots$, $d_k^{-1}.\infty$.
\end{enumerate}
\end{prop}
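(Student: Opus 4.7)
The central geometric picture is that the algorithm rotates around the inner vertex $v$ by one basal precell at a time. Since $c_j^{-1}v_j = v$, each translate $c_j^{-1}\pch_j$ has $v$ as an inner vertex. The rule $\{h_j,h_{j-1}^{-1}\} = \{k_1(\pch_j),k_2(\pch_j)\}$ selects the exit side of $\pch_j$ at $v_j$ opposite to the side through which one entered: the incoming element $h_{j-1}^{-1}$ lies in $\{k_1(\pch_j),k_2(\pch_j)\}$ because Proposition~\ref{structureglue}, applied to $\pch_{j-1}$ with $h_{j-1}\in\{k_1(\pch_{j-1}),k_2(\pch_{j-1})\}$, guarantees that $h_{j-1}$ sends a non-vertical side of $\pch_{j-1}$ to a non-vertical side of $\pch_j$ contained in $I(h_{j-1}^{-1})$. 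Translating back by $c_j^{-1}$, $c_{j+1}^{-1}\pch_{j+1}$ meets $c_j^{-1}\pch_j$ exactly along the non-vertical side at $v$ not shared with $c_{j-1}^{-1}\pch_{j-1}$, i.e., the algorithm moves to the next translate around $v$ in a fixed angular direction. The initial choice $h_1\in\{k_1(\pch),k_2(\pch)\}$ fixes this direction; the second sequence corresponds to the opposite choice.

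For (i), Corollary~\ref{angle} provides a uniform lower bound $k_0>0$ on the interior angle of a precell at any inner vertex and Proposition~\ref{intersectionprecells} makes the interiors of distinct translates disjoint, so the ring of translates around $v$ is finite. The algorithm must traverse this ring exactly once in its chosen direction. Once the full angle $2\pi$ is exhausted, the next candidate equals $c_1^{-1}\pch_1=\pch_1$, and the injective parameterization in Corollary~\ref{precellsHtess} forces $c_{j+1}=\id$ and $\pch_{j+1}=\pch_1$, triggering termination.

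For (ii) and (iii), both sequences enumerate the same ring of basal precells in opposite orders, hence share length $k$. Writing $P_j\sceq c_j^{-1}\pch_j$ and $Q_j\sceq d_j^{-1}\pch_j'$, matching the two enumerations gives $Q_j=P_{k-j+2}$ for $j=1,\ldots,k$ (with $P_{k+1}\sceq P_1$ and $d_1=c_{k+1}=\id$), and Corollary~\ref{precellsHtess} upgrades this to $d_j=c_{k-j+2}$ and $\pch_j'=\pch_{k-j+2}$. Then
\[
b_j \;=\; d_{j+1}d_j^{-1} \;=\; c_{k-j+1}\,c_{k-j+2}^{-1} \;=\; \bigl(c_{k-j+2}\,c_{k-j+1}^{-1}\bigr)^{-1} \;=\; a_{k-j+1}^{-1}.
\]

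For (iv), equality of the two unions is immediate from $Q_j=P_{k-j+2}$, and essential disjointness is Proposition~\ref{intersectionprecells}. To determine the shape of $\ch$, at each shared summit $c_j^{-1}s_1^{(j)}=c_{j-1}^{-1}s_2^{(j-1)}$ (where $s_1^{(j)}$ and $s_2^{(j-1)}$ are the summits of $I(h_{j-1}^{-1})$ and $I(h_{j-1})$; the identification comes from $s_1^{(j)}=h_{j-1}s_2^{(j-1)}$ via Lemma~\ref{isomsummit}) the two incident translated vertical sides lie on a single complete geodesic: the complete geodesic carrying the vertical side of $\pch_j$ at $s_1^{(j)}$ has real endpoints $\Rea s_1^{(j)}=h_{j-1}\infty$ (Lemma~\ref{isomsummit}) and $\infty$, whose images under $c_j^{-1}$ are $c_{j-1}^{-1}\infty$ and $c_j^{-1}\infty$; the parallel computation for the vertical side of $\pch_{j-1}$ at $s_2^{(j-1)}$ under $c_{j-1}^{-1}$ yields the same endpoints. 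The summits therefore lie in the interior of sides of $\ch$, so the vertices of $\ch$ are precisely the points $c_j^{-1}\infty$, joined successively by the complete geodesic segments $[c_{j-1}^{-1}\infty,c_j^{-1}\infty]$. Distinctness of the vertices follows from Corollary~\ref{props_preH}(i): $c_i^{-1}\infty=c_j^{-1}\infty$ would give $c_jc_i^{-1}\in\Gamma_\infty$ and $\pch_j=(c_jc_i^{-1})\pch_i$, which for two basal precells forces $c_jc_i^{-1}=\id$ and hence $P_i=P_j$, contradicting the already-established distinctness of the precells in the ring. The main obstacle throughout is the first paragraph's geometric claim that the algorithm really advances by one adjacent precell in a fixed direction and closes up after exactly one full revolution; once this is established, parts (ii)--(iv) reduce to the bookkeeping above.
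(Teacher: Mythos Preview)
Your proposal is correct and takes essentially the same approach as the paper: both argue via the picture of translates $c_j^{-1}\pch_j$ rotating around the inner vertex $v$, invoke Corollary~\ref{angle} for finiteness, Corollary~\ref{precellsHtess} (equivalently Proposition~\ref{intersectionprecells}) for uniqueness and essential disjointness, and use Lemma~\ref{isomsummit} to merge the two incident translated vertical sides into the complete geodesic segment $[c_{j-1}^{-1}\infty,c_j^{-1}\infty]$. The paper fills in what you flag as the ``main obstacle'' by an explicit first-kind/second-kind dichotomy for the partial union $T_k=\bigcup_{j\le k}g_j^{-1}\pch_j$: it shows inductively that $T_k$ is a polyhedron with ordered vertices $g_2^{-1}\infty<\cdots<g_k^{-1}\infty$ (first kind) until $g_k^{-1}\pch_k$ wraps around to share a side with $\pch_1$ (second kind), at which point the defining property of $h_k$ from Proposition~\ref{structureglue} forces $h_k=g_k^{-1}$ and hence $g_{k+1}=\id$; your bookkeeping $d_j=c_{k-j+2}$ for (ii)--(iv) is equivalent to the paper's direct computation $d_{j+1}=g_{k-j+1}$.
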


\begin{defi}
Let $\pch\in\fpch$ be a non-cuspidal precell and suppose that $\pch$ is attached to the vertex $v$ of $\mc K$. Let $h_\pch$ be one of the elements in $\Gamma\mminus\Gamma_\infty$ assigned to $\pch$ by Proposition~\ref{structureglue}. Let $(h_j)_{j=1,\ldots,k}$ be the sequence assigned to $\pch$ by Construction~\ref{elemsequences} with $h_1=h_\pch$. For $j=1,\ldots,k$ set $g_1\sceq \id$ and $g_{j+1}\sceq h_jg_j$. Then the (finite) sequence $\big( (\pch(g_jv), h_j) \big)_{j=1,\ldots,k}$ is called the \textit{cycle in $\fpch\times\Gamma$ determined by $(\pch,h_\pch)$}.

Let $\pch\in\fpch$ be a cuspidal precell. Suppose that $b$ is the non-vertical side of $\pch$ and let $h_\pch$ be the element in $\Gamma\mminus\Gamma_\infty$ assigned to $\pch$ by Proposition~\ref{structureglue}. Let $\pch'$ be the (cuspidal) basal precell with non-vertical side $h_\pch. b$. Then the (finite) sequence $\big( (\pch,h_\pch), (\pch',h_\pch^{-1}) \big)$ is called the \textit{cycle in $\fpch\times\Gamma$ determined by $(\pch,h_\pch)$}. 

Let $\pch\in\fpch$ be a strip precell. Set $h_\pch\sceq\id$. Then $\big( (\pch,h_\pch)\big)$ is called the \textit{cycle in $\fpch\times\Gamma$ determined by $(\pch,h_\pch)$}.
\end{defi}

\begin{example}\label{excycles}
\begin{enumerate}[{\rm (i)}]
\item Recall Example~\ref{exsequences}. The cycle in $\fpch\times G_n$ determined by $(\pch, U_n)$ is $\big( (\pch,U_n) \big)_{j=1}^n$.
\item Recall the group $\PGamma_0(5)$ and the basal family $\fpch = \{ \pch(v_0),\ldots, \pch(v_4)\}$ from Example~\ref{exprecells2}. The element in $\PGamma_0(5)\mminus \PGamma_0(5)_\infty$ assigned to $\pch(v_0)$ is $h\sceq \textmat{4}{-1}{5}{-1}$. The cycle in $\fpch\times \PGamma_0(5)$ determined by $(\pch(v_0), h)$ is 
\[
\big( (\pch(v_0), h), (\pch(v_4), h^{-1})\big).
\]
Further let $h_1\sceq h$, $h_2\sceq\textmat{3}{-2}{5}{-3}$ and $h_3\sceq\textmat{2}{-1}{5}{-2}$. The cycle in $\fpch\times\PGamma_0(5)$ determined by $(\pch(v_1),h_1)$ is 
\[
\big( (\pch(v_1),h_1), (\pch(v_3),h_2), (\pch(v_2),h_3)\big).
\]
\item Recall the group $\Gamma$ and the basal family $\fpch=\{\pch_1,\pch_2,\pch_3\}$  from Example~\ref{exprecells3}. The cycle in $\fpch\times\Gamma$ determined by $(\pch_2,S)$ is $\big( (\pch_2,S), (\pch_3,S)\big)$.
\end{enumerate}
\end{example}

\begin{prop}\label{cycleshift}
Let $\pch\in\fpch$ be a non-cuspidal precell in $\h$ and suppose that $h_\pch$ is one of the elements in $\Gamma\mminus\Gamma_\infty$ assigned to $\pch$ by Proposition~\ref{structureglue}. Let $\big( (\pch_j,h_j) \big)_{j=1,\ldots, k}$ be the cycle in $\fpch\times\Gamma$ determined by $(\pch,h_\pch)$. Let $j\in\{1,\ldots, k\}$ and define the sequence $\big( (\pch'_l, a_l) \big)_{l=1,\ldots, k}$ by 
\begin{align*}
a_l & \sceq
\begin{cases}
h_{l+j-1} & \text{for $l=1,\ldots, k-j+1$,}
\\
h_{l+j-1-k} & \text{for $l=k-j+2,\ldots,k$,}
\end{cases}
\intertext{and}
\pch'_l & \sceq
\begin{cases}
\pch_{l+j-1} & \text{for $l=1,\ldots, k-j+1$,}
\\
\pch_{l+j-1-k} & \text{for $l=k-j+2,\ldots,k$.}
\end{cases}
\end{align*}
Then $a_1=h_j$ is one of the elements in $\Gamma\mminus\Gamma_\infty$ assigned to $\pch_j$ by Proposition~\ref{structureglue} and $\big( (\pch'_l,a_l) \big)_{l=1,\ldots, k}$ is the cycle in $\fpch\times\Gamma$ determined by $(\pch_j,h_j)$.
\end{prop}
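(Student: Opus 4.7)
The plan is to treat the three types of basal precell separately, with the non-cuspidal case carrying the only real content. For a strip precell we have $k=1$ and the only admissible $j$ is $j=1$, so the shifted sequence agrees with the original and there is nothing to check. For a cuspidal precell, $k=2$; the case $j=1$ is trivial, and for $j=2$ one observes that the non-vertical side $h_\pch b$ of $\pch'$ is mapped by $h_\pch^{-1}$ back to the non-vertical side $b$ of $\pch$, so $h_\pch^{-1}$ is precisely the element assigned to $\pch'$ by Proposition~\ref{structureglue} and the cycle determined by $(\pch',h_\pch^{-1})$ is $\bigl((\pch',h_\pch^{-1}),(\pch,h_\pch)\bigr)$, which coincides with the shifted sequence.

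Assume now that $\pch=\pch(v)$ is non-cuspidal, and adopt the notation $g_1=\id$, $g_{l+1}=h_lg_l$, $v_l=g_lv$, $\pch_l=\pch(v_l)$ from the cycle definition. For the first assertion, I note that $h_j\in\{k_1(\pch_j),k_2(\pch_j)\}$: for $j=1$ this is the hypothesis on $h_\pch$, and for $j\ge 2$ this is built into Construction~\ref{elemsequences} through the defining property $\{h_j,h_{j-1}^{-1}\}=\{k_1(\pch_j),k_2(\pch_j)\}$. So $a_1=h_j$ is one of the elements assigned to $\pch_j=\pch'_1$ by Proposition~\ref{structureglue}.

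Next I would apply Construction~\ref{elemsequences} to $(\pch_j,h_j)$, producing a sequence $(a'_l)$ with associated group elements $g'_1=\id$, $g'_{l+1}=a'_lg'_l$, and basal precells $\pch(g'_lv_j)$. I prove by induction on $l$ that $a'_l=a_l$ and $\pch(g'_lv_j)=\pch'_l$. The induction hypothesis yields $g'_l=h_{l+j-2}\cdots h_j=g_{l+j-1}g_j^{-1}$ when $l+j-1\le k$, and $g'_l=g_{l+j-1-k}g_j^{-1}$ when $l+j-1>k$ (using $g_{k+1}=\id$ from Proposition~\ref{propsequences}); consequently $g'_lv_j$ equals either $v_{l+j-1}$ or $v_{l+j-1-k}$, so $\pch(g'_lv_j)=\pch'_l$. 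The defining condition $\{a'_l,(a'_{l-1})^{-1}\}=\{k_1(\pch'_l),k_2(\pch'_l)\}$ then matches the condition that singled out $h_{l+j-1}$ (respectively $h_{l+j-1-k}$) in the original construction, so $a'_l=a_l$.

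The only subtle point is to verify that the new algorithm terminates exactly at step $k$, i.e.\ that $g'_{l+1}\not=\id$ for $1\le l<k$ and $g'_{k+1}=\id$. A direct computation using $g_{k+1}=\id$ gives $g'_{l+1}=g_{l+j}g_j^{-1}$ (indices read modulo $k$). For $l=k$ the indices wrap fully and $g'_{k+1}=\id$; for $1\le l<k$ the index $l+j\pmod{k}$ differs from $j$, and Proposition~\ref{propsequences}\eqref{psiv} asserts that the points $g_1^{-1}\infty,\ldots,g_k^{-1}\infty$ are pairwise distinct, so in particular $g_{l+j}\not=g_j$ and hence $g'_{l+1}\not=\id$. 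This is the step that requires the most care; the rest is index bookkeeping.
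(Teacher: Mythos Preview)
Your proof is correct and follows essentially the same route as the paper's: compute the partial products $g'_{l+1}=g_{l+j}g_j^{-1}$ (indices modulo $k$), observe $g'_{k+1}=\id$, and use the pairwise distinctness of $g_1^{-1}\infty,\ldots,g_k^{-1}\infty$ from Proposition~\ref{propsequences}\eqref{psiv} to rule out earlier termination. Two minor remarks: the proposition is stated only for non-cuspidal precells, so your opening treatment of strip and cuspidal cases is superfluous here (the cuspidal case is the separate proposition that follows); and at the wraparound step $l=k-j+2$ your inductive matching of the defining condition tacitly uses $\{h_1,h_k^{-1}\}=\{k_1(\pch),k_2(\pch)\}$, which the paper makes explicit via Proposition~\ref{propsequences}\eqref{psiii} --- you should cite that rather than leave it implicit.
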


\begin{prop}\label{stripseq}
Let $\pch\in\fpch$ be a cuspidal precell in $\h$ and let $h_\pch$ be the element in $\Gamma\mminus\Gamma_\infty$ assigned to $\pch$ by Proposition~\ref{structureglue}. Let $\big( (\pch,h_\pch), (\pch', h_\pch^{-1}) \big)$ be the cycle in $\fpch\times\Gamma$ determined by $(\pch,h_\pch)$. Then $h_\pch^{-1}$ is the element in $\Gamma\mminus\Gamma_\infty$ assigned to $\pch'$ by Proposition~\ref{structureglue} and $\big( (\pch', h_\pch^{-1}), (\pch, h_\pch) \big)$ is the cycle in $\fpch\times\Gamma$ determined by $(\pch',h_\pch^{-1})$.
\end{prop}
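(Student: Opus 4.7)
The plan is to unpack the definitions and use the symmetry that Proposition~\ref{mappingrel} provides for the action of a generator on its own isometric sphere. Let $b$ denote the unique non-vertical side of $\pch$ and set $b'\sceq h_\pch b$. By definition of $h_\pch$ (as given by Proposition~\ref{structureglue}), we have $b\subseteq I(h_\pch)$ and $b'$ is the non-vertical side of the basal precell $\pch'$. The construction of the cycle then yields exactly $\big((\pch,h_\pch),(\pch',h_\pch^{-1})\big)$.

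First I would verify that $h_\pch^{-1}$ is the element in $\Gamma\mminus\Gamma_\infty$ that Proposition~\ref{structureglue} associates to $\pch'$. The candidate $g\sceq h_\pch^{-1}$ satisfies $b'=h_\pch b \subseteq h_\pch I(h_\pch) = I(h_\pch^{-1}) = I(g)$ (this is part of the content of Proposition~\ref{mappingrel}, which shows that $h_\pch$ maps the relevant part of $I(h_\pch)$ onto the relevant part of $I(h_\pch^{-1})$). Moreover, $gb' = h_\pch^{-1}b' = b$ is a non-vertical side of the basal precell $\pch\in\fpch$. The uniqueness clause in Proposition~\ref{structureglue} forces $h_{\pch'}=g=h_\pch^{-1}$.

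Next, I would compute the cycle in $\fpch\times\Gamma$ determined by $(\pch',h_\pch^{-1})$ directly from the definition: since $\pch'$ is cuspidal, the cycle has length two and is of the form $\big((\pch',h_\pch^{-1}),(\pch'',(h_\pch^{-1})^{-1})\big) = \big((\pch',h_\pch^{-1}),(\pch'',h_\pch)\big)$, where $\pch''$ is the unique basal (cuspidal) precell whose non-vertical side is $h_\pch^{-1}b' = b$. But $b$ is already a non-vertical side of $\pch\in\fpch$, so Proposition~\ref{boundarypreH} (or Corollary~\ref{precellsHtess}, which encodes the fact that basal precells meet at most in a common side and each side in $\fd^\circ$ belongs to a unique basal precell) forces $\pch''=\pch$. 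Combined with the previous paragraph, this gives the claim.

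I do not expect any substantive obstacle: the whole argument is a direct diagram-chase through the definition of the cycle and the uniqueness statement in Proposition~\ref{structureglue}. The only point that requires genuine input from earlier results is the identity $h_\pch I(h_\pch)=I(h_\pch^{-1})$, together with Proposition~\ref{mappingrel} ensuring that $b'$ lies in the relevant part of $I(h_\pch^{-1})$, so that applying the existence/uniqueness of $h_{\pch'}$ is legitimate.
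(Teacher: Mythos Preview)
Your argument is correct and matches the paper's approach exactly: the paper simply notes that the statement ``follows immediately from the definition of $h_\pch$,'' and you have unpacked precisely that, using the uniqueness clause in Proposition~\ref{structureglue} together with $h_\pch I(h_\pch)=I(h_\pch^{-1})$. One minor remark: the identity $gI(g)=I(g^{-1})$ is already available from \cite[Lemma~3.13]{Pohl_isofunddom} (used throughout the paper), so invoking Proposition~\ref{mappingrel} for it is slightly heavier than necessary, though of course not wrong.
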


\subsection{Cells in $\h$ and their properties}

Now we can define a cell in $\h$ for each basal precell in $\h$.

\begin{construction}\label{cellsH}
Let $\pch$ be a basal strip precell in $\h$. Then we set
\[
\ch(\pch) \sceq \pch.
\]
Let $\pch$ be a cuspidal basal precell in $\h$. Suppose that $g$ is the element in $\Gamma\mminus\Gamma_\infty$ assigned to $\pch$ by Proposition~\ref{structureglue} and let $\big( (\pch, g), (\pch',g^{-1})\big)$ be the cycle in $\fpch\times\Gamma$ determined by $(\pch, g)$.  Define
\[
\ch(\pch) \sceq \pch \cup g^{-1}.\pch'.
\]
The set $\ch(\pch)$ is  well-defined because $g$ is uniquely determined.

Let $\pch$ be a non-cuspidal basal precell in $\h$ and fix an element $h_\pch$ in $\Gamma\mminus\Gamma_\infty$ assigned to $\pch$ by Proposition~\ref{structureglue}. Let $\big( (\pch_j,h_j)\big)_{j=1,\ldots, k}$ be the cycle in $\fpch\times\Gamma$ determined by $(\pch, h_\pch)$. For $j=1,\ldots, k$ set $g_1\sceq \id$ and $g_{j+1}\sceq h_jg_j$. Set
\[
\ch(\pch) \sceq \bigcup_{j=1}^k g_j^{-1}.\pch_j.
\]
By Proposition~\ref{propsequences}, the set $\ch(\pch)$ does not depend on the choice of $h_\pch$.
The family $\fch \sceq \{ \ch(\pch) \mid \pch\in\fpch\}$ is called the \textit{family of cells in $\h$ assigned to $\fpch$}. Each element of $\fch$ is called a \textit{cell in $\h$}.

Note that the family $\fch$ of cells in $\h$ depends on the choice of $\fpch$. If we need to distinguish cells in $\h$ assigned to the basal family $\fpch$ of precells in $\h$ from those assigned to the basal family $\fpch'$ of precells in $\h$, we will call the first ones \textit{$\fpch$-cells} and the latter ones \textit{$\fpch'$-cells}.
\end{construction}

\begin{example} Recall the Example~\ref{excycles}.
For the Hecke triangle group $G_5$, Figure~\ref{cellHecke} shows the cell assigned to the family $\fpch=\{\pch\}$ from Example~\ref{exprecells1}.
\begin{figure}[h]
\begin{center}
\includegraphics*{Hecke5.11} 
\end{center}
\caption{The cell $\ch(\pch)$ for $G_5$.}\label{cellHecke}
\end{figure}
For the group $\PGamma_0(5)$, the family of cells in $\h$ assigned to $\fpch$ is indicated in Figure~\ref{cellGamma05}. 
\begin{figure}[h]
\begin{center}
\includegraphics*{Gamma05.9} 
\end{center}
\caption{The family of cells in $\h$ assigned to $\fpch$ for $\PGamma_0(5)$.}\label{cellGamma05}
\end{figure}
Figure~\ref{cellNichtcof} shows the family of cells in $\h$ assigned to the basal family $\fpch$ of precells of $\Gamma$.
\begin{figure}[h]
\begin{center}
\includegraphics*{Nichtcof.4} 
\end{center}
\caption{The family of cells in $\h$ assigned to $\fpch$ for $\Gamma$.}\label{cellNichtcof}
\end{figure}
\end{example}

In the series of the following six propositions we investigate the structure of cells and their relations to each other. This will allow to show that the family of $\Gamma$-translates of cells in $\h$ is a tesselation of $\h$, and it will be of interest for the labeling of the cross section.

\begin{prop} \label{nccells}
Let $\pch$ be a non-cuspidal basal precell in $\h$. Suppose that $h_\pch$ is an element in $\Gamma\mminus\Gamma_\infty$ assigned to $\pch$ by Proposition~\ref{structureglue} and let $\big( (\pch_j,h_j)\big)_{j=1,\ldots, k}$ be the cycle in $\fpch\times \Gamma$ determined by $(\pch, h_\pch)$. For $j=1,\ldots, k$ set $g_1\sceq\id$ and $g_{j+1}\sceq h_jg_j$. Then the following assertions hold true.
\begin{enumerate}[{\rm (i)}]
\item\label{ncc1} The set $\ch(\pch)$ is the convex polyhedron with vertices (in this order) $g_1^{-1}.\infty$, $g_2^{-1}.\infty$,$\ldots$, $g_k^{-1}.\infty$.
\item\label{ncc2} The boundary of $\ch(\pch)$ consists precisely of the union of the images of the vertical sides of $\pch_j$ under $g_j^{-1}$, $j=1,\ldots,k$. More precisely, if $s_j$ denotes the summit of $I(h_j)$ for $j=1,\ldots, k$, then 
\[
\partial \ch(\pch) = \bigcup_{j=1}^k g_j^{-1}.[s_j,\infty] \cup \bigcup_{j=2}^{k+1} g_j^{-1}.[h_{j-1}.s_{j-1},\infty].
\]
\item\label{ncc3} For each $j=1,\ldots, k$ we have $g_j. \ch(\pch) = \ch(\pch_j)$. In particular, the side $[g_j^{-1}.\infty, g_{j+1}^{-1}.\infty]$ of $\ch(\pch)$ is the image of the vertical side $[\infty, h_j^{-1}.\infty]$ of $\ch(\pch_j)$ under $g_j^{-1}$.
\item\label{ncc4} Let $\widehat \pch$ be a basal precell in $\h$ and let $h\in\Gamma$ such that $h.\widehat \pch \cap \ch(\pch)^\circ \not=\emptyset$. Then there exists a unique $j\in\{1,\ldots, k\}$ such that $h=g_j^{-1}$ and $\widehat \pch = \pch_j$. In particular, $\widehat \pch$ is non-cuspidal and $h.\ch(\widehat \pch) = \ch(\pch)$.
\end{enumerate}
\end{prop}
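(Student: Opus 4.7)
The strategy is to dispatch the four parts in order: parts (i) and (ii) extract the geometric picture already assembled in Proposition~\ref{propsequences}, part (iii) is a direct consequence of Proposition~\ref{cycleshift}, and part (iv) exploits the tesselation property from Corollary~\ref{precellsHtess}. Throughout, let $\sigma_j$ denote the summit of $I(h_j)$, so that $\pch_j = \pch(v_j)$ has vertices $\sigma_j$, $v_j$, $h_{j-1}\sigma_{j-1}$, and $\infty$ (with the cyclic conventions $h_0\sceq h_k$, $\sigma_0\sceq\sigma_k$, $g_{k+1}\sceq g_1=\id$), and its two non-vertical sides are $[\sigma_j, v_j]$ and $[v_j, h_{j-1}\sigma_{j-1}]$.

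For (i), Proposition~\ref{propsequences}(iv) already presents $\ch(\pch) = \bigcup_{j=1}^k g_j^{-1}\pch_j$ as an essentially disjoint union and lists the pairwise distinct points $g_j^{-1}\infty$ as its candidate vertices; what remains is convexity. The boundary arc joining $g_j^{-1}\infty$ to $g_{j+1}^{-1}\infty$ is either vertical (when one of these endpoints is $\infty$) or else, by the proof of Proposition~\ref{propsequences}, equals the complete geodesic segment $g_j^{-1}[\infty, h_j^{-1}\infty]$, obtained by concatenating the two pieces $g_j^{-1}[\infty,\sigma_j]$ and $g_j^{-1}[\sigma_j, h_j^{-1}\infty]$. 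Hence $\ch(\pch)$ is the intersection of $k$ closed hyperbolic half-planes and so is a convex polyhedron. For (ii) the same analysis shows that adjacent pieces $g_j^{-1}\pch_j$ and $g_{j+1}^{-1}\pch_{j+1}$ meet precisely along the common non-vertical side $g_j^{-1}[\sigma_j, v]$, which is therefore interior to $\ch(\pch)$; so all non-vertical sides of the $\pch_j$ disappear into the interior, and only the vertical sides $[\sigma_j,\infty]$ and $[h_{j-1}\sigma_{j-1},\infty]$ of each $\pch_j$ survive on $\partial\ch(\pch)$ after pushforward by $g_j^{-1}$, which gives the asserted description.

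For (iii), Proposition~\ref{cycleshift} identifies the cycle determined by $(\pch_j, h_j)$ as the cyclic shift $\big((\pch_{l+j-1}, h_{l+j-1})\big)_{l=1,\ldots,k}$ (indices modulo $k$), with cumulative products $c_l^{-1} = g_j g_{l+j-1}^{-1}$. Direct substitution into the defining union yields $\ch(\pch_j) = g_j\bigcup_{m=1}^k g_m^{-1}\pch_m = g_j\ch(\pch)$, and the identification of the side $[g_j^{-1}\infty, g_{j+1}^{-1}\infty]$ of $\ch(\pch)$ with $g_j^{-1}$ applied to the vertical side $[\infty, h_j^{-1}\infty]$ of $\ch(\pch_j)$ is a direct rereading of (ii) for $\ch(\pch_j)$ under the cyclic shift. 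For (iv), given $h\widehat\pch\cap\ch(\pch)^\circ\neq\emptyset$, I first replace the intersection point by one in $h\widehat\pch^\circ$: since $h\widehat\pch$ is a two-dimensional polyhedron its interior is dense in $h\widehat\pch$, and $\ch(\pch)^\circ$ is open, so $h\widehat\pch^\circ\cap\ch(\pch)^\circ$ is non-empty. Picking $z$ in this set and using $\ch(\pch)=\bigcup_{j'} g_{j'}^{-1}\pch_{j'}$, fix $j'$ with $z\in g_{j'}^{-1}\pch_{j'}$. Corollary~\ref{precellsHtess} states that $\Gamma\cdot\fpch$ is a tesselation whose clause \apref{T}{T2}{} forbids two $\Gamma$-translates of basal precells from sharing an interior point unless they coincide; hence $h\widehat\pch = g_{j'}^{-1}\pch_{j'}$, and the uniqueness clause of Corollary~\ref{precellsHtess} then gives $h=g_{j'}^{-1}$ and $\widehat\pch=\pch_{j'}$. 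Uniqueness of $j$ follows from the pairwise distinctness of the vertices $g_{j'}^{-1}\infty$ from (i); $\widehat\pch$ is non-cuspidal because $\pch_{j'}$ is attached to the inner vertex $v_{j'}$; and $h\ch(\widehat\pch)=g_{j'}^{-1}\ch(\pch_{j'})=\ch(\pch)$ by (iii). The most delicate points are the convexity verification in (i) (which tacitly uses that the $g_j^{-1}\infty$ appear in their natural cyclic order on $\bhg H$) and the transition in (iv) to an interior intersection point, which is what activates the uniqueness machinery of the tesselation.
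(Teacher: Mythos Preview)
Your proof is correct and mirrors the paper's argument closely for parts (i)--(iii): both deduce the polyhedron and its vertex set from Proposition~\ref{propsequences}, obtain convexity from the fact that every side is a complete geodesic segment, read off (ii) from the same proof, and derive (iii) from the cyclic-shift description in Proposition~\ref{cycleshift}.

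For part (iv) your argument is a mild streamlining of the paper's. The paper picks an arbitrary $z\in h\widehat\pch\cap\ch(\pch)^\circ$, locates it in some $g_l^{-1}\pch_l$, and then invokes Proposition~\ref{intersectionprecells} to split into three cases according to whether $h\widehat\pch\cap g_l^{-1}\pch_l$ is the whole precell, a common side, or a single point; in the side and point cases it argues that $z$ must lie on a non-vertical side or equal the inner vertex $v$, and then uses an open neighbourhood of $v$ inside $\ch(\pch)$ to manufacture a point of $h\widehat\pch^\circ\cap\ch(\pch)^\circ$ before re-applying Proposition~\ref{intersectionprecells}. You bypass this detour by observing at the outset that $h\widehat\pch=\overline{h\widehat\pch^\circ}$ (Remark~\ref{just_def}) and $\ch(\pch)^\circ$ is open, so $h\widehat\pch^\circ\cap\ch(\pch)^\circ\neq\emptyset$ follows immediately; a single appeal to the tesselation (Corollary~\ref{precellsHtess}) then finishes. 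Both routes rest on the same tesselation machinery, but passing to an interior point up front collapses the case analysis.
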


\begin{proof}
By Proposition~\ref{propsequences}, $\ch(\pch)$ is the polyhedron with vertices (in this order) $g_1^{-1}.\infty$, $g_2^{-1}.\infty,\ldots, g_k^{-1}.\infty$. Since each of its sides is a complete geodesic segment, $\ch(\pch)$ is convex. This shows \eqref{ncc1}. The statement \eqref{ncc2} follows from the proof of Proposition~\ref{propsequences}.

To prove \eqref{ncc3}, fix $j\in\{1,\ldots, k\}$ and recall from Proposition~\ref{cycleshift} the cycle $\big( (\pch'_l,a_l)\big)_{l=1,\ldots,k}$ in $\fpch\times\Gamma$ determined by $(\pch_j,h_j)$.  For $l=1,\ldots, k$ set $c_1\sceq\id$ and $c_{l+1}\sceq a_lc_l$. Then 
\[
c_l = 
\begin{cases}
g_{l+j-1}g_j^{-1} & \text{for $l=1,\ldots, k-j+1$}
\\
g_{l+j-k-1}g_j^{-1} & \text{for $l=k-j+2,\ldots, k$.}
\end{cases}
\]
Hence
\begin{align*}
\ch(\pch_j) & = \bigcup_{l=1}^k c_l^{-1}. \pch'_l = \bigcup_{l=1}^k g_j (c_lg_j)^{-1}. \pch'_l 
\\
& = g_j. \bigcup_{l=1}^{k-j+1} g_{l+j-1}^{-1}. \pch_{l+j-1} \cup g_j. \bigcup_{l=k-j+2}^k g_{l+j-k-1}^{-1}. \pch_{l+j-k-1}
\\
& = g_j. \bigcup_{l=1}^k g_l^{-1}.\pch_l = g_j. \ch(\pch).
\end{align*}
This immediately implies that the side $[g_j^{-1}.\infty, g_{j+1}^{-1}.\infty]$ of $\ch(\pch)$ maps to the side $g_j.[g_j^{-1}.\infty, g_{j+1}^{-1}.\infty] = [\infty, h_j^{-1}.\infty]$ of $\ch(\pch_j)$, which is vertical.

To prove \eqref{ncc4}, fix $z\in h.\widehat \pch\cap \ch(\pch)^\circ$. Then there exists $l\in\{1,\ldots, k\}$ such that $z\in h.\widehat \pch \cap g_l^{-1}.\pch_l$. Let $b\sceq h.\widehat \pch \cap g_l^{-1}.\pch_l$. By Proposition~\ref{intersectionprecells} there are three possibilities for $b$. 

If $b=h.\widehat \pch = g_l^{-1}.\pch_l$, then $g_lh.\widehat \pch = \pch_l$. Since $\widehat \pch$ and $\pch_l$ are both basal, it follows that $h=g_l^{-1}$ and $\widehat \pch = \pch_l$.

Suppose that $v$ is the vertex of $\mc K$ to which $\pch$ is attached. If $b$ is a common side of $h.\widehat \pch$ and $g_l^{-1}.\pch_l$, then, since $z\in \ch(\pch)^\circ$, $g_l.b$ must be a non-vertical side of $\pch_l$ (see \eqref{ncc2}). This implies that $v\in b$. In turn, there is a neighborhood $U$ of $v$ such that $U\subseteq \ch(\pch)$ and $U\cap h.(\widehat \pch)^\circ \not=\emptyset$. Hence, $h.(\widehat \pch)^\circ \cap \ch(\pch)^\circ \not=\emptyset$. Thus there exists $j\in \{1,\ldots, k\}$ such that $h.(\widehat \pch)^\circ \cap g_j^{-1}.\pch_j \not=\emptyset$. Proposition~\ref{intersectionprecells} implies that $\widehat \pch = \pch_j$ and $h=g_j^{-1}$.

If $b$ is a point, then $b=z$ must be the endpoint of some side of $g_l^{-1}.\pch_l$. From $z\in \ch(\pch)^\circ$ it follows that $z=v$. Now the previous argument applies. 

To show the uniqueness of $j\in\{1,\ldots,k\}$ with $\wh\pch=\pch_j$ and $h=g_j^{-1}$, suppose that there is $p\in\{1,\ldots,k\}$ with $\wh\pch=\pch_p$ and $h=g_p^{-1}$. Then $g_j=g_p$. By Proposition~\ref{propsequences}\eqref{psiv} $j=p$. The remaining parts of \eqref{ncc4} follow from \eqref{ncc3}.
\end{proof}

\begin{prop} \label{ccells}
Let $\pch$ be a cuspidal basal precell in $\h$ which is attached to the vertex $v$ of $\mc K$. Suppose that  $g$ is the element in $\Gamma\mminus\Gamma_\infty$ assigned to $\pch$ by Proposition~\ref{structureglue}. Let $\big( (\pch,g), (\pch',g^{-1})\big)$ be the cycle in $\fpch\times\Gamma$ determined by $(\pch,g)$. Then we have the following properties.
\begin{enumerate}[{\rm (i)}]
\item\label{cc1} The set $\ch(\pch)$ is the hyperbolic triangle with vertices $v$, $g^{-1}.\infty$, $\infty$.
\item\label{cc2} The boundary of $\ch(\pch)$ is the union of the vertical sides of $\pch$ with the images of the vertical sides of $\pch'$ under $g^{-1}$.
\item\label{cc3} The sets $g.\ch(\pch)$ and $\ch(\pch')$ coincide. In particular, the non-vertical side $[v,g^{-1}.\infty]$ of $\ch(\pch)$ is the image of the vertical side $[g.v,\infty]$ of $\pch'$ under $g^{-1}$.
\item\label{cc4} Suppose that $\widehat \pch$ is a basal precell in $\h$ and $h\in\Gamma$ such that $h.\widehat \pch\cap \ch(\pch)^\circ\not=\emptyset$. Then either $h=\id$ and $\widehat \pch=\pch$ or $h=g^{-1}$ and $\widehat \pch = \pch'$. In particular, $\widehat \pch$ is cuspidal and $h.\ch(\widehat \pch) = \ch(\pch)$.
\end{enumerate}
\end{prop}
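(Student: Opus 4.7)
The plan is to unwind the definitions of the cuspidal precells $\pch$ and $\pch'$, then invoke the tesselation property of Corollary~\ref{precellsHtess} for the uniqueness claim in (iv). Let $s$ denote the summit of the relevant isometric sphere $I(g)$, so that $[v,s]$ is the non-vertical side of $\pch$. By Remark~\ref{just_def}, $\pch$ is the hyperbolic triangle with vertices $v$, $s$, $\infty$. Proposition~\ref{mappingrel}, Remark~\ref{mapvertices}, and Lemma~\ref{isomsummit} together will identify $\pch'$ as the hyperbolic triangle with vertices $gv$, $gs$, $\infty$, where $gs$ is the summit of $I(g^{-1})$ and $gv$ is an infinite vertex of $\mc K$. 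Consequently $g^{-1}\pch'$ is the triangle with vertices $v$, $s$, $g^{-1}\infty$ whose non-vertical side $g^{-1}[gv,gs]=[v,s]$ coincides with that of $\pch$.

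For (i), the crucial geometric observation is that Lemma~\ref{isomsummit} gives $\pr_\infty(s)=g^{-1}\infty$, so $s$ lies on the vertical geodesic $[g^{-1}\infty,\infty]$. Thus the two segments $[s,\infty]\subseteq\pch$ and $[s,g^{-1}\infty]\subseteq g^{-1}\pch'$ concatenate to the complete vertical geodesic $[g^{-1}\infty,\infty]$, and gluing $\pch$ and $g^{-1}\pch'$ along $[v,s]$ produces the hyperbolic triangle with vertices $v$, $g^{-1}\infty$, $\infty$. Part (ii) will then follow by an inventory of the three sides of this triangle: $[v,\infty]$ is a vertical side of $\pch$, the side $[g^{-1}\infty,\infty]$ decomposes as $[s,\infty]\cup g^{-1}[gs,\infty]$ (a vertical side of $\pch$ together with the $g^{-1}$-image of a vertical side of $\pch'$), and $[v,g^{-1}\infty]=g^{-1}[gv,\infty]$. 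For (iii), I would apply Proposition~\ref{cycleshift} (or directly inspect) to note that the cycle determined by $(\pch',g^{-1})$ is the cyclic shift $\big((\pch',g^{-1}),(\pch,g)\big)$, hence $\ch(\pch')=\pch'\cup g\pch$ and $g\ch(\pch)=g\pch\cup\pch'=\ch(\pch')$; the identification of the non-vertical side is already visible from (ii).

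For (iv), given $z\in h\widehat\pch\cap\ch(\pch)^\circ$, the decomposition $\ch(\pch)=\pch\cup g^{-1}\pch'$ places $z$ in at least one of the two basal pieces, and I would apply Proposition~\ref{intersectionprecells} to the corresponding pair, exactly as in the proof of Proposition~\ref{nccells}\eqref{ncc4}. In the easy case $z\in\pch^\circ$ (respectively $z\in(g^{-1}\pch')^\circ$), Corollary~\ref{precellsHtess} immediately forces $(h,\widehat\pch)=(\id,\pch)$ (respectively $(h,\widehat\pch)=(g^{-1},\pch')$). The expected main obstacle is when $z$ lies only on the shared non-vertical boundary $(v,s)$, which does sit in $\ch(\pch)^\circ$; here the plan is to use that every neighborhood of such $z$ meets both $\pch^\circ$ and $(g^{-1}\pch')^\circ$, so Proposition~\ref{intersectionprecells} applied to $(h\widehat\pch,\pch)$ forces $h\widehat\pch$ to equal $\pch$ or coincide with it along a side through $z$, and analogously with $g^{-1}\pch'$; in either case we reduce to one of the two easy cases. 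The concluding assertions, namely that $\widehat\pch$ is cuspidal and $h\ch(\widehat\pch)=\ch(\pch)$, follow from (iii) and the fact that $\pch,\pch'$ are both cuspidal.
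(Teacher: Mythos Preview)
Your proposal is correct and follows essentially the same approach as the paper: identify the summit $s$, use $\pr_\infty(s)=g^{-1}\infty$ to glue the two triangles along $[v,s]$ into the triangle with vertices $v,g^{-1}\infty,\infty$, and for (iv) argue exactly as in Proposition~\ref{nccells}\eqref{ncc4} via Corollary~\ref{precellsHtess}. One minor remark: Proposition~\ref{cycleshift} as stated applies only to non-cuspidal precells, so for (iii) you should rely on your ``direct inspection'' (equivalently, the cuspidal analogue stated immediately after Proposition~\ref{cycleshift}), which is precisely what the paper does.
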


\begin{prop} \label{stripcells}
Let $\pch$ be a basal strip precell in $\h$. Let $\widehat \pch$ be a basal precell in $\h$ and $h\in\Gamma$ such that $h.\widehat \pch \cap \ch(\pch)^\circ \not=\emptyset$. Then $h=\id$ and $\widehat \pch = \pch$.  
\end{prop}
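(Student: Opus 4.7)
The proof will be a direct application of Proposition~\ref{intersectionprecells} combined with the uniqueness statement in Corollary~\ref{props_preH}(i). Since $\pch$ is a strip precell, recall from Construction~\ref{cellsH} that $\ch(\pch) = \pch$, so the hypothesis reads $h\wh\pch \cap \pch^\circ \neq \emptyset$. The plan is to rule out all intersection patterns of two precells except equality.

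First I would apply Proposition~\ref{intersectionprecells} to the pair $(\pch, h\wh\pch)$ (with $g_1 = \id$, $g_2 = h$). The three possibilities listed there are: $h\wh\pch = \pch$ with $h \in \Gamma_\infty$; the intersection is a common side; or the intersection is a single point which is an endpoint of sides on both precells. The latter two cases force the intersection to lie in $\partial\pch$, since sides and their endpoints are contained in the boundary of the respective precells. This contradicts the assumption $h\wh\pch \cap \pch^\circ \neq \emptyset$. Hence we must be in the first case: $h\wh\pch = \pch$ and $h \in \Gamma_\infty$.

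Second, with $h \in \Gamma_\infty$, write $h = t_\lambda^m$ for some $m \in \Z$. Then $t_\lambda^m \wh\pch = \pch$. Corollary~\ref{props_preH}(i) asserts that for each precell in $H$ there is a \emph{unique} pair $(\pch', n) \in \fpch \times \Z$ with $t_\lambda^n \pch' = \pch$. Since both $(\wh\pch, m)$ and the trivial pair $(\pch, 0)$ satisfy this equation, uniqueness forces $\wh\pch = \pch$ and $m = 0$, i.e. $h = \id$.

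There is essentially no obstacle: the heavy lifting was done in establishing Proposition~\ref{intersectionprecells} and Corollary~\ref{props_preH}, and the strip case is trivially cleaner than the non-cuspidal and cuspidal cases (cf.\@ Propositions~\ref{nccells}\eqref{ncc4} and \ref{ccells}\eqref{cc4}) because the cell coincides with the basal precell itself, so no translates $g_j^{-1}\pch_j$ need to be considered. The only thing worth double-checking is that the open set $\pch^\circ$ genuinely excludes the sides of $\pch$, which is guaranteed by Remark~\ref{just_def} ($\pch$ is a convex polyhedron with $\partial(\pch^\circ) = \partial\pch$).
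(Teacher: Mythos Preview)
Your proof is correct and essentially the same as the paper's, which simply cites $\ch(\pch)=\pch$ together with Corollary~\ref{precellsHtess}. That corollary packages precisely the two ingredients you invoke separately: Proposition~\ref{intersectionprecells} for the tesselation property (ruling out side/vertex intersections when the interior is met) and the uniqueness of the pair $(g,\pch')$ with $g\pch'=\pch$; you have merely unpacked the corollary into its constituent lemmas, using Corollary~\ref{props_preH}(i) for the uniqueness step instead of the fundamental-region argument in the proof of Corollary~\ref{precellsHtess}.
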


\begin{cor}\label{ABbij}
The map $\fpch \to \fch$, $\pch\mapsto \ch(\pch)$ is a bijection.
\end{cor}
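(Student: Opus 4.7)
The plan is to prove surjectivity trivially from the definition of $\fch$ and to reduce injectivity to the uniqueness clauses of Propositions~\ref{nccells}\eqref{ncc4}, \ref{ccells}\eqref{cc4} and \ref{stripcells}. Surjectivity is immediate from $\fch=\{\ch(\pch)\mid\pch\in\fpch\}$, so the real content is injectivity: given $\pch_1,\pch_2\in\fpch$ with $\ch(\pch_1)=\ch(\pch_2)$, I will show $\pch_2=\pch_1$.

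First I would verify that $\pch_2\cap\ch(\pch_1)^\circ\not=\emptyset$, as this is the hypothesis under which the three propositions above force uniqueness. The containment $\pch_2\subseteq\ch(\pch_2)=\ch(\pch_1)$ is immediate from Construction~\ref{cellsH}, since in every case $\pch$ itself appears as a summand of the union defining $\ch(\pch)$ (namely the $j=1$ term $g_1^{-1}\pch_1=\pch$ in the non-cuspidal and cuspidal cases, and trivially $\ch(\pch)=\pch$ in the strip case). By Remark~\ref{just_def}, $\pch_2$ has non-empty interior in $H$, while by Propositions~\ref{nccells}\eqref{ncc2} and \ref{ccells}\eqref{cc2} (and trivially in the strip case) the set $\partial\ch(\pch_1)$ is a finite union of complete geodesic segments and hence has empty interior in $H$. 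Therefore $\pch_2$ cannot be swallowed by $\partial\ch(\pch_1)$, giving $\pch_2\cap\ch(\pch_1)^\circ\not=\emptyset$.

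Next I would apply the appropriate proposition with $\wh\pch=\pch_2$ and $h=\id$, according to the type of $\pch_1$. If $\pch_1$ is a strip precell, Proposition~\ref{stripcells} yields $\pch_2=\pch_1$ directly. If $\pch_1$ is cuspidal with assigned element $g\in\Gamma\mminus\Gamma_\infty$ and partner precell $\pch'$, then Proposition~\ref{ccells}\eqref{cc4} gives either $(h,\wh\pch)=(\id,\pch_1)$ or $(h,\wh\pch)=(g^{-1},\pch')$; the second is incompatible with $h=\id$ since $g\notin\Gamma_\infty$, so $\pch_2=\pch_1$. If $\pch_1$ is non-cuspidal, Proposition~\ref{nccells}\eqref{ncc4} supplies a unique $j\in\{1,\ldots,k\}$ with $\id=h=g_j^{-1}$ and $\pch_2=\pch_j$; by Proposition~\ref{propsequences}\eqref{psiv} the vertices $g_1^{-1}\infty,\ldots,g_k^{-1}\infty$ of $\ch(\pch_1)$ are pairwise distinct, so $g_j=\id$ forces $j=1$, again giving $\pch_2=\pch_1$.

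I do not expect any serious obstacle: the corollary is essentially a bookkeeping consequence of the three preceding propositions, once one exploits the containment $\pch\subseteq\ch(\pch)$ and the dimensional remark that the positive-dimensional interior of $\pch_2$ cannot lie in the one-dimensional set $\partial\ch(\pch_1)$.
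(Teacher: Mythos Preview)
Your proof is correct and follows essentially the same route as the paper: surjectivity by definition, then for injectivity use $\pch_2\subseteq\ch(\pch_2)=\ch(\pch_1)$ to obtain $\pch_2\cap\ch(\pch_1)^\circ\neq\emptyset$ and invoke Propositions~\ref{nccells}\eqref{ncc4}, \ref{ccells}\eqref{cc4} and \ref{stripcells} with $h=\id$. You supply more justification than the paper does---in particular the dimension argument for the interior intersection and the explicit reason why $g_j=\id$ forces $j=1$ in the non-cuspidal case---and you avoid what appears to be a typo in the paper's displayed intersection (the paper writes ``$=\emptyset$'' where ``$\neq\emptyset$'' is clearly intended).
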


The following proposition is implied by following through the glueing procedure from precells to cells. The details are easily seen by straightforward deductions.

\begin{prop} \label{glues}
\begin{enumerate}[{\rm (1)}]
\item \label{glue_nccells}
Let $\pch$ be a non-cuspidal basal precell in $\h$. Suppose that $(h_j)_{j=1,\ldots, k}$ is a sequence in $\Gamma\mminus\Gamma_\infty$ assigned to $\pch$ by Construction~\ref{elemsequences}. For $j=1,\ldots, k$ set $g_1\sceq\id$ and $g_{j+1} \sceq h_jg_j$. Let $\pch'$ be a basal precell in $\h$ and $g\in\Gamma$ such that $\ch(\pch)\cap g.\ch(\pch') \not=\emptyset$. Then we have the following properties. 
\begin{enumerate}[{\rm (i)}]
\item Either $\ch(\pch) = g.\ch(\pch')$, or $\ch(\pch)\cap g.\ch(\pch')$ is a common side of $\ch(\pch)$ and $g.\ch(\pch')$.
\item If $\ch(\pch) = g.\ch(\pch')$, then $g=g_j^{-1}$ for a unique $j\in \{1,\ldots, k\}$. In particular, $\pch'$ is non-cuspidal.
\item If $\ch(\pch) \not= g.\ch(\pch')$, then $\pch'$ is cuspidal or non-cuspidal. If $\pch'$ is cuspidal and $k\in\Gamma\mminus\Gamma_\infty$ is the element assigned to $\pch'$ by Proposition~\ref{structureglue}, then we have  $\ch(\pch) \cap g.\ch(\pch') = g.[k^{-1}.\infty,\infty]$.
\end{enumerate}
\item  \label{glue_ccells}
Let $\pch$ be a cuspidal basal precell in $\h$ which is attached to the vertex $v$ of $\mc K$. Suppose that $h\in\Gamma\mminus\Gamma_\infty$ is the element assigned to $\pch$ by Proposition~\ref{structureglue}. Let $\pch'$ be a basal precell in $\h$ and $g\in\Gamma$ such that we have $\ch(\pch)\cap g.\ch(\pch') \not=\emptyset$. Then the following assertions hold true.
\begin{enumerate}[{\rm (i)}]
\item Either $\ch(\pch) = g.\ch(\pch')$, or $\ch(\pch) \cap g.\ch(\pch')$ is a common side of $\ch(\pch)$ and $g.\ch(\pch')$.
\item If $\ch(\pch) = g.\ch(\pch')$, then either $g=\id$ or $g=h^{-1}$. In particular, $\pch'$ is cuspidal.
\item If $\ch(\pch)\not= g.\ch(\pch')$, then $\pch'$ is cuspidal or non-cuspidal or a strip precell. If $\pch'$ is a strip precell, then $[h^{-1}.\infty,\infty] \not= \ch(\pch) \cap g.\ch(\pch')$. If $\pch'$ is a cuspidal precell attached to the vertex $w$ of $\mc K$ and $k\in\Gamma\mminus\Gamma_\infty$ is the element assigned to $\pch'$ by Proposition~\ref{structureglue}, then $\ch(\pch)\cap g.\ch(\pch')$ is either $[v,\infty] = g.[w,\infty]$ or $[v,\infty] = g.[w,k^{-1}.\infty]$ or $[v,h^{-1}.\infty] = g.[w,\infty]$ or $[v,h^{-1}.\infty] = g.[w,k^{-1}.\infty]$ or $[h^{-1}.\infty,\infty] = g.[k^{-1}.\infty,\infty]$. If $\pch'$ is a non-cuspidal precell, then we have $\ch(\pch)\cap g.\ch(\pch') = [h^{-1}.\infty,\infty]$.
\end{enumerate}
\item \label{glue_stripcells}
Let $\pch$ be a basal strip precell in $\h$. Let $\pch'$ be a basal precell in $\h$ and $g\in\Gamma$ such that $\ch(\pch) \cap g.\ch(\pch') \not=\emptyset$. Then the following statements hold.
\begin{enumerate}[{\rm (i)}]
\item Either $\ch(\pch) = g.\ch(\pch')$, or $\ch(\pch)\cap g.\ch(\pch')$ is a common side of $\ch(\pch)$ and $g.\ch(\pch')$.
\item If $\ch(\pch) = g.\ch(\pch')$, then $g=\id$ and $\pch=\pch'$.
\item If $\ch(\pch) \not= g.\ch(\pch')$, then $\pch'$ is a cuspidal or strip precell. If $\pch'$ is cuspidal and $k\in\Gamma\mminus\Gamma_\infty$ is the element assigned to $\pch'$ by Proposition~\ref{structureglue}, then we have $\ch(\pch) \cap g.\ch(\pch') \not= g.[k^{-1}.\infty,\infty]$.
\end{enumerate}
\end{enumerate}
\end{prop}

\begin{cor}\label{cellsHtess}
The family of $\Gamma$-translates of all cells provides a tesselation of $\h$. In particular, if $\ch$ is a cell in $\h$ and $S$ a side of $\ch$, then there exists a pair $(\ch',g) \in \fch \times \Gamma$ such that $S= \ch \cap g.\ch'$. Moreover, $(\ch',g)$ can be chosen such that $g^{-1}.S$ is a vertical side of $\ch'$. 
\end{cor}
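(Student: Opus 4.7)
The plan is to verify the two conditions (\apref{T}{T1}{}) and (\apref{T}{T2}{}) defining a tesselation for the family $\Gamma\cdot\fch \sceq \{g\ch \mid g\in\Gamma,\ \ch\in\fch\}$, and then to derive the side-identification assertion from them.

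For (\apref{T}{T1}{}), every point of $H$ lies in a $\Gamma$-translate of some basal precell, by Corollary~\ref{precellsHtess}. Construction~\ref{cellsH} exhibits each basal precell $\pch\in\fpch$ as an explicit summand of the cell $\ch(\pch)$, corresponding to the initial index $j=1$ (for which $g_1=\id$). Hence each $\Gamma$-translate of a basal precell sits inside the corresponding $\Gamma$-translate of a cell, and $H=\bigcup\Gamma\cdot\fch$.

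For (\apref{T}{T2}{}), suppose $g_1\ch_1\cap g_2\ch_2\neq\emptyset$ with $\ch_1,\ch_2\in\fch$ and $g_1,g_2\in\Gamma$. Setting $h\sceq g_1^{-1}g_2$ and translating by $g_1^{-1}$, the problem reduces to analyzing $\ch_1\cap h\ch_2$. According to whether $\ch_1$ arises from a non-cuspidal, cuspidal, or strip basal precell, I invoke Proposition~\ref{glue_nccells}, \ref{glue_ccells}, or \ref{glue_stripcells}, each of which asserts that $\ch_1\cap h\ch_2$ is either all of $\ch_1$ (so $g_1\ch_1=g_2\ch_2$) or a common side of $\ch_1$ and $h\ch_2$. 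Translating back by $g_1$, either $g_1\ch_1=g_2\ch_2$ or the intersection is a common side. A vertex-only intersection is excluded because all vertices of any cell lie in $\bhg H$ by Propositions~\ref{nccells}\eqref{ncc1} and \ref{ccells}\eqref{cc1} and the construction of strip cells, so they never contribute to an intersection taken inside $H$.

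For the final assertion, let $\ch\in\fch$ and $S$ be a side of $\ch$. Pick a relative interior point $z\in S$; by (\apref{T}{T1}{}), $z$ lies in another cell translate $X=g\ch'$ with $\ch'\in\fch$ and $X\neq\ch$. Property (\apref{T}{T2}{}) then forces $\ch\cap X$ to be a common side of $\ch$ and $X$, and since $z$ lies in its relative interior, this common side must equal $S$. Hence $S=\ch\cap g\ch'$. If $g^{-1}S$ is already a vertical side of $\ch'$, we are done. Otherwise $\ch'$ is non-cuspidal or cuspidal and $g^{-1}S$ is one of its non-vertical sides; Proposition~\ref{nccells}\eqref{ncc3} (respectively, \ref{ccells}\eqref{cc3}) then writes $g^{-1}S=f^{-1}V$ for some $f\in\Gamma$ and some $\ch''\in\fch$ with $f\ch'=\ch''$, where $V$ is a vertical side of $\ch''$. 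Replacing $(\ch',g)$ with $(\ch'',\,gf^{-1})$ preserves the translate, since $gf^{-1}\ch''=g\ch'=X$, so $S=\ch\cap gf^{-1}\ch''$ still holds, and now $\bigl(gf^{-1}\bigr)^{-1}S=fg^{-1}S=V$ is vertical in $\ch''$.

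The principal obstacle will lie in the verticality modification: one must be certain that the rewriting $g^{-1}S=f^{-1}V$ supplied by Proposition~\ref{nccells}\eqref{ncc3} or \ref{ccells}\eqref{cc3} keeps the adjacent cell intact, that is, $gf^{-1}\ch''=g\ch'$. This is precisely the content of the identities $g_j\ch(\pch)=\ch(\pch_j)$ and $g\ch(\pch)=\ch(\pch')$ furnished by those propositions.
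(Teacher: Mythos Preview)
Your verification of (\apref{T}{T1}{}) and (\apref{T}{T2}{}) matches the paper's proof exactly: covering via $\pch\subseteq\ch(\pch)$ together with Corollary~\ref{precellsHtess}, and the intersection dichotomy via Propositions~\ref{glue_nccells}, \ref{glue_ccells}, \ref{glue_stripcells}. The paper's own proof stops there and does not argue the ``In particular'' and ``Moreover'' clauses separately; your treatment of those, especially the verticality replacement via Propositions~\ref{nccells}\eqref{ncc3} and \ref{ccells}\eqref{cc3}, is correct and more complete than what the paper writes out.

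One step needs more care. When you write ``by (\apref{T}{T1}{}), $z$ lies in another cell translate $X=g\ch'$ with $X\neq\ch$'', property (\apref{T}{T1}{}) alone does not supply this: $z\in S\subseteq\ch$ already, so the translate guaranteed by (\apref{T}{T1}{}) could simply be $\ch$ itself. You must produce a translate distinct from $\ch$ that contains $z$. The quickest fix is local finiteness: by Theorem~\ref{specialfd} the family $\{g\overline{\fd(r)}\mid g\in\Gamma\}$ is locally finite, hence so is the precell tesselation of Corollary~\ref{precellsHtess}, and since each cell is a finite union of $\Gamma$-translates of basal precells, so is $\Gamma\cdot\fch$. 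Then points near $z$ on the far side of $S$ lie in finitely many translates, all distinct from $\ch$, and one of them contains $z$ by closedness. Alternatively, bypass the existence argument: first apply \eqref{ncc3} or \eqref{cc3} to write $S$ as the $h^{-1}$-image of a vertical side of some $\ch''\in\fch$ with $h\ch=\ch''$, and then use Lemma~\ref{intervert} on the underlying precell side to exhibit the neighbouring translate explicitly.
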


\section{The base manifold of the cross sections}\label{sec_base}

Let $\Gamma$ be a geometrically finite subgroup of $\PSL_2(\R)$ of which $\infty$ is a cuspidal point and which satisfies \eqref{A4}, and suppose that there are relevant isometric spheres. In this section we define a set $\wh\CS$ which will turn out, in Section~\ref{sec_geomcross} below, to be a cross section for the geodesic flow on $Y=\Gamma\backslash \h$ \wrt to certain measures $\mu$, which will be characterized in Section~\ref{sec_geomcross} below. Here we will already see that $\wh\CS$ satisfies (\apref{C}{C2}{}) by showing that $\pr(\wh\CS)$ is a totally geodesic suborbifold of $Y$ of codimension one and that $\wh\CS$ is the set of unit tangent vectors based on $\pr(\wh\CS)$ but not tangent to it. To achieve this, we start at the other end. We fix a basal family $\fpch$ of precells in $\h$ and consider the family $\fch$ of cells in $\h$ assigned to $\fpch$. We define $\BS(\fch)$ to be the set of $\Gamma$-translates of the sides of these cells. Then we show that the set $\BS\sceq \BS(\fch)$ is in fact 
independent of the choice of $\fpch$. We proceed to prove that $\BS$ is a totally geodesic submanifold of $\h$ of codimension one and define $\CS$ to be the set of unit tangent vectors based on $\BS$ but not tangent to it. Then $\wh\CS\sceq \pi(\CS)$ is our (future) geometric cross section and $\pr(\wh\CS) = \wh\BS\sceq \pi(\BS)$. This construction shows in particular that the set $\wh\CS$ does not depend on the choice of $\fpch$. For future purposes we already define the sets $\NC(\fch)$ and $\bd(\fch)$ and show that also these are independent of the choice of $\fpch$. Throughout let
\[
 t_\lambda = \bmat{1}{\lambda}{0}{1}
\]
with $\lambda>0$ be such that $\Gamma_\infty = \langle t_\lambda\rangle$.

\begin{defi}
Let $\fpch$ be a basal family of precells in $\h$ and let $\fch$ be the family of cells in $\h$ assigned to $\fpch$. For $\ch\in\fch$ let $\Sides(\ch)$ be the set of sides of $\ch$. Then set 
\[
 \Sides(\fch) \sceq\bigcup_{\ch\in\fch}\Sides(\ch)
\]
and define
\[
\BS(\fch) \sceq \bigcup \Gamma.\Sides(\fch) = \bigcup \{ g.S \mid g\in\Gamma,\ S\in\Sides(\fch)\}.
\]
For $\ch\in \fch$ define $\bd(\ch) \sceq \bhg\ch$ and let $\NC(\ch)$ be the set of geodesics on $Y$ which have a representative on $H$ both endpoints of which are contained in $\bd(\ch)$. Further set 
\begin{align*}
\bd(\fch) & \sceq \bigcup_{g\in\Gamma}\bigcup_{\ch\in\fch} g.\bd(\ch)
\intertext{and}
\NC(\fch) & \sceq \bigcup_{\ch\in\fch} \NC(\ch).
\end{align*}
\end{defi}

\begin{prop}\label{choiceinvariant}
Let $\fpch$ and $\fpch'$ be two basal families of precells in $\h$ and suppose that $\fch$ resp.\@ $\fch'$ are the families of corresponding cells in $\h$ assigned to $\fpch$ resp.\@ $\fpch'$. There exists a unique map $\fpch \to \Z$, $\pch\mapsto m(\pch)$ such that 
\[
\psi\colon \fpch \to \{\text{precells in $H$}\},\quad \pch  \mapsto  t_\lambda^{m(\pch)}.\pch 
\]
is a bijection from $\fpch$ to $\fpch'$. Then 
\[
\chi\colon \fch  \to  \fch',\quad \ch(\pch)  \mapsto  t_\lambda^{m(\pch)}.\ch(\pch)
\]
is a bijection as well. Further we have that $\BS(\fch) = \BS(\fch')$, $\NC(\fch) = \NC(\fch')$ and $\bd(\fch) = \bd(\fch')$.
\end{prop}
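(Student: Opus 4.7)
The plan is to construct the map $m\colon\fpch\to\Z$ via Corollary~\ref{props_preH}(i) applied with respect to the basal family $\fpch'$: each $\pch\in\fpch$, regarded as an arbitrary precell in $H$, admits by that corollary a unique pair $(\wh\pch',n)\in\fpch'\times\Z$ with $t_\lambda^{-n}\wh\pch'=\pch$, and I define $m(\pch) \sceq n$. The induced map $\varphi\colon\pch\mapsto t_\lambda^{m(\pch)}\pch$ from $\fpch$ to $\fpch'$ is then well-defined. Injectivity of $\varphi$ follows from Corollary~\ref{props_preH}(i) applied with respect to $\fpch$ to any common image in $\fpch'$, and surjectivity is automatic because $\#\fpch=\#\fpch'<\infty$ by Theorem~\ref{precellsH}.

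The key step will be to prove the compatibility identity $\ch'\big(t_\lambda^{m(\pch)}\pch\big) = t_\lambda^{m(\pch)}\ch(\pch)$ for each $\pch\in\fpch$, where I write $\ch$ and $\ch'$ for the cell maps associated to $\fpch$ and $\fpch'$ respectively. For a strip precell this is immediate since the cell coincides with the precell. For the cuspidal and non-cuspidal cases I transport the cycle in $\fpch\times\Gamma$ to a cycle in $\fpch'\times\Gamma$: given the $\fpch$-cycle $\big((\pch_j,h_j)\big)_{j=1,\ldots,k}$ with $g_1\sceq\id$ and $g_{j+1}\sceq h_jg_j$, I set $\pch'_j \sceq t_\lambda^{m(\pch_j)}\pch_j\in\fpch'$ and
\[
h'_j \sceq t_\lambda^{m(\pch_{j+1})}\, h_j\, t_\lambda^{-m(\pch_j)},
\]
with cyclic indices (using $g_{k+1}=\id$ to identify $\pch_{k+1}=\pch_1$). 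Using Lemmas~\ref{identicalisom} and~\ref{shiftisom}, I will verify that $I(h'_j)=t_\lambda^{m(\pch_j)}I(h_j)$ contains the non-vertical side $t_\lambda^{m(\pch_j)}b_j$ of $\pch'_j$ and that $h'_j$ maps it to a non-vertical side of $\pch'_{j+1}$, so $\big((\pch'_j,h'_j)\big)_j$ is the $\fpch'$-cycle determined by $(\pch'_1,h'_1)$. A telescoping computation then gives $g'_{j+1} = t_\lambda^{m(\pch_{j+1})}\,g_{j+1}\,t_\lambda^{-m(\pch_1)}$ (in particular $g'_{k+1}=\id$, confirming closure), from which
\[
\ch'(\pch'_1) \;=\; \bigcup_{j=1}^k (g'_j)^{-1}\pch'_j \;=\; t_\lambda^{m(\pch_1)}\bigcup_{j=1}^k g_j^{-1}\pch_j \;=\; t_\lambda^{m(\pch_1)}\ch(\pch_1).
\]
The cuspidal case is the two-step specialization ($k=2$) of the same computation. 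Combined with $\varphi$, this yields the asserted bijection $\fch\to\fch'$.

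With the cell bijection in hand, the three set equalities follow easily from the fact that $t_\lambda^{m(\pch)}\in\Gamma_\infty\subseteq\Gamma$. Since every element of $\fch'$ is $t_\lambda^{m(\pch)}\ch(\pch)$ for some $\pch\in\fpch$, one has $\Sides(\fch')\subseteq\Gamma\cdot\Sides(\fch)$, so $\BS(\fch')\subseteq\BS(\fch)$; by symmetry in the roles of $\fpch$ and $\fpch'$, equality follows. The same $\Gamma$-orbit argument, applied to $\bhg(t_\lambda^m\ch)=t_\lambda^m\bhg\ch$, proves $\bd(\fch)=\bd(\fch')$. For $\NC$, a geodesic on $Y$ has a lift with both endpoints in $\bhg\ch$ if and only if the $t_\lambda^m$-translated lift has both endpoints in $\bhg(t_\lambda^m\ch)$, so $\NC(\ch)=\NC(t_\lambda^m\ch)$ and summing over $\fpch$ yields $\NC(\fch)=\NC(\fch')$. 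The main obstacle is the bookkeeping in the cycle-transport step: the exponents of $t_\lambda$ are easily mis-signed, so I would verify the telescoping and the closure condition $g'_{k+1}=\id$ explicitly before drawing the conclusion.
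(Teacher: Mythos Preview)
Your proposal is correct and follows essentially the same route as the paper's proof: the same construction of $m(\pch)$ from Corollary~\ref{props_preH}, the same cycle transport via the conjugation $h'_j = t_\lambda^{m(\pch_{j+1})} h_j\, t_\lambda^{-m(\pch_j)}$, the same telescoping identity $g'_{j}=t_\lambda^{m(\pch_j)}g_j\,t_\lambda^{-m(\pch_1)}$, and the same $\Gamma$-orbit reasoning for $\BS$, $\bd$, $\NC$. The one item to make explicit in your bookkeeping is the check that $g'_j\neq\id$ for $1<j\le k$ (so the transported sequence really is the full $\fpch'$-cycle and does not terminate early); the paper handles this by observing that $g'_j=\id$ would force $g_j\in\Gamma_\infty$, contradicting Proposition~\ref{propsequences}\eqref{psiv}.
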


\begin{proof}
This follows from Corollary~\ref{props_preH}, Theorem~\ref{precellsH}, Corollary~\ref{ABbij}, Proposition~\ref{structureglue} and Proposition~\ref{propsequences}.
\end{proof}

We set 
\[
\BS \sceq \BS(\fch),\quad \bd\sceq \bd(\fch) \quad\text{and}\quad \NC \sceq \NC(\fch)
\]
for the family $\fch$ of cells in $\h$ assigned to an arbitrary family $\fpch$ of precells in $\h$. Proposition~\ref{choiceinvariant} shows that $\BS$, $\bd$ and $\NC$ are well-defined.

\begin{prop}\label{BStotgeod}
The set $\BS$ is a totally geodesic submanifold of $\h$ of codimension one.
\end{prop}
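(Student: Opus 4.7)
The plan is to show that locally around every point of $\BS \cap H$, the set $\BS$ coincides with an open arc of a single geodesic, thereby exhibiting it as a $1$-dimensional totally geodesic submanifold of the $2$-dimensional manifold $H$. By Proposition~\ref{choiceinvariant} we may fix any basal family $\fpch$ and work with $\fch$. First I would observe that every side of every cell $\ch \in \fch$ is a complete geodesic segment with both endpoints in $\bhg H$: for non-cuspidal cells this follows from Proposition~\ref{nccells}\eqref{ncc1}--\eqref{ncc2}, for cuspidal cells from Proposition~\ref{ccells}\eqref{cc1}--\eqref{cc2}, and for strip cells directly from the definition of strip precells. The same therefore holds for every side of every $\Gamma$-translate $g\ch$, so each element of $\BS$ lying in $H$ is a point of a complete geodesic segment that enters and leaves $H$ only at $\bhg H$.

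Second, I would establish that the tesselation $\{g\ch \mid g\in\Gamma,\,\ch\in\fch\}$ of Corollary~\ref{cellsHtess} is locally finite in $H$. By Theorem~\ref{specialfd} the closure $\overline{\fd(r)}$ is a locally finite fundamental polyhedron, and $\overline{\fd(r)}$ is an essentially disjoint union of the finitely many precells in $\fpch$ (Theorem~\ref{precellsH}), so the family $\{g\pch \mid g\in\Gamma,\,\pch\in\fpch\}$ is locally finite. Since each cell is a finite union of precells, the family of cells and hence the family $\Sides(\Gamma\cdot\fch)$ of sides of the tesselation is locally finite as well.

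The key step will be to prove that every $z \in \BS\cap H$ lies on exactly one side of the tesselation. Suppose $S_1, S_2 \in \Gamma\cdot\Sides(\fch)$ both contain $z$, say $S_j$ is a side of the cell $C_j$ of the tesselation. Since the endpoints of $S_j$ lie in $\bhg H$ while $z \in H$, $z$ lies in the relative interior of each $S_j$. If $C_1 = C_2$, then $S_1$ and $S_2$ are two sides of a single convex polyhedron sharing a common interior point; but distinct sides of a convex polyhedron meet only at common vertices, and every vertex of a cell lies in $\bhg H$, so $S_1 = S_2$. If $C_1 \neq C_2$, then Corollary~\ref{cellsHtess} forces $C_1 \cap C_2$ to be a common side or a common vertex; the vertex case is ruled out because $z \in H$, so $C_1 \cap C_2 = T$ for a common side $T$ containing $z$. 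Then $T$ is a side of $C_1$ through the interior point $z$ of $S_1$, so by the same convex-polyhedron argument $T = S_1$, and symmetrically $T = S_2$. This is the delicate step, but it reduces entirely to the convexity of cells and the fact that all vertices of cells sit at infinity.

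Finally, combining the three steps: given $z \in \BS\cap H$, let $S$ be the unique side of the tesselation through $z$ and let $\gamma$ be the geodesic line carrying $S$. By local finiteness we can pick a neighborhood $U$ of $z$ in $H$ meeting only finitely many sides $S, T_1, \dots, T_n$, and after shrinking $U$ so as to exclude the sides $T_i$ (which do not contain $z$), we obtain $\BS\cap U = S\cap U = \gamma\cap U$. This is an open arc of a geodesic of $H$, so $\BS$ is locally a $1$-dimensional totally geodesic submanifold, which in the $2$-dimensional space $H$ has codimension one.
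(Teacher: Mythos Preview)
Your proof is correct and follows essentially the same approach as the paper's: both rely on Corollary~\ref{cellsHtess} together with the fact that all cell vertices lie in $\bhg H$ to show that $\BS$ locally coincides with a single complete geodesic segment. The paper sidesteps your local-finiteness step by observing directly that a small ball around $z$ is covered by the two cells $g_1\ch_1, g_2\ch_2$ sharing the side $S$ through $z$, and that the interiors $g_j\ch_j^\circ$ are disjoint from $\BS$ by the tesselation property; your route via local finiteness of the tesselation is equally valid but a little longer.
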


\begin{proof}
The set $\BS$ is  a disjoint countable union of complete geodesic segments, which is locally finite. 
\end{proof}

Let $\CS$ \label{def_CS2} denote the set of unit tangent vectors in $S\h$ that are based on $\BS$ but not tangent to $\BS$. Recall that $Y$ denotes  the orbifold $\Gamma\backslash \h$ and recall the canonical projections $\pi\colon \h\to Y$, $\pi\colon S\h\to SY$ from Section~\ref{sec_prelims}. Set $\wh\BS \sceq \pi(\BS)$ and $\wh\CS \sceq \pi(\CS)$.

\begin{prop}\label{gcs1}
The set $\wh\BS$ is a totally geodesic suborbifold of $Y$ of codimension one, $\wh\CS$ is the set of unit tangent vectors based on $\wh\BS$ but not tangent to $\wh\BS$ and $\wh\CS$ satisfies (\apref{C}{C2}{}).
\end{prop}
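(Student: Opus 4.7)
The plan is to verify the three assertions in the order stated, using that everything has essentially been set up already. The key observation is that the set $\BS$ is $\Gamma$-invariant by its very definition $\BS = \bigcup\Gamma\cdot\Sides(\fch)$, so $\pi^{-1}(\wh\BS) = \BS$. Combined with Proposition~\ref{BStotgeod}, which states that $\BS$ is a totally geodesic submanifold of $H$ of codimension one, this immediately gives that $\wh\BS$ is a totally geodesic suborbifold of $Y$ of codimension one, by the very definition of that notion given just after the definition of cross sections.

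For the second assertion I would argue as follows. Let $\wh\CS'$ denote the set of unit tangent vectors $\wh v \in SY$ based on $\wh\BS$ but not tangent to $\wh\BS$. The inclusion $\wh\CS \subseteq \wh\CS'$ is clear: if $v \in \CS$, then $\pr(v) \in \BS$ and $v$ is not tangent to $\BS$, so $\pi(v)$ is based on $\wh\BS$ and, since $\pi$ is a local isometry away from the discrete set of orbifold points (and tangency is a local property preserved by local isometries), $\pi(v)$ is not tangent to $\wh\BS$. Conversely, given $\wh v \in \wh\CS'$, pick any $v \in SH$ with $\pi(v) = \wh v$. Then $\pr(v) \in \pi^{-1}(\wh\BS) = \BS$ and, again by the local isometry property, $v$ is not tangent to $\BS$, hence $v \in \CS$ and $\wh v \in \pi(\CS) = \wh\CS$.

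The third assertion is then immediate from the remark made in Section~\ref{sec_symdyn} just after the introduction of condition (\apref{C}{C2}{}): if $\pr(\wh\CS)$ is a totally geodesic suborbifold of $Y$ and $\wh\CS$ contains no vectors tangent to $\pr(\wh\CS)$, then (\apref{C}{C2}{}) holds automatically. Both hypotheses are supplied by the first two parts of the proposition, so (\apref{C}{C2}{}) follows.

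I do not anticipate any real obstacle here; the proof is essentially a bookkeeping step that collects the content of Proposition~\ref{BStotgeod}, the $\Gamma$-invariance of $\BS$ built into its definition (together with the independence of the choice of $\fpch$ given by Proposition~\ref{choiceinvariant}), and the general remark about totally geodesic cross sections. The only mildly delicate point is the passage between tangency in $H$ and tangency in $Y$ at orbifold singularities; but since $\BS$ is a union of complete geodesic segments and $\pi$ is a local isometry on the complement of a discrete $\Gamma$-orbit, the non-tangency condition transfers cleanly in both directions.
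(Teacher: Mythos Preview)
Your proof is correct and follows essentially the same approach as the paper: both use the $\Gamma$-invariance of $\BS$ to get $\pi^{-1}(\wh\BS)=\BS$, invoke Proposition~\ref{BStotgeod} for the totally geodesic suborbifold claim, and then appeal to the remark in Section~\ref{sec_symdyn} for (\apref{C}{C2}{}). The paper's version is terser (it simply notes $\CS=\pi^{-1}(\wh\CS)$ from the $\Gamma$-invariance of $\CS$ rather than verifying both inclusions), but your more explicit treatment of the tangency transfer is a fine elaboration of the same idea.
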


\begin{proof}
Since $\BS$ is $\Gamma$-invariant by definition, we see that $\BS=\pi^{-1}(\wh\BS)$. Therefore, $\wh\BS$ is a totally geodesic suborbifold of $Y$ of codimension one. Moreover, $\CS=\pi^{-1}(\wh\CS)$ and hence $\wh\CS$ is indeed the set of unit tangent vectors based on $\wh\BS$ but not tangent to $\wh\BS$. Finally, $\pr(\wh\CS) = \wh\BS$. Therefore, $\wh\CS$ satisfies (\apref{C}{C2}{}).
\end{proof}

\begin{remark}\label{outlook}
Let $\NIC$ be the set of geodesics on $Y$ of which at least one endpoint is contained in $\pi(\bd)$. Here, $\pi\colon \hg\to \Gamma\backslash\hg$ denotes the extension of the canonical projection $\h\to Y$ to $\hg$. In Section~\ref{sec_geomcross} we will show that $\wh\CS$ is a cross section for the geodesic flow on $Y$ \wrt any measure $\mu$ on the space of geodesics on $Y$ for which $\mu(\NIC) = 0$. 
\end{remark}

We end this section with a short explanation of the acronyms. Obviously, $\CS$ stands for ``cross section'' and $\BS$ for ``base of (cross) section''. Then $\bd$ is for ``boundary'' in sense of geodesic boundary, and $\bd(\ch)$ is the geodesic boundary of the cell $\ch$. Moreover, which will become more sense in Section~\ref{sec_geomcodseq} (see Remark~\ref{whynaming}), $\NC$ stands for ``not coded'' and $\NC(\ch)$ for ``not coded due to the cell $\ch$''. Finally, $\NIC$ is for ``not infinitely often coded''.

\section{Precells and cells in $S\h$}\label{sec_preSH}

Let $\Gamma$ be a geometrically finite subgroup of $\PSL_2(\R)$ which satisfies \eqref{A4}. Suppose that $\infty$ is a cuspidal point of $\Gamma$ and that the set of relevant isometric spheres is non-empty. In this section we define the precells and cells in $S\h$ and study their properties. The purpose of precells and cells in $S\h$ is to get very detailed information about the set $\wh\CS$ from Section~\ref{sec_base} and its relation to the geodesic flow on $Y$, see Section~\ref{sec_geomcross}.

\begin{defi}
Let $U$ be a subset of $\h$ and $z\in \overline{U}$. A unit tangent vector $v$ at $z$ is said to \textit{point into $U$} if the geodesic $\gamma_v$ determined by $v$ runs into $U$, \ie if there exists $\eps > 0$ such that $\gamma_v( (0,\eps) ) \subseteq U$. The unit tangent vector $v$ is said to \textit{point along the boundary of $U$} if there exists $\eps>0$ such that $\gamma_v( (0,\eps) ) \subseteq \partial U$. It is said to \textit{point out of $U$} if it points into $H\mminus U$. 
\end{defi}

\begin{defi}\label{def_preSH}
Let $\pch$ be a precell in $\h$. Define $\wt\pch$ to be the set of unit tangent vectors that are based on $\pch$ and point into $\pch^\circ$. The set $\widetilde \pch$ is called the \textit{precell in $S\h$ corresponding to $\pch$}. If $\pch$ is attached to the vertex $v$ of $\mc K$, we call $\widetilde \pch$ a \textit{precell in $S\h$ attached to $v$}. 
\end{defi}

Recall the projection $\pr\colon S\h\to \h$ on base points.

\begin{remark}\label{bijec} Let $\pch$ be a precell in $\h$ and $\wt\pch$ the corresponding precell in $S\h$. Since $\pch$ is a convex polyhedron with non-empty interior, $\pr(\widetilde \pch)$ is the precell in $\h$ to which $\widetilde \pch$ corresponds.
\end{remark}

\begin{lemma}\label{predisj}
Let $\pch_1,\pch_2$ be two different precells in $\h$. Then the precells $\widetilde{\pch_1}$ and $\widetilde{\pch_2}$ in $S\h$ are disjoint.
\end{lemma}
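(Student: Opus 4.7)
The plan is to reduce the problem to Proposition~\ref{boundarypreH}, which tells us exactly how two distinct precells can meet, and then to rule out a common point in the corresponding precells in $SH$ by a ``pointing'' argument using the convexity of precells together with the fact that the shared boundary is a vertical geodesic segment.

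First I would observe that any element $v \in \wt\pch_1 \cap \wt\pch_2$ has a base point $z = \pr(v) \in \pch_1 \cap \pch_2$, since $\pr(\wt\pch_j) = \pch_j$ by Remark~\ref{bijec}. Hence if $\pch_1 \cap \pch_2 = \emptyset$ we are immediately done, and by Proposition~\ref{boundarypreH} the only remaining case (given that $\pch_1 \neq \pch_2$) is that $\pch_1 \cap \pch_2$ equals a common vertical side $S$. So it suffices to treat that case.

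Next I would use the geometry of a vertical side. The vertical side $S$ is contained in a vertical geodesic $\ell$ of $H$, which separates $H$ into two open half-planes $H^+$ and $H^-$. Each $\pch_j$ is a convex polyhedron (Remark~\ref{just_def}) having $S$ as one of its sides, so $\pch_j \subseteq \overline{H^{\eps_j}}$ for some sign $\eps_j \in \{+,-\}$; since $\pch_1$ and $\pch_2$ are distinct precells meeting precisely along $S$ (not coinciding), their interiors lie in the opposite open half-planes, i.e.\ $\pch_1^\circ \subseteq H^+$ and $\pch_2^\circ \subseteq H^-$ (after relabeling).

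Now suppose $v \in \wt\pch_1 \cap \wt\pch_2$, so $z = \pr(v) \in S$. By the definition of precell in $SH$, the geodesic $\gamma_v$ determined by $v$ satisfies $\gamma_v((0,\eps)) \subseteq \pch_1^\circ \subseteq H^+$ for some $\eps > 0$, and simultaneously $\gamma_v((0,\delta)) \subseteq \pch_2^\circ \subseteq H^-$ for some $\delta > 0$. Taking the minimum of $\eps$ and $\delta$ would give a non-empty initial arc of $\gamma_v$ contained in $H^+ \cap H^- = \emptyset$, a contradiction. Hence $\wt\pch_1 \cap \wt\pch_2 = \emptyset$.

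The only delicate point is the separation statement $\pch_1^\circ \subseteq H^+$ and $\pch_2^\circ \subseteq H^-$, but this is forced by Proposition~\ref{boundarypreH} together with the fact that each $\pch_j$ is a convex polyhedron with non-empty interior whose boundary contains $S$: otherwise both precells would sit in the same closed half-plane and their interiors would overlap, contradicting Proposition~\ref{boundarypreH}, which asserts that they coincide only along $S$. No real calculation is required beyond this observation.
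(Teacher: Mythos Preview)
Your proof is correct and follows the same approach as the paper, which simply cites Proposition~\ref{boundarypreH} and Definition~\ref{def_preSH} as an immediate consequence. Your half-plane separation argument is correct but slightly more than needed: once you know from Proposition~\ref{boundarypreH} that $\pch_1\cap\pch_2$ is a common side, it follows directly that $\pch_1^\circ\cap\pch_2^\circ=\emptyset$, and since $v\in\wt\pch_1\cap\wt\pch_2$ forces an initial arc of $\gamma_v$ to lie in $\pch_1^\circ\cap\pch_2^\circ$, you reach the contradiction without invoking the half-planes explicitly.
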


\begin{proof}
This is an immediate consequence of Proposition~\ref{precellsum}\eqref{boundarypreH} and Definition~\ref{def_preSH}.
\end{proof}

\begin{defi}\label{def_visual}
Let $\pch$ be a precell in $\h$ and $\widetilde \pch$ the corresponding precell in $S\h$. The set $\vb(\widetilde \pch)$ of unit tangent vectors based on $\partial \pch$ and pointing along $\partial \pch$ is called the \textit{visual boundary of $\widetilde \pch$}. Further, $\vc(\widetilde \pch) \sceq \widetilde \pch \cup \vb(\widetilde \pch)$ is said to be the \textit{visual closure of $\widetilde \pch$}.
\end{defi}

\begin{figure}[h]
\begin{center}
\includegraphics*{Hecke5.5} \hspace{1cm}
\includegraphics*{Hecke5.6}
\end{center}
\caption{The precell in $S\h$ and its visual boundary for a non-cuspidal precell in $\h$.}
\end{figure}

The next lemma is clear from the definitions.

\begin{lemma}\label{boundarypreSH}
Let $\pch$ be a precell in $\h$ and $\widetilde \pch$ be the corresponding precell in $S\h$. Then $\vc(\widetilde \pch)$ is the disjoint union of $\widetilde \pch$ and $\vb(\widetilde \pch)$.
\end{lemma}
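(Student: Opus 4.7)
The plan is very short since the statement is essentially a consequence of the definitions together with the incompatibility of the two pointing conditions. By the definition of $\vc(\wt\pch)$, we already have $\vc(\wt\pch) = \wt\pch \cup \vb(\wt\pch)$; the only thing to verify is that the union is disjoint, i.e., $\wt\pch \cap \vb(\wt\pch) = \emptyset$.

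First I would unwind the two definitions. An element $v$ of $\wt\pch$ is a unit tangent vector based on $\pch$ such that $\gamma_v\bigl((0,\eps_1)\bigr) \subseteq \pch^\circ$ for some $\eps_1 > 0$. An element $v$ of $\vb(\wt\pch)$ is a unit tangent vector based on $\partial\pch$ such that $\gamma_v\bigl((0,\eps_2)\bigr) \subseteq \partial\pch$ for some $\eps_2 > 0$. Suppose for contradiction that $v \in \wt\pch \cap \vb(\wt\pch)$, and set $\eps \sceq \min\{\eps_1,\eps_2\}>0$. Then $\gamma_v\bigl((0,\eps)\bigr)$ is simultaneously contained in $\pch^\circ$ and in $\partial\pch$. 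Since $\pch$ is a convex polyhedron with non-empty interior (Remark~\ref{just_def}), we have $\pch^\circ \cap \partial\pch = \emptyset$, and $\gamma_v\bigl((0,\eps)\bigr)$ is non-empty, which is a contradiction. Hence $\wt\pch \cap \vb(\wt\pch) = \emptyset$ and the decomposition is disjoint.

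There is no real obstacle here; the only thing to keep an eye on is the (tiny) subtlety that the two defining $\eps$'s for a hypothetical common vector need not a priori coincide, which is handled by taking their minimum. The proof therefore reduces to the observation $\pch^\circ\cap\partial\pch=\emptyset$, which is already recorded in Remark~\ref{just_def}.
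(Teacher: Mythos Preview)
Your argument is correct and is exactly the direct unfolding of the definitions that the paper has in mind; the paper itself does not write out a proof but simply states that the lemma ``is clear from the definitions.'' Your handling of the two $\eps$'s and the use of $\pch^\circ\cap\partial\pch=\emptyset$ is precisely what is needed.
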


We remark that the visual boundary and the visual closure of a precell $\wt\pch$ in $S\h$ is a proper subset of the topological boundary resp.\@ closure of $\wh\pch$ in $S\h$.

\begin{prop}\label{precellsSH}
Let $\{\pch_j\mid j\in J\}$ be a basal family of precells in $\h$ and let $\{ \widetilde{\pch_j} \mid j\in J\}$ be the set of corresponding precells in $S\h$. Then there is a fundamental set $\widetilde \fd$ for $\Gamma$ in $S\h$ such that 
\begin{equation}\label{fundsetSH}
\bigcup_{j\in J} \widetilde{\pch_j} \subseteq \widetilde \fd \subseteq \bigcup_{j\in J} \vc\big( \widetilde{\pch_j} \big).
\end{equation}
Moreover, $\pr(\widetilde \fd) = \bigcup_{j\in J}\pch_j$. If $\wt\fd$ is a fundamental set for $\Gamma$ in $S\h$ such that $\wt\fd\subseteq \bigcup_{j\in J} \vc(\wt{\pch_j})$, then $\wt\fd$ satisfies \eqref{fundsetSH}.
Conversely, if $\{ \widetilde \pch_j \mid j\in J\}$ is a set, indexed by $J$, of precells in $S\h$ such that \eqref{fundsetSH} holds for some fundamental set $\wt\fd$ for $\Gamma$ in $S\h$, then the family $\{ \pr(\widetilde \pch_j) \mid j\in J\}$ is a basal family of precells in $\h$. 
\end{prop}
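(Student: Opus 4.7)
The plan is to build $\wt\fd$ by starting with the forced inclusion $\bigcup_j\wt{\pch_j}\subseteq\wt\fd$ and completing it along visual boundaries. The key disjointness lemma reads: if $v_1\in\wt{\pch_i}$, $v_2\in\wt{\pch_j}$ and $gv_1=v_2$ for some $g\in\Gamma$, then $v_1=v_2$ and $g=\id$, $i=j$. Indeed, the initial pieces of the geodesics $\gamma_{v_1}$ and $\gamma_{v_2}$ lie in $\pch_i^\circ$ and $\pch_j^\circ$, so $g\pch_i^\circ\cap\pch_j^\circ\ne\emptyset$, and Corollary~\ref{precellsHtess} together with Theorem~\ref{precellsH} forces $g\pch_i=\pch_j$; since both are basal this means $g=\id$ and $i=j$. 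Hence $\bigcup_j\wt{\pch_j}$ is $\Gamma$-disjoint and may be included in any fundamental set.

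To construct $\wt\fd$, I first check that every $v\in SH$ admits a $\Gamma$-translate in $\bigcup_j\vc(\wt{\pch_j})$. Since $\bigcup_j\pch_j$ is a closed fundamental region (Theorem~\ref{precellsH}), pick $g\in\Gamma$ with $g\pr(v)\in\pch_j$ for some $j$; the geodesic $\gamma_{gv}$ either enters $\pch_j^\circ$, runs along $\partial\pch_j$, or enters some $\Gamma$-neighbour, and in the last case Corollary~\ref{precellsHtess} supplies an $h\in\Gamma$ sending that neighbour back into the basal family, after which $hgv\in\wt{\pch_k}$ for some $k$. Choosing one representative per $\Gamma$-orbit, preferring vectors in $\bigcup_j\wt{\pch_j}$ whenever available, yields $\wt\fd$ with the sandwich \eqref{fundsetSH}; the equality $\pr(\wt\fd)=\bigcup_j\pch_j$ follows from $\pr(\vc(\wt{\pch_j}))=\pr(\wt{\pch_j})=\pch_j$ (Remark~\ref{bijec}). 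For the second assertion — that every fundamental set contained in $\bigcup_j\vc(\wt{\pch_j})$ automatically contains $\bigcup_j\wt{\pch_j}$ — one takes $v\in\wt{\pch_i}$ and its unique $\Gamma$-translate $gv\in\wt\fd$; the alternative $gv\in\vb(\wt{\pch_j})$ is ruled out by observing that $\gamma_{gv}$ would then have to run simultaneously inside $g\pch_i^\circ$ and along $\partial\pch_j$, which is impossible for $g\pch_i\ne\pch_j$ by the tesselation property and impossible for $g\pch_i=\pch_j$ because then $gv\in\wt{\pch_j}$. Thus $gv\in\wt{\pch_j}$, and the disjointness lemma gives $g=\id$, $i=j$, so $v\in\wt\fd$.

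For the converse, set $\pch_j\sceq\pr(\wt{\pch_j})$ and fix a basal family $\fpch'=\{\pch'_k\}_{k\in K}$ furnished by Theorem~\ref{precellsH}. Corollary~\ref{props_preH}(i) gives a well-defined map $j\mapsto(k(j),m_j)$ with $\pch_j=t_\lambda^{m_j}\pch'_{k(j)}$. I expect the main obstacle to be showing that $j\mapsto k(j)$ is a bijection. Injectivity follows from the disjointness lemma: if $k(j_1)=k(j_2)$ with $j_1\ne j_2$, then $\pch_{j_1}$ and $\pch_{j_2}$ are distinct $\Gamma_\infty$-translates of a common basal precell, so a vector in $\wt{\pch_{j_1}}$ and its $t_\lambda^{m_{j_2}-m_{j_1}}$-image in $\wt{\pch_{j_2}}$ would both lie in $\wt\fd$, contradicting the fundamental set property. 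For surjectivity, fix $k\in K$ and $v\in\wt{\pch'_k}$; its unique $\Gamma$-translate $gv\in\wt\fd$ lies in some $\vc(\wt{\pch_j})$, and the $\vb$-exclusion argument above upgrades this to $gv\in\wt{\pch_j}$, whence $g\pch'_k=\pch_j=t_\lambda^{m_j}\pch'_{k(j)}$. The delicate step is to conclude $k=k(j)$: distinct basal precells in $\fpch'$ cannot be $\Gamma$-equivalent, for otherwise some non-trivial $h\in\Gamma$ would move a piece of $(\bigcup_l\pch'_l)^\circ$ into itself, violating (F2) for the basal closed fundamental region. Hence $k(j)=k$, the map $j\mapsto k(j)$ is a bijection, $J$ is finite with $|J|=|K|$, and Corollary~\ref{props_preH}(ii) identifies $\{\pch_j\}$ as a basal family of precells in $H$.
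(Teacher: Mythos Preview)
Your argument for the first three assertions tracks the paper's proof closely: the disjointness lemma, the covering argument using the tesselation (Corollary~\ref{precellsHtess}), and the $\vb$-exclusion are all handled in the same spirit.

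For the converse, your route differs from the paper's. The paper verifies directly that $\bigcup_j\pch_j$ is a closed fundamental region: it checks $\Gamma\cdot\bigcup_j\pch_j=H$ via $\pr(\wt\fd)=\bigcup_j\pch_j$, uses Lemma~\ref{predisj} together with the fundamental-set property of $\wt\fd$ to rule out repetitions among the $\pch_j$, and then checks (\apref{F}{F2}{}) by lifting a hypothetical interior overlap $z\in\fd^\circ\cap g^{-1}\fd^\circ$ to unit tangent vectors and invoking Proposition~\ref{intersectionprecells}. You instead fix an auxiliary basal family $\fpch'$ and build a bijection $j\mapsto k(j)$ using Corollary~\ref{props_preH}(i), then conclude via Corollary~\ref{props_preH}(ii). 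Both approaches are valid; yours is somewhat more indirect but makes the finiteness of $J$ and the structure of the resulting family transparent, while the paper's approach is self-contained and avoids importing a second basal family. One small remark: in your surjectivity step you re-use the $\vb$-exclusion argument with $v\in\wt{\pch'_k}$ rather than $v\in\wt{\pch_i}$; this still works because each $\pch_j$ is a $\Gamma_\infty$-translate of some $\pch'_{k(j)}$, so $\partial\pch_j\cap g(\pch'_k)^\circ\ne\emptyset$ forces $g\pch'_k=\pch_j$ by the tesselation for $\fpch'$, and then the geodesic cannot lie simultaneously in the interior and the boundary of the same precell.
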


\begin{proof}
This is a slight extension of a similar statement in \cite{Hilgert_Pohl}.
\end{proof}

\begin{remark}
Recall from Theorem~\ref{precellsH} that each basal family of precells in $\h$ contains the same finite number of precells, say $m$. Proposition~\ref{precellsSH} shows that if $\{ \widetilde \pch_k\mid k\in K\}$ is a set of precells in $S\h$, indexed by $K$, such that \eqref{fundsetSH} holds for some fundamental set $\wt\fd$ for $\Gamma$ in $S\h$, then $\# K = m$.
\end{remark}

\begin{defi}\label{def_wtfpch}
Let $\fpch \sceq \{ \pch_j\mid j\in J\}$ be a basal family of precells in $\h$. Then the set $\widetilde\fpch \sceq \{ \widetilde{\pch_j} \mid j\in J\}$ of corresponding precells in $S\h$ is called a \textit{basal family of precells in $S\h$} or a \textit{family of basal precells in $S\h$}. If $\fpch$ is a connected family of basal precells in $\h$, then $\widetilde\fpch$ is said to be a \textit{connected family of basal precells in $S\h$} or a \textit{connected basal family of precells in $S\h$}. 
\end{defi}

Let $\fpch\sceq \{ \pch_j\mid j\in J\}$ be a basal family of precells in $\h$ and let $\widetilde\fpch \sceq \{ \widetilde{\pch_j} \mid j\in J\}$ be the corresponding basal family of precells in $S\h$.

\begin{defirem}
We call two cycles $c_1,c_2$ in $\fpch\times\Gamma$ \textit{equivalent} if there exists a basal precell $\pch\in\fpch$ and elements $g_1,g_2\in\Gamma\mminus\Gamma_\infty$ such that $(\pch, g_1)$ is an element of $c_1$ and $(\pch, g_2)$ is an element of $c_2$. Obviously, equivalence of cycles is an equivalence relation (on the set of all cycles). If $[c]$ is an equivalence class of cycles in $\fpch\times\Gamma$, then each element $(\pch, h_\pch)\in\fpch\times\Gamma$ in any representative $c$ of $[c]$ is called a \textit{generator} of $[c]$. 
\end{defirem}

\begin{lemma}\label{cyclic}
Let $\pch$ be a non-cuspidal basal precell in $\h$ and suppose that $h_\pch$ is an element in $\Gamma\mminus\Gamma_\infty$ assigned to $\pch$ by Proposition~\ref{structureglue}. Let $\big( (\pch_j,h_j)\big)_{j=1,\ldots, k}$ be the cycle in $\fpch\times\Gamma$ determined by $(\pch, h_\pch)$. If $\pch=\pch_l$ for some $l\in \{2,\ldots, k\}$, then $h_l=h_\pch$. Moreover, if
\[
 q \sceq \min\big\{ l\in\{1,\ldots,k-1\} \ \big\vert\  \pch_{l+1} = \pch \big\}
\]
exists, then $q$ does not depend on the choice of $h_\pch$, $k$ is a multiple of $q$, and $(\pch_{l+q}, h_{l+q}) = (\pch_l, h_l)$ for $l\in\{1,\ldots, k-q\}$.
\end{lemma}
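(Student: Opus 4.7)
My plan is to establish the first assertion by combining the cyclic tiling structure from the proof of Proposition~\ref{propsequences} with the orientation-preservation of $\Gamma\subseteq\PSL(2,\R)$, and then to promote this into the periodicity of the second assertion using Proposition~\ref{cycleshift}.

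For the first assertion, suppose $\pch_l=\pch$ for some $l\in\{2,\ldots,k\}$. By the recipe of Construction~\ref{elemsequences}, $h_l\in\{k_1(\pch_l),k_2(\pch_l)\}=\{k_1(\pch),k_2(\pch)\}$, and the argument in the proof of Proposition~\ref{propsequences} shows $\{k_1(\pch),k_2(\pch)\}=\{h_1,h_k^{-1}\}$; thus $h_l$ equals either $h_\pch=h_1$ or $h_k^{-1}$. To rule out the second possibility I will use orientation-preservation. From the proof of Proposition~\ref{propsequences}, the translates $g_j^{-1}\pch_j$ tile a neighborhood of $v$ cyclically, with common boundary $g_j^{-1}[v,s_j]$ between consecutive cells, where $s_j$ is the summit of $I(h_j)$. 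Endow $T_vH$ with the orientation inherited from $H$ and, after relabelling if necessary, suppose the enumeration traces this arrangement counter-clockwise. Let $b_+$ denote the non-vertical side of $\pch$ lying in $I(h_1)$, so that the counter-clockwise exit from $\pch=g_1^{-1}\pch_1$ at $v$ is $b_+$. At position $l$, the crossing side of $g_l^{-1}\pch$ is $g_l^{-1}(\pch_l\cap I(h_l))$; since $g_l^{-1}$ is an orientation-preserving isometry fixing $v$, its differential at $v$ rotates $T_vH$ and sends the counter-clockwise exit from $\pch$ to that from $g_l^{-1}\pch$, namely $g_l^{-1}b_+$. Equating forces $\pch_l\cap I(h_l)=b_+\subseteq I(h_1)$, and as $I(h_1)\ne I(h_k^{-1})$ we conclude $h_l=h_1=h_\pch$.

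For the second assertion, assume $q$ exists. The first assertion gives $h_{q+1}=h_\pch$, so $(\pch_{q+1},h_{q+1})=(\pch,h_\pch)$. Proposition~\ref{cycleshift} identifies the cyclic shift of the original cycle at position $q+1$ with the cycle determined by $(\pch_{q+1},h_{q+1})$, which by construction is the original cycle. Matching terms yields $(\pch_{l+q},h_{l+q})=(\pch_l,h_l)$ for $l=1,\ldots,k-q$, together with the analogous wrap-around relations. Iterating $\pch_{l+q}=\pch_l$ together with the minimality of $q$ shows that $\{j\in\{1,\ldots,k\}\mid\pch_j=\pch\}=\{1,q+1,\ldots,(m-1)q+1\}$ for some $m\geq 1$ with $mq=k$; thus $k$ is a positive multiple of $q$. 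The independence of $q$ from $h_\pch$ follows because the alternative generator $h'_\pch=h_k^{-1}$ yields the reverse cycle, whose basal precell at position $j$ is $\pch_{k+2-j}$ (indices mod $k$) by Proposition~\ref{propsequences}\eqref{psiii}; the periodic set just described is invariant under the involution $j\mapsto k+2-j\pmod k$, so the minimal index in the reverse cycle is again $q$.

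The hardest step will be pinning down the orientation argument rigorously within the combinatorial framework of cycles in $\fpch\times\Gamma$: one must select a consistent cyclic sense for the enumeration, verify that $b_+$ (defined through $h_1$) is indeed the counter-clockwise exit, and check that the differential of $g_l^{-1}$ at $v$ carries this exit to the corresponding exit of $g_l^{-1}\pch$. After that, the remaining steps amount to direct bookkeeping with Proposition~\ref{cycleshift}.
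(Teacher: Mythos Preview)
Your proposal is correct, and for the first assertion it follows a genuinely different route from the paper.

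The paper's proof of the first assertion is entirely algebraic. It first treats the adjacent case $\pch_1=\pch_2$ directly (showing that $h_2=h_k^{-1}$ would force $h_1=h_1^{-1}$, hence $h_1$ would fix two points of $H$), propagates this to all adjacent pairs via Proposition~\ref{cycleshift}, and then for general $l$ assumes $h_l=h_k^{-1}$ and exploits the reversal relation $b_j=a_{k-j+1}^{-1}$ of Proposition~\ref{propsequences}\eqref{psiii} to obtain a parity contradiction (splitting into the cases $l=k-2p$ and $l=k-2p-1$). Your argument instead observes that $\pch_l=\pch$ forces $g_l v=v$, so $g_l^{-1}$ is a rotation about $v$; since the tiles $g_j^{-1}\pch_j$ from the proof of Proposition~\ref{propsequences} wind around $v$ in a consistent cyclic sense, the rotation must carry the ``exit side'' of $\pch$ (contained in $I(h_1)$) to the exit side of $g_l^{-1}\pch_l$ (the $g_l^{-1}$-image of $\pch_l\cap I(h_l)$), forcing $I(h_l)=I(h_1)$. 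Your approach is more geometric and avoids the case split; the paper's approach is more self-contained combinatorially and does not require setting up the cyclic orientation at $v$. Note that your phrase ``after relabelling if necessary'' is harmless: the argument works for whichever rotational sense the tiling actually has, so no change of $h_\pch$ is needed.

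For the second assertion the two arguments are close in spirit. The paper uses the forward determinism ``$(\pch_{l+1},h_{l+1})$ is determined by $(\pch_l,h_l)$'' to get the periodicity and then the relation $\{h_1,h_k^{-1}\}=\{h_1,h_q^{-1}\}$ to conclude $h_k=h_q$, $\pch_k=\pch_q$, and $q\mid k$. You instead invoke Proposition~\ref{cycleshift} to identify the shifted cycle with the original, which immediately gives both the periodicity for $l\le k-q$ \emph{and} the wrap-around relations; the latter yield $\pch_{k-q+1}=\pch$, and combined with minimality this forces $q\mid k$. Your write-up compresses this last deduction into one sentence; when you expand it, make the use of the wrap-around relation $\pch_{k-q+1}=\pch$ explicit before concluding $q\mid k$.
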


\begin{proof}
This is an obvious consequence of the definition of cycles.
\end{proof}

\begin{defi}
Let $\pch$ be a non-cuspidal basal precell in $\h$ and let $h_\pch$ be an element in $\Gamma\mminus\Gamma_\infty$ assigned to $\pch$ by Proposition~\ref{structureglue}. Suppose that $\big( (\pch_j, h_j) \big)_{j=1,\ldots, k}$ is the cycle in $\fpch\times\Gamma$ determined by $(\pch, h_\pch)$. We set
\[
\cyl(\pch) \sceq \min \big(\big\{ l\in\{1,\ldots,k-1\} \ \big\vert\  \pch_{l+1} = \pch  \big\} \cup \{k\} \big).
\]
Lemma~\ref{cyclic} shows that $\cyl(\pch)$ is well-defined. Moreover, it implies that $\cyl(\pch)$ does not depend on the choice of the generator $(\pch, h_\pch)$ of an equivalence class of cycles.
For a cuspidal basal precell $\pch$ in $\h$ we set 
\[
 \cyl(\pch) \sceq 3,
\]
and for a basal strip precell $\pch$  in $\h$ we define
\[
 \cyl(\pch) \sceq 2.
\]
\end{defi}

\begin{example} 
Recall Example~\ref{excycles}. For the Hecke triangle group $G_n$ and its basal precell $\pch$ in $\h$  we have $\pch=\pch_2$ and hence $\cyl(\pch) = 1$. In contrast, the basal precell $\pch(v_1)$ in $H$ of the congruence group $\PGamma_0(5)$ appears only once in the cycle in $\fpch\times \PGamma_0(5)$ and therefore $\cyl\big(\pch(v_1)\big) = 3$.
\end{example}

\begin{constrdefi}\label{cellsSH}
Set $\fd\sceq \bigcup_{j\in J}\pch_j$. Pick a fundamental set $\widetilde \fd$ for $\Gamma$ in $S\h$ such that
\[
 \bigcup_{j\in J} \widetilde{\pch_j} \subseteq \widetilde \fd \subseteq \bigcup_{j\in J} \vc\big( \widetilde{\pch_j}\big),
\]
which is possible by Proposition~\ref{precellsSH}.
For each basal precell $\pch\in\fpch$ and each $z\in \fd$ let $\widetilde E_z(\pch)$ denote the set of unit tangent vectors in $\widetilde \fd\cap \vc({\widetilde \pch})$ based at $z$. Fix any enumeration of the index set $J$ of $\fpch$, say $J=\{j_1,\ldots, j_k\}$. For $z\in\fd$ and $l\in\{1,\ldots,k\}$ set
\[
 \wt\fd_z\big(\pch_{j_1}\big) \sceq \wt E_z\big(\pch_{j_1}\big) \quad\text{and}\quad \wt\fd_z\big(\pch_{j_l}\big) \sceq \wt E_z\big(\pch_{j_l}\big)\mminus\bigcup_{m=1}^{l-1} \wt E_z\big(\pch_{j_m}\big).
\]
Further set 
\[
 \wt\fd(\pch) \sceq \bigcup_{z\in\pch}\wt\fd_z(\pch)
\]
for $\pch\in\fpch$.
Recall from Proposition~\ref{precellsSH} that $\pr(\widetilde \fd) = \fd$. Thus, 
\begin{equation}\label{superunion}
\bigcup_{z\in \fd} \bigcup_{\pch\in\fpch} \widetilde \fd_z(\pch) = \widetilde \fd,
\end{equation}
and the union is disjoint.
For each equivalence class of cycles in $\fpch\times\Gamma$ fix a generator and let $\choices$ denote the set of chosen generators. Let $(\pch,h_\pch)\in\choices$.

Suppose that $\pch$ is a non-cuspidal precell in $\h$ and let $v$ be the vertex of $\mc K$ to which $\pch$ is attached. Let $\big( (\pch_j,h_j) \big)_{j=1,\ldots,k}$ be the cycle in $\fpch\times\Gamma$ determined by $(\pch, h_\pch)$. For $j=1,\ldots, k$ set $g_1\sceq \id$ and $g_{j+1} \sceq h_jg_j$, and let $s_j$ be the summit of $I(h_j)$. Further, for convenience, set $h_0\sceq h_k$ and $s_0\sceq s_k$. In the following we partition certain $\wt\fd(\pch_j)$ into $k$ subsets. More precisely, we partition each element of the set $\{ \wt\fd(\pch_j)\mid j=1,\ldots, k\}$ into $k$ subsets. Let $j\in \{1,\ldots, \cyl(\pch)\}$.

For each $z\in \pch_j^\circ \cup (h_{j-1}.s_{j-1}, g_j.v] \cup [g_j.v,s_j)$ we pick any partition of $\widetilde \fd_z(\pch_j)$ into $k$ non-empty disjoint subsets $W^{(1)}_{j,z},\ldots, W^{(k)}_{j,z}$.

For $z\in [s_j,\infty)$ we set $W^{(1)}_{j,z}\sceq \widetilde \fd_z(\pch_j)$ and $W^{(2)}_{j,z} = \ldots = W^{(k)}_{j,z} \sceq \emptyset$.

For $z\in [h_{j-1}.s_{j-1},\infty)$ we set $W^{(1)}_{j,z}\sceq \emptyset$, $W^{(2)}_{j,z} \sceq \widetilde \fd_z(\pch_j)$ and $W^{(3)}_{j,z} = \ldots = W^{(k)}_{j,z} \sceq \emptyset$.

For $m\in\{1,\ldots, k\}$ and $j\in\{1,\ldots,\cyl(\pch)\}$ we set 
\[
\widetilde \pch_{j,m} \sceq \bigcup_{z\in \pch_j} W^{(m)}_{j,z}
\]
and
\[
\widetilde \ch_j(\pch,h_\pch) \sceq \bigcup_{l=1}^k g_jg_l^{-1}. \widetilde \pch_{l, l-j+1}
\]
where the first part ($l$) of the subscript of $\widetilde \pch_{l,l-j+1}$ is calculated modulo $\cyl(\pch)$ and the second part ($l-j+1$) is calculated modulo $k$.

Suppose that $\pch$ is a cuspidal precell in $\h$. Let $\big( (\pch_1,h_1), (\pch_2,h_2) \big)$ be the cycle in $\fpch\times\Gamma$ determined by $(\pch, h_\pch)$. Set $g\sceq h_1 = h_\pch$, $g_1\sceq \id$ and $g_2\sceq h_1=g$. Suppose that $v$ is the vertex of $\mc K$ to which  $A$ is attached, and let $s$ be the summit of $I(g)$. Let $j\in\{1,2\}$. We partition $\widetilde\fd(\pch_j)$ into three subsets as follows.

For $z\in \pch_j^\circ \cup (g_j.v,g_j.s)$ we pick any partition of $\widetilde \fd_z(\pch_j)$ into three non-empty disjoint subsets $W^{(1)}_{j,z}$, $W^{(2)}_{j,z}$ and $W^{(3)}_{j,z}$. 

For $z\in (g_j.v,\infty)$ we set $W^{(1)}_{j,z} \sceq \widetilde \fd_z(\pch_j)$ and $W^{(2)}_{j,z} = W^{(3)}_{j,z} \sceq \emptyset$.

For $z\in [g_j.s,\infty)$ we set $W^{(1)}_{j,z} = W^{(2)}_{j,z} \sceq \emptyset$ and $W^{(3)}_{j,z} \sceq \widetilde \fd_z(\pch_j)$.

For $m\in \{1,2,3\}$ and $j\in\{1,2\}$ we set
\[
\widetilde \pch_{j,m} \sceq \bigcup_{z\in \pch_j} W^{(m)}_{j,z}.
\]
Then we define
\begin{align*}
\widetilde \ch_1(\pch,h_\pch) &\sceq \widetilde \pch_{1,1} \cup g^{-1}. \widetilde \pch_{2,2},
\\
\widetilde \ch_2(\pch,h_\pch) & \sceq g.\widetilde \pch_{1,2} \cup \widetilde \pch_{2,1},
\\
\widetilde \ch_3(\pch,h_\pch) & \sceq \widetilde \pch_{1,3} \cup g^{-1}.\widetilde \pch_{2,3}.
\end{align*}

Suppose that $\pch$ is a strip precell in $\h$.  Let $v_1,v_2$ be the two (infinite) vertices of $\mc K$ to which $\pch$ is attached and suppose that $v_1 < v_2$. We partition $\widetilde\fd(\pch)$ into two subsets as follows. 

For $z\in \pch^\circ$ we pick any partition of $\widetilde \fd_z(\pch)$ into two non-empty disjoint subsets $W^{(1)}_z$ and $W^{(2)}_z$. 

For $z\in (v_1,\infty)$ we set $W^{(1)}_z \sceq \widetilde \fd_z(\pch)$ and $W^{(2)}_z \sceq \emptyset$. For $z\in (v_2,\infty)$ we set $W^{(1)}_z \sceq \emptyset$ and $W^{(2)}_z \sceq \widetilde \fd_z(\pch)$.

For $m\in \{1,2\}$ we define 
\[
\widetilde \ch_1(\pch,h_\pch) \sceq \bigcup_{z\in \pch} W^{(1)}_z \quad\text{and}\quad \widetilde \ch_2(\pch,h_\pch) \sceq \bigcup_{z\in \pch} W^{(2)}_z.
\]
The set $\choices$ (``selection'') is called a \textit{set of choices associated to $\fpch$}. 
The family 
\[
\widetilde{\fch}_{\choices} \sceq \left\{ \wt\ch_j(\pch,h_\pch) \left\vert\  (\pch,h_\pch)\in\choices,\ j=1,\ldots,\cyl(\pch) \vphantom{\wt\ch_j(\pch,h_\pch)}       \right.\right\}
\]
 is called the \textit{family of cells in $S\h$ associated to $\fpch$ and $\choices$}. 
The elements in this family are subject to the choice of the fundamental set $\wt\fd$, the enumeration of $J$ and some choices for partitions in unit tangent bundle. However, all further applications of $\widetilde{\fch}_{\choices}$ are invariant under these choices. This justifies to call $\widetilde{\fch}_{\choices}$ \textit{the} family of cells in $S\h$ associated to $\fpch$ and $\choices$. Each element in $\widetilde{\fch}_{\choices}$ is called a \textit{cell in $S\h$}. 
\end{constrdefi}

\begin{example}\label{HeckecellSH}
For the Hecke triangle group $G_5$ from Example~\ref{excycles} we choose $\choices = \{ (\pch, U_5) \}$. Here we have $k=5$ and $\cyl(\pch) = 1$. The first figure in Figure~\ref{decompA} indicates 
\begin{figure}[h]
\begin{center}
\includegraphics*{Hecke5.12} \hspace{1cm}
\includegraphics*{Hecke5.13}
\end{center}
\caption{A partition of $\wt\fd(\pch)$ and the cell $\wt\ch_1(\pch, U_5)$ in  $S\h$.}\label{decompA}
\end{figure}
a possible partition of $\wt\fd(\pch)$ into the sets $\wt\pch_{1,1}, \wt\pch_{1,2}, \ldots, \wt\pch_{1,5}$. The unit tangent vectors in black belong to $\wt\pch_{1,1}$, those in very light grey to $\wt\pch_{1,2}$, those in light grey to $\wt\pch_{1,3}$, those in middle grey to $\wt\pch_{1,4}$, and those in dark grey to $\wt\pch_{1,4}$. The second figure in Figure~\ref{decompA} shows the cell $\wt\ch_1(\pch,U_5)$ in $S\h$. 
\end{example}

\begin{figure}[h]
\begin{center}
\includegraphics*{Nichtcof.5} \hspace{1cm}
\includegraphics*{Nichtcof.6}
\end{center}
\caption{The cells in $S\h$ arising from $(\pch_1,\id)$.}\label{SHstripNicht}
\end{figure}

\begin{example}
For the group $\Gamma$ from Example~\ref{Gammaex}\eqref{Nichtcof} we choose as set of choices $\choices = \{ (\pch_1,\id), (\pch_2,S) \}$ (cf.\@ Example~\ref{excycles}). The cells in $S\h$ which arise from $(\pch_1,\id)$ are shown in Figure~\ref{SHstripNicht}.
The cells in $S\h$ arising from $(\pch_2,S)$ are indicated in Figure~\ref{SHcuspidalNicht}.
\begin{figure}[h]
\begin{center}
\includegraphics*{Nichtcof.7} \hspace{1cm}
\includegraphics*{Nichtcof.8} \hspace{1cm}
\includegraphics*{Nichtcof.9} 
\end{center}
\caption{The cells in $S\h$ arising from $(\pch_1,\id)$.}\label{SHcuspidalNicht}
\end{figure}
\end{example}

Let $\choices$ be a set of choices associated to $\fpch$. 

\begin{prop}\label{BfundsetSH}
The union $\bigcup_{\widetilde \ch \in \widetilde{\fch}_{\choices}} \widetilde \ch$ is disjoint and a fundamental set for $\Gamma$ in $S\h$.
\end{prop}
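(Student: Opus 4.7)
My plan is to reduce the claim to a bookkeeping exercise by exploiting the fact that each cell in $\wt\fch_\choices$ is, by construction, a disjoint union of $\Gamma$-translates of the pieces $\wt\pch_{l,m}$ which in turn partition $\wt\fd$. Since $\wt\fd$ is a fundamental set for $\Gamma$ in $SH$ by Proposition~\ref{precellsSH}, any two $\Gamma$-translates $g\wt\fd$ and $g'\wt\fd$ with $g\neq g'$ are disjoint, and any two distinct pieces $\wt\pch_{l,m}$ and $\wt\pch_{l',m'}$ of $\wt\fd$ are disjoint. Hence once I verify that each piece $\wt\pch_{l,m}$ appears in precisely one cell, paired with a uniquely determined $\Gamma$-element, the disjointness of $\bigcup_{\wt\ch\in\wt\fch_\choices}\wt\ch$ will follow automatically. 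Coverage under $\Gamma$ then follows because every $v\in SH$ has a unique representative $hv$ in $\wt\fd$, this representative lies in exactly one piece $\wt\pch_{l,m}$, and that piece reappears somewhere in $T\sceq \bigcup_{\wt\ch\in\wt\fch_\choices}\wt\ch$ as a $\Gamma$-translate.

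First I would show that the basal precells appearing in distinct generators of $\choices$ are disjoint: different generators represent different equivalence classes of cycles, and by the definition of equivalence, two classes with a common basal precell coincide. So the pieces attached to different generators sit in disjoint $\wt\fd(\pch)$'s inside $\wt\fd$. Within a fixed generator $(\pch,h_\pch)\in\choices$ I would do case work. For a strip precell the cells are $\wt\ch_1,\wt\ch_2$ and they partition $\wt\fd(\pch)$ trivially. For a cuspidal precell one checks by direct inspection of the six pieces $\wt\pch_{j,m}$, $j\in\{1,2\}$, $m\in\{1,2,3\}$, that each of the three cells collects two of them and every piece is used exactly once. For a non-cuspidal precell, writing $\wt\ch_j(\pch,h_\pch)=\bigcup_{l=1}^k g_j g_l^{-1}\wt\pch_{l\bmod\cyl(\pch),\,l-j+1\bmod k}$, I would verify that the map
\[
\{1,\dots,\cyl(\pch)\}\times\{1,\dots,k\}\longrightarrow\{1,\dots,\cyl(\pch)\}\times\{1,\dots,k\},\quad (j,l)\mapsto\bigl(l\bmod \cyl(\pch),\,l-j+1\bmod k\bigr)
\]
is a bijection: the second coordinate determines $l$ uniquely once $j$ is fixed, and compatibility of the first coordinate with $l\bmod\cyl(\pch)$ fixes $j$ mod $\cyl(\pch)$. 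Thus the $\cyl(\pch)\cdot k$ pieces are distributed, one apiece, among the $\cyl(\pch)$ cells (each receiving $k$ of them).

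Given this bijective bookkeeping, I would assemble the final step as follows. If $g\,\wt\pch_{l,m}$ and $g'\wt\pch_{l',m'}$ are two constituent pieces of $T$ with nonempty intersection, then $g\wt\fd\cap g'\wt\fd\neq\emptyset$ forces $g=g'$; the pieces then lie in $\wt\fd$ and, being either identical or disjoint by construction, must coincide, so by the bijective count they come from the same cell. Hence $T$ is disjoint. Conversely, for any $v\in SH$ take the unique $h\in\Gamma$ with $hv\in\wt\fd$, find the unique piece $\wt\pch_{l,m}$ containing $hv$, read off from the bookkeeping the unique cell in which it appears and the associated $g\in\Gamma$, and observe $g\,hv\in T$; uniqueness of this orbit representative again reduces to uniqueness in $\wt\fd$. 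The main obstacle I anticipate is the index juggling in the non-cuspidal case, and in particular showing that the elements $g_j g_l^{-1}$ for $l\in\{1,\dots,k\}$ within a single cell are pairwise distinct so that the corresponding translates of $\wt\fd$ are pairwise disjoint; this is guaranteed by Proposition~\ref{propsequences}\eqref{psiv}, which asserts that the points $g_l^{-1}\infty$ are pairwise distinct and therefore the $g_l$ lie in pairwise distinct left $\Gamma_\infty$-cosets.
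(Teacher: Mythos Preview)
Your approach is correct and is essentially the same as the paper's: both arguments observe that Construction~\ref{cellsSH} partitions the fundamental set $\wt\fd$ into pieces $\wt\pch_{l,m}$, translates each by an element of $\Gamma$, and regroups them into cells, so that the result is automatically a disjoint fundamental set. The paper states this at a higher level of abstraction (calling the successive families $\mc P_1,\mc P_2,\mc P_3$ and simply asserting the regrouping into cells is a partition of $\mc P_3$), whereas you carry out the index bookkeeping explicitly and verify the bijection $(j,l)\mapsto(l\bmod\cyl(\pch),\,l-j+1\bmod k)$ in the non-cuspidal case.

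One minor remark: your closing concern about the $g_jg_l^{-1}$ being pairwise distinct (and the detour through $\Gamma_\infty$-cosets) is unnecessary. Once you know the underlying pieces $\wt\pch_{p,m}$ used within a single cell are pairwise distinct---which your bijection already gives, since for fixed $j$ the second index $(l-j+1)\bmod k$ runs bijectively over $\{1,\dots,k\}$---the fundamental-set property of $\wt\fd$ alone forces the translated pieces to be disjoint, regardless of whether the translating elements coincide. So Proposition~\ref{propsequences}\eqref{psiv} is not needed here.
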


\begin{proof}
Construction~\ref{cellsSH} picks a fundamental set $\wt\fd$ for $\Gamma$ in $S\h$ and chooses a family $\mc P\sceq \{ \wt\fd_z(\pch) \mid z\in\fd,\ \pch\in\fpch\}$ of subsets of it. Since the union in \eqref{superunion} is disjoint, $\mc P$ is a partition of $\wt\fd$. 
Recall the notation from Construction~\ref{cellsSH}. One considers the family
\[
\mc P_1 \sceq \left\{ \wt\fd_z(\pch_j)\ \left\vert\ z\in\fd,\ (\pch,h_\pch)\in\choices,\ j=1,\ldots, \cyl(\pch) \vphantom{\wt\fd_z(\pch_j)} \right.\right\}.
\]
The elements of $\mc P_1$ are pairwise disjoint and each element of $\mc P$ is contained in $\mc P_1$. Hence, $\mc P_1$ is a partition of $\wt\fd$. The next step is to partition each element of $\mc P_1$ into a finite number of subsets. Thus, $\wt\fd$ is partitioned into some family $\mc P_2$ of subsets of $\wt\fd$.  Then each element $W$ of $\mc P_2$ is translated by some element $g(W)$ in $\Gamma$ to get the family $\mc P_3 \sceq \{ g(W).W\mid W\in\mc P_2\}$. Since $\wt\fd$ is a fundamental set for $\Gamma$ in $S\h$, the elements of $\mc P_3$ are pairwise disjoint and $\bigcup \mc P_3$ is a fundamental set for $\Gamma$ in $S\h$. Now $\mc P_3$ is partitioned into certain subsets, say into the subsets $\mc Q_l$, $l\in L$. Each cell $\wt\ch$ in $SH$ is the union of the elements in some $\mc Q_{l(\wt\ch)}$ such that $l(\wt\ch_1)\not=l(\wt\ch_2)$ if $\wt\ch_1\not=\wt\ch_2$. Therefore, the union $\bigcup \big\{ \wt\ch\ \big\vert\ \wt\ch\in\wt\fch_\choices\big\}$ is disjoint and a fundamental set for $\Gamma$ in 
$S\h$.
\end{proof}

For each $\widetilde \ch \in \widetilde{\fch}_{\choices}$ set $b(\widetilde \ch) \sceq \pr(\widetilde \ch) \cap \partial\pr(\widetilde \ch)$ and let $\CS'(\widetilde \ch)$ be the set of unit tangent vectors in $\widetilde \ch$ that are based on $b(\widetilde \ch)$ but do not point along $\partial \pr(\widetilde \ch)$.

\begin{example}\label{choicesGamma05}
Recall the congruence subgroup $\PGamma_0(5)$ and its cycles in $\fpch\times\PGamma_0(5)$ from Example~\ref{excycles}. We choose $\choices \sceq \big\{ \big(\pch(v_4),h^{-1}\big), \big(\pch(v_1), h_1\big)\big\}$ as set of choices associated to $\fpch$ and set 
\begin{align*}
\wt\ch_1 & \sceq \wt\ch_1\big(\pch(v_4), h^{-1}\big), & \wt\ch_4 & \sceq \wt\ch_1\big(\pch(v_1), h_1\big),
\\
\wt\ch_2 & \sceq \wt\ch_2\big(\pch(v_4), h^{-1}\big), & \wt\ch_5 & \sceq \wt\ch_2\big(\pch(v_1), h_1\big),
\\
\wt\ch_3 & \sceq \wt\ch_3\big(\pch(v_4), h^{-1}\big), & \wt\ch_6 & \sceq \wt\ch_3\big(\pch(v_1), h_1\big),
\end{align*}
as well as
\[
 \CS'_j \sceq \CS'\rueck\big(\wt\ch_j\big)
\]
for $j=1,\ldots, 6$. Figure~\ref{CSGamma05} shows the sets $\CS'_j$.
\begin{figure}[h]
\begin{center}
\includegraphics*{Gamma05.4} 
\end{center}
\caption{The sets $\CS'_j$.}\label{CSGamma05}
\end{figure}
\end{example}

\begin{lemma}\label{tesselationSH}
Let $\pch_1,\pch_2$ be two basal precells in $\h$ and let $g\in \Gamma$ such that $g.\vc(\widetilde{\pch_1}) \cap \vc(\widetilde{\pch_2}) \not=\emptyset$. Suppose that $\pch_1\not=\pch_2$ or $g\not=\id$. Then
\[
g.\vc\big(\widetilde{\pch_1}\big) \cap \vc\big(\widetilde{\pch_2}\big) \subseteq g.\vb\big(\widetilde{\pch_1}\big) \cap \vb\big(\widetilde{\pch_2}\big).
\]
Moreover, suppose that $\pch_1$ is cuspidal or non-cuspidal and that there is a unit tangent vector $w\in\vc(\widetilde{\pch_1})$ pointing into a non-vertical side $S_1$ of $\pch_1$ such that $gw\in \vc(\widetilde{\pch_2})$. Then $g.w$ points into a non-vertical side $S_2$ of $\pch_2$ and $g.S_1=S_2$.
\end{lemma}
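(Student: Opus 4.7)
For the first assertion I will proceed by contradiction. Pick $v\in g\vc(\widetilde{\pch_1})\cap\vc(\widetilde{\pch_2})$ and suppose $v\notin\vb(\widetilde{\pch_2})$, so $v\in\widetilde{\pch_2}$ and there is $\eps>0$ with $\gamma_v((0,\eps))\subseteq\pch_2^\circ$. Since $g^{-1}v\in\widetilde{\pch_1}\cup\vb(\widetilde{\pch_1})$, either $g^{-1}\gamma_v$ runs into $\pch_1^\circ$ or along $\partial\pch_1$, so in either case $g^{-1}\gamma_v((0,\eps'))\subseteq\pch_1$ for some $\eps'>0$. Applying $g$, the point $\gamma_v(t)$ lies in $g\pch_1\cap\pch_2^\circ$ for all small $t>0$. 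I will then invoke Proposition~\ref{intersectionprecells}: the three allowed forms of $g\pch_1\cap\pch_2$ are $g\pch_1=\pch_2$ with $g\in\Gamma_\infty$, a common side, or a common vertex; the last two are contained in $\partial\pch_2$ and contradict $\gamma_v(t)\in\pch_2^\circ$, while the first, together with the uniqueness clause of Corollary~\ref{props_preH}, forces $g=\id$ and $\pch_1=\pch_2$, contrary to the hypothesis. Hence $v\in\vb(\widetilde{\pch_2})$; the symmetric argument applied with $\pch_1$ and $\pch_2$ swapped and $v$ replaced by $g^{-1}v$ yields $g^{-1}v\in\vb(\widetilde{\pch_1})$.

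For the second assertion, let $w$ and $S_1$ be as in the statement. By the first assertion $gw\in\vb(\widetilde{\pch_2})$, hence $g\gamma_w((0,\eps))\subseteq\partial\pch_2$ for some $\eps>0$. Since $\pch_2$ is a convex polyhedron with finitely many sides and any two distinct sides meet in at most a single vertex, I may shrink $\eps$ so that the connected arc $g\gamma_w((0,\eps))$ avoids every vertex of $\pch_2$ other than possibly its starting point $\pr(gw)$; this arc therefore lies entirely inside a single side $S_2$ of $\pch_2$. The hypothesis that $w$ points into $S_1$ gives $\gamma_w((0,\eps'))\subseteq S_1$, so the non-trivial arc $g\gamma_w((0,\min(\eps,\eps')))$ is contained in both $S_2$ and $gS_1$.

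The remaining task is to identify $S_2$ with $gS_1$ and to verify that $S_2$ is non-vertical, which I will accomplish by a final application of Proposition~\ref{intersectionprecells} to the non-empty intersection $g\pch_1\cap\pch_2$ (it contains $\pr(gw)$). The case $g\pch_1=\pch_2$ is excluded as before, and a single common vertex is excluded because the arc above is non-trivial; hence $g\pch_1\cap\pch_2$ is a common side $T$. As $S_2$ and $T$ are sides of $\pch_2$ sharing the non-trivial arc, and distinct sides meet in at most a vertex, $S_2=T$; the same reasoning applied to $g\pch_1$ gives $gS_1=T$, so $gS_1=S_2$. Finally, the verticality clause of Proposition~\ref{intersectionprecells} states that $S_1=g^{-1}T$ is vertical as a side of $\pch_1$ if and only if $T=S_2$ is vertical as a side of $\pch_2$, so the non-verticality of $S_1$ transfers to $S_2$.

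The main subtlety is precisely this last verticality issue: because $g$ can a priori carry a non-vertical geodesic to a vertical one (whenever $g^{-1}\infty$ lies on the complete geodesic underlying $S_1$), one cannot argue directly that $gS_1$ is non-vertical from the geometry of $S_1$ alone. It is essential to exploit the side-correspondence built into Proposition~\ref{intersectionprecells}, which encodes precisely the compatibility of vertical sides across the tessellation of $H$ by $\Gamma$-translates of precells.
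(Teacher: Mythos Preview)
Your proof is correct and follows essentially the same approach as the paper: both parts rest on Proposition~\ref{intersectionprecells} (the trichotomy for $g\pch_1\cap\pch_2$) together with Corollary~\ref{props_preH} to exclude the case $g\pch_1=\pch_2$. Your first part is actually more explicit than the paper's, which jumps directly from $g\pch_1\cap\pch_2\subseteq g\partial\pch_1\cap\partial\pch_2$ to the visual-boundary inclusion; your contradiction argument spells out why a vector in $\widetilde{\pch_2}$ would produce interior points of $\pch_2$ in the intersection. For the second part the paper is slightly more direct---it identifies the common side as $S_1$ in one step rather than introducing an auxiliary $S_2$ and $T$---but your route via the verticality clause of Proposition~\ref{intersectionprecells} is equivalent and your remark about why verticality cannot be read off $gS_1$ alone is well taken.
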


\begin{proof}
We have $\pr(g.\vc(\widetilde{\pch_1})) = g.\pr(\vc(\widetilde{\pch_1})) = g.\pch_1$ and $\pr(g.\vb(\widetilde{\pch_1})) = g.\partial \pch_1$, and likewise for $\pr(\vc(\widetilde{\pch_2})) = \pch_2$ and $\pr(\vb(\widetilde{\pch_2}))=\partial\pch_2$. From 
\[
g.\vc(\wt{\pch_1})\cap \vc(\wt{\pch_2})\not=\emptyset
\]
then follows that $g.\pch_1\cap\pch_2\not=\emptyset$. By Proposition~\ref{intersectionprecells}, either $g.\pch_1=\pch_2$ and $g\in\Gamma_\infty$ or 
$g.\pch_1\cap \pch_2 \subseteq g.\partial \pch_1 \cap \partial \pch_2$. 
Assume for contradiction that $g.\pch_1=\pch_2$ with $g\in\Gamma_\infty$. Since $\pch_1$ and $\pch_2$ are basal, Corollary~\ref{props_preH} shows that $g=\id$ and $\pch_1=\pch_2$. This contradicts the hypotheses of the lemma. Hence $g.\pch_1\cap\pch_2\subseteq g.\partial\pch_1 \cap \partial\pch_2$ and therefore 
\[
g.\vc(\widetilde{\pch_1}) \cap \vc(\widetilde{\pch_2}) \subseteq g.\vb(\widetilde{\pch_1}) \cap \vb(\widetilde{\pch_2}).
\] 
Let $\pch_1$ and $w$ be as in the claim. Further let $\gamma$ be the geodesic determined by $w$. Then $g.\gamma$ is the geodesic determined by $g.w$. By definition there exists $\eps>0$ such that $\gamma( (0,\eps) ) \subseteq S_1$ and $g.\gamma( (0,\eps) )\subseteq \pch_2$. Then 
\[
\gamma\big( (0,\eps) \big) \subseteq S_1 \cap g^{-1}. \pch_2\subseteq \pch_1\cap g^{-1}.\pch_2.
\]
Since the sets $\pch_1$ and $g^{-1}.\pch_2$ intersect in more than one point and $\pch_1\not= g^{-1}.\pch_2$, Proposition~\ref{intersectionprecells} states that $\pch_1 \cap g^{-1}.\pch_2$ is a common side of $\pch_1$ and $g^{-1}.\pch_2$. Necessarily, this side is $S_1$. Proposition~\ref{intersectionprecells} shows further that $g.S_1$ is a non-vertical side of $\pch_2$. Thus, $g.w$ points along the non-vertical side $g.S_1$ of $\pch_2$.
\end{proof}

\begin{prop}\label{ncSH}
Let $(\pch,h_\pch)\in\choices$ and suppose that $\pch$ is a non-cuspidal precell in $\h$. Let $\big((\pch_j,h_j)\big)_{j=1,\ldots,k}$ be the cycle in $\fpch\times\Gamma$ determined by $(\pch,h_\pch)$. For each $m=1,\ldots, \cyl(\pch)$ we have $b(\widetilde \ch_m(\pch,h_\pch)) = (h_m^{-1}.\infty,\infty)$ and 
\[
\pr\big( \widetilde \ch_m(\pch,h_\pch)\big) = \ch(\pch_m)^\circ \cup (h_m^{-1}.\infty,\infty).
\]
Moreover, $\CS'(\widetilde \ch_m(\pch,h_\pch))$ is the set of unit tangent vectors based on $(h_m^{-1}.\infty,\infty)$ that point into $\ch(\pch_m)^\circ$, and $\pr(\CS'(\widetilde \ch_m(\pch,h_\pch))) = b(\widetilde \ch_m(\pch,h_\pch))$.
\end{prop}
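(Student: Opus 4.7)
The plan is to reduce everything to a bookkeeping exercise over the partitions $W^{(n)}_{l,z}$ introduced in Construction~\ref{cellsSH}, and then to combine this with the essentially disjoint decomposition $\ch(\pch_m)=g_m\ch(\pch)=\bigcup_{l=1}^{k}g_mg_l^{-1}\pch_l$ furnished by Propositions~\ref{nccells}(iii) and \ref{propsequences}. First I would compute, directly from the three cases in the construction, that for $l\in\{1,\dots,k\}$ and $n\in\{1,\dots,k\}$
\[
\pr\big(\wt\pch_{l,n}\big)=
\begin{cases}
\pch_l\setminus[h_{l-1}s_{l-1},\infty) & \text{if } n\equiv 1\pmod k,\\
\pch_l\setminus[s_l,\infty) & \text{if } n\equiv 2\pmod k,\\
\pch_l\setminus\big([s_l,\infty)\cup[h_{l-1}s_{l-1},\infty)\big) & \text{otherwise.}
\end{cases}
\]
The point is that at every generic base point $z\in\pch_l^\circ\cup(h_{l-1}s_{l-1},g_lv]\cup[g_lv,s_l)$ the set $W^{(n)}_{l,z}$ is non-empty for \emph{every} $n$, while on each of the two vertical segments a single index absorbs all of $\wt\fd_z(\pch_l)$ and the remaining indices are empty. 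Projecting $\wt\ch_m(\pch,h_\pch)$ term by term then gives
\[
\pr\big(\wt\ch_m(\pch,h_\pch)\big)=\bigcup_{l=1}^{k}g_mg_l^{-1}\,\pr\big(\wt\pch_{l,\,l-m+1\bmod k}\big),
\]
and on every piece the projection contributes the interior together with both non-vertical sides, so by the essentially disjoint decomposition of $\ch(\pch_m)$ the union already contains $\ch(\pch_m)^\circ$.

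The delicate point is controlling which half-lines on $\partial\ch(\pch_m)$ survive. Proposition~\ref{nccells}(ii) writes each side of $\ch(\pch_m)$ as the union of a transported right vertical side of some $\pch_l$ and a transported left vertical side of $\pch_{l+1}$, joined at the summit $g_mg_l^{-1}s_l$; in particular the two vertical sides adjacent to $\infty$ are
\[
(h_m^{-1}\infty,\infty)=[s_m,\infty)\cup h_m^{-1}[h_ms_m,\infty),\quad(h_{m-1}\infty,\infty)=[h_{m-1}s_{m-1},\infty)\cup h_{m-1}[s_{m-1},\infty).
\]
Only $l=m$ gives $n=1$ (retaining $[s_m,\infty)$ and discarding $[h_{m-1}s_{m-1},\infty)$), only $l=m+1$ gives $n=2$ (retaining $h_m^{-1}[h_ms_m,\infty)$ and discarding $h_m^{-1}[s_{m+1},\infty)$), and every remaining $l$ produces $n\in\{3,\dots,k\}$ so both vertical sides of the corresponding piece are stripped away. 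Matching the discarded half-lines in pairs across adjacent pieces shows that every internal boundary between consecutive pieces is erased from both sides (so it lies in $\ch(\pch_m)^\circ$ anyway), that every outer side of $\ch(\pch_m)$ other than $(h_m^{-1}\infty,\infty)$ is discarded from both its adjacent pieces, and that both halves of $(h_m^{-1}\infty,\infty)$ survive. This yields $\pr(\wt\ch_m(\pch,h_\pch))=\ch(\pch_m)^\circ\cup(h_m^{-1}\infty,\infty)$.

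The remaining assertions are then quick. Since $\ch(\pch_m)^\circ$ is open in $H$ and $(h_m^{-1}\infty,\infty)$ is one-dimensional, the topological closure of $\pr(\wt\ch_m(\pch,h_\pch))$ in $H$ is $\ch(\pch_m)$ and its interior is $\ch(\pch_m)^\circ$, so $\partial\pr(\wt\ch_m(\pch,h_\pch))=\partial\ch(\pch_m)$ and hence $b(\wt\ch_m(\pch,h_\pch))=(h_m^{-1}\infty,\infty)$. To identify $\CS'(\wt\ch_m(\pch,h_\pch))$, observe that at a base point $z$ on the upper half $[s_m,\infty)$ the construction forces $\wt\ch_m|_z=W^{(1)}_{m,z}=\wt\fd_z(\pch_m)$, and on the lower half $h_m^{-1}[h_ms_m,\infty)$ it forces $\wt\ch_m|_z=h_m^{-1}W^{(2)}_{m+1,h_mz}=h_m^{-1}\wt\fd_{h_mz}(\pch_{m+1})$. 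Because $\wt\fd_w(\pch_l)\subseteq\vc(\wt\pch_l)$ consists only of vectors pointing into $\pch_l^\circ$ or along $\partial\pch_l$, and because $\wt\pch_l\subseteq\wt\fd$, the vectors in $\wt\ch_m|_z$ not tangent to $\partial\ch(\pch_m)$ are exactly the vectors at $z$ pointing into $\ch(\pch_m)^\circ$. Since such a vector exists at every $z\in(h_m^{-1}\infty,\infty)$, we also get $\pr(\CS'(\wt\ch_m(\pch,h_\pch)))=(h_m^{-1}\infty,\infty)$.

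The hard part is the combinatorial matching of the discarded vertical half-lines; once the table for $\pr(\wt\pch_{l,n})$ and the two distinguished indices $l=m$, $l=m+1$ are isolated from the side decomposition in Proposition~\ref{nccells}(ii), every other half-line appears as the right side of one piece and the transported left side of the next, and both occurrences carry indices $n\geq 3$ (modulo~$k$), so the cancellations are forced. The only case that needs a separate remark is that the cycle length $k$ satisfies $k\geq 3$ (two isometric spheres cannot meet with angle $\pi$ at an inner vertex of $\mc K$), which ensures the three indices $\{1,2,n\geq 3\}$ really are distinct mod $k$.
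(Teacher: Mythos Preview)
Your overall scheme matches the paper's: compute $\pr(\wt\pch_{l,n})$ casewise from the construction, reassemble via $\wt\ch_m=\bigcup_l g_mg_l^{-1}\wt\pch_{l,l-m+1}$, and read off $b(\wt\ch_m)$ and $\CS'(\wt\ch_m)$. The projection bookkeeping and the side-matching via Proposition~\ref{nccells}(ii) are fine. However, your identification of $\CS'(\wt\ch_m)$ has a real gap.

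The sentence ``because $\wt\pch_l\subseteq\wt\fd$'' does not give what you use it for. The fiber $\wt\fd_z(\pch_l)$ is not $\wt\fd$ restricted to $z$; it is $\wt E_z(\pch_l)$ \emph{minus} all earlier $\wt E_z(\pch_{j'})$ in the enumeration. To conclude that every vector at $z$ pointing into $\pch_l^\circ$ survives this subtraction you need $(\wt\pch_l)_z\cap\vc(\wt\pch_{j'})=\emptyset$ for $j'\neq l$, which is exactly the content of Lemma~\ref{tesselationSH} together with Lemma~\ref{boundarypreSH}; the paper isolates this as the inclusion $(\wt\pch_l)_z\subseteq\wt\fd_z(\pch_l)$. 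Without it you cannot even assert $\wt\fd_z(\pch_l)\neq\emptyset$ on the vertical sides, which your projection formula already uses.

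More seriously, at the summit $z=s_m$ there is a unit tangent vector $w$ pointing along the non-vertical side $[s_m,g_mv]$. This $w$ points into $\ch(\pch_m)^\circ$, but it lies in $\vb(\wt\pch_m)\cap\vb(h_m^{-1}\wt\pch_{m+1})$, not in $\wt\pch_m$ or $h_m^{-1}\wt\pch_{m+1}$. So neither ``$\wt\pch_l\subseteq\wt\fd$'' nor the visual-closure containment tells you whether $w$ lands in $\wt\fd_{s_m}(\pch_m)\cup h_m^{-1}\wt\fd_{h_ms_m}(\pch_{m+1})$; that depends on whether $w\in\wt\fd$ and, if not, on where its $\Gamma$-representative sits. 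The paper handles this by a two-case argument: if $w\in\wt\fd$, Lemma~\ref{tesselationSH} forces $w\notin\vc(\wt\pch_k)$ for $k\neq m$, hence $w\in\wt\fd_{s_m}(\pch_m)$; if $w\notin\wt\fd$, one uses Proposition~\ref{nccells}\eqref{ncc4} to identify the unique $\Gamma$-translate $h_mw$ landing in $\vc(\wt\pch_{m+1})$, and again Lemma~\ref{tesselationSH} to place it in $\wt\fd_{h_ms_m}(\pch_{m+1})$. Your argument skips this entirely, so the claimed equality for $\CS'(\wt\ch_m)$ is unproven at the one point where it is delicate.
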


\begin{proof}
We use the notation from Construction~\ref{cellsSH}. Let $j\in\{1,\ldots, \cyl(\pch)\}$ and $z\in\pch_j$. At first we show that $\wt\fd_z(\pch_j)\not=\emptyset$. For each choice of $\wt\fd$ we have $\wt{\pch_j}\subseteq \wt\fd\cap \vc(\wt{\pch_j})$. Remark~\ref{bijec} states that $\pr(\wt{\pch_j})=\pch_j$. Hence $\wt E_z(\pch_j)\cap \wt\pch_j\not=\emptyset$. More precisely, if $\big(\wt{\pch_j}\big)_z$ denotes the set of unit tangent vectors based on $z$ that point into $\pch_j^\circ$, then $\big(\widetilde{\pch_j}\big)_z = \wt E_z(\pch_j) \cap \wt{\pch_j}$. The set $\big(\wt{\pch_j}\big)_z$ is non-empty, since $\pch_j$ is convex with non-empty interior. Let $k\in J$ such that $\pch_k\not=\pch_j$. Then 
\[
\wt E_z(\pch_k)\cap \wt E_z(\pch_j) \subseteq \vc\big(\wt{\pch_k}\big) \cap \vc\big(\wt{\pch_j}\big) \subseteq \vb\big(\wt{\pch_k}\big)\cap \vb\big(\wt{\pch_j}\big),
\]
where the last inclusion follows from Lemma~\ref{tesselationSH}. Since $\wt{\pch_j} \cap \vb(\wt{\pch_j}) = \emptyset$ by Lemma~\ref{boundarypreSH}, it follows that 
\[
\big(\wt{\pch_j}\big) \cap \wt E_z(\pch_k) = \wt{\pch_j}\cap \wt E_z(\pch_j) \cap \wt E_z(\pch_k) \subseteq\wt{\pch_j} \cap \vb\big(\wt{\pch_j}\big) = \emptyset.
\]
Hence 
\begin{equation}\label{haveelem}
\big(\wt{\pch_j}\big)_z \subseteq \wt\fd_z(\pch_j).
\end{equation}
Let $j\in\{1,\ldots, \cyl(\pch)\}$ set $\widetilde \ch_j \sceq \widetilde \ch_j(\pch,h_\pch)$ and
\[
T_j \sceq \pch_j^\circ \cup (h_{j-1}.s_{j-1}, g_j.v] \cup [g_j.v, s_j).
\]
Let $m\in\{1,\ldots, k\}$. Then
\begin{align*}
\pr\big( \widetilde \pch_{j,m}\big) & = \bigcup_{z\in \pch_j} \pr\big(W_{j,z}^{(m)}\big)
\\
& = \bigcup_{z\in T_j} \pr\big(W_{j,z}^{(m)}\big) \cup \bigcup_{z\in [s_j,\infty)} \pr\big(W_{j,z}^{(m)}\big) \cup \bigcup_{ z\in [h_{j-1}.s_{j-1}, \infty)} \pr\big(W_{j,z}^{(m)}\big)
\\
& = 
\begin{cases}
T_j & \text{for $m\notin\{1,2\}$,}
\\
T_j \cup [s_j,\infty) & \text{for $m=1$,}
\\
T_j \cup [h_{j-1}.s_{j-1},\infty) & \text{for $m=2$.}
\end{cases}
\end{align*}
Note that necessarily $k\geq 3$. Then 
\begin{align*}
\widetilde \ch_j & = \bigcup_{l=1}^k g_jg_l^{-1}. \widetilde \pch_{l,l-j+1}
\\
& = \bigcup_{l=1}^{j-1} g_jg_l^{-1}. \widetilde \pch_{l,l-j+1} \cup g_jg_j^{-1}. \widetilde \pch_{j,1} \cup g_jg_{j+1}^{-1}. \widetilde \pch_{j+1,2} \cup \bigcup_{l=j+2}^k g_jg_l^{-1}. \widetilde \pch_{l,l-j+1}.
\end{align*}
Since $l-j+1 \not\equiv 1,2 \mod k$ for $l\in\{1,\ldots, j-1\} \cup \{ j+2,\ldots, k\}$, it follows that 
\begin{align*}
\pr\big(\widetilde \ch_j\big) & = \bigcup_{l=1}^{j-1} g_jg_l^{-1}. \pr\big(\widetilde \pch_{l,l-j+1}\big) \cup \pr\big(\widetilde \pch_{j,1}\big) \cup h_j^{-1}.\pr\big(\widetilde \pch_{j+1,2}\big) 
\\
& \quad\ \cup \bigcup_{l=j+2}^k g_jg_l^{-1}. \pr\big( \widetilde \pch_{l,l-j+1}\big)
\\
& = \bigcup_{l=1}^{j-1} g_jg_l^{-1}. T_l \cup T_j \cup [s_j,\infty) \cup h_j^{-1}. T_{j+1} \cup h_j^{-1}. [h_j.s_j,\infty) \cup \bigcup_{l=j+2}^k g_jg_l^{-1}. T_l
\\
& = \bigcup_{l=1}^k g_jg_l^{-1}.T_l \cup (h_j^{-1}.\infty, \infty).
\end{align*}
For the last equality we use that $\pr_\infty(s_j) = h_j^{-1}.\infty$ by Lemma~\ref{isoprops}. Hence $s_j$ is contained in the geodesic segment $\pr_\infty^{-1}(h_j^{-1}.\infty)\cap \h = (h_j^{-1}.\infty,\infty)$, which shows that the union of the two geodesic segments $[s_j,\infty)$ and $[s_j,h_j^{-1}.\infty)$ is indeed $(h_j^{-1}.\infty,\infty)$.
Proposition~\ref{nccells} implies that 
\[
\pr\big(\widetilde \ch_j\big) = \ch(\pch_j)^\circ \cup (h_j^{-1}.\infty,\infty).
\]
This shows that $b(\widetilde \ch_j) = (h_j^{-1}.\infty,\infty)$. The set of unit tangent vectors in $\widetilde \ch_j$ that are based on $b(\widetilde \ch_j)$ is the disjoint union 
\begin{align*}
D'_j & \sceq \bigcup_{z\in [s_j,\infty)} W_{j,z}^{(1)} \cup h_j^{-1}. \bigcup_{z\in [h_j.s_j,\infty)} W_{j+1,z}^{(2)} 
\\
& = \bigcup_{z\in [s_j,\infty)} \widetilde \fd_z(\pch_j) \cup h_j^{-1}. \bigcup_{z\in [h_j.s_j,\infty)} \widetilde \fd_z(\pch_{j+1}).
\end{align*}
To show that $\CS'(\widetilde \ch_j)$ is the set of unit tangent vectors based on $b(\widetilde \ch_j)$ that point into $\ch(\pch_j)^\circ$ we have to show that $D'_j$ contains all unit tangent vectors based on $[s_j,\infty)$ that point into $\pch_j^\circ$ and all unit tangent vectors based on $(h_j^{-1}.\infty, s_j]$ that point into $h_j^{-1}. \pch_{j+1}^\circ$ and the unit tangent vector which is based at $s_j$ and points into $[s_j,g_jv]$.  If $w$ is a unit tangent vector based on $[h_j.s_j,\infty)$ that points into $\pch_{j+1}^\circ$, then, clearly, $h_j^{-1}.w$ is a unit tangent vector based on $[s_j,h_j^{-1}.\infty)$ that points into $h_j^{-1}.\pch_{j+1}^\circ$. Hence, \eqref{haveelem} shows that $D'_j$ contains all unit tangent vectors of the first two kinds mentioned above. Let $w$ be the unit tangent vector with $\pr(w) = s_j$ which points into $[s_j,g_j.v]$. 

Suppose first that $w\in\wt\fd$. Then $w\in \vc(\wt{\pch_j})\cap\wt\fd$ and therefore $w\in \wt E_{s_j}(\pch_j)$. Let $k\in J$ with $\pch_k\not=\pch_j$. Assume for contradiction that $w\in \vc(\wt{\pch_k})$. Lemma~\ref{tesselationSH} implies that $[s_j,g_j.v]$ is a non-vertical side of $\pch_k$, which is a contradiction. Hence $w\notin \wt E_{s_j}(\pch_k)$. Therefore, $w\in \wt\fd_{s_j}(\pch_j)$ and hence $w\in D'_j$.

Suppose now that $w\notin\widetilde\fd$. Then there exists a unique $g\in\Gamma\mminus\{\id\}$ such that $gw\in\widetilde \fd$. Let $\pch$ be a basal precell in $\h$ such that $g.w\in\vc(\widetilde \pch) \cap \widetilde \fd$. Lemma~\ref{tesselationSH} shows that $g.[s_j,g_j.v]$ is a non-vertical side $S$ of $\pch$. Thus, $g^{-1}.\pch \cap \ch(\pch_j)^\circ \not= \emptyset$. By Proposition~\ref{nccells}\eqref{ncc4} there is a unique $l\in \{1,\ldots,k\}$ such that $g=g_lg_j^{-1}$ and $\pch=\pch_l$. Now $[s_j,g_j.v]$ is mapped by $h_j$ to the non-vertical side $[h_j.s_j, g_{j+1}.v]$ of $\pch_{j+1}$. Thus, $g=h_j$ and $\pch = \pch_{j+1}$. Then $h_j.w \in \vc(\wt{\pch_{j+1}})$. As before we see that $w\in D'_j$.  Moreover, $\pr(\CS'(\widetilde \ch_j)) = b(\widetilde \ch_j)$.
\end{proof}

Analogously to Proposition~\ref{ncSH} one proves the following two propositions.

\begin{prop}\label{ccSH}
Let $(\pch,h_\pch)\in\choices$ and suppose that $\pch$ is a cuspidal precell in $\h$. Let $v$ be the vertex of $\mc K$ to which  $\pch$ is attached and let $\big( (\pch, g), (\pch', g^{-1})\big)$ be the  cycle in $\fpch\times\Gamma$ determined by $(\pch,h_\pch)$. Then 
\[
b\big(\widetilde \ch_1(\pch,h_\pch)\big) = (v,\infty), \ b\big(\widetilde \ch_2(\pch,h_\pch)\big) = (g.v,\infty),\ b\big(\widetilde \ch_3(\pch,h_\pch)\big) = (g^{-1}.\infty,\infty)
\]
and
\begin{align*}
\pr\big(\widetilde \ch_1(\pch,h_\pch)\big) & = \ch(\pch)^\circ \cup (v,\infty),
\\
\pr\big(\widetilde \ch_2(\pch,h_\pch)\big) & = \ch(\pch')^\circ \cup (g.v,\infty),
\\
\pr\big(\widetilde \ch_3(\pch,h_\pch)\big) & = \ch(\pch)^\circ \cup (g^{-1}.\infty,\infty).
\end{align*}
Moreover, $\CS'(\widetilde \ch_m(\pch,h_\pch))$ is the set of unit tangent vectors based on $b(\widetilde \ch_m(\pch,h_\pch))$ that point into $\pr(\widetilde \ch_m(\pch,h_\pch))^\circ$, and $\pr(\CS'(\widetilde \ch_m(\pch,h_\pch))) = b(\widetilde \ch_m(\pch,h_\pch))$ for $m=1,2,3$.
\end{prop}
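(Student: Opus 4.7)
The plan is to mirror the strategy used for non-cuspidal cells in Proposition~\ref{ncSH}, simplified by the fact that the cycle $\big((\pch,g),(\pch',g^{-1})\big)$ has length two. First I would unwind Construction~\ref{cellsSH} and compute $\pr(\wt\pch_{j,m})$ for each $j\in\{1,2\}$ and $m\in\{1,2,3\}$ directly from the partition rules. On the "free" region $\pch_j^\circ \cup (g_j v, g_j s)$ all three classes receive non-empty contributions, whereas on the vertical side $(g_j v,\infty)$ the entire fibre $\wt\fd_z(\pch_j)$ is assigned to $W^{(1)}$ and on $[g_j s,\infty)$ it is assigned to $W^{(3)}$. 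Assembling via
\[
\wt\ch_1 = \wt\pch_{1,1}\cup g^{-1}\wt\pch_{2,2},\quad \wt\ch_2 = g\wt\pch_{1,2}\cup\wt\pch_{2,1},\quad \wt\ch_3 = \wt\pch_{1,3}\cup g^{-1}\wt\pch_{2,3},
\]
and using $\pr_\infty(s)=g^{-1}\infty$ from Lemma~\ref{isomsummit} so that the segments $[s,\infty)$ and $g^{-1}[gs,\infty)=[s,g^{-1}\infty)$ fit together along the common point $s$ to form $(g^{-1}\infty,\infty)$, one obtains
\begin{align*}
\pr(\wt\ch_1) &= \pch^\circ \cup g^{-1}(\pch')^\circ \cup (v,s) \cup (v,\infty),\\
\pr(\wt\ch_2) &= g\pch^\circ \cup (\pch')^\circ \cup (gv,gs) \cup (gv,\infty),\\
\pr(\wt\ch_3) &= \pch^\circ \cup g^{-1}(\pch')^\circ \cup (v,s) \cup (g^{-1}\infty,\infty).
\end{align*}
By Proposition~\ref{ccells} we have $\ch(\pch)=\pch\cup g^{-1}\pch'$ and $\ch(\pch')=g\pch\cup\pch'$, with shared internal non-vertical sides $[v,s]$ resp.\ $[gv,gs]$; taking interiors in $H$ identifies the first three terms on each line with $\ch(\pch)^\circ$ resp.\ $\ch(\pch')^\circ$, which yields the claimed formulas for $\pr(\wt\ch_m)$.

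The identification of $b(\wt\ch_m)$ with the remaining vertical segment is then immediate: $\ch(\pch)$ and $\ch(\pch')$ are convex polyhedrons with non-empty interior, so the topological interior of $\pr(\wt\ch_m)$ in $H$ equals $\ch(\cdot)^\circ$, while each of $(v,\infty)$, $(gv,\infty)$, $(g^{-1}\infty,\infty)$ is a side of the respective cell and hence lies in $\partial\ch(\cdot)$. To describe $\CS'(\wt\ch_m)$ I would argue both inclusions. For the first: any $w\in\wt\ch_m$ based at some $z\in b(\wt\ch_m)$ lies in a piece $\wt\fd_z(\pch_j)\subseteq\vc(\wt{\pch_j})=\wt{\pch_j}\cup\vb(\wt{\pch_j})$, so $w$ either points into $\pch_j^\circ\subseteq\ch(\cdot)^\circ$ or is tangent to $\partial\pch_j$; at such $z$ the component of $\partial\pch_j$ through $z$ is the vertical side coinciding with $\partial\ch(\cdot)$, so removing vectors tangent to $\partial\pr(\wt\ch_m)$ discards exactly the latter. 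Conversely, a vector $w$ based at $z\in b(\wt\ch_m)$ pointing into $\ch(\cdot)^\circ$ either points into $\pch_j^\circ$ for $\pch_j=\pch$ or $\pch_j=\pch'$ (after applying the appropriate $g^{\pm 1}$), or points along the interior edge $(v,s)$ resp.\ $(gv,gs)$; in each case Proposition~\ref{precellsSH} places the relevant translate of $w$ in $\wt\fd$ and in $\wt\fd_z(\pch_j)$, and the partition rule on the vertical side containing $z$ assigns the entire fibre to the class corresponding to $\wt\ch_m$, so $w\in\wt\ch_m$. The identity $\pr(\CS'(\wt\ch_m))=b(\wt\ch_m)$ follows since $\ch(\cdot)^\circ$ lies on one definite side of the vertical geodesic through any $z\in b(\wt\ch_m)$, guaranteeing at least one qualifying vector at each such base point.

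The main obstacle will be the careful bookkeeping near the vertex $s$ (and symmetrically near $gs$), where the two basal-precell translates comprising $\ch(\pch)$ (resp.\ $\ch(\pch')$) meet along the internal non-vertical edge $[v,s]$ (resp.\ $[gv,gs]$). At $z=s\in b(\wt\ch_3)$ a vector tangent to $(v,s)$ points into $\ch(\pch)^\circ$ yet lies in $\vb(\wt\pch)\cap g^{-1}\vb(\wt{\pch'})$; one must check that exactly one of $w$ or $gw$ is placed in the fundamental set $\wt\fd$ by the construction, that the resulting fibre membership puts it into the correct partition class $W^{(3)}$, and consequently that it is not double-counted between $\wt\pch_{1,3}$ and $g^{-1}\wt\pch_{2,3}$. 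Once this case is handled, the bijective correspondence between $\CS'(\wt\ch_m)$ and the set of unit tangent vectors based on $b(\wt\ch_m)$ pointing into $\pr(\wt\ch_m)^\circ$ is complete.
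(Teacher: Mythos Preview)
Your proposal is correct and follows exactly the approach the paper intends: the paper's own proof is simply the sentence ``Analogously to Proposition~\ref{ncSH} one proves the following two propositions,'' and what you have written is precisely that analogy carried out in detail, including the correct identification of the summit $s$ as the only point requiring the case distinction $w\in\wt\fd$ versus $gw\in\wt\fd$ via Lemma~\ref{tesselationSH}. One minor imprecision: in your ``first inclusion'' paragraph you say that at $z\in b(\wt\ch_m)$ the component of $\partial\pch_j$ through $z$ is the single vertical side coinciding with $\partial\ch(\cdot)$, but at $z=s$ (for $m=3$) two sides of $\pch$ meet, namely $[v,s]$ and $[s,\infty]$, and only the latter lies in $\partial\ch(\pch)$; this is exactly the special case you flag and resolve in your final paragraph, so the argument is complete once you treat it there rather than earlier.
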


\begin{prop}\label{stripSH}
Let $(\pch,h_\pch)\in\choices$ and suppose that $\pch$ is a strip precell. Let $v_1,v_2$ be the two (infinite) vertices of $\mc K$ to which  $\pch$ is attached and suppose that $v_1 < v_2$. For $m=1,2$ we have $b(\widetilde \ch_m(\pch,h_\pch)) = (v_m,\infty)$ and 
\[
\pr\big( \widetilde \ch_m(\pch,h_\pch) \big) = \ch(\pch)^\circ \cup (v_m,\infty) = \pch^\circ \cup (v_m,\infty).
\]
Moreover, $\CS'(\widetilde \ch_m(\pch,h_\pch))$ is the set of unit tangent vectors based on $(v_m,\infty)$ that point into $\ch(\pch)^\circ$, and $\pr(\CS'(\widetilde \ch_m(\pch,h_\pch))) = b(\widetilde \ch_m(\pch,h_\pch))$.
\end{prop}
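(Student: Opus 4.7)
The plan is to proceed exactly as in the proof of Proposition~\ref{ncSH}, but with the simpler combinatorics that arise for strip precells. Recall that for a strip precell $\pch$ attached to infinite vertices $v_1 < v_2$, Construction~\ref{cellsH} gives $\ch(\pch) = \pch$, and $\pch$ itself is the vertical strip $\pr_\infty^{-1}([v_1,v_2])\cap H$, whose boundary in $H$ is $(v_1,\infty)\cup(v_2,\infty)$. In particular $\ch(\pch)^\circ = \pch^\circ$, which immediately yields the second equality $\pr(\widetilde\ch_m(\pch,h_\pch)) = \ch(\pch)^\circ \cup (v_m,\infty) = \pch^\circ\cup (v_m,\infty)$ once the first is known.

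First I would compute $\pr(\widetilde\ch_m(\pch,h_\pch))$ directly from Construction~\ref{cellsSH}. For $m=1$ the three cases in the construction give
\[
\pr\big(\widetilde\ch_1(\pch,h_\pch)\big) = \bigcup_{z\in \pch^\circ} \pr\big(W^{(1)}_z\big) \;\cup\; \bigcup_{z\in(v_1,\infty)} \pr\big(W^{(1)}_z\big)\;\cup\; \bigcup_{z\in(v_2,\infty)} \pr\big(W^{(1)}_z\big).
\]
The last union is empty by definition. For the first two, the corresponding $W^{(1)}_z$ is non-empty: on $\pch^\circ$ this is the chosen partition, and on $(v_1,\infty)$ we have $W^{(1)}_z = \widetilde\fd_z(\pch)$, which is non-empty by precisely the argument of \eqref{haveelem} in the proof of Proposition~\ref{ncSH} (namely $(\widetilde\pch)_z \subseteq \widetilde\fd_z(\pch)$, using Lemmas~\ref{tesselationSH} and~\ref{boundarypreSH} together with the fact that the basal precell $\pch$ is convex with non-empty interior). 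Hence $\pr(\widetilde\ch_1(\pch,h_\pch)) = \pch^\circ \cup (v_1,\infty)$; the analogous computation gives $\pr(\widetilde\ch_2(\pch,h_\pch)) = \pch^\circ\cup(v_2,\infty)$.

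Next I would identify $b(\widetilde\ch_m(\pch,h_\pch)) = \pr(\widetilde\ch_m)\cap\partial\pr(\widetilde\ch_m)$. Since the closure of $\pch^\circ\cup(v_m,\infty)$ is $\pch$ with boundary $(v_1,\infty)\cup(v_2,\infty)$, intersection with $\pch^\circ\cup(v_m,\infty)$ yields exactly $(v_m,\infty)$, proving the claim about $b(\widetilde\ch_m(\pch,h_\pch))$.

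Finally I would analyze $\CS'(\widetilde\ch_m(\pch,h_\pch))$. By definition this is the set of unit tangent vectors in $\widetilde\ch_m$ based on $(v_m,\infty)$ and not pointing along $\partial\pch$. From the construction, the vectors in $\widetilde\ch_1$ based on $(v_1,\infty)$ form the set $\bigcup_{z\in(v_1,\infty)} \widetilde\fd_z(\pch)$. Since $(v_m,\infty)$ is a vertical side of $\pch$ and $\pch$ is convex with non-empty interior, a unit tangent vector based on $(v_m,\infty)$ that does not point along $(v_m,\infty)$ either points into $\pch^\circ$ or out of $\pch$. Using again the argument of \eqref{haveelem} (that $(\widetilde\pch)_z \subseteq \widetilde\fd_z(\pch)$), every unit tangent vector based on $(v_m,\infty)$ that points into $\pch^\circ = \ch(\pch)^\circ$ lies in $\widetilde\fd_z(\pch) = W^{(m)}_z$, hence in $\widetilde\ch_m(\pch,h_\pch)$; conversely, anything in $\widetilde\ch_m$ based on $(v_m,\infty)$ and not pointing along $\partial\pch$ must point into $\pch^\circ$ (it cannot point into the adjacent $\Gamma$-translate without leaving $\widetilde\fd$, because $\widetilde\fd$ is a fundamental set). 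This identifies $\CS'(\widetilde\ch_m(\pch,h_\pch))$ as claimed, and yields $\pr(\CS'(\widetilde\ch_m(\pch,h_\pch))) = (v_m,\infty) = b(\widetilde\ch_m(\pch,h_\pch))$ by inspection. The only mildly delicate point, and the one I expect to require the most care, is the last step: ruling out tangent vectors pointing out of $\pch$, which rests on $\widetilde\fd$ being a fundamental set together with Lemma~\ref{tesselationSH} applied to the adjacent basal precell across $(v_m,\infty)$.
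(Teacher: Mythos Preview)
Your proposal is correct and follows exactly the approach the paper intends (it simply says the proof is analogous to that of Proposition~\ref{ncSH}). One small simplification for the final step: you do not need to invoke the fundamental set property or Lemma~\ref{tesselationSH} to rule out vectors pointing out of $\pch$, since by construction $W^{(m)}_z \subseteq \widetilde\fd_z(\pch) \subseteq \widetilde E_z(\pch) \subseteq \vc(\widetilde\pch)$, and elements of $\vc(\widetilde\pch)$ based on $\partial\pch$ point either into $\pch^\circ$ or along $\partial\pch$ by definition.
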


\begin{cor}\label{iscellH}
Let $\wt\ch\in \wt\fch_\choices$. Then $\ch\sceq\cl(\pr(\wt\ch))$ is a cell in $\h$ and $b(\wt\ch)$ a side of $\ch$. Moreover, $\pr(\wt\ch)=\ch^\circ\cup b(\wt\ch)$ and $\pr(\wt\ch)^\circ=\ch^\circ$.
\end{cor}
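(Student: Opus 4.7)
The plan is to argue by case analysis over the type of the basal precell $\pch$ with $(\pch,h_\pch) \in \choices$ from which $\wt\ch$ is constructed, and to compare the three explicit descriptions of $\pr(\wt\ch)$ provided by Propositions~\ref{ncSH}, \ref{ccSH} and \ref{stripSH} with the boundary structure of cells in $H$ given by Propositions~\ref{nccells} and \ref{ccells} (and Construction~\ref{cellsH} for strip cells).

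First I would unpack which cell in $H$ arises in each situation. If $\pch$ is non-cuspidal and $\big((\pch_j,h_j)\big)_{j=1,\ldots,k}$ is the cycle determined by $(\pch,h_\pch)$, then for $\wt\ch = \wt\ch_m(\pch,h_\pch)$ Proposition~\ref{ncSH} gives $\pr(\wt\ch) = \ch(\pch_m)^\circ \cup (h_m^{-1}\infty,\infty)$ and $b(\wt\ch) = (h_m^{-1}\infty,\infty)$. By Proposition~\ref{nccells}\eqref{ncc3} this latter geodesic segment is exactly the vertical side of $\ch(\pch_m)$ which is the image under $g_m^{-1}$ of $[\infty, h_m^{-1}\infty]$, hence a side of $\ch(\pch_m)$. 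The cuspidal case is handled analogously using Proposition~\ref{ccSH} together with Proposition~\ref{ccells}\eqref{cc2}: the three candidate sets $(v,\infty)$, $(gv,\infty)$, $(g^{-1}\infty,\infty)$ are precisely the vertical sides of $\ch(\pch)$ resp.\@ $\ch(\pch')$. For a strip precell the statement follows directly from Proposition~\ref{stripSH} together with $\ch(\pch)=\pch$ and Remark~\ref{just_def}, since the two vertical sides of $\pch$ are exactly $(v_1,\infty)$ and $(v_2,\infty)$.

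So in every case I obtain a cell $\ch \in \fch$ (namely $\ch(\pch_m)$, $\ch(\pch)$, $\ch(\pch')$, or $\pch$) and a side $S=b(\wt\ch)$ of $\ch$ with $\pr(\wt\ch) = \ch^\circ \cup S$. It remains to identify $\cl(\pr(\wt\ch))$ with $\ch$ and to compute $\pr(\wt\ch)^\circ$. Each cell in $H$ is a convex polyhedron with non-empty interior (Propositions~\ref{nccells}\eqref{ncc1}, \ref{ccells}\eqref{cc1}, and Remark~\ref{just_def}), so $\cl(\ch^\circ)=\ch$ and $S\subseteq \partial\ch$. Therefore
\[
\cl(\pr(\wt\ch)) = \cl(\ch^\circ \cup S) = \cl(\ch^\circ) \cup S = \ch \cup S = \ch,
\]
proving the first two assertions. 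For the interior, $\ch^\circ$ is open in $H$ and contained in $\pr(\wt\ch)$, so $\ch^\circ \subseteq \pr(\wt\ch)^\circ$. Conversely, $S$ is a complete geodesic segment (Proposition~\ref{structureK} together with Propositions~\ref{nccells}\eqref{ncc2}, \ref{ccells}\eqref{cc2}), hence a one-dimensional submanifold of $H$ with empty interior, and $S\subseteq \partial\ch$ is disjoint from $\ch^\circ$; this forces $\pr(\wt\ch)^\circ \subseteq \ch^\circ$, yielding $\pr(\wt\ch)^\circ = \ch^\circ$.

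The bookkeeping across the three precell types is the main mild obstacle; no genuine difficulty arises since all the structural information needed has been accumulated in the preceding propositions, and the topological step at the end only uses convexity and the codimension-one nature of sides.
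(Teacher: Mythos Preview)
Your proposal is correct and follows essentially the same approach as the paper, which simply states that the result follows directly from combining Proposition~\ref{ncSH} with Proposition~\ref{nccells}, Proposition~\ref{ccSH} with Proposition~\ref{ccells}, and Proposition~\ref{stripSH} with Proposition~\ref{stripcells}. You have spelled out the case analysis and the easy topological closure/interior step that the paper leaves implicit; the only minor quibble is that Proposition~\ref{structureK} concerns sides of $\mc K$ rather than sides of cells, so the fact that $b(\wt\ch)$ is a complete geodesic segment is better attributed to Propositions~\ref{nccells}\eqref{ncc1}, \ref{ccells}\eqref{cc1} and Remark~\ref{just_def} alone.
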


The development of a symbolic dynamics for the geodesic flow on $Y$ via the family $\wt\fch_\choices$ of cells in $S\h$ is based on the following properties of the cells $\wt\ch$ in $S\h$: It uses that $\cl(\pr(\wt\ch))$ is a convex polyhedron of which each side is a complete geodesic segment and that each side is the image under some element $g\in\Gamma$ of the complete geodesic segment $b(\wt\ch')$ for some cell $\wt\ch'$ in $S\h$. It further uses that $\bigcup \wt\fch_\choices$ is a fundamental set for $\Gamma$ in $S\h$ and that $\{ g.\cl(\pr(\wt\ch)) \mid g\in\Gamma,\ \wt\ch\in\wt\fch_\choices\}$ is a tesselation of $\h$. Moreover, one needs that $b(\wt\ch)$ is a vertical side of $\pr(\wt\ch)$ and that $\CS'(\wt\ch)$ is the set of unit tangent vectors based on $b(\wt\ch)$ that point into $\pr(\wt\ch)^\circ$. It does not use that $\{ \cl(\pr(\wt\ch)) \mid \wt\ch\in\wt\fch_\choices\}$ is the set of all cells in $\h$ nor does one need that for some cells $\wt\ch_1,\wt\ch_2\in\wt\fch_\choices$ one has $\cl(\
pr(\wt\ch_1))=\ch(\pr(\wt\ch_2))$. This means that one has the freedom to perform (horizontal) translations of single cells in $S\h$ by elements in $\Gamma_\infty$. The following definition is motivated by this fact. We will see that in some situations the family of shifted cells in $S\h$ will induce a symbolic dynamics which has a generating function for the future part while the symbolic dynamics that is constructed from the original family of cells in $S\h$ has not.

\begin{defi}\label{defi_shmap}
Each map $\shmap\colon \wt\fch_\choices\to \Gamma_\infty$ (``translation'') is called a \textit{shift map} for $\wt\fch_\choices$. 
The family
\[
\wt\fch_{\choices,\shmap} \sceq \big\{ \shmap\big(\wt\ch\big).\wt\ch \ \big\vert\  \wt\ch\in\wt\fch_\choices\big\}
\]
is called the \textit{family of cells in $S\h$ associated to $\fpch$, $\choices$ and $\shmap$.} Each element of $\wt\fch_{\choices,\shmap}$ is called a \textit{shifted cell in $S\h$}. 

For each $\wt\ch\in\wt\fch_{\choices,\shmap}$ define $b(\wt\ch) \sceq \pr(\wt\ch)\cap\partial\pr(\wt\ch)$ and let $\CS'(\wt\ch)$ be the set of unit tangent vectors in $\wt\ch$ that are based on $b(\wt\ch)$ but do not point along $\partial\pr(\wt\ch)$.
\end{defi}

Let $\shmap$ be a shift map for $\wt\fch_\choices$.

\begin{remark}\label{remain}
The results of Propositions~\ref{BfundsetSH} and \ref{ncSH}-\ref{stripSH} remain true for $\wt\fch_{\choices,\shmap}$ after the obvious changes. More precisely, the union $\bigcup \wt\fch_{\choices,\shmap}$ is disjoint and a fundamental set for $\Gamma$ in $S\h$, and if $\wt\ch\in \wt\fch_\choices$, then $\pr\big( \shmap(\wt\ch).\wt\ch\big) = \shmap(\wt\ch).\pr(\wt\ch)$ and $b\big(\shmap(\wt\ch).\wt\ch\big) = \shmap(\wt\ch). b(\wt\ch)$. Then $\CS'\big(\shmap(\wt\ch).\wt\ch\big)$ is the set of unit tangent vectors based on $\shmap(\wt\ch). b(\wt\ch)$ that point into $\shmap(\wt\ch).\pr(\wt\ch)^\circ$.
\end{remark}

\section{Geometric symbolic dynamics}\label{sec_geomsymdyn}

Let $\Gamma$ be a geometrically finite subgroup of $\PSL_2(\R)$ of which $\infty$ is a cuspidal point and which satisfies \eqref{A4}. Suppose that the set of relevant isometric spheres is non-empty.  Let $\fpch$ be a basal family of precells in $\h$ and denote the family of cells in $\h$ assigned to $\fpch$ by $\fch$. Suppose that $\choices$ is a set of choices associated to $\fpch$ and let $\wt\fch_\choices$ be the family of cells in $S\h$ associated to $\fpch$ and $\choices$. Fix a shift map $\shmap$ for $\wt\fch_\choices$ and denote the family of cells in $S\h$ associated to $\fpch,\choices$ and $\shmap$ by $\wt\fch_{\choices,\shmap}$. Recall the set $\CS'(\wt\ch)$ for $\wt\ch\in\wt\fch_{\choices, \shmap}$ from Definition~\ref{defi_shmap}. We set 
\[
\CS'\rueck\big(\wt\fch_{\choices,\shmap}\big) \sceq \bigcup_{\wt\ch\in\wt\fch_{\choices,\shmap}} \CS'\rueck\big(\wt\ch\big) \quad\text{and}\quad \wh\CS\big(\wt\fch_{\choices,\shmap}\big) \sceq \pi\big( \CS'\rueck\big(\wt\fch_{\choices,\shmap}\big)\big).
\]
In Section~\ref{sec_geomcross}, we will use the results from Section~\ref{sec_preSH} to show that $\wh\CS$ satisfies (\apref{C}{C1}{}) and hence is a cross section for the geodesic flow on $Y$ \wrt certain measures $\mu$. It will turn out that the measures $\mu$ are characterized by the condition that $\NIC$ (see Remark~\ref{outlook}) be a $\mu$-null set.

\subsection{Geometric cross section}\label{sec_geomcross}

Recall the set $\BS$ from Section~\ref{sec_base}.

\begin{lemma}\label{propsSH}
Let $\wt\ch\in\wt\fch_{\choices,\shmap}$. Then $\pr(\wt\ch)$ is a convex polyhedron and $\partial\pr(\wt\ch)$ consists of complete geodesic segments. Moreover, we have that $\pr(\wt\ch)^\circ\cap \BS=\emptyset$ and $\partial\pr(\wt\ch) \subseteq\BS$ and $\pr(\wt\ch)\cap\BS=b(\wt\ch)$ and that $b(\wt\ch)$ is a connected component of $\BS$.
\end{lemma}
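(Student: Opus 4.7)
The plan is to reduce the lemma to Corollary~\ref{iscellH} together with the $\Gamma$-invariance of all the relevant structure. Write $\wt\ch=g\wt\ch_0$ with $\wt\ch_0\in\wt\fch_\choices$ and $g=\shmap(\wt\ch_0)\in\Gamma_\infty$. Corollary~\ref{iscellH} states that $\ch_0\sceq \cl(\pr(\wt\ch_0))$ is a cell in $H$ with side $b(\wt\ch_0)$ and satisfies $\pr(\wt\ch_0)=\ch_0^\circ\cup b(\wt\ch_0)$ as well as $\pr(\wt\ch_0)^\circ=\ch_0^\circ$. Setting $\ch\sceq g\ch_0\in\Gamma\cdot\fch$ and invoking Remark~\ref{remain} to transfer the description through $g$, I would obtain
\[
\pr(\wt\ch)=\ch^\circ\cup b(\wt\ch),\qquad \pr(\wt\ch)^\circ=\ch^\circ,\qquad \cl(\pr(\wt\ch))=\ch.
\]
Since $\ch$ is a $\Gamma$-translate of an element of $\fch$, Propositions~\ref{nccells}\eqref{ncc1} and~\ref{ccells}\eqref{cc1}, together with Remark~\ref{just_def} for strip cells, imply that $\ch$ is a convex polyhedron each of whose sides is a complete geodesic segment; in particular $\partial\pr(\wt\ch)=\partial\ch$ also consists of complete geodesic segments, which settles the first two assertions.

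Next I would establish the two inclusions $\pr(\wt\ch)^\circ\cap\BS=\emptyset$ and $\partial\pr(\wt\ch)\subseteq\BS$. For the former, Corollary~\ref{cellsHtess} asserts that $\Gamma\cdot\fch$ is a tesselation of $H$; any element of $\BS=\bigcup\Gamma\cdot\Sides(\fch)$ is a side of some $h\ch_1\in\Gamma\cdot\fch$, and by (\apref{T}{T2}{}) either $h\ch_1=\ch$ (in which case the side in question lies on $\partial\ch$) or $h\ch_1\cap\ch\subseteq\partial\ch$; either way, no element of $\BS$ meets $\ch^\circ$. For the latter, each side of $\ch=g\ch_0$ is the $g$-translate of a side of $\ch_0\in\fch$ and hence belongs to $\Gamma\cdot\Sides(\fch)=\BS$ by definition. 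Combining these two facts gives
\[
\pr(\wt\ch)\cap\BS=\bigl(\ch^\circ\cup b(\wt\ch)\bigr)\cap\BS=b(\wt\ch),
\]
because $b(\wt\ch)\subseteq\partial\ch\subseteq\BS$ while $\ch^\circ\cap\BS=\emptyset$.

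Finally, to identify $b(\wt\ch)$ as a connected component of $\BS$, I would note via Propositions~\ref{ncSH},~\ref{ccSH} and~\ref{stripSH} (combined once more with Remark~\ref{remain} to absorb $g$) that $b(\wt\ch)$ is a vertical side of $\ch$ of the form $(x,\infty)$ with $x\in\bhg H$, in particular a complete geodesic segment in $H$. Proposition~\ref{BStotgeod}, or more precisely its proof, shows that $\BS$ is a disjoint union of complete geodesic segments; hence $b(\wt\ch)$ is contained in some connected component $T$ of $\BS$, and since $T$ is itself a complete geodesic segment sharing the complete segment $b(\wt\ch)$, we must have $b(\wt\ch)=T$. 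The only delicate point in the whole argument is the bookkeeping needed to transport the description of $\pr(\wt\ch_0)$, $b(\wt\ch_0)$ and the sides of $\ch_0$ across the $\Gamma_\infty$-translation by $g$, but as $\BS$, the class of convex polyhedra, completeness of geodesic segments and the tesselation of Corollary~\ref{cellsHtess} are all $\Gamma$-invariant, this presents no genuine obstacle.
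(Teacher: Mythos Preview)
Your proof is correct and follows essentially the same approach as the paper: both reduce to the unshifted cell via Corollary~\ref{iscellH} and Remark~\ref{remain}, invoke the tesselation (Corollary~\ref{cellsHtess}) to separate $\ch^\circ$ from $\BS$, and use $\Gamma$-invariance to transport everything back. Your argument is simply more detailed, in particular making explicit the connected-component step via the decomposition of $\BS$ into disjoint complete geodesic segments (proof of Proposition~\ref{BStotgeod}), which the paper leaves implicit.
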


\begin{prop}\label{CS=CShat} 
We have $\wh\CS = \wh\CS(\wt\fch_{\choices,\shmap})$. Moreover, the union 
\[ 
\CS'\rueck\big(\wt\fch_{\choices,\shmap}\big) = \bigcup\big\{ \CS'\rueck\big(\wt\ch\big) \ \big\vert\  \wt\ch\in\wt\fch_{\choices,\shmap}\big\}
\]
is disjoint and $\CS'(\wt\fch_{\choices,\shmap})$ is a set of representatives for $\wh\CS$.
\end{prop}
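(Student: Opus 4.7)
The plan is to establish two inclusions and then derive the injectivity statement from the fundamental-set property proved in Proposition~\ref{BfundsetSH} (and extended to $\wt\fch_{\choices,\shmap}$ by Remark~\ref{remain}). The central technical input is Lemma~\ref{propsSH}, which identifies $b(\wt\ch)$ with the connected component $\pr(\wt\ch)\cap\BS$ of $\BS$ and guarantees $\partial\pr(\wt\ch)\subseteq\BS$ together with $\pr(\wt\ch)^\circ\cap\BS=\emptyset$. The main conceptual step is to translate between the two descriptions ``based on $\BS$ but not tangent to $\BS$'' (defining $\CS$) and ``based on $b(\wt\ch)$ but not pointing along $\partial\pr(\wt\ch)$'' (defining $\CS'(\wt\ch)$).

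For the inclusion $\wh\CS(\wt\fch_{\choices,\shmap})\subseteq\wh\CS$, I would take $v\in\CS'(\wt\ch)$ for some $\wt\ch\in\wt\fch_{\choices,\shmap}$. By Lemma~\ref{propsSH}, $\pr(v)\in b(\wt\ch)\subseteq\BS$. Since $b(\wt\ch)$ is a complete geodesic segment and a connected component of $\BS$ near $\pr(v)$, being tangent to $\BS$ at $\pr(v)$ is the same as pointing along $b(\wt\ch)$; in particular it implies pointing along $\partial\pr(\wt\ch)$, which is excluded. Hence $v\in\CS$, so $\pi(v)\in\wh\CS$.

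For the reverse inclusion, let $\wh v\in\wh\CS$ and pick $v\in\CS$ with $\pi(v)=\wh v$. Since $\bigcup\wt\fch_{\choices,\shmap}$ is a fundamental set for $\Gamma$ in $SH$ (Proposition~\ref{BfundsetSH}, Remark~\ref{remain}), there is a unique $g\in\Gamma$ and a unique $\wt\ch\in\wt\fch_{\choices,\shmap}$ with $gv\in\wt\ch$. Then $\pr(gv)=g\pr(v)\in g\BS=\BS$ by $\Gamma$-invariance of $\BS$, and also $\pr(gv)\in\pr(\wt\ch)$, so by Lemma~\ref{propsSH}
\[
\pr(gv)\in \pr(\wt\ch)\cap\BS=b(\wt\ch).
\]
To finish I must show that $gv$ does not point along $\partial\pr(\wt\ch)$. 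Any other side $s$ of $\pr(\wt\ch)$ is a complete geodesic segment that, as two such segments meet in at most one point, can intersect the complete geodesic segment $b(\wt\ch)$ only at a single point, and that point lies in $\bhg H$. Since $\pr(gv)\in H$, the base point of $gv$ lies in no side of $\pr(\wt\ch)$ other than $b(\wt\ch)$. Thus if $gv$ pointed along $\partial\pr(\wt\ch)$, it would point along $b(\wt\ch)$ and hence be tangent to $\BS$, contradicting $v\in\CS$ together with the $\Gamma$-invariance of tangency. Therefore $gv\in\CS'(\wt\ch)$ and $\wh v=\pi(gv)\in\wh\CS(\wt\fch_{\choices,\shmap})$.

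The disjointness of the union and the set-of-representatives property then follow at once. Disjointness is immediate because $\CS'(\wt\ch)\subseteq\wt\ch$ and the cells in $\wt\fch_{\choices,\shmap}$ are pairwise disjoint by Remark~\ref{remain}. For the representatives property, the preceding paragraph provides surjectivity of $\pi|_{\CS'(\wt\fch_{\choices,\shmap})}$. Injectivity uses the fundamental-set property one more time: if $v_1,v_2\in\CS'(\wt\fch_{\choices,\shmap})\subseteq\bigcup\wt\fch_{\choices,\shmap}$ satisfy $\pi(v_1)=\pi(v_2)$, then $v_2=gv_1$ for some $g\in\Gamma$, and having two $\Gamma$-equivalent points inside a fundamental set forces $g=\id$ and $v_1=v_2$. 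The only step that truly requires care is the argument that ``not tangent to $\BS$'' is equivalent to ``not pointing along $\partial\pr(\wt\ch)$'' at points of $b(\wt\ch)\cap H$; this is exactly where the geometry of complete geodesic segments delivered by Lemma~\ref{propsSH} is indispensable.
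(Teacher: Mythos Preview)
Your proof is correct and follows essentially the same approach as the paper: both directions use Lemma~\ref{propsSH} to identify $\pr(\wt\ch)\cap\BS$ with $b(\wt\ch)$, and the fundamental-set property of $\bigcup\wt\fch_{\choices,\shmap}$ (Remark~\ref{remain}) to produce a unique representative in some $\wt\ch$; the injectivity/representatives argument is also identical. You are in fact more explicit than the paper about why, at a point of $b(\wt\ch)\cap H$, ``not tangent to $\BS$'' coincides with ``not pointing along $\partial\pr(\wt\ch)$'', which the paper passes over with a bare ``Hence''.
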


\begin{proof}
We start by showing that $\wh\CS(\wt\fch_{\choices,\shmap})\subseteq\wh\CS$. Let $\wt\ch\in \wt\fch_{\choices,\shmap}$. Then there exists a (unique) $\wt\ch_1\in\wt\fch_{\choices}$ such that $\wt\ch=\shmap(\wt\ch_1).\wt\ch_1$. Lemma~\ref{propsSH} shows that $b(\wt\ch_1)$ is a connected component of $\BS$. The set $\CS'(\wt\ch_1)$ consists of unit tangent vectors based on $b(\wt\ch_1)$ which are not tangent to it. Therefore, $\CS'(\wt\ch_1)\subseteq \CS$. Now $b(\wt\ch)=\shmap(\wt\ch_1).\wt\ch_1$ and $\CS'(\wt\ch) = \shmap(\wt\ch_1).\CS'(\wt\ch_1)$ with $\shmap(\wt\ch_1)\in\Gamma$. Thus, we see that $\pi(b(\wt\ch)) \subseteq\pi(\BS)=\wh\BS$ and $\pi(\CS'(\wt\ch))\subseteq\pi(\CS)=\wh\CS$. This shows that $\wh\CS(\wt\fch_{\choices,\shmap})\subseteq\wh\CS$. 

Conversely, let $\wh v\in\wh\CS$. We will show that there is a unique $\wt\ch\in\wt\fch_{\choices,\shmap}$ and a unique $v\in \CS'(\wt\ch)$ such that $\pi(v)=\wh v$. Pick any $w\in\pi^{-1}(v)$. Remark~\ref{remain} shows that the set $\mc P\sceq \bigcup\{\wt\ch\mid \wt\ch\in\wt\fch_{\choices,\shmap}\}$ is a fundamental set for $\Gamma$ in $S\h$. Hence there exists a unique pair $(\wt\ch, g)\in \wt\fch_{\choices,\shmap}\times\Gamma$ such that $v\sceq g.w\in \wt\ch$. Note that $\pi^{-1}(\wh\CS)=\CS$. Thus, $v\in\CS$ and hence $\pr(v)\in\pr(\wt\ch)\cap \BS$. Lemma~\ref{propsSH} shows that $\pr(v)\in b(\wt\ch)$. Therefore, $v\in \pi^{-1}(b(\wt\ch))\cap \wt\ch$. Since $v\in \CS$, it does not point along $b(\wt\ch)$. Hence $v$ does not point along $\partial\pr(\wt\ch)$, which shows that $v\in \CS'(\wt\ch)$. This proves that $\wh\CS\subseteq\wh\CS(\wt\fch_{\choices,\shmap})$. 

To see the uniqueness of $\wt\ch$ and $v$ suppose that $w_1\in\pi^{-1}(\wh v)$. Let $(\wt\ch_1,g_1)\in\wt\fch_{\choices,\shmap}\times\Gamma$ be the unique pair such that $g_1.w_1\in\wt\ch_1$. There exists a unique element $h\in\Gamma$ such that $h.w=w_1$. Then $g_1hg^{-1}.v=g.w_1$ and $v,g_1hg^{-1}.v\in\mc P$. Now $\mc P$ being a fundamental set shows that $g_1hg^{-1}=\id$, which proves that $g_1.w_1=g_1h.w=g.w=v$ and $\wt\ch_1=\wt\ch$. This completes the proof.
\end{proof}

\begin{cor}
Let $\wh\gamma$ be a geodesic on $Y$ which intersects $\wh\CS$ in $t$. Then there is a unique geodesic $\gamma$ on $H$ which intersects $\CS'(\wt\fch_{\choices,\shmap})$ in $t$ such that $\pi(\gamma)=\wh\gamma$.
\end{cor}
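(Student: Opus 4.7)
The plan is to reduce the corollary directly to Proposition~\ref{CS=CShat}, which states that $\CS'(\wt\fch_{\choices,\shmap})$ is a set of representatives for $\wh\CS$, meaning that $\pi$ restricts to a bijection $\CS'(\wt\fch_{\choices,\shmap})\to\wh\CS$. The hypothesis $\wh\gamma'(t)\in\wh\CS$ then yields a unique $v\in\CS'(\wt\fch_{\choices,\shmap})$ with $\pi(v)=\wh\gamma'(t)$, and the geodesic $\gamma$ on $H$ we seek will be the unique one with $\gamma'(t)=v$, i.e.\ $\gamma(s)=\gamma_v(s-t)$ in the notation of Section~\ref{sec_prelims}.

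First, I would record existence. Apply Proposition~\ref{CS=CShat} to obtain the unique $v\in\CS'(\wt\fch_{\choices,\shmap})$ with $\pi(v)=\wh\gamma'(t)$, and let $\gamma$ be the unique geodesic on $H$ with $\gamma'(t)=v$. By construction $\gamma$ intersects $\CS'(\wt\fch_{\choices,\shmap})$ in $t$. To check $\pi(\gamma)=\wh\gamma$, note that both $\pi\circ\gamma$ and $\wh\gamma$ are geodesics on $Y$ whose derivative at $t$ equals $\pi(v)=\wh\gamma'(t)$; since the $\PSL(2,\R)$-action on $SH$ preserves the geodesic flow, $\pi$ sends geodesics on $H$ to geodesics on $Y$, and a geodesic on $Y$ is determined by its derivative at any point, so $\pi(\gamma)=\wh\gamma$.

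For uniqueness, suppose $\gamma'$ is another geodesic on $H$ with $\pi(\gamma')=\wh\gamma$ and $\gamma'{}'(t)\in\CS'(\wt\fch_{\choices,\shmap})$. Then $\pi(\gamma'{}'(t))=\wh\gamma'(t)$, so both $\gamma'{}'(t)$ and $v$ are elements of $\CS'(\wt\fch_{\choices,\shmap})$ projecting to the same point of $\wh\CS$. The injectivity part of Proposition~\ref{CS=CShat} forces $\gamma'{}'(t)=v$, and since a geodesic on $H$ is determined by its derivative at one point, $\gamma'=\gamma$.

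I do not expect any serious obstacle here: the entire content has been packaged into Proposition~\ref{CS=CShat}, and all that remains is the standard observation that lifting a geodesic on $Y$ to $H$ together with specifying a lift of a single tangent vector along it pins the lift uniquely, which is immediate from the fact that $\pi\colon SH\to SY$ is $\Gamma$-equivariant and intertwines the two geodesic flows.
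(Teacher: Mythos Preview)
Your proposal is correct and matches the paper's approach: the corollary is stated without proof in the paper, immediately after Proposition~\ref{CS=CShat}, precisely because it follows directly from that proposition in the way you describe. Your unpacking of the existence and uniqueness via the bijection $\pi\vert_{\CS'(\wt\fch_{\choices,\shmap})}\colon\CS'(\wt\fch_{\choices,\shmap})\to\wh\CS$ together with the fact that a geodesic is determined by its derivative at one point is exactly the intended argument.
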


\begin{defi}
Let $\wh\gamma$ be a geodesic on $Y$ which intersects $\wh\CS$ in $\wh\gamma'(t_0)$. If 
\[
 s\sceq \min \big\{ t>t_0 \ \big\vert\  \wh\gamma'(t) \in \wh\CS \big\}
\]
exists, we call $s$ the \textit{first return time} of $\wh\gamma'(t_0)$ and $\wh\gamma'(s)$ the \textit{next point of intersection of $\wh\gamma$ and $\wh\CS$}. 
Let $\gamma$ be a geodesic on $H$. If $\gamma'(t)\in \CS$, then we say that $\gamma$ \textit{intersects $\CS$ in $t$}. 
If there is a sequence $(t_n)_{n\in\N}$ with $\lim_{n\to\infty} t_n = \infty$ and $\gamma'(t_n)\in \CS$ for all $n\in\N$, then $\gamma$ is said to \textit{intersect $\CS$ infinitely often in future}. Analogously, if we find a sequence $(t_n)_{n\in\N}$ with $\lim_{n\to\infty} t_n = -\infty$ and $\gamma'(t_n)\in \CS$ for all $n\in\N$, then $\gamma$ is said to \textit{intersect $\CS$ infinitely often in past}. 
Suppose that $\gamma$ intersects $\CS$ in $t_0$. If 
\[
s \sceq \min\big\{t>t_0 \ \big\vert\  \gamma'(t) \in\CS \big\}
\]
exists, we call $s$ the \textit{first return time} of $\gamma'(t_0)$ and $\gamma'(s)$ the \textit{next point of intersection of $\gamma$ and $\CS$}. Analogously, we define the \textit{previous point of intersection} of $\wh\gamma$ and $\wh\CS$ resp.\@ of $\gamma$ and $\CS$. 
\end{defi}

\begin{remark}\label{charinter}
A geodesic $\widehat \gamma$ on $Y$ intersects $\widehat\CS$ if and only if some (and hence any) representative of $\widehat\gamma$ on $\h$ intersects $\pi^{-1}(\widehat\CS)$. Recall that $\CS=\pi^{-1}(\widehat \CS)$, and that $\CS$ is the set of unit tangent vectors based on $\BS$ but which are not tangent to $\BS$. Since $\BS$ is a totally geodesic submanifold of $\h$ (see Proposition~\ref{BStotgeod}), a geodesic $\gamma$ on $\h$ intersects $\CS$ if and only if $\gamma$ intersects $\BS$ transversely. Again because $\BS$ is totally geodesic, the geodesic $\gamma$ intersects $\BS$ transversely if and only if $\gamma$ intersects $\BS$ and is not contained in $\BS$.  Therefore, a geodesic $\widehat\gamma$ on $Y$ intersects $\widehat\CS$ if and only if some (and thus any) representative $\gamma$ of $\widehat\gamma$ on $\h$  intersects $\BS$ and $\gamma(\R) \not\subseteq \BS$.

A similar argument simplifies the search for previous and next points of intersection. To make this precise, suppose that $\widehat\gamma$ is a geodesic on $Y$ which intersects $\widehat\CS$ in $\widehat\gamma'(t_0)$ and that $\gamma$ is a representative of $\widehat\gamma$ on $\h$. Then $\gamma'(t_0) \in \CS$. There is a next point of intersection of $\widehat\gamma$ and $\widehat\CS$ if and only if there is a next point of intersection of $\gamma$ and $\CS$. The hypothesis that $\gamma'(t_0)\in\CS$ implies that $\gamma(\R)$ is not contained in $\BS$. Hence each intersection of $\gamma$ and $\BS$ is transversal. Then there is a next point of intersection of $\gamma$ and $\CS$ if and only if $\gamma( (t_0,\infty) )$ intersects $\BS$. Suppose that there is a next point of intersection. If  $\gamma'(s)$ is the next point of intersection of $\gamma$ and $\CS$, then and only then $\widehat\gamma'(s)$ is the next point of intersection of $\widehat\gamma$ and $\widehat\CS$. In this case, $s=\min\{ t>t_0 \mid \
gamma(t)\in\BS\}$.

Likewise, there was a previous point of intersection of $\widehat\gamma$ and $\widehat\CS$ if and only if there was a previous point of intersection of $\gamma$ and $\CS$. Further, there was a previous point of intersection of $\gamma$ and $\CS$ if and only if $\gamma( (-\infty, t_0) )$ intersects $\BS$. If there was a previous point of intersection, then $\gamma'(s)$ is the previous point of intersection of $\gamma$ and $\CS$ if and only if $\widehat\gamma'(s)$ was the previous point of intersection of $\widehat\gamma$ and $\widehat\CS$. Moreover, $s=\max\{ t<t_0\mid \gamma(t)\in\BS\}$.
\end{remark}

Proposition~\ref{CS1} provides a characterization of the geodesics on $Y$ which intersect $\wh\CS$ at all. Using Remark~\ref{charinter}, its proof consists of elementary arguments in convex geometry. We omit it here.

\begin{prop}\label{CS1}
Let $\widehat\gamma$ be a geodesic on $Y$. Then $\widehat\gamma$ intersects $\widehat\CS$ if and only if $\widehat\gamma \notin \NC$.
\end{prop}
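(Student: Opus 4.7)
My plan is to reduce the statement via Remark~\ref{charinter} to an equivalent claim about a representative geodesic $\gamma$ on $H$: namely, $\wh\gamma\notin\NC$ if and only if $\gamma$ intersects $\BS$ and $\gamma(\R)\not\subseteq\BS$. The two key structural facts I will exploit are (i) that $\BS$ is a disjoint union of complete geodesic segments (each being a $\Gamma$-translate of a side of some $\ch\in\fch$), established in the proof of Proposition~\ref{BStotgeod}, and (ii) that the family of $\Gamma$-translates of cells in $H$ tesselates $H$ (Corollary~\ref{cellsHtess}). Every cell is a closed convex polyhedron in $H$ with all vertices in $\bhg H$, so Lemma~\ref{convex} applies to it.

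For the forward direction, assume $\wh\gamma\in\NC$. Then there is a cell $\ch\in\fch$ and a representative $\gamma$ of $\wh\gamma$ with $\gamma(\pm\infty)\in\bhg\ch$. Since $\ch$ is a closed convex subset of $\hg$ whose sides are complete geodesic segments, Lemma~\ref{convex}\eqref{convexiii} forces either $\gamma(\R)\subseteq\ch^\circ$ or $\gamma(\R)\subseteq\partial\ch$. In the first case $\gamma(\R)$ is disjoint from $\partial\ch\subseteq\BS$ and, by the tesselation property, from every other $\Gamma$-translate of a cell boundary, so $\gamma(\R)\cap\BS=\emptyset$. In the second case $\gamma(\R)\subseteq\partial\ch\subseteq\BS$. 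Either outcome violates the criterion of Remark~\ref{charinter}, so $\wh\gamma$ does not intersect $\wh\CS$.

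For the backward direction, fix a representative $\gamma$ of $\wh\gamma$ and assume $\wh\gamma\notin\NC$. If $\gamma(\R)\subseteq\BS$, connectedness of $\gamma(\R)$ and the disjointness of the components of $\BS$ would force $\gamma(\R)=gS$ for some side $S$ of some $\ch\in\fch$ and some $g\in\Gamma$; then $g^{-1}\gamma$ is a representative of $\wh\gamma$ with endpoints in $\bhg\ch$, giving $\wh\gamma\in\NC(\ch)$, a contradiction. If $\gamma(\R)\cap\BS=\emptyset$, the tesselation lets me choose $g\in\Gamma$ and $\ch\in\fch$ with $\gamma(0)\in g\ch$; replacing $\gamma$ by $g^{-1}\gamma$ (still a representative of $\wh\gamma$), the assumption $\gamma(\R)\cap\BS=\emptyset$ together with $\partial\ch\subseteq\BS$ means $\gamma(0)\in\ch^\circ$, and since $\{t\in\R\mid \gamma(t)\in\ch\}$ is both open (it avoids $\partial\ch$) and closed in $\R$ and non-empty, it equals $\R$; hence $\gamma(\R)\subseteq\ch^\circ\subseteq\ch$. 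As $\ch$ is closed in $H$ and $\gamma(\pm\infty)\in\bhg H$, this yields $\gamma(\pm\infty)\in\bhg\ch=\bd(\ch)$, so $\wh\gamma\in\NC(\ch)$, again a contradiction. Both alternatives failing, $\gamma$ must meet $\BS$ transversely, which by Remark~\ref{charinter} means $\wh\gamma$ intersects $\wh\CS$.

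The main technical point that needs care is the backward direction when $\gamma(\R)\cap\BS=\emptyset$: one must verify that $\gamma$ cannot escape the cell containing $\gamma(0)$, which is precisely where the convex-polyhedral structure of cells and the observation that $\partial\ch\subseteq\BS$ are used. Everything else is a direct translation between conditions on $\wh\gamma$, $\gamma$, $\BS$, and the tesselation, handled uniformly for non-cuspidal, cuspidal, and strip cells since all three are closed convex polyhedra in $H$.
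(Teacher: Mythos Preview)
Your proof is correct. The forward direction is identical to the paper's. In the backward direction you take a slightly different route: you work entirely with the tesselation of $H$ by cells (Corollary~\ref{cellsHtess}) and a clopen argument to trap the geodesic in a single cell, whereas the paper passes through the unit tangent bundle, using that $\bigcup\wt\fch_\choices$ is a fundamental set in $SH$ (Proposition~\ref{BfundsetSH}) to place a tangent vector $gv$ in some $\wt\ch$, and then invokes Lemma~\ref{propsSH} and the non-transversality to conclude $\eta(\R)\subseteq\cl(\pr(\wt\ch))$, which by Corollary~\ref{iscellH} is a cell in $H$. Your argument is more self-contained for this particular statement since it avoids the $SH$ machinery; the paper's version has the advantage of treating the two subcases ($\gamma(\R)\subseteq\BS$ versus $\gamma(\R)\cap\BS=\emptyset$) uniformly via a single non-transversality observation, and it already has the $SH$ cells available for the finer results that follow.
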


Suppose that we are given a geodesic $\wh\gamma$ on $Y$ which intersects $\wh\CS$ in $\wh\gamma'(t_0)$ and suppose that $\gamma$ is the unique geodesic on $\h$ which intersects $\CS'(\wt\fch_{\choices,\shmap})$ in $\gamma'(t_0)$ and which satisfies $\pi(\gamma)=\wh\gamma$. We now characterize when there is a next point of intersection of $\wh\gamma$ and $\wh\CS$ resp.\@ of $\gamma$ and $\CS$, and, if there is one, where this point is located. Further we will do analogous investigations on the existence and location of previous points of intersections. To this end we need the following preparations.

\begin{defi}\label{def_codint}
Let $\wt\ch\in\wt\fch_{\choices,\shmap}$ and suppose that $b(\wt\ch)$ is the complete geodesic segment $(a,\infty)$ with $a\in\R$. We assign to $\wt\ch$ two intervals $I(\wt\ch)$ and $J(\wt\ch)$ which are given as follows:
\begin{align*}
I\big(\wt\ch\big) & \sceq 
\begin{cases}
(a,\infty) & \text{if $\pr(\wt\ch)\subseteq \{ z\in \h\mid \Rea z\geq a\}$,}
\\
(-\infty, a) & \text{if $\pr(\wt\ch) \subseteq \{ z\in \h \mid \Rea z \leq a\}$,}
\end{cases}
\intertext{and}
J\big(\wt\ch\big) & \sceq 
\begin{cases}
(-\infty, a) & \text{if $\pr(\wt\ch)\subseteq \{ z\in \h\mid \Rea z \geq a\}$,}
\\
(a,\infty) & \text{if $\pr(\wt\ch) \subseteq \{ z\in \h \mid \Rea z \leq a\}$.}
\end{cases}
\end{align*}
\end{defi}

We note that the combination of Remark~\ref{remain} with Propositions~\ref{nccells}\eqref{ncc1} and \ref{ncSH} resp.\@ with Propositions~\ref{ccells}\eqref{cc1} and \ref{ccSH} resp.\@ Proposition~\ref{stripSH} shows that indeed each $\wt\ch\in\wt\fch_{\choices,\shmap}$ gets assigned a pair $\big( I(\wt\ch), J(\wt\ch) \big)$ of intervals.

\begin{lemma}\label{char_intervals}
Let $\wt\ch\in\wt\fch_{\choices,\shmap}$. For each $v\in\CS'(\wt\ch)$ let $\gamma_v$ denote the geodesic on $\h$ determined by $v$. If $v\in \CS'(\wt\ch)$, then $(\gamma_v(\infty), \gamma_v(-\infty))\in I(\wt\ch)\times J(\wt\ch)$. Conversely, if $(x,y)\in I(\wt\ch)\times J(\wt\ch)$, then there exists a unique element $v$ in $\CS'(\wt\ch)$ such that $(\gamma_v(\infty), \gamma_v(-\infty)) = (x,y)$.
\end{lemma}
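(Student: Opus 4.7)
The plan is to exploit the very simple geometry of $b(\wt\ch)$. By Propositions~\ref{ncSH}, \ref{ccSH}, and \ref{stripSH} combined with Remark~\ref{remain}, $b(\wt\ch)$ is a complete vertical geodesic segment $(a,\infty)$ for some $a\in\R$, and $\CS'(\wt\ch)$ is precisely the set of unit tangent vectors based on $b(\wt\ch)$ that point into $\pr(\wt\ch)^\circ$. By Lemma~\ref{propsSH} and Corollary~\ref{iscellH}, $\ch\sceq\cl(\pr(\wt\ch))$ is a convex polyhedron having $b(\wt\ch)$ as a vertical side, with $\pr(\wt\ch)^\circ=\ch^\circ$. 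Convexity forces $\ch$ to lie in one of the closed half-planes bounded by the complete geodesic extending $b(\wt\ch)$; without loss of generality assume $\ch\subseteq\{z\in H\mid \Rea z\geq a\}$, so that $I(\wt\ch)=(a,\infty)$ and $J(\wt\ch)=(-\infty,a)$ as real intervals.

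For the forward implication, let $v\in\CS'(\wt\ch)$. Since $v$ is not tangent to the vertical geodesic $b(\wt\ch)$, the geodesic $\gamma_v$ is a Euclidean semicircle centered on $\R$, and $v$ points into the open right half-plane $\{\Rea z>a\}$ (the side on which $\ch^\circ$ lies locally near $\pr(v)$). Hence $\Rea\gamma_v(t)>a$ for small $t>0$ and $\Rea\gamma_v(t)<a$ for small $t<0$. Since the endpoints of a semicircle coincide with the extremal $\Rea$-values of its Euclidean closure, it follows that $\gamma_v(\infty)>a$ and $\gamma_v(-\infty)<a$, giving $(\gamma_v(\infty),\gamma_v(-\infty))\in I(\wt\ch)\times J(\wt\ch)$.

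For the converse and uniqueness, given $(x,y)\in I(\wt\ch)\times J(\wt\ch)$, so $y<a<x$, let $\eta$ be the unique geodesic on $H$ with $\eta(\infty)=x$ and $\eta(-\infty)=y$. Because $x$ and $y$ lie on opposite sides of $a$, the semicircle $\eta(\R)$ crosses the vertical complete geodesic $(a,\infty)$ transversely in exactly one point, say $\eta(t_0)\in b(\wt\ch)$; set $v\sceq\eta'(t_0)$. The main (and only mild) subtlety is to verify that $v$ points into $\ch^\circ$ rather than merely into the right half-plane: since $\eta(t_0)$ is an interior point of the side $b(\wt\ch)$ of $\ch$ (it lies in $H$, strictly between the endpoints $a$ and $\infty$ of $b(\wt\ch)$) and distinct sides of a convex polyhedron share at most their endpoints in $\overline H^g$, a small half-disc of $\eta(t_0)$ lying in $\{\Rea z>a\}$ is contained in $\ch^\circ$; the non-vertical vector $v$, which points toward $x>a$, therefore points into $\ch^\circ=\pr(\wt\ch)^\circ$, and so $v\in\CS'(\wt\ch)$. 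For uniqueness, any $v'\in\CS'(\wt\ch)$ realizing the same pair of endpoints determines the same geodesic $\eta$; since $b(\wt\ch)$ meets $\eta(\R)$ only at $\eta(t_0)$, we have $\pr(v')=\eta(t_0)=\pr(v)$, and hence $v'=v$.
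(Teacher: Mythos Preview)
Your proof is correct and follows essentially the same approach as the paper's own proof: both exploit that $b(\wt\ch)$ is the vertical segment $(a,\infty)$, separate $H$ into the half-planes $\{\Rea z\gtrless a\}$, and use the local half-disc description of $\ch^\circ$ near an interior point of the side $b(\wt\ch)$ to verify that the chosen unit tangent vector points into $\pr(\wt\ch)^\circ$. Your forward direction phrases the half-space argument via the extremal $\Rea$-values of a Euclidean semicircle, and you make uniqueness explicit (the paper leaves it implicit), but these are cosmetic differences rather than a different route.
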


Let $\wt\ch\in\wt\fch_{\choices,\shmap}$ and $g\in\Gamma$. Suppose that $I(\wt\ch) =(a,\infty)$. Then
\begin{align*}
g.I\big(\wt\ch\big) & = 
\begin{cases}
(g.a,g.\infty) & \text{if $g.a<g.\infty$,}
\\
(g.a,\infty] \cup (-\infty, g.\infty) & \text{if $g.\infty < g.a$,}
\end{cases}
\intertext{and}
g.J\big(\wt\ch\big) & =
\begin{cases}
(g.a,\infty] \cup (-\infty, g.\infty) & \text{if $g.a < g.\infty$,}
\\
(g.a,g.\infty) &\text{if $g.\infty < g.a$.}
\end{cases}
\end{align*}
Here, the interval $(b,\infty]$ denotes the union of the interval $(b,\infty)$ with the point $\infty\in \bhg \h$. 
Hence, the set $I\sceq (b,\infty]\cup (-\infty, c)$ is connected as a subset of $\bhg \h$. The interpretation of $I$ is more eluminating in the ball model: Via the Cayley transform $\mc C$ the set $\bhg \h$ is homeomorphic to the unit sphere $S^1$. Let $b'\sceq \mc C(b)$, $c'\sceq \mc C(c)$ and $I'\sceq \mc C(I)$. Then $I'$ is the connected component of $S^1\mminus\{b',c'\}$ which contains $\mc C(\infty)$.

Suppose now that $I(\wt\ch) = (-\infty, a)$. Then 
\begin{align*}
g.I\big(\wt\ch\big) & =
\begin{cases}
(-\infty, g.a) \cup (g.(-\infty), \infty] & \text{if $g.a<g.(-\infty)$,}
\\
(g.(-\infty), g.a) & \text{if $g.(-\infty)<g.a$,}
\end{cases}
\intertext{and}
g.J\big(\wt\ch\big) & =
\begin{cases}
( g.(-\infty), g.a) & \text{if $g.a < g.(-\infty)$,}
\\
(-\infty, g.a) \cup (g.(-\infty), \infty] & \text{if $g.(-\infty)<g.a$.}
\end{cases}
\end{align*}
Note that for $g=\textbmat{\alpha}{\beta}{\gamma}{\delta}$ we have
\[
g.(-\infty) = \lim_{t\searrow -\infty} \frac{\alpha t + \beta}{\gamma t + \delta} = \lim_{s\nearrow 0} \frac{\alpha + \beta s}{\gamma + \delta s} = \lim_{s\searrow 0} \frac{\alpha + \beta s}{\gamma + \delta s} = g.\infty.
\]
In particular, $\id.(-\infty) = \infty$.

Let $a,b\in\overline \R$. For abbreviation we set  $(a,b)_+\sceq (\min(a,b),\max(a,b))$ and $(a,b)_-\sceq (\max(a,b),\infty] \cup (-\infty, \min(a,b))$.

\begin{prop}\label{adjacentSH}
Let $\wt\ch\in\wt\fch_{\choices,\shmap}$ and suppose that $S$ is a side of $\pr(\wt\ch)$. Then there exist exactly two pairs $(\wt\ch_1,g_1),(\wt\ch_2,g_2)\in \wt\fch_{\choices,\shmap}\times\Gamma$ such that $S=g_j.b(\wt\ch_j)$. Moreover, $g_1. \cl( \pr(\wt\ch_1) ) = \cl(\pr(\wt\ch))$ and $g_2.\cl(\pr(\wt\ch_2))\cap \cl(\pr(\wt\ch)) = S$ or vice versa. The union $g_1.\CS'(\wt\ch_1) \cup g_2.\CS'(\wt\ch_2)$ is disjoint and equals the set of all unit tangent vectors in $\CS$ that are based on $S$. Let $a,b\in\bhg \h$ be  the endpoints of $S$. Then $g_1.I(\wt\ch_1)\times g_1.J(\wt\ch_1) = (a,b)_+ \times (a,b)_-$ and $g_2.I(\wt\ch_2)\times g_2.J(\wt\ch_2) = (a,b)_-\times (a,b)_+$ or vice versa.
\end{prop}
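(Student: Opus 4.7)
The plan is to combine the tesselation of $H$ by cells (Corollary~\ref{cellsHtess}) with the explicit side descriptions of cells in $SH$ (Propositions~\ref{ncSH}, \ref{ccSH}, \ref{stripSH}, and Remark~\ref{remain}). I set $\ch_\star \sceq \cl(\pr(\wt\ch))$; by Corollary~\ref{iscellH} this is a $\Gamma$-translate of a cell in $\fch$, hence a convex polyhedron of the tesselation, and $b(\wt\ch)$ is one of its sides. Lemma~\ref{propsSH} identifies $S$ as a complete geodesic segment and a side of $\ch_\star$, and Corollary~\ref{cellsHtess} furnishes a unique further cell $\ch_{\star\star}$ of the tesselation with $\ch_\star \cap \ch_{\star\star} = S$.

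The central step will be a labelling lemma: for every cell $\ch_0$ of the tesselation and every side $S_0$ of $\ch_0$, there is exactly one pair $(\wt{\mathfrak c}, g) \in \wt\fch_{\choices,\shmap} \times \Gamma$ with $g\,\cl(\pr(\wt{\mathfrak c})) = \ch_0$ and $g\,b(\wt{\mathfrak c}) = S_0$. I will prove it by a case analysis on the cell type. For non-cuspidal $\ch_0 = h\,\ch(\pch')$ with $\pch' \in \fpch$, Proposition~\ref{nccells}(\eqref{ncc2}, \eqref{ncc3}) writes each side of $\ch(\pch')$ as the $g_j^{-1}$-image of the distinguished vertical segment $(h_j^{-1}\infty, \infty)$ of $\ch(\pch_j)$, and Proposition~\ref{ncSH} identifies this segment with $b(\wt\ch_j(\pch, h_\pch))$ whenever $(\pch, h_\pch) \in \choices$ generates the cycle class of $(\pch', h_{\pch'})$. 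Proposition~\ref{cycleshift} together with Lemma~\ref{cyclic} then reduces $j$ modulo $\cyl(\pch)$ to an index in $\{1, \ldots, \cyl(\pch)\}$, while the horizontal shift $\shmap(\wt\ch_j(\pch, h_\pch)) \in \Gamma_\infty$ is absorbed into the group element. The cuspidal and strip cases run in parallel using Propositions~\ref{ccells}(\eqref{cc2}, \eqref{cc3}), \ref{ccSH}, Remark~\ref{just_def}, and \ref{stripSH}. Uniqueness within the lemma will come from the fundamental-set property of $\bigcup \wt\fch_{\choices,\shmap}$ (Proposition~\ref{BfundsetSH} and Remark~\ref{remain}): two candidates with the same $b$-image and same projection-closure carry the same unit tangent vectors pointing into $\ch_0^\circ$, forcing them to coincide.

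Feeding the lemma with $(\ch_\star, S)$ and $(\ch_{\star\star}, S)$ produces two distinct pairs $(\wt\ch_1, g_1)$ and $(\wt\ch_2, g_2)$ with the asserted projection properties. Any further pair $(\wt\ch_3, g_3)$ with $g_3\,b(\wt\ch_3) = S$ has $g_3\,\cl(\pr(\wt\ch_3))$ a cell of the tesselation containing $S$ in its boundary, hence equal to $\ch_\star$ or $\ch_{\star\star}$, and the labelling lemma collapses $(\wt\ch_3, g_3)$ to one of the two existing pairs. Propositions~\ref{ncSH}--\ref{stripSH} (with Remark~\ref{remain}) describe $g_j\,\CS'(\wt\ch_j)$ as the set of unit tangent vectors based on $S$ that point into $g_j\,\pr(\wt\ch_j)^\circ$; disjointness of the union is then immediate from $\ch_\star^\circ \cap \ch_{\star\star}^\circ = \emptyset$, and any $v \in \CS$ based on $S$ yields a geodesic transverse to $\BS$ at $\pr(v)$ (Remark~\ref{charinter}) which enters $\ch_\star^\circ$ or $\ch_{\star\star}^\circ$ immediately, so $v$ lies in exactly one of the two sets. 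For the interval claim, Definition~\ref{def_codint} places $I(\wt\ch_j)$ and $J(\wt\ch_j)$ on the two arcs of $\overline\R \setminus \{c_j\}$, with $b(\wt\ch_j) = (c_j, \infty)$ and $I(\wt\ch_j)$ on the side of $\pr(\wt\ch_j)^\circ$; after translating by $g_j$, the endpoints of $S$ become $\{a, b\}$ and $g_j\,I(\wt\ch_j)$ is the arc of $\bhg H \setminus \{a, b\}$ containing $g_j\,\pr(\wt\ch_j)^\circ$. Since $\ch_\star^\circ$ and $\ch_{\star\star}^\circ$ lie in opposite components of $\bhg H \setminus \{a, b\}$ (separated by the complete geodesic through $\{a, b\}$), the pairs $(g_j\,I(\wt\ch_j), g_j\,J(\wt\ch_j))$ come out as $((a,b)_+, (a,b)_-)$ and $((a,b)_-, (a,b)_+)$ in some order. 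The main obstacle will be the labelling lemma itself, particularly unifying the three cell types under a single uniqueness statement and handling the cyclic reduction when $\cyl(\pch) < k$.
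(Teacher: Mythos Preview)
Your approach is correct but takes a genuinely different route from the paper. The paper avoids your labelling lemma entirely: instead of constructing the pairs $(\wt\ch_j,g_j)$ from the cell structure, it picks a unit tangent vector $v_j$ in each of the two half-spaces determined by $S$ and invokes Proposition~\ref{CS=CShat} (that $\CS'(\wt\fch_{\choices,\shmap})$ is a set of representatives for $\wh\CS$) to obtain the unique $(\wt\ch_j,g_j)$ with $v_j\in g_j\CS'(\wt\ch_j)$. The only nontrivial step is then to show $S=g_jb(\wt\ch_j)$, which the paper does by a short transversality contradiction: if $S\neq g_jb(\wt\ch_j)$, the two complete geodesic segments cross transversally at $\pr(v_j)$, forcing $\pr(\wt\ch)^\circ\cap g_j\pr(\wt\ch_j)^\circ\neq\emptyset$, and then Propositions~\ref{glue_nccells}--\ref{glue_stripcells} give $\cl(\pr(\wt\ch))=g_j\cl(\pr(\wt\ch_j))$, a contradiction. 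The interval claim is read off directly from Lemma~\ref{char_intervals}. Your route, by contrast, builds the labelling lemma via a case analysis on cell types and the cycle combinatorics (essentially the content of Remark~\ref{sides_eff}, which the paper establishes later for computational purposes but does not need here). What your approach buys is an explicit, constructive identification of the pairs; what the paper's approach buys is brevity and uniformity, since the fundamental-set property handles all three cell types at once without any case distinction or cyclic reduction.
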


\begin{proof}
Let $D'$ denote the set of unit tangent vectors in $\CS$ that are based on $S$. By Lemma~\ref{propsSH}, $S$ is a connected component of $\BS$. Hence $D'$ is the set of unit tangent vectors based on $S$ but not tangent to $S$. The complete geodesic segment $S$ divides $\h$ into two open half-spaces $H_1,H_2$ such that $\h$ is the disjoint union $H_1\cup S\cup H_2$. Moreover, $\pr(\wt\ch)^\circ$ is contained in $H_1$ or $H_2$, say $\pr(\wt\ch)^\circ\subseteq H_1$. Then $D'$ decomposes into the disjoint union $D'_1\cup D'_2$ where $D'_j$ denotes the non-empty set of elements in $D'$ that point into $H_j$. For $j=1,2$ pick $v_j\in D'_j$. Since $\CS'(\wt\fch_{\choices,\shmap})$ is a set of representatives for $\wh\CS=\pi(\CS)$ (see Proposition~\ref{CS=CShat}), there exists a uniquely determined pair $(\wt\ch_j, g_j)\in\wt\fch_{\choices,\shmap}\times\Gamma$ such that $v_j\in g_j.\CS'(\wt\ch_j)$. We will show that $S=g_j.b(\wt\ch_j)$. Assume for contradiction that $S\not=g_j.b(\wt\ch_j)$. Since $S$ and $g_j.b(\wt\
ch_j)$ are complete geodesic segments, the intersection of $S$ and $g_j.b(\wt\ch_j)$ in $\pr(v_j)$ is transversal. Recall that $S\subseteq\partial\pr(\wt\ch)$ and $b(\wt\ch_j)\subseteq\partial\pr(\wt\ch_j)$ and that $\partial\pr(\wt\ch')^\circ = \partial\pr(\wt\ch')$ for each $\wt\ch'\in\wt\fch_{\choices,\shmap}$. Then there exists $\eps>0$ such that $B_\eps(\pr(v_j))\cap \pr(\wt\ch)^\circ = B_\eps(\pr(v_j))\cap H_1$ and
\[
 B_\eps\big(\pr(v_j)\big) \cap g_j.\pr\big(\wt\ch_j\big)^\circ \cap H_1 \not=\emptyset.
\]
Hence $\pr(\wt\ch)^\circ\cap g_j.\pr(\wt\ch_j)^\circ \not=\emptyset$. Proposition~\ref{glues} in combination with Remark~\ref{remain} shows that $\cl(\pr(\wt\ch)) = g_j.\cl(\pr(\wt\ch_j))$. But then 
\[
\partial\pr(\wt\ch) = g_j.\partial\pr(\wt\ch_j),
\]
which implies that $S=g_j.b(\wt\ch_j)$. This is a contradiction, and hence $S=g_j.b(\wt\ch_j)$. Then Lemma~\ref{char_intervals} implies that $g_j.I(\wt\ch_j)\times g_j.J(\wt\ch_j)$ equals $(a,b)_+\times (a,b)_-$ or $(a,b)_-\times (a,b)_+$. On the other hand 
\[
\bhg H_1 \times \bhg H_2 = \big\{ \big(\gamma_v(\infty), \gamma_v(-\infty)\big) \ \big\vert\ v\in D'_1\big\} = \big\{ \big(\gamma_v(-\infty),\gamma_v(\infty)\big) \ \big\vert\ v\in D'_2\big\}
\]
equals $(a,b)_+\times (a,b)_-$ or $(a,b)_-\times (a,b)_+$. Therefore, again by Lemma~\ref{char_intervals}, we have $g_j.\CS'(\wt\ch_j)=D'_j$. This shows that the union $g_1.\CS'(\wt\ch_1)\cup g_2.\CS'(\wt\ch_2)$ is disjoint and equals $D'$. 

We have  $\cl(\pr(\wt\ch))\subseteq \overline H_1$ and $g_1.\cl(\pr(\wt\ch_1))\subseteq \overline H_1$ with $S\subseteq \partial\pr(\wt\ch) \cap g_1.\partial \pr(\wt\ch_1)$. Let $z\in S$. Then there exists $\eps>0$ such that 
\[
 B_\eps(z) \cap \pr\big(\wt\ch\big)^\circ = B_\eps(z) \cap H_1 = B_\eps(z) \cap g_1.\pr\big(\wt\ch_1\big)^\circ.
\]
Hence $\pr(\wt\ch)^\circ \cap g_1.\pr(\wt\ch_1)^\circ\not=\emptyset$. As above we find that $\cl(\pr(\wt\ch)) = g_1.\cl(\pr(\wt\ch_1))$. Finally, $g_2.\cl(\pr(\wt\ch_2))\subseteq \overline H_2$ with 
\[
 S\subseteq g_2.\cl\big(\pr\big(\wt\ch_2\big)\big) \cap \overline H_1 \subseteq \overline H_2 \cap \overline H_1 = S.
\]
Hence $\cl(\pr(\wt\ch))\cap g_2.\cl(\pr(\wt\ch_2)) = S$.
\end{proof}

Let $\wt\ch\in\wt\fch_{\choices,\shmap}$ and suppose that $S$ is a side of $\pr(\wt\ch)$. Let $(\wt\ch_1,g_1), (\wt\ch_2,g_2)$ be the two elements in $\wt\fch_{\choices,\shmap}\times\Gamma$ such that $S=g_j.b(\wt\ch_j)$ and $g_1.\cl(\pr(\wt\ch_1)) = \cl(\pr(\wt\ch))$ and  $g_2.\cl(\pr(\wt\ch_2))\cap \cl(\pr(\wt\ch))=S$. Then we define
\[
 p\big(\wt\ch,S\big) \sceq \big(\wt\ch_1,g_1\big)\quad\text{and}\quad n\big(\wt\ch,S\big)\sceq \big(\wt\ch_2,g_2\big).
\]

\begin{remark}\label{sides_eff}
Let $\wt\ch\in\wt\fch_{\choices,\shmap}$ and suppose that $S$ is a side of $\pr(\wt\ch)$. We will show how one effectively finds the elements $p(\wt\ch,S)$ and $n(\wt\ch,S)$. Let 
\[
\big(\wt\ch_1,k_1\big)\sceq p\big(\wt\ch,S\big) \quad\text{and}\quad \big(\wt\ch_2,k_2\big)\sceq n\big(\wt\ch,S\big).
\]
Suppose that $\wt\ch'$ is the (unique) element in $\wt\fch_\choices$ such that $\shmap(\wt\ch').\wt\ch'=\wt\ch$ and suppose further that $\wt\ch'_j\in\wt\fch_\choices$ such that $\shmap(\wt\ch'_j).\wt\ch'_j = \wt\ch_j$ for $j=1,2$. Set $S'\sceq \shmap(\wt\ch')^{-1}.S$. Then $S'$ is a side of $\pr(\wt\ch')$. For $j=1,2$ we have
\[
S'=\shmap\big(\wt\ch'\big)^{-1}.S = \shmap\big(\wt\ch'\big)^{-1} k_j .b\big(\wt\ch_j\big) = \shmap\big(\wt\ch'\big)^{-1} k_j \shmap\big(\wt\ch'_j\big).b\big(\wt\ch'_j\big)
\]
and
\[
 k_j.\cl\big(\pr\big(\wt\ch_j\big)\big) = k_j\shmap\big(\wt\ch'_j\big).\cl\big(\pr\big(\wt\ch'_j\big)\big).
\]
Moreover, $\cl(\pr(\wt\ch)) = \shmap(\wt\ch').\cl(\pr(\wt\ch'))$. Then $k_1.\cl(\pr(\wt\ch_1)) = \cl(\pr(\wt\ch))$ is equivalent to
\[
\shmap\big(\wt\ch'\big)^{-1}k_1\shmap\big(\wt\ch'_1\big).\cl\big(\pr\big(\wt\ch'_1\big)\big) = \cl\big(\pr\big(\wt\ch'\big)\big),
\]
and $k_2.\cl(\pr(\wt\ch_2))\cap\cl(\pr(\wt\ch)) = S$ is equivalent to
\[
\shmap\big(\wt\ch'\big)^{-1}k_2\shmap\big(\wt\ch'_2\big).\cl\big(\pr\big(\wt\ch'_2\big)\big) \cap \cl\big(\pr\big(\wt\ch'\big)\big) = S'.
\]
Therefore, $(\wt\ch_1,k_1) = p(\wt\ch,S)$ if and only if $(\wt\ch'_1,\shmap(\wt\ch')^{-1}k_1\shmap(\wt\ch'_1)) = p(\wt\ch',S')$, and $(\wt\ch_2,k_2) = n(\wt\ch,S)$ if and only if $(\wt\ch'_2,\shmap(\wt\ch')^{-1}k_2\shmap(\wt\ch'_2)) = n(\wt\ch',S')$.

By Corollary~\ref{iscellH}, the sets $\ch'\sceq \cl(\pr(\wt\ch'))$ and $\ch'_j\sceq \cl(\pr(\wt\ch'_j))$ are $\fpch$-cells in $\h$. Suppose first that $\ch'$ arises from the non-cuspidal basal precell $\pch'$ in $\h$. Then there is a unique element $(\pch, h_\pch)\in \choices$ such that for some $h\in\Gamma$ the pair $(\pch',h)$ is contained in the cycle in $\fpch\times\Gamma$ determined by $(\pch, h_\pch)$. Necessarily, $\pch$ is non-cuspidal. Let $\big( (\pch_j, h_j) \big)_{j=1,\ldots, k}$ be the cycle in $\fpch\times\Gamma$ determined by $(\pch,h_\pch)$. Then $\pch'=\pch_m$ for some $m\in\{1,\ldots, \cyl(\pch)\}$ and hence $\ch'=\ch(\pch_m)$ and $\wt\ch' = \wt\ch_m(\pch,h_\pch)$. For $j=1,\ldots, k$ set $g_1\sceq\id$ and $g_{j+1}\sceq h_jg_j$. Proposition~\ref{nccells}\eqref{ncc3} states that $\ch(\pch_m)=g_m.\ch(\pch)$ and Proposition~\ref{nccells}\eqref{ncc1} shows that $S'$ is the geodesic segment $[g_mg_j^{-1}.\infty, g_mg_{j+1}^{-1}.\infty]$ for some $j\in \{1,\ldots, k\}$. Then $g_jg_m^{-1}.S'=[\
infty, h_j^{-1}.\infty]$.  Let $n\in\{1,\ldots, \cyl(\pch)\}$ such that $n\equiv j \mod \cyl(\pch)$. Then $h_n=h_j$ by Lemma~\ref{cyclic}. Proposition~\ref{ncSH} shows that 
$b(\wt\ch_n(\pch, h_\pch)) = [\infty, h_n^{-1}.\infty] = g_jg_m^{-1}.S'$. We claim that $(\wt\ch_j(\pch,h_\pch), g_mg_j^{-1}) = p(\wt\ch',S')$. For this is remains to show that $g_mg_j^{-1}.\cl(\pr(\wt\ch_j(\pch,h_\pch))) = \cl(\pr(\wt\ch'))$. Proposition~\ref{ncSH} shows that $\cl(\pr(\wt\ch_j(\pch, h_\pch))) = \ch(\pch_n)$ and Lemma~\ref{cyclic} implies that $\ch(\pch_n) = \ch(\pch_j)$. Let $v$ be the vertex of $\mc K$ to which $\pch$ is attached. Then $g_j.v\in \ch(\pch_j)^\circ$ and $g_mg_j^{-1}g_j.v=g_m.v\in\ch(\pch_m)^\circ$. Therefore $g_mg_j^{-1}.\ch(\pch_j)^\circ \cap \ch(\pch_m)^\circ \not=\emptyset$. From Proposition~\ref{glues}\eqref{glue_nccells} it follows that $g_mg_j^{-1}.\ch(\pch_j) = \ch(\pch_m)$. Recall that $\ch(\pch_m) = \cl(\pr(\wt\ch'))$. Hence $( \wt\ch_j(\pch,h_\pch), g_mg_j^{-1}) = p(\wt\ch',S')$ and 
\[
\big( \shmap\big(\wt\ch_j(\pch,h_\pch)\big) .\wt\ch_j\big(\pch, h_\pch\big), \shmap\big(\wt\ch'\big) g_mg_j^{-1} \shmap\big( \wt\ch_j(\pch, h_\pch)\big)^{-1} \big) = p\big(\wt\ch, S\big).
\]
Analogously one proceeds if $\pch'$ is cuspidal or a strip precell. 

Now we show how one determines $(\wt\ch_2,k_2)$. Suppose again that $\ch'$ arises from the non-cuspidal basal precell $\pch'$ in $\h$. We use the notation from the determination of $p(\wt\ch',S')$. By Corollary~\ref{props_preH} there is a unique pair $(\wh\pch,s)\in\fpch\times\Z$ such that $b(\wt\ch_n(\pch, h_\pch)) \cap t_\lambda^s.\wh\pch \not=\emptyset$ and $t_\lambda^s.\wh\pch\not=\pch_n$. Then $t_\lambda^{-s}g_jg_m^{-1}.S'$ is a side of the cell $\ch(\wh\pch)$ in $\h$. As before, we determine $(\wt\ch_3,k_3)\in\wt\fch_\choices\times\Gamma$ such that $k_3.b(\wt\ch_3) = t_\lambda^{-s}g_jg_m^{-1}.S'$ and $k_3.\cl(\pr(\wt\ch_3)) = \ch(\wh\pch)$. Recall that $g_jg_m^{-1}.S'$ is a side of $\ch(\pch_j) = \ch(\pch_n)$. We have
\begin{align*}
g_mg_j^{-1}t_\lambda^s k_3. \cl\big(\pr\big(\wt\ch_3\big)\big) \cap \cl\big(\pr\big(\wt\ch'\big)\big) & = g_mg_j^{-1}t_\lambda^s. \ch(\wh\pch) \cap \ch(\pch_m) 
\\
& = g_mg_j^{-1}t_\lambda^s. \ch(\wh\pch) \cap g_mg_j^{-1}.\ch(\pch_j) 
\\
& = g_mg_j^{-1}. \big( t_\lambda^s. \ch(\wh\pch) \cap \ch(\pch_j) \big) 
\\
& = g_mg_j^{-1}. \big( t_\lambda^s. \ch(\wh\pch) \cap \ch(\pch_n) \big) 
\\
& = g_mg_j^{-1}g_jg_m^{-1}.S'
\\
& = S'.
\end{align*}
Thus $n(\wt\ch',S') = (\wt\ch_3, g_mg_j^{-1}t_\lambda^s k_3)$ and 
\[
n\big(\wt\ch, S\big) = \big( \shmap\big(\wt\ch_3\big).\wt\ch_3, \shmap\big(\wt\ch'\big)g_mg_j^{-1}t_\lambda^s k_3 \shmap\big(\wt\ch_3\big)^{-1}\big).
\]
If $\ch'$ arises from a cuspidal or strip precell in $\h$, then the construction of $n(\wt\ch,S)$ is analogous.
\end{remark}

\begin{prop}\label{CS2}
Let $\wh\gamma$ be a geodesic on $Y$ and suppose that $\wh\gamma$ intersects $\wh\CS$ in $\wh\gamma'(t_0)$. Let $\gamma$ be the unique geodesic on $\h$ such that  $\gamma'(t_0)\in \CS'(\wt\fch_{\choices,\shmap})$ and $\pi(\gamma'(t_0))=\wh\gamma'(t_0)$. Let $\wt\ch\in\wt\fch_{\choices,\shmap}$ be the unique shifted cell in $S\h$ for which we have $\gamma'(t_0)\in \CS'(\wt\ch)$. 
\begin{enumerate}[{\rm (i)}]
\item\label{CS2i} There is a next point of intersection of $\gamma$ and $\CS$ if and only if $\gamma(\infty)$ does not belong to $\bhg\pr(\wt\ch)$.
\item\label{CS2ii} Suppose that $\gamma(\infty)\notin\bhg\pr(\wt\ch)$. Then there is a unique side $S$ of $\pr(\wt\ch)$ intersected by $\gamma( (t_0,\infty) )$. Suppose that $(\wt\ch_1,g) = n(\wt\ch,S)$. The next point of intersection is on $g.\CS'(\wt\ch_1)$.
\item\label{CS2iii} Let $(\wt\ch',h) = n(\wt\ch, b(\wt\ch))$. Then there was a previous point of intersection of $\gamma$ and $\CS$ if and only if $\gamma(-\infty)\notin h.\bhg\pr(\wt\ch')$.
\item\label{CS2iv} Suppose that $\gamma(-\infty)\notin h.\bhg\pr(\wt\ch')$. Then there is a unique side $S$ of $h.\pr(\wt\ch')$ intersected by $\gamma( (-\infty, t_0) )$. Let $(\wt\ch_2, h^{-1}k) = p(\wt\ch',h^{-1}S)$. Then the previous point of intersection was on $k.\CS'(\wt\ch_2)$.
\end{enumerate}
\end{prop}

\begin{proof}
We start by proving \eqref{CS2i}. Recall from Remark~\ref{charinter} that there is a next point of intersection of $\gamma$ and $\CS$ if and only if $\gamma( (t_0,\infty) )$ intersects $\BS$. Since $\gamma'(t_0)\in \CS'(\wt\ch)$, Proposition~\ref{ncSH} resp.\@ \ref{ccSH} resp.\@ \ref{stripSH} in combination with Remark~\ref{remain} shows that $\gamma'(t_0)$ points into $\pr(\wt\ch)^\circ$. Lemma~\ref{propsSH} states that $\pr(\wt\ch)^\circ \cap \BS =\emptyset$ and $\partial\pr(\wt\ch)\subseteq \BS$. Hence $\gamma( (t_0,\infty))$ does not intersect $\BS$ if and only if $\gamma( (t_0,\infty) )\subseteq \pr(\wt\ch)^\circ$. In this case, 
\[
\gamma(\infty)\in \chg( \pr(\wt\ch)) \cap \bhg H = \bhg \pr(\wt\ch).
\]
Conversely, if $\gamma(\infty) \in \bhg\pr(\wt\ch)$, then $\gamma( (t_0,\infty) )\subseteq \pr(\wt\ch)^\circ$ or $\gamma( (t_0,\infty) )\subseteq \partial\pr(\wt\ch)$. In the latter case, Lemma~\ref{propsSH} shows that $\gamma( (t_0,\infty) )\subseteq\BS$. Hence, if $\gamma(\infty)\in\bhg\pr(\wt\ch)$, then $\gamma( (t_0,\infty) )\subseteq \pr(\wt\ch)^\circ$.

Suppose now that $\gamma(\infty)\notin\bhg\pr(\wt\ch)$. The previous argument shows that the geodesic segment $\gamma( (t_0,\infty) )$ intersects $\partial\pr(\wt\ch)$, say $\gamma(t_1)\in\partial\pr(\wt\ch)$ with $t_1\in (t_0,\infty)$. If there was an element $t_2\in (t_0,\infty)\mminus\{t_1\}$ with $\gamma(t_2)\in\partial\pr(\wt\ch)$, then there is a side $S$ of $\pr(\wt\ch)$ such that $\gamma(\R)=S$, where the equality follows from the fact that $S$ is a complete geodesic segment. But then, by Lemma~\ref{propsSH}, $\gamma(\R)\subseteq \BS$, which contradicts to $\gamma'(t_0)\in \CS$. Thus, $\gamma(t_1)$ is the only intersection point of $\partial\pr(\wt\ch)$ and $\gamma( (t_0,\infty) )$. Since $\gamma( (t_0,t_1) )\subseteq \pr(\wt\ch)^\circ$, $\gamma'(t_0)$ is the next point of intersection of $\gamma$ and $\CS$. Moreover, $\gamma'(t_1)$ points out of $\pr(\wt\ch)$, since otherwise $\gamma( (t_1,\infty) )$ would intersect $\partial\pr(\wt\ch)$ which would lead to a contradiction as before.  Proposition~\
ref{CS=CShat} states that there is a unique pair $(\wt\ch_1,g)\in\wt\fch_{\choices,\shmap}\times\Gamma$ such that $\gamma'(t_1)\in g.\CS'(\wt\ch_1)$. Then $\gamma'(t_1)$ points into $g.\pr(\wt\ch_1)^\circ$. Let $S$ be the side of $\pr(\wt\ch)$ with $\gamma(t_1)\in S$. By Proposition~\ref{adjacentSH}, either $g.\cl(\pr(\wt\ch_1)) = \cl(\pr(\wt\ch))$ or $g.\cl(\pr(\wt\ch_1))\cap \cl(\pr(\wt\ch)) = S$. In the first case, $\gamma'(t_1)$ points into $\pr(\wt\ch)^\circ$, which is a contradiction. Therefore 
\[
g.\cl(\pr(\wt\ch_1))\cap \cl(\pr(\wt\ch)) = S,
\]
which shows that $(\wt\ch_1,g) = n(\wt\ch,S)$. This completes the proof of \eqref{CS2ii}.

Let $(\wt\ch',h)=n(\wt\ch, b(\wt\ch))$. Since $\gamma(t_0) \in b(\wt\ch)$ and $\gamma'(t_0)\in \CS'(\wt\ch)$, Proposition~\ref{adjacentSH} implies that $\gamma(t_0)\in h.b(\wt\ch')$ and $\gamma'(t_0) \notin h.\CS'(\wt\ch')$. Since  $\gamma(\R)\not\subseteq h.\partial\pr(\wt\ch')$, the unit tangent vector $\gamma'(t_0)$ points out of $\pr(\wt\ch')$. Because the intersection of $\gamma(\R)$ and $h.b(\wt\ch')$ is transversal and $\pr(\wt\ch')$ is a convex polyhedron with non-empty interior, $\gamma( (t_0-\eps,t_0) )\cap h.\pr(\wt\ch)^\circ \not= \emptyset$ for each $\eps>0$. As before we find that there was a previous point of intersection of $\gamma$ and $\CS$ if and only if $\gamma( (-\infty, t_0) )$ intersects $\partial\pr(\wt\ch')$ and that this is the case if and only if $\gamma(-\infty) \notin h.\bhg\pr(\wt\ch')$. 

Suppose that $\gamma(-\infty)\notin h.\bhg\pr(\wt\ch')$. As before, there is a unique $t_{-1}\in (-\infty, t_0)$ such that $\gamma(t_{-1})\in h.\bhg\pr(\wt\ch')$. Let $S$ be the side of $h.\pr(\wt\ch')$ with $\gamma(t_{-1})\in S$. Necessarily, $\gamma( (t_{-1},t_0 ) \subseteq h.\pr(\wt\ch')^\circ$, which shows that $\gamma'(t_{-1})$ points into $h.\pr(\wt\ch')^\circ$ and that $\gamma'(t_{-1})$ is the previous point of intersection. Let $(\wt\ch_2, k) \in \wt\fch_{\choices,\shmap}\times\Gamma$ be the unique pair such that $\gamma'(t_{-1})\in k.\CS'(\wt\ch_2)$ (see Proposition~\ref{CS=CShat}). By Proposition~\ref{adjacentSH}, we have either $k.\cl(\pr(\wt\ch_2)) = h.\cl(\pr(\wt\ch'))$ or $k.\cl(\pr(\wt\ch_2)) \cap h.\cl(\pr(\wt\ch')) = S$. In the latter case, $\gamma'(t_{-1})$ points out of $h.\pr(\wt\ch')^\circ$ which is a contradiction. Hence $h^{-1}k.\cl(\pr(\wt\ch_2) = \cl(\pr(\wt\ch'))$, which shows that $(\wt\ch_2,h^{-1}k) = p(\wt\ch',h^{-1}S)$.
\end{proof}

\begin{cor}\label{lastinter}
Let $\wh\gamma$ be a geodesic on $Y$ and suppose that $\wh\gamma$ does not intersect $\wh\CS$ infinitely often in future. If $\wh\gamma$ intersects $\wh\CS$ at all, then there exists $t\in\R$ such that $\wh\gamma'(t)\in\wh\CS$ and $\wh\gamma( (t,\infty) )\cap \wh\BS =\emptyset$. Analogously, suppose that $\wh\eta$ is a geodesic on $Y$ which does not intersect $\wh\CS$ infinitely often in past. If $\wh\eta$ intersects $\wh\CS$ at all, then there exists $t\in\R$ such that $\wh\eta'(t)\in\wh\CS$ and $\wh\eta( (-\infty, t) ) \cap \wh\BS = \emptyset$.
\end{cor}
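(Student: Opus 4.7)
The plan is to identify a last intersection time $t_{\max}$ and then show, using Proposition~\ref{CS2}\eqref{CS2i} together with Lemma~\ref{propsSH}, that beyond $t_{\max}$ the geodesic lives in the interior of a single shifted cell in $SH$, which is disjoint from $\BS$.

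First I would argue that the set of intersection times $T \sceq \{t \in \R \mid \wh\gamma'(t) \in \wh\CS\}$ has a maximum. Let $t_0 \in T$ (which exists by hypothesis). Since $\wh\CS$ satisfies (\apref{C}{C2}{}) by Proposition~\ref{gcs1}, the set $T$ is discrete in $\R$. The hypothesis that $\wh\gamma$ does not intersect $\wh\CS$ infinitely often in future means that there is no sequence $t_n\to\infty$ in $T$; hence there exists $N\geq t_0$ such that $T\cap[N,\infty)$ is finite. Because $T\cap[t_0,N]$ is discrete and compact, it too is finite, so $T\cap[t_0,\infty)$ is finite and admits a maximum $t_{\max}$.

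Next I would lift: by Proposition~\ref{CS=CShat} there is a unique geodesic $\gamma$ on $H$ with $\pi(\gamma)=\wh\gamma$ and $\gamma'(t_{\max})\in \CS'\rueck(\wt\fch_{\choices,\shmap})$, and a unique shifted cell $\wt\ch\in\wt\fch_{\choices,\shmap}$ with $\gamma'(t_{\max})\in \CS'(\wt\ch)$. Since $t_{\max}$ is the last intersection time, $\wh\gamma'$ has no next point of intersection with $\wh\CS$ at $t_{\max}$; equivalently, by Remark~\ref{charinter}, $\gamma$ has no next point of intersection with $\CS$. Proposition~\ref{CS2}\eqref{CS2i} then forces $\gamma(\infty)\in\bhg\pr(\wt\ch)$. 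Inspecting the proof of that part (or applying Lemma~\ref{convex}\eqref{convexiii} directly, noting that $\gamma(\R)\not\subseteq\BS$ by $\gamma'(t_{\max})\in\CS$ together with Lemma~\ref{propsSH}) yields $\gamma((t_{\max},\infty))\subseteq\pr(\wt\ch)^\circ$.

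Now Lemma~\ref{propsSH} states that $\pr(\wt\ch)^\circ\cap\BS=\emptyset$, so $\gamma((t_{\max},\infty))\cap\BS=\emptyset$. Since $\BS$ is $\Gamma$-invariant with $\pi^{-1}(\wh\BS)=\BS$, it follows that $\wh\gamma((t_{\max},\infty))\cap\wh\BS=\emptyset$. Setting $t\sceq t_{\max}$ completes the first statement. The statement for the past is entirely analogous, using parts \eqref{CS2iii} and \eqref{CS2iv} of Proposition~\ref{CS2} (and replacing $\gamma(\infty)$ by $\gamma(-\infty)$); there is no real obstacle beyond the bookkeeping. The one mildly subtle point in the whole argument is the step from ``no next intersection with $\wh\CS$'' to ``no subsequent touching of $\wh\BS$ at all'' (tangential crossings of $\wh\BS$ would not show up in $\wh\CS$), but this is precisely what is paid for by confining $\gamma((t_{\max},\infty))$ to $\pr(\wt\ch)^\circ$ rather than merely to $\cl(\pr(\wt\ch))$.
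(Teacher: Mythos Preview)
Your argument is correct once a small gap in the first paragraph is patched. Property~(\apref{C}{C2}{}) guarantees only that each point of $T$ is isolated \emph{within} $T$; it does not by itself prevent $T$ from accumulating at a point outside $T$ (think of $\{1-1/n:n\in\N\}$). So the claim ``$T\cap[t_0,N]$ is discrete and compact'' requires the additional input that $T$ is closed in $\R$. This holds here: since $\wh\gamma$ intersects $\wh\CS$, Remark~\ref{charinter} gives $\gamma(\R)\not\subseteq\BS$ and hence $T=\gamma^{-1}(\BS)$; and $\BS$ is closed in $H$ (locally it is a finite union of sides of the tesselation of Corollary~\ref{cellsHtess}), so $T$ is closed by continuity of $\gamma$. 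With this inserted, your proof goes through.

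The paper's own proof takes a slightly different, and in this one respect cleaner, route. Rather than first establishing that $\max T$ exists, it picks any $s$ with $\wh\gamma'((s,\infty))\cap\wh\CS=\emptyset$ (immediate from $T$ being bounded above), uses Remark~\ref{charinter} to upgrade this to $\wh\gamma((s,\infty))\cap\wh\BS=\emptyset$, then lifts and locates the cell $g\ch$ (with $\ch\in\fch$, $g\in\Gamma$) whose interior contains $\gamma(r)$ for some $r>s$; convexity forces $\gamma((s,\infty))\subseteq g\ch^\circ$, and since $\gamma(\R)$ must cross $g\partial\ch$ somewhere (transversally, as $\gamma(\R)\not\subseteq\BS$), the unique crossing time furnishes the desired $t$. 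So the paper argues ``far future $\to$ ambient cell $\to$ entry time'' using cells in $H$, whereas you argue ``last intersection time $\to$ Proposition~\ref{CS2}\eqref{CS2i} $\to$ trapped in $\pr(\wt\ch)^\circ$''. Both are short; the paper's version sidesteps the closedness-of-$T$ issue and stays at the level of cells in $H$ without invoking the shifted-cell machinery.
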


\begin{example}\label{intersectionGamma05}
Recall the setting of Example~\ref{choicesGamma05}. We consider the two shift maps $\shmap_1 \equiv \id$, and
\[
\shmap_2\big(\wt\ch_1\big) \sceq \mat{1}{-1}{0}{1}\quad\text{and}\quad \shmap_2\big(\wt\ch_j\big) \sceq \id \quad\text{for $j=2,\ldots, 6$.}
\]
For simplicity set $\wt\ch_{-1}\sceq \shmap_2\big(\wt\ch_1\big). \wt\ch_1$ and $\CS'_{-1}\sceq \shmap_2\big(\wt\ch_1\big).\CS'_1$. Further we set
\begin{align*}
g_1 & \sceq \mat{1}{0}{5}{1}, & g_2 & \sceq \mat{2}{-1}{5}{-2}, & g_3 & \sceq \mat{3}{-2}{5}{-3}, & g_4 & \sceq \mat{4}{-1}{5}{-1},
\\
g_5 & \sceq \mat{4}{-5}{5}{-6}, & g_6 & \sceq \mat{1}{1}{0}{1},  & g_7 & \sceq \mat{-1}{0}{5}{-1}.
\end{align*}
Figure~\ref{forward1} shows the translates of the sets $\CS'_j$ which are necessary to determine the location of the next point of intersection if the shift map is $\shmap_1$, and Figure~\ref{forward2} those if $\shmap_2$ is the chosen shift map.
\begin{figure}[h]
\begin{center}
\includegraphics*{Gamma05.10} 
\end{center}
\caption{The translates of $\CS'$ relevant for determination of the location next point of intersection for the shift map $\shmap_1$.}\label{forward1}
\end{figure}
\begin{figure}[h]
\begin{center}
\includegraphics*{Gamma05.5} 
\end{center}
\caption{The translates of $\CS'$ relevant for determination of the location next point of intersection for the shift map $\shmap_2$.}\label{forward2}
\end{figure}
\end{example}

Recall the set $\bd$ from Section~\ref{sec_base}. The following characterization is now obvious.

\begin{prop}\label{CS3}
Let $\widehat\gamma$ be a geodesic on $Y$. 
\begin{enumerate}[{\rm (i)}]
\item\label{CS3i} $\widehat\gamma$ intersects $\widehat\CS$ infinitely often in future if and only if $\widehat\gamma(\infty) \notin \pi(\bd)$.
\item\label{CS3ii} $\widehat\gamma$ intersects $\widehat\CS$ infinitely often in past if and only if $\widehat\gamma(-\infty) \notin \pi(\bd)$.
\end{enumerate}
\end{prop}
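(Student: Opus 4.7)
The plan is to prove part~(i); part~(ii) then follows by applying~(i) to the time-reversed geodesic on $Y$, since reversal interchanges past with future and $\wh\gamma(\infty)$ with $\wh\gamma(-\infty)$. In~(i) I would prove both implications by contrapositive, using the tesselation of $H$ by $\Gamma$-translates of cells (Corollary~\ref{cellsHtess}) together with Proposition~\ref{CS=CShat}, which identifies $\CS'\big(\wt\fch_{\choices,\shmap}\big)$ as a set of representatives for $\wh\CS$.

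For the direction ``$\wh\gamma$ not infinitely often in future $\Rightarrow\wh\gamma(\infty)\in\pi(\bd)$'', I distinguish two cases. If $\wh\gamma$ does not meet $\wh\CS$ at all, Proposition~\ref{CS1} yields $\wh\gamma\in\NC$, so some representative $\gamma$ satisfies $\gamma(\pm\infty)\in\bd(\ch)\subseteq\bd$ for some $\ch\in\fch$. Otherwise Corollary~\ref{lastinter} provides $t\in\R$ with $\wh\gamma'(t)\in\wh\CS$ and $\wh\gamma\big((t,\infty)\big)\cap\wh\BS=\emptyset$; via Proposition~\ref{CS=CShat} I pick the unique representative $\gamma$ with $\gamma'(t)\in\CS'\big(\wt\ch\big)$ for some $\wt\ch\in\wt\fch_{\choices,\shmap}$, so that $\gamma\big((t,\infty)\big)\cap\BS=\emptyset$. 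Since $\gamma'(t)$ points into $\pr\big(\wt\ch\big)^\circ$ (Propositions~\ref{ncSH},~\ref{ccSH},~\ref{stripSH} together with Remark~\ref{remain}) and $\partial\pr\big(\wt\ch\big)\subseteq\BS$ (Lemma~\ref{propsSH}), connectedness forces $\gamma\big((t,\infty)\big)\subseteq\pr\big(\wt\ch\big)^\circ$, whence $\gamma(\infty)\in\bhg\cl\big(\pr\big(\wt\ch\big)\big)$. By Corollary~\ref{iscellH} the set $\cl\big(\pr\big(\wt\ch\big)\big)$ is a $\Gamma_\infty$-translate of an element of $\fch$, so $\gamma(\infty)\in\bd$.

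For the converse, assume $\wh\gamma(\infty)\in\pi(\bd)$; if $\wh\gamma\in\NC$ then Proposition~\ref{CS1} suffices, so suppose not. Then $\wh\gamma$ meets $\wh\CS$, which forces $\gamma(\R)\not\subseteq\BS$ for any representative (otherwise $\gamma$ would be tangent to $\BS$ throughout and $\wh\gamma$ could not intersect $\wh\CS$). Choose a representative with $v\sceq\gamma(\infty)$ a vertex of some $\Gamma$-translate $\ch_0$ of a cell in $\fch$, and take $h\in\PSL(2,\R)$ with $hv=\infty$; then $\gamma_1\sceq h\gamma$ is a vertical line $\Rea z=c$, and $h\ch_0$ is a convex polyhedron with $\infty$ as a vertex, hence with exactly two vertical sides at $\Rea=a,b$ ($a<b$) and finitely many non-vertical sides of bounded height. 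If $c\in(a,b)$, then $\gamma_1(t)\in (h\ch_0)^\circ$ for $t$ large, so $\gamma$ stays eventually in $\ch_0^\circ\subseteq H\mminus\BS$ and the claim follows; if $c\in\{a,b\}$, then $\gamma_1(\R)$ coincides with a complete geodesic side of $h\ch_0$, forcing $\gamma(\R)\subseteq\BS$, a contradiction. The remaining case $c\notin[a,b]$ is the main obstacle: by Corollary~\ref{cellsHtess} each vertical side $\{a\}+i\R^+$ (respectively $\{b\}+i\R^+$) is shared with exactly one further cell, which inherits $\infty$ as a vertex because its closure contains a complete geodesic ending at $\infty$; iterating yields a sequence of adjacent vertical strips, and the delicate point is to guarantee that $c$ lies in some such strip. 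When $v$ is a cuspidal point of $\Gamma$, the infinite cyclic parabolic stabilizer $\Gamma_v$ acts on these strips with finite orbit quotient (in the spirit of Proposition~\ref{highbox} applied in the $h$-conjugated picture), so their union is all of $\R$; when $v$ is not cuspidal, Theorem~\ref{cuspsfd} gives $v\notin L(\Gamma)$, whence cells without $v$ in their $\bhg$-closure do not accumulate at $v$, so for large height the conjugated tesselation consists only of strips with $\infty$ as a vertex and these must cover $\Rea=c$. In both situations the analysis reduces back to the case $c\in(a,b)$ for the appropriate cell.
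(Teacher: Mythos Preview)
Your treatment of the implication ``not infinitely often $\Rightarrow \wh\gamma(\infty)\in\pi(\bd)$'' matches the paper's proof.

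For the converse, your conjugation approach diverges from the paper, and the case $c\notin[a,b]$ contains a real gap. You iterate to adjacent cells sharing a vertical side in the $h$-picture and then need the resulting strips to reach $\Rea z=c$. For $v$ cuspidal, Proposition~\ref{highbox} concerns an exact convex fundamental polyhedron with a side-pairing, not the cell tesselation $\Gamma\cdot\fch$; the fact you actually need---that the cells having $v$ as a vertex are permuted by $\Gamma_v$ in finitely many orbits and fill a full horoball at $v$---is not established anywhere in the paper. For $v\notin L(\Gamma)$, the non-accumulation of cells at $v$ is a statement about local finiteness of the tesselation at an ideal boundary point, which again is not supplied by the results proved; Theorem~\ref{cuspsfd} only tells you which boundary points of one isometric fundamental domain lie in $L(\Gamma)$, not how $\Gamma$-translates of cells behave near such a point.

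The paper avoids conjugation entirely. It treats $\wh\gamma(\infty)=\pi(\infty)$ directly: a representative with $\gamma(\infty)=\infty$ eventually lies in $\mc K$ by Lemma~\ref{KM}, hence (after a $\Gamma_\infty$-shift) in some basal precell $\pch$; convexity traps the forward ray in $\ch(\pch)$, and either $\wh\gamma\in\NC$ or Proposition~\ref{CS2}\eqref{CS2i} gives no next intersection. For $\wh\gamma(\infty)\ne\pi(\infty)$ (regardless of whether the point is cuspidal) it takes $\gamma(\infty)\in\bhg\ch\cap\R$ and argues by contradiction: if $\gamma$ crossed $\CS$ infinitely often, the connected components $S_n$ of $\BS$ it meets could not all be vertical (the uniform lower bound on the horizontal gap between vertical sides of cells would force $\gamma(\infty)\notin\R$), and a non-vertical $S_k$ is then argued to cross the segment $[\gamma(\infty),\infty]\subseteq\ch$, contradicting Corollary~\ref{cellsHtess}. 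This stays inside the toolkit already developed (Lemma~\ref{convex}, Corollary~\ref{cellsHtess}, Proposition~\ref{adjacentSH}), whereas your route would require new structural lemmas about how the cell tesselation behaves near an arbitrary point of~$\bd$.
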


Recall the set $\NIC$ from Remark~\ref{outlook}.

\begin{thm}\label{geomcross}
Let $\mu$ be a measure on the space of geodesics on $Y$. Then $\wh\CS$ is a cross section \wrt $\mu$ for the geodesic flow on $Y$ if and only if $\mu(\NIC) = 0$.
\end{thm}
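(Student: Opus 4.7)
The plan is to reduce the theorem to the characterizations already established in Propositions~\ref{gcs1} and \ref{CS3}. Since the discreteness condition (\apref{C}{C2}{}) is handled independently of $\mu$, the theorem amounts to showing that the condition (\apref{C}{C1}{}) for $\mu$ is equivalent to $\mu(\NIC)=0$.

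First I would invoke Proposition~\ref{gcs1}, which states that $\wh\CS$ satisfies (\apref{C}{C2}{}) unconditionally. Hence $\wh\CS$ is a cross section \wrt $\mu$ if and only if (\apref{C}{C1}{}) holds for $\mu$, i.e., if and only if the set
\[
 E \sceq \big\{ \wh\gamma \ \big\vert\ \text{$\wh\gamma$ does not intersect $\wh\CS$ infinitely often in both past and future}\big\}
\]
is a $\mu$-null set. The key step is then to identify $E$ with $\NIC$.

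Next I would apply Proposition~\ref{CS3}. By part~\eqref{CS3i}, a geodesic $\wh\gamma$ on $Y$ fails to intersect $\wh\CS$ infinitely often in future if and only if $\wh\gamma(\infty)\in\pi(\bd)$; by part~\eqref{CS3ii}, it fails to intersect $\wh\CS$ infinitely often in past if and only if $\wh\gamma(-\infty)\in\pi(\bd)$. Therefore $\wh\gamma\in E$ if and only if at least one of $\wh\gamma(\pm\infty)$ lies in $\pi(\bd)$, which by the definition of $\NIC$ (see Remark~\ref{outlook}) is precisely the condition $\wh\gamma\in\NIC$. Thus $E=\NIC$.

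Combining these observations, $\wh\CS$ is a cross section \wrt $\mu$ if and only if $\mu(E)=\mu(\NIC)=0$, which establishes the theorem. No step presents a genuine obstacle, since all the geometric and dynamical content has been absorbed into the earlier propositions; the proof here is essentially a bookkeeping step that packages (\apref{C}{C1}{}) and (\apref{C}{C2}{}) together with the already-proved characterization of geodesics intersecting $\wh\CS$ infinitely often.
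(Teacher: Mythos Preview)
Your proposal is correct and follows exactly the same approach as the paper: invoke Proposition~\ref{gcs1} for (\apref{C}{C2}{}), then use Proposition~\ref{CS3} to identify the set of geodesics failing (\apref{C}{C1}{}) with $\NIC$. The paper's proof is simply a terser version of what you wrote.
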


\begin{proof}
Proposition~\ref{gcs1} shows that $\wh\CS$ satisfies (\apref{C}{C2}{}). Let $\wh\gamma$ be a geodesic on $Y$. Then Proposition~\ref{CS3} implies that $\wh\gamma$ intersects $\wh\CS$ infinitely often in past and future if and only if $\wh\gamma\notin\NIC$.  This completes the proof.
\end{proof}

Let $\mc E$ denote the set of unit tangent vectors to the geodesics in $\NIC$  and set $\wh\CS_\st \sceq \wh\CS \mminus \mc E$.

\begin{cor} Let $\mu$ be a measure on the space of geodesics on $Y$ such that $\mu(\NIC) = 0$. Then $\wh\CS_\st$ is the maximal strong cross section \wrt $\mu$ contained in $\wh\CS$.
\end{cor}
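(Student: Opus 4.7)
The plan is to verify that $\wh\CS_\st$ is itself a strong cross section \wrt $\mu$ and then show that any strong cross section contained in $\wh\CS$ must avoid $\mc E$. Both halves are direct consequences of Proposition~\ref{CS3} once we unravel the definition of $\mc E$: a unit tangent vector $\wh v$ lies in $\mc E$ if and only if the geodesic $\wh\gamma$ with tangent $\wh v$ belongs to $\NIC$, i.e.\@ fails to intersect $\wh\CS$ infinitely often in at least one time direction.

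First I would show that $\wh\CS_\st$ is a cross section \wrt $\mu$. Condition~(\apref{C}{C2}{}) is inherited from $\wh\CS$ since $\wh\CS_\st\subseteq\wh\CS$ and (\apref{C}{C2}{}) is a local condition on intersections. For (\apref{C}{C1}{}), let $\wh\gamma$ be a geodesic on $Y$ with $\wh\gamma\notin\NIC$; this holds for $\mu$-almost every geodesic by hypothesis. By Proposition~\ref{CS3} (more precisely, by Theorem~\ref{geomcross}) $\wh\gamma$ intersects $\wh\CS$ infinitely often in past and future. Each such intersection is of the form $\wh\gamma'(t)$ and the geodesic determined by $\wh\gamma'(t)$ is $\wh\gamma$ itself, which is not in $\NIC$; hence $\wh\gamma'(t)\notin\mc E$, so $\wh\gamma'(t)\in\wh\CS_\st$. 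Thus $\mu$-almost every geodesic intersects $\wh\CS_\st$ infinitely often in past and future.

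Next I would verify that $\wh\CS_\st$ is \emph{strong}, meaning every (not merely $\mu$-almost every) geodesic meeting it does so infinitely often in both time directions. If $\wh\gamma$ intersects $\wh\CS_\st$ in $\wh\gamma'(t)$, then $\wh\gamma'(t)\notin\mc E$, whence $\wh\gamma\notin\NIC$. Proposition~\ref{CS3} then gives that $\wh\gamma$ meets $\wh\CS$ infinitely often in past and future, and the same argument as in the previous paragraph shows that each such intersection actually lies in $\wh\CS_\st$.

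Finally, maximality. Let $\wh\DS$ be any strong cross section \wrt $\mu$ with $\wh\DS\subseteq\wh\CS$, and pick $\wh v\in\wh\DS$. Let $\wh\gamma$ be the geodesic determined by $\wh v$, so $\wh\gamma$ intersects $\wh\DS$. The strong property forces $\wh\gamma$ to intersect $\wh\DS$, and thus $\wh\CS$, infinitely often in both time directions, so $\wh\gamma\notin\NIC$ by Proposition~\ref{CS3}. Consequently $\wh v\notin\mc E$, i.e.\@ $\wh v\in\wh\CS_\st$. Hence $\wh\DS\subseteq\wh\CS_\st$, which is the required maximality. There is no real obstacle in this proof; the only point requiring care is the symmetric bookkeeping that the intersections of a geodesic with $\wh\CS$ automatically inherit the property of being in $\wh\CS_\st$ once the underlying geodesic is known to lie outside $\NIC$.
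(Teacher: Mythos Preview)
Your argument is correct and is precisely the routine verification the paper leaves implicit: the corollary is stated without proof, as an immediate consequence of Proposition~\ref{CS3} (via Theorem~\ref{geomcross}) and the definition $\wh\CS_\st=\wh\CS\mminus\mc E$. Your three-step check (cross section, strong, maximal) is exactly what is needed.
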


\subsection{Geometric coding sequences and geometric symbolic dynamics}\label{sec_geomcodseq}

A \textit{label} of a unit tangent vector in $\wh\CS$ or $\CS$ is a symbol which is assigned to this vector. The \textit{labeling} 
of $\wh\CS$ resp.\@ $\CS$ is the assigment of the labels to its elements. The set of labels is commonly called the \textit{alphabet} of the arising symbolic dynamics. 

We establish a labeling of $\wh\CS$ and $\CS$ in the following way: Let $v\in\CS'(\wt\fch_{\choices,\shmap})$ and suppose that $\wt\ch\in\wt\fch_{\choices,\shmap}$ is the unique shifted cell in $S\h$ such that $v\in\CS'(\wt\ch)$. Let $\gamma$ be the geodesic on $\h$ determined by $v$. 

Suppose first that $\gamma(\infty)\notin \bhg\pr(\wt\ch)$. Proposition~\ref{CS2}\eqref{CS2ii} states that there is a unique side $S$ of $\pr(\wt\ch)$ intersected by $\gamma( (0,\infty) )$ and that the next point of intersection of $\gamma$ and $\CS$ is on $g.\CS'(\wt\ch_1)$ if $(\wt\ch_1,g)=n(\wt\ch,S)$. We assign to $v$ the label $(\wt\ch_1,g)$.

Suppose now that $\gamma(\infty)\in \bhg\pr(\wt\ch)$. Proposition~\ref{CS2}\eqref{CS2i} shows that there is no next point of intersection of $\gamma$ and $\CS$. Let $\eps$ be an abstract symbol which is not contained in $\Gamma$. Then we label $v$ by $\eps$ (``end'' or ``empty word''). 

Let $\wh v\in\wh\CS$. By Proposition~\ref{CS=CShat} there is a unique $v\in\CS'(\wt\fch_{\choices,\shmap})$ such that $\pi(v) = \wh v$. We endow $\wh v$ and each element in $\pi^{-1}(\wh v)$ with the labels of $v$.

The following proposition is the key result for the determination of the set of labels.

\begin{prop}\label{char_bound}
Let $\wt\ch\in\wt\fch_{\choices,\shmap}$ and suppose that $S_j$, $j=1,\ldots,k$, are the sides of $\pr(\wt\ch)$. For $j=1,\ldots, k$ set $(\wt\ch_j,g_j)\sceq n(\wt\ch, S_j)$. Let $v\in \CS'(\wt\ch)$ and suppose that $\gamma$ is the geodesic determined by $v$. Then $\gamma(\infty) \in g_j.I(\wt\ch_j)$ if and only if $\gamma( (0,\infty) )$ intersects $S_j$. Moreover, if $S_j\not= b(\wt\ch)$, then $g_j.I(\wt\ch_j) \subseteq I(\wt\ch)$. If $S_j=b(\wt\ch)$, then $g_j.I(\wt \ch_j) = J(\wt\ch)$.
\end{prop}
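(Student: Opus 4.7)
My plan is to reduce everything to elementary half-plane geometry, pivoting on the crucial observation that every side $S_j$ of $\pr(\wt\ch)$ is a \emph{complete} geodesic segment. This follows from the explicit descriptions of cells in $H$ given in Propositions~\ref{nccells}\eqref{ncc1}, \ref{ccells}\eqref{cc1} and Remark~\ref{just_def}: every vertex of any cell in $H$ lies in $\bhg H$, so every side joins two points of $\bhg H$ and is therefore complete. For each $j$, write $H_j^+$ and $H_j^-$ for the two open half-planes bounded by $S_j$, labelled so that $\pr(\wt\ch)^\circ\subseteq H_j^+$. Proposition~\ref{adjacentSH} applied to the pair $(\wt\ch_j,g_j)=n(\wt\ch,S_j)$ then yields $g_j\pr(\wt\ch_j)^\circ\subseteq H_j^-$ and identifies $g_j\CS'(\wt\ch_j)$ with the set of unit tangent vectors based on $S_j$ pointing into $H_j^-$.

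First I would pin down $g_jI(\wt\ch_j)$: denote by $a_j,b_j\in\bhg H$ the endpoints of $S_j$. Lemma~\ref{char_intervals} together with Proposition~\ref{adjacentSH} shows that the forward endpoints of the geodesics determined by the elements of $g_j\CS'(\wt\ch_j)$ exhaust one of the two open arcs of $\bhg H\setminus\{a_j,b_j\}$; since every such geodesic enters $H_j^-$ immediately after time $0$ and remains there by the convexity of $H_j^-$, this arc is exactly $\bhg H_j^-\setminus\{a_j,b_j\}$.

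For the equivalence, I would use that $v\in\CS'(\wt\ch)$ forces $\gamma((0,\eps))\subseteq\pr(\wt\ch)^\circ\subseteq H_j^+$ for small $\eps>0$ and $\gamma(\R)\neq S_j$ (otherwise $v$ would be tangent to $\BS$, contradicting $v\in\CS$). If $\gamma((0,\infty))$ meets $S_j$, the meeting is transverse, so $\gamma$ crosses from $H_j^+$ into $H_j^-$ and $\gamma(\infty)\in\bhg H_j^-$; transversely intersecting geodesics share no endpoint at infinity, so $\gamma(\infty)\notin\{a_j,b_j\}$ and hence $\gamma(\infty)\in g_jI(\wt\ch_j)$. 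Conversely, if $\gamma(\infty)\in g_jI(\wt\ch_j)=\bhg H_j^-\setminus\{a_j,b_j\}$, then $\gamma$ sits in $H_j^+$ just after $0$ and ends in $\bhg H_j^-$, so by continuity it must cross the complete geodesic $S_j$ at some positive time. This is the step where the initial observation plays its essential role, ruling out the \emph{a priori} possibility that $\gamma$ crosses only an extension of $S_j$ outside $S_j$ itself.

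For the last assertion, write $b(\wt\ch)=[a_0,\infty]$ and suppose \Wlog that $\pr(\wt\ch)\subseteq\{\Rea z\geq a_0\}$, so $I(\wt\ch)=(a_0,\infty)$ and $J(\wt\ch)=(-\infty,a_0)$. If $S_j=b(\wt\ch)$ then $H_j^-=\{\Rea z<a_0\}$ and consequently $g_jI(\wt\ch_j)=\bhg H_j^-\setminus\{a_0,\infty\}=(-\infty,a_0)=J(\wt\ch)$. If $S_j\neq b(\wt\ch)$, the endpoints $a_j,b_j$ of $S_j$ lie in $\bhg\pr(\wt\ch)\subseteq[a_0,\infty]$ and satisfy $\{a_j,b_j\}\neq\{a_0,\infty\}$; since the infinite vertex $\infty$ of $\pr(\wt\ch)$ (present via $b(\wt\ch)$) belongs to $\chg(H_j^+)$, the open arc $\bhg H_j^-\setminus\{a_j,b_j\}$ does not contain $\infty$ and is therefore contained in $(a_0,\infty)=I(\wt\ch)$; a brief inspection of the three subcases according to whether neither, one, or both of $a_j,b_j$ coincide with $a_0$ or $\infty$ confirms the inclusion. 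The main subtlety, as stressed above, is the backward direction of the equivalence, where completeness of $S_j$ is indispensable.
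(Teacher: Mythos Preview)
Your proof is correct and takes a somewhat different, more direct route than the paper's.

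The paper's argument for the forward implication routes through Proposition~\ref{CS2}: assuming $\gamma((0,\infty))$ meets $S_j$, it first rules out $\gamma(\infty)\in\bhg\pr(\wt\ch)$ via Lemma~\ref{convex}\eqref{convexiii}, then invokes Proposition~\ref{CS2}\eqref{CS2ii} to place the next intersection on $g_j\CS'(\wt\ch_j)$, and finally reads off $\gamma(\infty)\in g_jI(\wt\ch_j)$ from Lemma~\ref{char_intervals}. For the inclusion $g_jI(\wt\ch_j)\subseteq I(\wt\ch)$ when $S_j\neq b(\wt\ch)$, the paper picks an auxiliary geodesic $\alpha$ with $\alpha(\infty)=x\in g_jI(\wt\ch_j)$ and runs two separate contradiction arguments to show first $\alpha((0,\infty))\subseteq\complement\cl(\pr(\wt\ch))$ and then $\alpha(\infty)\in I(\wt\ch)$. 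The case $S_j=b(\wt\ch)$ is handled via $p(\wt\ch,b(\wt\ch))=(\wt\ch,\id)$ and Proposition~\ref{adjacentSH}.

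Your key simplification is to identify $g_jI(\wt\ch_j)$ once and for all with the open arc $\bhg H_j^-\setminus\{a_j,b_j\}$ using Proposition~\ref{adjacentSH} and Lemma~\ref{char_intervals}. After that, both directions of the equivalence become the single elementary fact that a geodesic not equal to $S_j$ which starts in $H_j^+$ lands at infinity in the open arc $\bhg H_j^-\setminus\{a_j,b_j\}$ if and only if it crosses $S_j$; the observation that two distinct geodesics meeting in $H$ cannot share an endpoint in $\bhg H$ neatly excludes $\gamma(\infty)\in\{a_j,b_j\}$. The inclusion statement likewise collapses to an arc containment in $\bhg H$, checked by locating $\infty$ in $\chg(H_j^+)$ and the endpoints $a_j,b_j$ in $[a_0,\infty]$. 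This is shorter and isolates the geometric content cleanly; the paper's version has the compensating virtue of showing explicitly how Proposition~\ref{CS2} feeds into the subsequent development.
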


\begin{proof}
This follows from Propositions~\ref{adjacentSH} and \ref{CS2} and Lemma~\ref{char_intervals} with similar arguments as in the proof of Proposition~\ref{adjacentSH}. 
\end{proof}

For $\wt\ch\in\wt\fch_{\choices,\shmap}$ let $\Sides(\wt\ch)$ denote the set of sides of $\pr(\wt\ch)$.

\begin{cor}\label{decomposition}
Let $\wt\ch\in\wt\fch_{\choices,\shmap}$. For each $S\in\Sides(\wt\ch)$ set $(\wt\ch_S,g_S)\sceq n(\wt\ch,S)$. Then $I(\wt\ch)$ is the disjoint union 
\[
I(\wt\ch) = \left( I(\wt\ch) \cap \bhg\pr(\wt\ch) \right) \cup \bigcup_{S\in\Sides(\wt\ch)\mminus b(\wt\ch)} g_S.I(\wt\ch_S).
\]
\end{cor}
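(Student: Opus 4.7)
The plan is to combine three earlier results: Lemma~\ref{char_intervals} (which parametrises $\CS'(\wt\ch)$ by pairs of boundary points in $I(\wt\ch)\times J(\wt\ch)$), Proposition~\ref{CS2}\eqref{CS2i}--\eqref{CS2ii} (which says whether a geodesic $\gamma$ with $\gamma'(0)\in\CS'(\wt\ch)$ admits a next point of intersection with $\CS$ and, if so, identifies the unique side of $\pr(\wt\ch)$ crossed), and Proposition~\ref{char_bound} (which translates ``$\gamma((0,\infty))$ crosses $S$'' into ``$\gamma(\infty)\in g_S I(\wt\ch_S)$'').

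\medskip

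First I would prove that $I(\wt\ch)$ is covered by the sets on the right. Given $x\in I(\wt\ch)$, pick any $y\in J(\wt\ch)$ (non-empty as it is an interval); by Lemma~\ref{char_intervals} there is $v\in\CS'(\wt\ch)$ whose geodesic $\gamma$ on $H$ satisfies $(\gamma(\infty),\gamma(-\infty))=(x,y)$. Proposition~\ref{CS2}\eqref{CS2i} says either $x=\gamma(\infty)\in\bhg\pr(\wt\ch)$, placing $x$ in $I(\wt\ch)\cap\bhg\pr(\wt\ch)$, or there exists a next point of intersection and hence, by Proposition~\ref{CS2}\eqref{CS2ii}, a unique side $S$ of $\pr(\wt\ch)$ that $\gamma((0,\infty))$ meets. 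Proposition~\ref{char_bound} then gives $x\in g_S I(\wt\ch_S)$. The side $S$ cannot equal $b(\wt\ch)$: the complete geodesic segments $\gamma(\R)$ and $b(\wt\ch)$ share the point $\gamma(0)$, and they are distinct because $v$ is not tangent to $b(\wt\ch)$; so they meet only there, and $\gamma((0,\infty))\cap b(\wt\ch)=\emptyset$.

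\medskip

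For the reverse inclusion I would note the trivial containment $I(\wt\ch)\cap\bhg\pr(\wt\ch)\subseteq I(\wt\ch)$ and apply the second part of Proposition~\ref{char_bound}, which states precisely that $g_S I(\wt\ch_S)\subseteq I(\wt\ch)$ whenever $S\neq b(\wt\ch)$.

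\medskip

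Finally I would verify disjointness. If $x\in g_{S_1}I(\wt\ch_{S_1})\cap g_{S_2}I(\wt\ch_{S_2})$ for two distinct sides $S_1,S_2\in\Sides(\wt\ch)\mminus\{b(\wt\ch)\}$, choose $v\in\CS'(\wt\ch)$ and $\gamma$ as above with $\gamma(\infty)=x$; then Proposition~\ref{char_bound} forces $\gamma((0,\infty))$ to meet both $S_1$ and $S_2$, contradicting the uniqueness statement of Proposition~\ref{CS2}\eqref{CS2ii}. Similarly, if $x\in\bhg\pr(\wt\ch)\cap g_SI(\wt\ch_S)$ for some $S\neq b(\wt\ch)$, then Proposition~\ref{CS2}\eqref{CS2i} says $\gamma$ has no next point of intersection with $\CS$, while Proposition~\ref{char_bound} says $\gamma((0,\infty))$ crosses $S\subseteq\partial\pr(\wt\ch)\subseteq\BS$, contradiction. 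The only real bookkeeping subtlety is the rôle of the side $b(\wt\ch)$: by the last sentence of Proposition~\ref{char_bound} one has $g_{b(\wt\ch)}I(\wt\ch_{b(\wt\ch)})=J(\wt\ch)$, which is disjoint from $I(\wt\ch)$, so omitting this side from the union is both necessary (to get a subset of $I(\wt\ch)$) and lossless.
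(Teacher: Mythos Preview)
Your proof is correct and follows essentially the same route as the paper: pick $x\in I(\wt\ch)$, lift to $v\in\CS'(\wt\ch)$ via Lemma~\ref{char_intervals}, apply the dichotomy of Proposition~\ref{CS2}\eqref{CS2i}--\eqref{CS2ii}, and translate with Proposition~\ref{char_bound}. The only cosmetic difference is your argument for $S\neq b(\wt\ch)$: you reason geometrically that $\gamma((0,\infty))$ cannot revisit $b(\wt\ch)$, whereas the paper uses the last clause of Proposition~\ref{char_bound} (namely $g_{b(\wt\ch)}I(\wt\ch_{b(\wt\ch)})=J(\wt\ch)$ together with $I(\wt\ch)\cap J(\wt\ch)=\emptyset$)---which you also note at the end.
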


Let $\Sigma$ denote the set of labels.

\begin{cor}\label{prop_labels}
The set $\Sigma$ of labels is given by
\begin{align*}
\Sigma & = \left\{\eps\right\} \cup 
\\
& \quad\cup\left\{ (\wt\ch,g)\in\wt\fch_{\choices,\shmap}\times\Gamma \left\vert\ \exists\, \wt\ch'\in\wt\fch_{\choices,\shmap}\ \exists\, S\in\Sides(\wt\ch')\mminus b(\wt\ch')\colon (\wt\ch,g)=n(\wt\ch',S)\right.\right\}.
\end{align*}
Moreover, $\Sigma$ is finite.
\end{cor}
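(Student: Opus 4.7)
My plan is to unfold the definition of the labeling and then read off the two inclusions using Proposition~\ref{char_bound} and Corollary~\ref{decomposition}. Recall that for $v\in\CS'\rueck\big(\wt\fch_{\choices,\shmap}\big)$ lying in $\CS'(\wt\ch)$, the label of $v$ is $\eps$ precisely when $\gamma_v(\infty)\in\bhg\pr(\wt\ch)$, and otherwise equals $n(\wt\ch,S)$ for the unique side $S\in\Sides(\wt\ch)$ that $\gamma_v((0,\infty))$ crosses (Proposition~\ref{CS2}\eqref{CS2ii}). Thus I only need to determine which pairs $(\wt\ch,S)$ actually occur as intersection data of some admissible geodesic, and which ones can never appear.

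The first observation is that such an $S$ is never $b(\wt\ch)$. Indeed, Lemma~\ref{char_intervals} forces $\gamma_v(\infty)\in I(\wt\ch)$, but by Proposition~\ref{char_bound} the side $b(\wt\ch)$ is crossed by $\gamma_v((0,\infty))$ only when $\gamma_v(\infty)\in J(\wt\ch)$, and $I(\wt\ch)\cap J(\wt\ch)=\emptyset$ by Definition~\ref{def_codint}. Hence any non-$\eps$ label is of the form $n(\wt\ch',S)$ with $S\in\Sides(\wt\ch')\mminus b(\wt\ch')$, giving one inclusion.

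For the reverse inclusion, fix $\wt\ch'\in\wt\fch_{\choices,\shmap}$ and a side $S\in\Sides(\wt\ch')\mminus b(\wt\ch')$; set $(\wt\ch_S,g_S)\sceq n(\wt\ch',S)$. By Corollary~\ref{decomposition} the set $g_SI(\wt\ch_S)$ is a non-empty subset of $I(\wt\ch')$, so I can pick $x\in g_SI(\wt\ch_S)$ and any $y\in J(\wt\ch')$. Lemma~\ref{char_intervals} provides $v\in \CS'(\wt\ch')$ with $(\gamma_v(\infty),\gamma_v(-\infty))=(x,y)$. Then $\gamma_v(\infty)\notin\bhg\pr(\wt\ch')$ (since $x\in I(\wt\ch')\subseteq \bhg H\mminus\bhg\pr(\wt\ch')$, which follows from $I(\wt\ch')\cap J(\wt\ch')=\emptyset$ and the description of $\bhg\pr(\wt\ch')$ as endpoints of its sides), and by Proposition~\ref{char_bound} the unique side of $\pr(\wt\ch')$ crossed by $\gamma_v((0,\infty))$ is exactly $S$. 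So $v$ carries the label $n(\wt\ch',S)=(\wt\ch_S,g_S)$, proving the other inclusion.

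Finally, finiteness follows immediately: $\fpch$ is finite by Theorem~\ref{precellsH}, and for each $(\pch,h_\pch)\in\choices$ the index $\cyl(\pch)$ is finite, so $\wt\fch_\choices$ (and hence $\wt\fch_{\choices,\shmap}$) is finite; each $\pr(\wt\ch')$ is a convex polyhedron with finitely many sides (Proposition~\ref{nccells}\eqref{ncc1}, \ref{ccells}\eqref{cc1}, and the obvious description for strip cells via Remark~\ref{just_def}), so the set of admissible pairs $(\wt\ch',S)$ is finite, and $n$ is a function into $\wt\fch_{\choices,\shmap}\times\Gamma$. I expect no serious obstacle: the whole argument is a direct synthesis of results already proved; the only subtle point is ruling out $S=b(\wt\ch)$, which is handled by the $I(\wt\ch)$ versus $J(\wt\ch)$ dichotomy.
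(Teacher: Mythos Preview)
Your overall strategy matches the paper's --- both read the description of $\Sigma$ straight off Corollary~\ref{decomposition} and Proposition~\ref{char_bound} --- but there is a genuine slip. You assert that $I(\wt\ch')\subseteq \bhg H\mminus\bhg\pr(\wt\ch')$, and this is false: in fact $I(\wt\ch')\cap\bhg\pr(\wt\ch')\not=\emptyset$ for every $\wt\ch'$ (e.g.\ for $b(\wt\ch')=(a,\infty)$ with $I(\wt\ch')=(a,\infty)$, any other side of $\cl(\pr(\wt\ch'))$ has at least one endpoint in $(a,\infty)$). The paper actually \emph{uses} this non-emptiness to conclude that $\eps$ genuinely occurs as a label --- a point you never verify.

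Your conclusion $x\notin\bhg\pr(\wt\ch')$ is nonetheless correct, for a different reason: since $x\in g_SI(\wt\ch_S)$, Proposition~\ref{char_bound} gives that $\gamma_v((0,\infty))$ meets $S$, and then Proposition~\ref{CS2}\eqref{CS2i} (or the disjointness in Corollary~\ref{decomposition}) forces $\gamma_v(\infty)\notin\bhg\pr(\wt\ch')$. So the fix is: drop the false inclusion, justify $x\notin\bhg\pr(\wt\ch')$ via the disjoint decomposition of Corollary~\ref{decomposition}, and add the one-line observation that $I(\wt\ch')\cap\bhg\pr(\wt\ch')\not=\emptyset$ so that $\eps$ is indeed realized as a label.
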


\begin{proof}
Note that for each $\wt\ch'\in\wt\fch_{\choices,\shmap}$ we have $\bhg\pr(\wt\ch')\cap I(\wt\ch')\not=\emptyset$. Thus, $\eps$ is a label. Then the claimed equality follows immediately from Corollary~\ref{decomposition}. Since $\wt\fch_{\choices,\shmap}$ is finite and each shifted cell in $S\h$ has only finitely many sides, $\Sigma$ is finite.
\end{proof}

\begin{example}\label{labelsHecke}
For the Hecke triangle group $G_n$ let $\fpch = \{\pch\}$, $\choices = \{ (\pch, U_n)\}$ and $\shmap \equiv \id$. Then $\wt\fch_{\choices,\shmap} = \{ \wt\ch_1(\pch, U_n) \}$. Set $\wt\ch\sceq \wt\ch_1(\pch,U_n)$.  Then the set of labels is (see Figure~\ref{nextlastHecke})
\[
\Sigma = \left\{ \eps, \big(\wt\ch, U_n^kS\big)\left\vert\  k\in\{1,\ldots, n-1\} \vphantom{\big(\wt\ch, U_n^kS\big)}  \right.\right\}.
\]
\end{example}

\begin{example}\label{labelsGamma05}
Recall Example~\ref{intersectionGamma05}. If the shift map is $\shmap_1$, then the set of labels is
\begin{align*}
\Sigma = \big\{ \eps,  \big(\wt\ch_2,g_1\big), \big(\wt\ch_4,\id\big), \big(\wt\ch_5, g_2\big), \big(\wt\ch_6, \id\big), \big(\wt\ch_3,g_3\big), &\big(\wt\ch_5, \id\big), \big(\wt\ch_6,g_4\big), \big(\wt\ch_3,\id\big),
\\ & \big(\wt\ch_1, g_5\big), \big(\wt\ch_2,g_6\big), \big(\wt\ch_4,g_4\big) \big\}.
\end{align*}
With the shift map $\shmap_2$, the set of labels is
\begin{align*}
\Sigma = \big\{ \eps, \big(\wt\ch_4,g_7\big), \big(\wt\ch_{-1},g_7\big), & \big(\wt\ch_2,g_1\big),  \big(\wt\ch_4,\id\big),  \big(\wt\ch_5,g_2\big), \big(\wt\ch_6,\id\big), \big(\wt\ch_3,g_3\big), 
\\ & \big(\wt\ch_5,\id\big),  \big(\wt\ch_6,g_4\big), \big(\wt\ch_3,\id\big), \big(\wt\ch_{-1},g_4\big), \big(\wt\ch_2,g_6\big) \big\}.
\end{align*}
\end{example}

\begin{defirem}
Let $v\in\CS'(\wt\fch_{\choices,\shmap})$ and suppose that $\gamma$ is the geodesic on $\h$ determined by $v$. Proposition~\ref{CS2} implies that there is a unique sequence $(t_n)_{n\in J}$ in $\R$ which satisfies the following properties:
\begin{enumerate}[(i)]
\item $J = \Z \cap (a,b)$ for some interval $(a,b)$ with $a,b\in\Z\cup\{\pm\infty\}$ and $0\in (a,b)$,
\item the sequence $(t_n)_{n\in J}$ is increasing,
\item $t_0=0$,
\item for each $n\in J$ we have $\gamma'(t_n)\in\CS$ and 
\[
\gamma'( (t_n,t_{n+1}) )\cap \CS = \emptyset\quad\text{and}\quad \gamma'( (t_{n-1},t_n) ) \cap \CS = \emptyset
\]
where we set $t_b\sceq \infty$ if $b< \infty$ and $t_a\sceq -\infty$ if $a>-\infty$.
\end{enumerate}
The sequence $(t_n)_{n\in J}$ is said to be the \textit{sequence of intersection times of $v$ (with respect to $\CS$)}. %
\index{sequence of intersection times}%

Let $\wh v\in \wh\CS$ and set $v\sceq \left(\pi\vert_{\CS'(\wt\fch_{\choices,\shmap})}\right)^{-1}(\wh v)$. Then the \textit{sequence of intersection times (\wrt $\CS$) of $\wh v$} and \textit{of} each $w\in \pi^{-1}(\wh v)$ is defined to be the sequence of intersection times of $v$.
\end{defirem}

Now we define the geometric coding sequences.

\begin{defi}\label{def_CSs}
For each $s\in\Sigma$ we set
\begin{align*}
\wh\CS_s & \sceq \big\{ \wh v\in \wh\CS \ \big\vert\  \text{$\wh v$ is labeled with $s$} \big\}
\intertext{and}
\CS_s & \sceq \pi^{-1}\big(\wh\CS_s\big) = \big\{ v\in \CS \ \big\vert\  \text{$v$ is labeled with $s$} \big\}.
\end{align*}
Let $\wh v\in\wh\CS$ and let $(t_n)_{n\in J}$ be the sequence of intersection times of $\wh v$. Suppose that $\wh\gamma$ is the geodesic on $Y$ determined by $\wh v$. The \textit{geometric coding sequence} %
\index{geometric coding sequence}\index{coding sequence!geometric}%
of $\wh v$ is the sequence $(a_n)_{n\in J}$ in $\Sigma$ defined by 
\[
a_n\sceq s\quad\text{if and only if}\quad \wh\gamma'(t_n)\in\wh\CS_s
\]
for each $n\in J$.

Let $v\in\CS$. The \textit{geometric coding sequence} of $v$ is defined to be the geometric coding sequence of $\pi(v)$.
\end{defi}

\begin{prop}\label{locint}
Let $v\in\CS'$. Suppose that $(t_n)_{n\in J}$ is the sequence of intersection times of $v$ and that $(a_n)_{n\in J}$ is the geometric coding sequence of $v$. Let $\gamma$ be the geodesic on $\h$ determined by $v$. Suppose that $J=\Z \cap (a,b)$ with $a,b\in \Z \cup \{\pm \infty\}$.
\begin{enumerate}[{\rm (i)}]
\item\label{locinti} If $b=\infty$, then $a_n\in\Sigma\mminus\{\eps\}$ for each $n\in J$.
\item\label{locintii} If $b<\infty$, then $a_n\in\Sigma\mminus\{\eps\}$ for each $n\in (a,b-2]\cap\Z$ and $a_{b-1}=\eps$.
\item\label{locintiii} Suppose that $a_n= (\wt\ch_n,h_n)$ for $n\in (a,b-1)\cap\Z$ and set 
\begin{align*}
g_0 &\sceq h_0  && \text{if $b\geq 2$,}
\\
g_{n+1} & \sceq g_nh_{n+1} && \text{for $n\in [0,b-2)\cap\Z$,}
\\
g_{-1} & \sceq \id,
\\
g_{-(n+1)} & \sceq g_{-n}h_{-n}^{-1} && \text{for $n\in [1,-(a+1))\cap \Z$.}
\end{align*}
Then $\gamma'(t_{n+1}) \in g_n.\CS'(\wt\ch_n)$ for each $n\in (a,b-1)\cap\Z$.
\end{enumerate}
\end{prop}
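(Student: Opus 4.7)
For each $n\in J$ let $v^*_n\sceq\big(\pi|_{\CS'(\wt\fch_{\choices,\shmap})}\big)^{-1}\big(\pi(\gamma'(t_n))\big)$ be the standard representative, let $\wt\ch^*_n$ denote the shifted cell with $v^*_n\in\CS'(\wt\ch^*_n)$, and write $\gamma'(t_n)=k_nv^*_n$ with $k_n\in\Gamma$. The geodesic determined by $v^*_n$ is $\eta_n(s)\sceq k_n^{-1}\gamma(s+t_n)$; by $\Gamma$-invariance of $\CS$ its intersection times with $\CS$ are exactly the numbers $t_m-t_n$ with $m\in J$.

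For (i) and (ii), whenever $n+1\in J$ the number $t_{n+1}-t_n>0$ is the first return time of $\eta_n$ to $\CS$, so the next intersection of $\eta_n$ with $\CS$ exists; by Proposition~\ref{CS2}(i) we get $\eta_n(\infty)\notin\bhg\pr(\wt\ch^*_n)$, and the labeling rule assigns $a_n\in\Sigma\mminus\{\eps\}$. This covers all $n\in J$ when $b=\infty$, proving (i), and covers $n\in(a,b-2]\cap\Z$ when $b<\infty$. For $n=b-1$ with $b<\infty$, $t_b$ does not exist, so $\eta_{b-1}$ has no first return to $\CS$; Proposition~\ref{CS2}(i) then forces $\eta_{b-1}(\infty)\in\bhg\pr(\wt\ch^*_{b-1})$, and the labeling rule gives $a_{b-1}=\eps$.

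For (iii), the conclusion $\gamma'(t_{n+1})\in g_n\CS'(\wt\ch_n)$ follows at once from the two identities $k_{n+1}=g_n$ and $\wt\ch^*_{n+1}=\wt\ch_n$. Both are proved by simultaneous induction from the base case $n=-1$, where $v\in\CS'$ forces $v^*_0=v$ and hence $k_0=\id=g_{-1}$. In the inductive step, Proposition~\ref{CS2}(ii) applied to $v^*_n$ (with side $S^*_n$ determined by the labeling rule $a_n=n(\wt\ch^*_n,S^*_n)=(\wt\ch_n,h_n)$) yields that the next intersection of $\eta_n$ with $\CS$ lies in $h_n\CS'(\wt\ch_n)$. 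Hence $k_n^{-1}\gamma'(t_{n+1})\in h_n\CS'(\wt\ch_n)$, which after substituting $\gamma'(t_{n+1})=k_{n+1}v^*_{n+1}$ reads $h_n^{-1}k_n^{-1}k_{n+1}v^*_{n+1}\in\CS'(\wt\ch_n)$. Since $\CS'(\wt\fch_{\choices,\shmap})$ is a set of representatives for $\wh\CS$ under the $\Gamma$-action, this forces $h_n^{-1}k_n^{-1}k_{n+1}=\id$ and $\wt\ch^*_{n+1}=\wt\ch_n$, \ie $k_{n+1}=k_nh_n$. Iterating the recursion forward from $k_0=\id$ gives $k_{n+1}=g_n$ for $n\geq 0$, and solving it backward from $k_0=\id$ yields $k_n=g_{n-1}$ for $n\leq-1$, matching the defining formula $g_{-(n+1)}=g_{-n}h_{-n}^{-1}$. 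The only subtlety is index bookkeeping; no genuine obstacle arises, since the labeling rule is set up precisely so that the composition of transition elements telescopes as claimed.
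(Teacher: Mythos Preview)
Your proof is correct and follows essentially the same approach as the paper: pull each $\gamma'(t_n)$ back to its standard representative $v^*_n\in\CS'(\wt\fch_{\choices,\shmap})$ via some $k_n\in\Gamma$, use Proposition~\ref{CS2} (together with the labeling rule) to obtain the recursion $k_{n+1}=k_nh_n$ and $\wt\ch^*_{n+1}=\wt\ch_n$, and anchor it at $k_0=\id$ via Proposition~\ref{CS=CShat}. The only organisational difference is that the paper runs two separate inductions (forward from $n=0$ and backward from $n=-1$), whereas you establish the single recursion $k_{n+1}=k_nh_n$ uniformly and then solve it in both directions; this is a cosmetic reordering, not a different argument.
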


\begin{proof} We start with some preliminary considerations which will prove \eqref{locinti} and \eqref{locintii} and simplify the argumentation for \eqref{locintiii}.
Let $n\in J$ and consider $w\sceq \gamma'(t_n)$. The definition of geometric coding sequences shows that $\gamma'(t_n)\in \CS_{a_n}$. Since $\CS$ is the disjoint union $\bigcup_{k\in\Gamma} k.\CS'(\wt\fch_{\choices,\shmap})$ (see Proposition~\ref{CS=CShat}), there is a unique $k\in\Gamma$ such that $k^{-1}.w\in\CS'(\wt\fch_{\choices,\shmap})$. The label of $k^{-1}.w$ is $a_n$. Let $\eta$ be the geodesic on $\h$ determined by $k^{-1}.w$. Note that $\eta(t) \sceq k^{-1}.\gamma(t+t_n)$ for each $t\in\R$. The definition of labels shows that $a_n=\eps$ if and only if there is no next point of intersection of $\eta$ and $\CS$. In this case $\gamma'( (t_n,\infty) ) \cap \CS=\emptyset$ and hence $b=n+1$. This shows \eqref{locinti} and \eqref{locintii}.
Suppose now that $a_n=(\wt\ch,g)$. Then there is a next point of intersection of $\eta$ and $\CS$, say $\eta'(s)$, and this is on $g.\CS'(\wt\ch)$. Then $k^{-1}.\gamma'(s+t_n)\in g.\CS'(\wt\ch)$ and $k^{-1}.\gamma'( (t_n,s+t_n) )\cap \CS = \emptyset$. Hence $t_{n+1}= s+t_n$ and $\gamma'(t_{n+1}) \in kg.\CS'(\wt\ch)$.

Now we show \eqref{locintiii}. Suppose that $b\geq 2$. Then $v=\gamma'(t_0)$ is labeled with $(\wt\ch_0,h_0)$. Hence for the next point of intersection $\gamma'(t_1)$ of $\gamma$ and $\CS$ we have
\[
 \gamma'(t_1) \in h_0.\CS'\rueck\big(\wt\ch_0\big) = g_0. \CS'\rueck\big(\wt\ch_0\big).
\]
Suppose that we have already shown that 
\[
 \gamma'(t_{n+1}) \in g_n.\CS'\rueck\big(\wt\ch_n\big)
\]
for some $n\in [0,b-1)\cap \Z$ and that $b\geq n+3$. By \eqref{locinti} resp.\@ \eqref{locintii}, $\gamma'( (t_{n+1},\infty) ) \cap \CS \not= \emptyset$ and hence $\gamma'(t_{n+1})$ is labeled with $(\wt\ch_{n+1},h_{n+1})$. Our preliminary considerations show that 
\[
 \gamma'(t_{n+2}) \in g_nh_{n+1}.\CS'\rueck\big(\wt\ch_{n+1}\big) = g_{n+1}.\CS'\rueck\big(\wt\ch_{n+1}\big).
\]
Therefore $\gamma'(t_{n+1}) \in g_n.\CS'(\wt\ch_n)$ for each $n\in [0,b-1)\cap\Z$. 

Suppose that $a\leq -2$. The element $\gamma'(t_{-1})$ is labeled with $(\wt\ch_{-1},h_{-1})$. Since $\gamma'(t_{-1}) \in k.\CS'(\wt\fch_{\choices,\shmap})$ for some $k\in\Gamma$, our preliminary considerations show that $\gamma'(t_0)\in kh_{-1}.\CS'(\wt\ch_{-1})$. Because $\gamma'(t_0) = v \in \CS'(\wt\fch_{\choices,\shmap})$, Proposition~\ref{CS=CShat} implies that $k=h_{-1}^{-1}$ and 
\[
\gamma'(t_0) \in \CS'\rueck\big(\wt\ch_{-1}\big) = g_{-1}.\CS'\rueck\big(\wt\ch_{-1}\big)\ \text{and}\ \gamma'(t_{-1}) \in h_{-1}^{-1}.\CS'\rueck\big(\wt\fch_{\choices,\shmap}\big) = g_{-2}.\CS'\rueck\big(\wt\fch_{\choices,\shmap}\big).
\]
Suppose that we have already shown that 
\[
\gamma'(t_{-(n-1)}) \in g_{-n}.\CS'\rueck\big(\wt\ch_{-n}\big)\quad \text{and}\quad \gamma'(t_{-n}) \in g_{-(n+1)}.\CS'\rueck\big(\wt\fch_{\choices,\shmap}\big)
\]
for some $n\in [1,-a)\cap \Z$ and suppose that $a\leq -n-2$. Then $\gamma'(t_{-n-1})$ exists and is labeled with $(\wt\ch_{-n-1},h_{-n-1})$. Since $\gamma'(t_{-n-1}) \in h.\CS'(\wt\fch_{\choices,\shmap})$ for some $h\in\Gamma$, we know that $\gamma'(t_{-n}) \in hh_{-n-1}.\CS'(\wt\ch_{-n-1})$. Therefore
\[
\gamma'(t_{-n}) \in hh_{-(n+1)}.\CS'\rueck\big(\wt\ch_{-(n+1)}\big) \cap g_{-(n+1)}.\CS'\rueck\big(\wt\fch_{\choices,\shmap}\big).
\]
Proposition~\ref{CS=CShat} implies that $hh_{-(n+1)} = g_{-(n+1)}$, $\gamma'(t_{-n}) \in g_{-(n+1)}.\CS'(\wt\ch_{-(n+1)})$ and 
\[
\gamma'(t_{-(n+1)}) \in g_{-(n+1)}h_{-(n+1)}^{-1}. \CS'\rueck\big(\wt\fch_{\choices,\shmap}\big) = g_{-(n+2)}.\CS'\rueck\big(\wt\fch_{\choices,\shmap}\big).
\]
Therefore $\gamma'(t_{n+1}) \in g_n.\CS'(\wt\ch_n)$ for each $n\in (a,-1]\cap \Z$. This completes the proof.
\end{proof}

Let $\Lambda$ denote the set of geometric coding sequences and let $\Lambda_\sigma$ be the subset of $\Lambda$ which contains the geometric coding sequences $(a_n)_{n\in (a,b)\cap\Z}$ with $a,b\in\Z\cup\{\pm\infty\}$ for which $b\geq 2$. Let $\Sigma^\all$ denote the set of all finite and one- or two-sided infinite sequences in $\Sigma$. The left shift $\sigma\colon \Sigma^\all \to \Sigma^\all$,
\[
\sigma\big( (a_n)_{n\in J} \big)_k \sceq a_{k+1} \quad\text{for all $k\in J$}
\]
induces a partially defined map $\sigma\colon\Lambda\to\Lambda$ resp.\@ a map $\sigma\colon \Lambda_\sigma \to \Lambda$. Suppose  that $\Seq\colon \wh\CS\to \Lambda$ is the map which assigns to $\wh v\in \wh \CS$ the geometric coding sequence of $\wh v$. Recall the first return map $R$ from Section~\ref{sec_symdyn}.

\begin{prop}
The diagram
\[
\xymatrix{
\wh\CS \ar[r]^R \ar[d]_{\Seq} & \wh\CS\ar[d]^{\Seq}
\\
\Lambda \ar[r]^{\sigma} & \Lambda
}
\]
commutes. In particular, for $\wh v\in\wh\CS$, the element $R(\wh v)$ is defined if and only if $\Seq(\wh v)\in\Lambda_\sigma$.
\end{prop}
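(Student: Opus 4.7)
The plan is to verify both claims by directly comparing the sequences of intersection times of $\wh v$ and $R(\wh v)$ and by identifying the canonical $\CS'(\wt\fch_{\choices,\shmap})$-representative of $R(\wh v)$ in terms of the first label of $\wh v$, using Propositions~\ref{CS=CShat} and \ref{locint}.

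First I would fix $\wh v\in\wh\CS$, let $v\in\CS'(\wt\fch_{\choices,\shmap})$ be its unique representative given by Proposition~\ref{CS=CShat}, and let $\gamma$ be the geodesic on $H$ determined by $v$ with sequence of intersection times $(t_n)_{n\in(a,b)\cap\Z}$ and geometric coding sequence $(a_n)_{n\in(a,b)\cap\Z}$. The ``in particular'' clause reduces to showing that $R(\wh v)$ is defined if and only if $b\geq 2$: by Remark~\ref{charinter} and the definition of the sequence of intersection times, $R(\wh v)$ exists precisely when $t_1\in\R$, which is equivalent to $b\geq 2$. Proposition~\ref{locint}\eqref{locinti}--\eqref{locintii} then says that $b\geq 2$ if and only if $a_0\neq\eps$, matching exactly the definition of $\Lambda_\sigma$.

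Next, assuming $R(\wh v)$ is defined, I would write $a_0=(\wt\ch_0,h_0)\in\wt\fch_{\choices,\shmap}\times\Gamma$. By Proposition~\ref{locint}\eqref{locintiii} applied at $n=0$, we have $\gamma'(t_1)\in h_0\CS'\rueck(\wt\ch_0)$, so Proposition~\ref{CS=CShat} identifies the unique $\CS'(\wt\fch_{\choices,\shmap})$-representative of $R(\wh v)=\wh\gamma'(t_1)$ as $w\sceq h_0^{-1}\gamma'(t_1)\in\CS'\rueck(\wt\ch_0)$, and the geodesic $\eta$ determined by $w$ is $\eta(s)=h_0^{-1}\gamma(s+t_1)$. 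Consequently the sequence of intersection times of $w$ is obtained from $(t_n)_n$ by translating by $-t_1$ and reindexing so that the new basepoint corresponds to the old index $1$; explicitly, the new index set is $(a-1,b-1)\cap\Z$ and the new time at index $m$ is $t_{m+1}-t_1$.

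It remains to compute the geometric coding sequence of $R(\wh v)$. For each $m\in(a-1,b-1)\cap\Z$ we have $\eta'(t_{m+1}-t_1)=h_0^{-1}\gamma'(t_{m+1})$, whose image under $\pi$ equals $\wh\gamma'(t_{m+1})$, so the label of $\eta'(t_{m+1}-t_1)$ coincides with the label of $\wh\gamma'(t_{m+1})$, which by definition is $a_{m+1}$. Hence $\Seq(R(\wh v))=(a_{m+1})_{m\in(a-1,b-1)\cap\Z}=\sigma\big((a_n)_{n\in(a,b)\cap\Z}\big)$, proving commutativity. The only real subtlety — and the main point to handle carefully — is the bookkeeping of index sets under the shift and the verification that $w$ lies in $\CS'(\wt\fch_{\choices,\shmap})$ (rather than in some other $\Gamma$-translate), which is precisely what Proposition~\ref{locint}\eqref{locintiii} together with the uniqueness in Proposition~\ref{CS=CShat} provides.
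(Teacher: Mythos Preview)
Your argument is correct and is essentially a fleshed-out version of the paper's one-line proof, which simply cites the definition of geometric coding sequences together with Propositions~\ref{CS2} and \ref{locint}. One small wording issue: $\Lambda_\sigma$ is defined in the paper directly by the condition $b\geq 2$, so your detour through $a_0\neq\eps$ via Proposition~\ref{locint}\eqref{locinti}--\eqref{locintii} is not needed for the ``in particular'' clause (though it is harmless and true).
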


\begin{proof}
This follows immediately from the definition of geometric coding sequences, Propositions~\ref{CS2} and \ref{locint}.
\end{proof}

Set\label{def_CSst} $\CS_\st\sceq \pi^{-1}(\wh\CS_\st)$ and $\CS'_\st(\wt\fch_{\choices,\shmap})\sceq \CS_\st\cap \CS'(\wt\fch_{\choices,\shmap})$ and let $\Lambda_\st$ denote the set of two-sided infinite geometric coding sequences. 

\begin{remark}
The set of geometric coding sequences of elements in $\CS_\st$ (or only $\CS'_\st(\wt\fch_{\choices,\shmap})$) is $\Lambda_\st$. Moreover, $\Lambda_\st\subseteq \Lambda_\sigma$. 
\end{remark}

In the following we will show that $(\Lambda_\st,\sigma)$ is a symbolic dynamics for the geodesic flow on $\wh\Phi$. Elementary convex geometry proves the following lemma.

\begin{lemma}\label{findsec}
Suppose that $x,y\in\bhg H\mminus\bd$, $x<y$. Then there exists a connected component $S=[a,b]$ of $\BS$ with $a,b\in\R$, $a<b$, such that $x<a<y<b$.
\end{lemma}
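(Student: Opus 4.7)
The plan is to track the future crossings of $\gamma$ with $\BS$ via the iterative decomposition of Corollary~\ref{decomposition}. Let $\gamma$ be the geodesic on $H$ with $\gamma(-\infty)=x$ and $\gamma(\infty)=y$; note $x,y\in\R$ since $\infty$ is a vertex of every cell and hence $\infty\in\bd$, so $x,y\notin\bd$ forces $x,y\neq\infty$. Because $\bd$ is $\Gamma$-invariant, $y\notin\bd$ gives $\pi(y)\notin\pi(\bd)$, so Proposition~\ref{CS3}\eqref{CS3i} with Remark~\ref{charinter} yields an increasing sequence $(t_n)_{n\ge0}$ with $t_n\to\infty$ and $\gamma(t_n)\in\BS$. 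By Proposition~\ref{CS=CShat} there is a unique $(\wt\ch_n,g_n)\in\wt\fch_{\choices,\shmap}\times\Gamma$ with $\gamma'(t_n)\in g_n\CS'(\wt\ch_n)$, and by Lemma~\ref{char_intervals} we have $y\in J_n:=g_n I(\wt\ch_n)$ while $x\in g_n J(\wt\ch_n)$, so $x\notin J_n$.

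By Proposition~\ref{CS2}\eqref{CS2ii} and Corollary~\ref{decomposition}, the arcs $(J_n)$ form a nested decreasing sequence on $\bhg H$, each containing $y$, and $J_{n+1}$ corresponds to the side $S_n$ of $g_n\pr(\wt\ch_n)$ (different from $g_n b(\wt\ch_n)$) through which $\gamma$ next exits; the boundary of $J_{n+1}$ on $\bhg H$ consists of the two endpoints of this side $S_n$. The key claim I would then establish is that $J_n$ shrinks to $\{y\}$: more precisely, for $n$ large the bounding geodesic $g_n b(\wt\ch_n)$, whose endpoints are $g_n c_n$ and $g_n\infty$ (with $c_n\in\R$ the non-$\infty$ endpoint of the vertical side $b(\wt\ch_n)$ of $\pr(\wt\ch_n)$), has both endpoints converging to $y$. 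Since $\wt\fch_{\choices,\shmap}$ is finite, the possible intrinsic shapes $I(\wt\ch_n)$ are drawn from a finite set; combined with $\gamma(t_n)\in g_n b(\wt\ch_n)$ and $\gamma(t_n)\to y$ in $\overline H^g$, together with the fact that $y\notin\bd$ (so $\gamma$ must always exit the current cell rather than limit into a vertex), one rules out any subsequence in which an endpoint of $g_n b(\wt\ch_n)$ stays bounded away from $y$. It follows that for large $n$ the arc $J_n$ is in fact the bounded short arc $(\alpha_n,\beta_n)\subset\R$ with $\alpha_n,\beta_n\to y$, rather than the unbounded arc through $\infty$.

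Granting the shrinking, choose $n$ so large that $J_{n+1}\subseteq(x,\infty)\cap(-\infty,y+1)$, say. Then the bounding side $S_n$ of $J_{n+1}$ has both endpoints in $\R$; write $S_n=[a,b]$ with $a<b$. Because $y\in J_{n+1}=(a,b)$, we have $a<y<b$. Because $x\notin J_{n+1}\supseteq\{a,b\}\setminus\partial J_{n+1}$ and $a\in\bd$ while $x\notin\bd$, we get $a\neq x$, and since $J_{n+1}\subset(x,\infty)$ we conclude $x<a$. Thus $x<a<y<b$. Finally $S_n$, being a side of the cell $g_n^{-1}\cdot\pr(\wt\ch_n)$ mapped by $g_n$, is a connected component of $\BS$ by Lemma~\ref{propsSH}; hence $S=[a,b]$ is the desired side. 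The main obstacle is the shrinking claim in the second paragraph: one must rule out the possibility that the bounding sides $g_n b(\wt\ch_n)$ have endpoints not both approaching $y$, and this requires carefully combining the finiteness of $\wt\fch_{\choices,\shmap}$ with the tesselation property (Corollary~\ref{cellsHtess}) and the fact that $y\notin\bd$.
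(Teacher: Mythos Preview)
Your route through the symbolic dynamics is a substantial detour, and the gap you flag is real. Even the weakened claim you actually need---that \emph{some} $J_n$ is a bounded subinterval of $\R$ (since $x\notin J_n\ni y$, $x<y$, and $a_n\in\bd\not\ni x$ would then force $x<a_n<y<b_n$ at once)---does not follow from $\gamma(t_n)\to y$ together with the finiteness of $\wt\fch_{\choices,\shmap}$. A half-circle through a point close to $y$ can have one endpoint far from $y$, and the nesting of the $J_n$ does not by itself exclude $\infty\in J_n$ for all $n$. Pinning this down would require a local-finiteness statement for the connected components of $\BS$ (so that infinitely many distinct sides cannot accumulate on a fixed geodesic in $H$), which you do not invoke and which is not explicitly available at this point in the paper. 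More to the point, the shrinking assertion is essentially the statement that the future itinerary determines $\gamma(\infty)$---precisely what the paper uses the present lemma to establish in Proposition~\ref{geomsequnique}---so this strategy is at serious risk of circularity.

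The paper's argument bypasses the coding machinery entirely. It fixes the single point $z=y+i\tfrac{y-x}{2}$, takes a cell $g\ch$ containing $z$ via Corollary~\ref{cellsHtess}, and observes that the vertical segment $(y,z]$ must meet some side $S$ of $g\ch$ (otherwise $(y,z]\subseteq g\ch$, forcing $y\in\bhg g\ch\subseteq\bd$); that side cannot be vertical for the same reason, so $S=[a,b]$ is a Euclidean half-circle meeting $(y,z]$, and the elementary geometry of that half-circle gives $x<a<y<b$. This is about a dozen lines, uses only the tesselation property and the hypothesis $y\notin\bd$, and never touches $\CS'(\wt\fch_{\choices,\shmap})$, Proposition~\ref{CS3}, or nested intervals.
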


For the proof of the following proposition we recall that each connected component of $\BS$ is a complete geodesic segment and that it is of the form $\pr(g.\CS'(\wt\ch))$ for some pair $(\wt\ch,g)\in \wt\fch_{\choices,\shmap}\times\Gamma$. Conversely, for each pair $(\wt\ch,g)\in \wt\fch_{\choices,\shmap}$, the set $\pr(g.\CS'(\wt\ch))$ is a connected component of $\BS$.

\begin{prop}\label{geomsequnique}
Let $v,w\in\CS'_\st(\wt\fch_{\choices,\shmap})$. If the geometric coding sequences of $v$ and $w$ are equal, then $v=w$.
\end{prop}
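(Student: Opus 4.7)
The plan is to show that equal geometric coding sequences force the two geodesics $\gamma_v$ and $\gamma_w$ to share both endpoints at infinity, whence $v=w$ by the bijection in Lemma~\ref{char_intervals}. First I note that Proposition~\ref{locint}\eqref{locintiii} with $g_{-1}=\id$ and $t_0=0$ gives $v=\gamma_v'(t_0)\in\CS'(\wt\ch_{-1})$, and the same for $w$, where $\wt\ch_{-1}$ is the cell extracted from the label $a_{-1}$; since the coding sequences agree, $\wt\ch_{-1}$ is the same for both, and Lemma~\ref{char_intervals} makes $u\mapsto(\gamma_u(\infty),\gamma_u(-\infty))$ a bijection $\CS'(\wt\ch_{-1})\to I(\wt\ch_{-1})\times J(\wt\ch_{-1})$. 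Hence it suffices to prove $\gamma_v(\pm\infty)=\gamma_w(\pm\infty)$.

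For the forward endpoint I argue by contradiction. Since $v,w\in\CS_\st$, Proposition~\ref{CS3} together with the $\Gamma$-invariance of $\bd$ forces $\gamma_v(\infty),\gamma_w(\infty)\in\bhg H\setminus\bd$; as $\infty$ is a vertex of every cell we have $\infty\in\bd$, so both endpoints actually lie in $\R$. Assuming $\gamma_v(\infty)<\gamma_w(\infty)$ without loss of generality, Lemma~\ref{findsec} produces a connected component $S=[a,b]$ of $\BS$ with $a,b\in\R$, $a<b$, and $\gamma_v(\infty)<a<\gamma_w(\infty)<b$. The complete geodesic $S$ splits $H$ into open half-planes $H_1,H_2$ with boundary arcs at infinity $(a,b)$ and $(b,\infty]\cup(-\infty,a)$ respectively. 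Since $\gamma_v(t)\to\gamma_v(\infty)\in(-\infty,a)\subseteq\bhg H_2$ and $\gamma_w(t)\to\gamma_w(\infty)\in(a,b)\subseteq\bhg H_1$, there exists $T>0$ with $\gamma_v(t)\in H_2$ and $\gamma_w(t)\in H_1$ for all $t\geq T$.

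Now I exploit the tesselation. Proposition~\ref{locint}\eqref{locintiii} yields $\gamma'(t_m)\in g_{m-1}\CS'(\wt\ch_{m-1})$, and between consecutive intersection times the geodesic does not meet $\BS$, so by the tesselation of Corollary~\ref{cellsHtess} it lies inside a single tile; combined with where it enters, this forces $\gamma\big((t_m,t_{m+1})\big)\subseteq C_m:=g_{m-1}\pr(\wt\ch_{m-1})^\circ$. Crucially, $C_m$ depends only on the coding sequence, so it is the same open cell for both $v$ and $w$. Because $v,w\in\CS_\st$, the intersection times satisfy $t_m^v,t_m^w\to\infty$ as $m\to\infty$; choose $m$ so large that $t_m^v,t_m^w\geq T$. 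Then $\gamma_v\big((t_m^v,t_{m+1}^v)\big)\subseteq C_m\cap H_2$ and $\gamma_w\big((t_m^w,t_{m+1}^w)\big)\subseteq C_m\cap H_1$, so $C_m$ meets both $H_1$ and $H_2$. But $C_m$ is connected and, since $S\subseteq\BS$, disjoint from $S$, hence contained in one component of $H\setminus S=H_1\sqcup H_2$---a contradiction.

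This proves $\gamma_v(\infty)=\gamma_w(\infty)$. A symmetric argument with $m\to-\infty$ (using $t_m^v,t_m^w\to-\infty$ and the recursion $g_{-(k+1)}=g_{-k}h_{-k}^{-1}$ from Proposition~\ref{locint}\eqref{locintiii}) yields $\gamma_v(-\infty)=\gamma_w(-\infty)$, and Lemma~\ref{char_intervals} then gives $v=w$. The main technical point is verifying that $C_m$ is a single open tile disjoint from $S$, which follows from $S\subseteq\BS$ together with the tesselation property of Corollary~\ref{cellsHtess}; everything else is assembling this half-plane separation argument.
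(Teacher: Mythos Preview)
Your proof is correct and follows essentially the same route as the paper's: both reduce to showing $\gamma_v(\pm\infty)=\gamma_w(\pm\infty)$ via Lemma~\ref{char_intervals}, both invoke Lemma~\ref{findsec} to produce a separating component $S$ of $\BS$, and both use Proposition~\ref{locint}\eqref{locintiii} to see that equal coding sequences force the two geodesics through the same $\Gamma$-translates. The only difference is cosmetic: the paper tracks the connected components $g_{n-1}b(\wt\ch_{n-1})$ of $\BS$ on which the base points $\pr(\gamma'(t_n))$ sit (noting that any such component other than $S$ lies entirely in one half-plane), whereas you track the open cells $C_m=g_{m-1}\pr(\wt\ch_{m-1})^\circ$ in which the geodesic arcs between successive intersections lie; either object is connected and disjoint from $S$, so the contradiction is the same.
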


\begin{proof}
Let $\big( (\wt\ch_j, h_j) \big)_{j\in\Z}$ be the geometric coding sequence of $v$ and assume that $\big( (\wt\ch'_j,k_j) \big)_{j\in\Z}$ is that of $w$. Suppose that $v\not=w$. Suppose first that 
\[
 \big( \gamma_v(\infty), \gamma_v(-\infty) \big) = \big( \gamma_w(\infty), \gamma_w(-\infty) \big).
\]
Proposition~\ref{locint} shows that $v\in \CS'(\wt\ch_{-1})$ and $w\in\CS'(\wt\ch'_{-1})$. Lemma~\ref{char_intervals} implies that $\wt\ch_{-1}\not=\wt\ch'_{-1}$, which shows that the geometric coding sequences of $v$ and $w$ are different.

Suppose now that 
\[
 \big( \gamma_v(\infty),\gamma_v(-\infty) \big) \not= \big( \gamma_w(\infty), \gamma_w(-\infty) \big).
\]
Assume for contradiction that $\big( (\wt\ch_j,h_j) \big)_{j\in\Z} = \big( (\wt\ch'_j,k_j) \big)_{j\in\Z}$. Let $(t_n)_{n\in\Z}$ be the sequence of intersection times of $v$ and $(s_n)_{n\in\Z}$ be that of $w$. Prop~\ref{locint}\eqref{locintiii} implies that for each $n\in\Z$, the elements $\pr(\gamma'_v(t_n))$ and $\pr(\gamma'_w(s_n))$ are on the same connected component of $\BS$. For each connected component $S$ of $\BS$ let $H_{1,S}, H_{2,S}$ denote the open convex half spaces such that $\h$ is the disjoint union 
\[
\h=H_{1,S}\cup S\cup H_{2,S}.
\]
Suppose first that $\gamma_v(\infty) \not= \gamma_w(\infty)$. Proposition~\ref{CS3} shows that 
\[
\gamma_v(\infty),\gamma_w(\infty)\notin\bd.
\]
By Lemma~\ref{findsec} we find a connected component $S$ of $\BS$ such that $\gamma_v(\infty)\in\bhg H_{1,S}\mminus\bhg S$ and $\gamma_w(\infty)\in \bhg H_{2,S}\mminus\bhg S$ (or vice versa). Since $\BS$ is a manifold, each connected component of $\BS$ other than $S$ is either contained in $H_{1,S}$ or in $H_{2,S}$. In particular, we may assume that $\pr(v),\pr(w)\in H_{1,S}$. Then 
\[
\gamma_v([0,\infty))\subseteq H_{1,S}\quad\text{and}\quad \gamma_w( (t,\infty) )\subseteq H_{2,S}
\]
for some $t>0$. Hence there is $n\in\N$ such that $\pr(\gamma'_w(s_n))\in H_{2,S}$, which implies that $\pr(\gamma'_v(t_n))$ and $\pr(\gamma'_w(s_n))$ are not on the same connected component of $\BS$.

Suppose now that $\gamma_v(-\infty)\not=\gamma_w(-\infty)$ and let $S$ be a connected component of $\BS$ auch that $\gamma_v(-\infty) \in \bhg H_{1,S}\mminus\bhg S$ and $\gamma_w(-\infty)\in \bhg H_{2,S}\mminus\bhg S$ (or vice versa). Again, we may assume that $\pr(v),\pr(w)\in H_{1,S}$. Then 
\[
\gamma_v( (-\infty,0] ) \subseteq H_{1,S}\quad\text{and}\quad\gamma_w(-\infty, s))\subseteq H_{2,S}
\]
for some $s<0$. Thus we find $n\in\N$ such that $\pr(\gamma'_w(s_{-n}))\in H_{2,S}$. Hence $\pr(\gamma'_v(t_{-n}))$ and $\pr(\gamma'_w(s_{-n}))$ are not on the same connected component of $\BS$. In both cases we find a contradiction. Therefore the geometric coding sequences are not equal.
\end{proof}

\begin{cor}
The map $\Seq\vert_{\wh\CS_\st}\colon \wh\CS_\st\to \Lambda_\st$ is bijective.
\end{cor}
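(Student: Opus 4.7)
The plan is to reduce the bijectivity to the already-established Proposition~\ref{geomsequnique} (injectivity on $\CS'_\st\rueck(\wt\fch_{\choices,\shmap})$) plus the characterization of two-sided infinite coding sequences via Proposition~\ref{locint}. The main task is just to trace the definitions carefully; no new geometric input is needed.

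First I would verify that $\Seq\vert_{\wh\CS_\st}$ indeed maps into $\Lambda_\st$. Given $\wh v\in\wh\CS_\st$, the geodesic $\wh\gamma$ determined by $\wh v$ does not lie in $\NIC$, so by Theorem~\ref{geomcross} (or directly by Proposition~\ref{CS3}) it intersects $\wh\CS$ infinitely often in past and future. Equivalently, the sequence of intersection times $(t_n)_{n\in J}$ of $\wh v$ has $J=\Z$. Consequently the geometric coding sequence of $\wh v$ is indexed by $J=\Z\cap(a,b)$ with $a=-\infty$, $b=\infty$, i.e., an element of $\Lambda_\st$.

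For injectivity, let $\wh v_1,\wh v_2\in\wh\CS_\st$ with $\Seq(\wh v_1)=\Seq(\wh v_2)$. Because $\CS'(\wt\fch_{\choices,\shmap})$ is a set of representatives for $\wh\CS$ (Proposition~\ref{CS=CShat}), there exist unique $v_j\sceq \bigl(\pi\vert_{\CS'(\wt\fch_{\choices,\shmap})}\bigr)^{-1}(\wh v_j)\in\CS'_\st\rueck(\wt\fch_{\choices,\shmap})$ for $j=1,2$. The geometric coding sequence of $v_j$ coincides by definition with that of $\wh v_j$, so $v_1$ and $v_2$ have the same geometric coding sequence. Proposition~\ref{geomsequnique} then yields $v_1=v_2$, hence $\wh v_1=\wh v_2$.

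For surjectivity, let $(a_n)_{n\in\Z}\in\Lambda_\st$. By definition of $\Lambda$ there exists $\wh v\in\wh\CS$ whose geometric coding sequence is $(a_n)_{n\in\Z}$. Since this coding sequence is indexed by the whole of $\Z$, the associated sequence of intersection times of $\wh v$ is two-sided infinite; equivalently, the geodesic $\wh\gamma$ determined by $\wh v$ intersects $\wh\CS$ infinitely often in past and future. By Proposition~\ref{CS3} this means $\wh\gamma\notin\NIC$, so $\wh v\notin\mc E$ and hence $\wh v\in\wh\CS_\st$, with $\Seq(\wh v)=(a_n)_{n\in\Z}$. The only subtlety — which is not really an obstacle but worth stating — is the careful correspondence between ``two-sided infinite coding sequence'' and ``intersects $\wh\CS$ infinitely often in both time directions,'' which is precisely the content of Proposition~\ref{locint}(\ref{locinti})–(\ref{locintii}) combined with Proposition~\ref{CS3}.
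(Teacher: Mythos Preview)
Your argument is correct and is exactly the route the paper intends: the corollary is stated without proof because surjectivity and the target $\Lambda_\st$ are covered by the Remark immediately preceding Proposition~\ref{geomsequnique}, and injectivity is Proposition~\ref{geomsequnique} lifted to $\wh\CS_\st$ via the set of representatives from Proposition~\ref{CS=CShat}. You have simply spelled out what the paper leaves implicit.
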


\begin{remark}
If there is more than one shifted cell in $S\h$ or if there is a strip precell in $\h$, then the map $\Seq\colon \wh\CS\mminus\wh\CS_\st\to \Lambda\mminus\Lambda_\st$ is not injective. This is due to the decision to label each $v\in\CS'(\wt\ch)$, for each $\wt\ch\in\wt\fch_{\choices,\shmap}$, with the same label $\eps$ if $\gamma_v(\infty)\in \bhg\pr(\wt\ch)$ without distinguishing between different points in $\bhg \pr(\wt\ch)$ and without distinguishing between different shifted cells in $S\h$.
\end{remark}

Let $\Cod \sceq \left(\Seq\vert_{\wh\CS_\st}\right)^{-1}\colon \Lambda_\st\to \wh\CS_\st$.

\begin{cor}
The diagram
\[
\xymatrix{
\wh\CS_\st \ar[r]^R & \wh\CS_\st
\\
\Lambda_\st \ar[r]^\sigma \ar[u]^{\Cod} & \Lambda_\st \ar[u]_{\Cod}
}
\]
commutes and $(\Lambda_\st,\sigma)$ is a symbolic dynamics for the geodesic flow on $Y$.
\end{cor}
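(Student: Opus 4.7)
The plan is to verify directly the three ingredients needed: that $R$ is everywhere defined on $\wh\CS_\st$ and preserves it, that $\sigma$ preserves $\Lambda_\st$, and that the commutative diagram together with the decomposition of $\wh\CS_\st$ into labeled pieces meets the definition of symbolic dynamics from Section~\ref{sec_symdyn}.

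First I would show $R(\wh\CS_\st)\subseteq \wh\CS_\st$. If $\wh v\in \wh\CS_\st$, then by definition the geodesic $\wh\gamma$ determined by $\wh v$ intersects $\wh\CS$ infinitely often in future and past, so the first return time of $\wh v$ exists and $R(\wh v)$ is defined. Moreover, $R(\wh v)$ determines the same geodesic $\wh\gamma$, which a fortiori intersects $\wh\CS$ infinitely often in both directions, so $R(\wh v)\in\wh\CS_\st$. In particular, $R$ is everywhere defined on $\wh\CS_\st$. Similarly, $\sigma$ is everywhere defined on $\Lambda_\st$ and preserves it, since shifting a two-sided infinite sequence produces a two-sided infinite sequence; and by Proposition~\ref{locint}\eqref{locinti} none of the entries of a sequence in $\Lambda_\st$ equals $\eps$.

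Next I would deduce the commutativity. The earlier commutative diagram established just before Definition~\ref{def_CSst} states that $\Seq\circ R=\sigma\circ \Seq$ wherever $R$ is defined. Restricting to $\wh\CS_\st$ (which $R$ preserves, by the first step), and using that $\Seq\vert_{\wh\CS_\st}\colon \wh\CS_\st\to \Lambda_\st$ is a bijection with inverse $\Cod$ (Proposition~\ref{geomsequnique} and the corollary immediately following), we obtain
\[
R\circ \Cod = \Cod\circ \Seq\circ R\circ \Cod = \Cod\circ \sigma\circ \Seq\circ \Cod = \Cod\circ \sigma,
\]
which is the desired commutativity.

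Finally I would verify the definition of symbolic dynamics. The set $\wh\CS_\st$ is a strong cross section for $\wh\Phi$ (this is a direct consequence of Theorem~\ref{geomcross} and the corollary defining $\wh\CS_\st$). Setting $\wh\CS_{\st,s}\sceq \wh\CS_s\cap \wh\CS_\st$ for $s\in\Sigma\mminus\{\eps\}$, the family $\{\wh\CS_{\st,s}\}_{s\in\Sigma\mminus\{\eps\}}$ is a decomposition of $\wh\CS_\st$ into pairwise disjoint sets, indexed by the finite (hence at most countable) alphabet $\Sigma\mminus\{\eps\}$; by Definition~\ref{def_CSs} the coding sequence associated to $\wh v\in \wh\CS_\st$ via this decomposition is precisely $\Seq(\wh v)$. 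Thus $\Lambda_\st=\Seq(\wh\CS_\st)$ is the set of coding sequences arising from the strong cross section $\wh\CS_\st$ and this decomposition; it is $\sigma$-invariant, $\Cod\colon\Lambda_\st\to\wh\CS_\st$ is the natural (and, by the corollary above, bijective) map back to $\wh\CS_\st$, and the commutative diagram has just been proved. Hence $(\Lambda_\st,\sigma)$ is a symbolic dynamics for $\wh\Phi$.

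The only non-formal point is the very first one—checking that $R$ is everywhere defined on $\wh\CS_\st$—and this is immediate from the characterization, already in hand, of $\wh\CS_\st$ as the set of vectors whose geodesics intersect $\wh\CS$ infinitely often in both directions; the rest of the argument is bookkeeping built on the bijectivity of $\Seq\vert_{\wh\CS_\st}$ and the partial commutative diagram already established.
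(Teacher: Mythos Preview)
Your proposal is correct and follows exactly the route the paper intends: the corollary is stated without proof because it is an immediate consequence of the preceding commutative diagram for $\Seq$ and $R$, the bijectivity of $\Seq\vert_{\wh\CS_\st}$, and the definition of $\Cod$ as its inverse. You have simply spelled out the bookkeeping the paper leaves implicit.
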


We end this section with the explanation of the acronyms $\NC$ and $\NIC$ (cf.\@ Remark~\ref{outlook}).

\begin{remark}\label{whynaming}
Let $\wh v$ be a unit tangent vector in $S\h$ based on $\wh\BS$ and let $\wh\gamma$ be the geodesic determined by $\wh v$. Then $\wh v$ has no geometric coding sequence if and only if $\wh v\notin \wh\CS$. By Proposition~\ref{CS1} this is the case if and only if $\wh\gamma\in\NC$. This is the reason why $\NC$ stands for ``not coded''.

Suppose now that $\wh v\in\wh\CS$. Then the geometric coding sequence is not two-sided infinite if and only if $\wh\gamma$ does not intersect $\wh\CS$ infinitely often in past and future, which by Proposition~\ref{CS3} is equivalent to $\wh\gamma\in\NIC$. This explains why $\NIC$ is for ``not infinitely often coded''.
\end{remark}

\section{Reduction and arithmetic symbolic dynamics}\label{sec_arithm}

Let $\Gamma$ be a geometrically finite subgroup of $\PSL_2(\R)$ of which $\infty$ is a cuspidal point and which satisfies \eqref{A4}. Suppose that the set of relevant isometric spheres is non-empty. Fix a basal family $\fpch$ of precells in $\h$ and let $\fch$ be the family of cells in $\h$ assigned to $\fpch$. Let $\choices$ be a set of choices associated to $\fpch$ and suppose that $\wt\fch_{\choices}$ is the family of cells in $S\h$ associated to $\fpch$ and $\choices$. Let $\shmap$ be a shift map for $\wt\fch_\choices$ and let $\wt\fch_{\choices,\shmap}$ denote the family of cells in $S\h$ associated to $\fpch$, $\choices$ and $\shmap$.

Recall the geometric symbolic dynamics for the geodesic flow on $Y$ which we constructed in Section~\ref{sec_geomsymdyn} with respect to $\fpch$, $\choices$ and $\shmap$. In particular, recall the set $\CS'(\wt\fch_{\choices,\shmap})$ of representatives for the cross section $\wh\CS=\wh\CS(\wt\fch_{\choices,\shmap})$, its subsets $\CS'(\wt\ch)$ for $\wt\ch\in\wt\fch_{\choices,\shmap}$, and the definition of the labeling of $\wh\CS$.

Let $v\in \CS'(\wt\ch)$ for some $\wt\ch\in\wt\fch_{\choices,\shmap}$ and consider the geodesic $\gamma_v$ on $\h$ determined by $v$. Suppose that $(a_n)_{n\in J}$ is the geometric coding sequence of $v$. The combination of Propositions~\ref{char_bound} and \ref{CS2} allows to determine the label $a_0$ of $v$ from the  location of $\gamma_v(\infty)$, and then to iteratively reconstruct the complete future part $(a_n)_{n\in [0,\infty)\cap J}$   of the geometric coding sequence of $v$. Hence, if the unit tangent vector $v\in \CS'(\wt\fch_{\choices,\shmap})$ is known, or more precisely, if the shifted cell $\wt\ch$ in $S\h$ with $v\in\CS'(\wt\ch)$ is known, then one can reconstruct at least the future part of the geometric coding sequence of $v$. However, if $\gamma_v$ intersects $\CS'(\wt\fch_{\choices,\shmap})$ in more than one point, then one cannot derive the shifted cell $\wt\ch$ in $S\h$ from the endpoints of $\gamma_v$. In this case, the induced discrete dynamical system on $\bhg \h$ which is conjugate 
to $(\wh\CS,R)$ or $(\wh\CS_\st,R)$ is given by local diffeomorphisms which have to keep the shifted cells in their definitions. These discrete dynamical systems are used in \cite{Pohl_mcf_Gamma0p, Pohl_mcf_general} to provide a dynamical approach to Maass cusp forms. Here, we will propose a second way to deduce discrete dynamical systems on $\bhg\h$. 

For these, we restrict, in Section~\ref{sec_redcross}, our cross section $\wh\CS$ to a subset $\wh\CS_\rd(\wt\fch_{\choices,\shmap})$ (resp.\@ to $\wh\CS_{\st,\rd}(\wt\fch_{\choices,\shmap})$ for the strong cross section $\wh\CS_\st$) such that for any $v\in \wh\CS_\rd$, the endpoints of $\gamma_v$ completely determine $v$. We will show that $\wh\CS_\rd(\wt\fch_{\choices,\shmap})$ and $\wh\CS_{\st,\rd}(\wt\fch_{\choices,\shmap})$ are cross sections for the geodesic flow on $Y$ \wrt to the same measure as $\wh \CS$ and $\wh\CS_{\st}$. More precisely, it will turn out that exactly those geodesics on $Y$ which intersect $\wh\CS$ at all also intersect $\wh\CS_\rd(\wt\fch_{\choices,\shmap})$ at all, and that exactly those which intersect $\wh\CS$ infinitely often in future and past also intersect $\wh\CS_\rd(\wt\fch_{\choices,\shmap})$ infinitely often in future and past. Moreover, $\wh\CS_{\st,\rd}(\wt\fch_{\choices,\shmap})$ is the maximal strong cross section contained in $\wh\CS_\rd(\wt\fch_{\choices,\shmap})
$. In contrast to $\wh\CS$ and $\wh\CS_\st$, the sets $\wh\CS_\rd(\wt\fch_{\choices,\shmap})$ and $\wh\CS_{\st,\rd}(\wt\fch_{\choices,\shmap})$ do depend on the choice of the family $\wt\fch_{\choices,\shmap}$. Moreover, we will deduce discrete dynamical systems $(\wt\DS, \wt F)$ and $(\wt\DS_\st, \wt F_\st)$ with $\wt\DS_\st \subseteq \wt\DS \subseteq \R\times\R$ which are conjugate to $(\wh\CS_\rd(\wt\fch_{\choices,\shmap}), R)$ resp.\@ $(\wh\CS_{\st,\rd}(\wt\fch_{\choices,\shmap}),R)$.

All construction in this process are of geometrical nature and effectively performable in a finite number of steps. Moreover, the set of labels is finite.

\subsection{Reduced cross section}\label{sec_redcross}

The set $\{ I(\wt\ch) \mid \wt\ch\in\wt\fch_{\choices,\shmap}\}$ decomposes into two sequences
\[
\mc I_1\big(\wt\fch_{\choices,\shmap}\big)  \sceq \left\{ I\big(\wt\ch_{1,1}\big) \supseteq I\big(\wt\ch_{1,2}\big) \supseteq \ldots \supseteq I\big(\wt\ch_{1,k_1}\big) \right\}
\]
where $I(\wt\ch_{1,j}) = (a_j,\infty)$ and $a_1<a_2<\ldots < a_{k_1}$, and 
\[
\mc I_2\big(\wt\fch_{\choices,\shmap}\big) \sceq \left\{ I\big(\wt\ch_{2,1}\big) \supseteq I\big(\wt\ch_{2,2}\big) \supseteq \ldots \supseteq I\big(\wt\ch_{2,k_2}\big) \right\}
\]
where $I(\wt\ch_{2,j}) = (-\infty, b_j)$ and $b_1 > b_2 > \ldots > b_{k_2}$.

Set $I(\wt\ch_{1,k_1+1})\sceq \emptyset$ and
\[
 I_\rd\big(\wt\ch_{1,j}\big) \sceq I\big(\wt\ch_{1,j}\big)\mminus I\big(\wt\ch_{1,j+1}\big) \qquad\text{for $j=1,\ldots, k_1$,}
\]
and set $I(\wt\ch_{2,k_2+1})\sceq\emptyset$ and
\[
 I_\rd\big(\wt\ch_{2,j}\big) \sceq I\big(\wt\ch_{2,j}\big)\mminus I\big(\wt\ch_{2,j+1}\big) \qquad\text{for $j=1,\ldots, k_2$.}
\]
For each $\wt\ch\in\wt\fch_{\choices,\shmap}$ set 
\[
\CS'_\rd\rueck\big(\wt\ch\big) \sceq \left\{ v\in \CS'\rueck\big(\wt\ch\big) \left\vert\  \big(\gamma_v(\infty), \gamma_v(-\infty)\big) \in I_\rd\big(\wt\ch\big)\times J\big(\wt\ch\big) \right.\right\}
\]
and
\[
 \CS'_\rd\rueck\big(\wt\fch_{\choices,\shmap}\big) \sceq \bigcup_{\wt\ch\in\wt\fch_{\choices,\shmap}} \CS'_\rd\rueck\big(\wt\ch\big).
\]
Define 
\[
\wh\CS_\rd\big(\wt\fch_{\choices,\shmap}\big) \sceq \pi\big(\CS'_\rd\rueck\big(\wt\fch_{\choices,\shmap}\big)\big) \quad\text{and}\quad \CS_\rd\big(\wt\fch_{\choices,\shmap}\big) \sceq \pi^{-1}\big(\wh\CS_\rd\big(\wt\fch_{\choices,\shmap}\big)\big).
\]
 Further set 
\begin{align*}
&& \CS'_{\st,\rd}\rueck\big(\wt\ch\big) & \sceq \CS'_\rd\rueck\big(\wt\ch\big) \cap \CS_\st \qquad\text{for each $\wt\ch\in\wt\fch_{\choices,\shmap}$},
\\
&& \CS'_{\st,\rd}\rueck\big(\wt\fch_{\choices,\shmap}\big) & \sceq \CS'_\rd\rueck\big(\wt\fch_{\choices,\shmap}\big) \cap \CS_\st = \bigcup_{\wt\ch\in\wt\fch_{\choices,\shmap}} \CS'_{\st,\rd}\rueck\big(\wt\ch\big),
\\
&& \CS_{\st,\rd}\big(\wt\fch_{\choices,\shmap}\big) & \sceq \CS_\rd\big(\wt\fch_{\choices,\shmap}\big) \cap \CS_\st,
\\
\text{and} && \wh\CS_{\st,\rd}\big(\wt\fch_{\choices,\shmap}\big) & \sceq \wh\CS_\rd\big(\wt\fch_{\choices,\shmap}\big) \cap \wh\CS_\st.
\end{align*}

\begin{remark}
Let $\wt\ch\in\wt\fch_{\choices,\shmap}$.  Note that the sets $I_\rd(\wt\ch)$ and  $\CS'_\rd(\wt\ch)$ not only depend on $\wt\ch$ but also on the choice of the complete family $\wt\fch_{\choices,\shmap}$. The set $\CS'_\rd(\wt\ch)$ is identical to 
\[
\big\{ v\in \CS'\rueck\big(\wt\ch\big) \ \big\vert\  \gamma_v\big( (0,\infty) \big)\cap \CS'\rueck\big(\wt\fch_{\choices,\shmap}\big) = \emptyset \big\}.
\]
In other words, if we say that $v\in\CS'(\wt\ch)$ has an \textit{inner intersection} %
\index{inner intersection}%
if and only if $\gamma_v( (0,\infty) )\cap \CS'(\wt\fch_{\choices,\shmap}) \not= \emptyset$, then $\CS'_\rd(\wt\ch)$ is the subset of $\CS'(\wt\ch)$ of all elements without inner intersection.
\end{remark}

\begin{remark}\label{redrepset}
By Proposition~\ref{CS=CShat}, the union 
\[
\CS'(\wt\fch_{\choices,\shmap})= \bigcup\big\{\CS'(\wt\ch)\ \big\vert\  \wt\ch\in\wt\fch_{\choices,\shmap}\big\}
\]
is disjoint and $\CS'(\wt\fch_{\choices,\shmap})$ is a set of representatives for $\wh\CS$. Since $\CS'_\rd(\wt\ch)$ is a subset of $\CS'(\wt\ch)$ for each $\wt\ch\in\wt\fch_{\choices,\shmap}$ and $\wh\CS_\rd(\wt\fch_{\choices,\shmap})=\pi(\CS'_\rd(\wt\fch_{\choices,\shmap}))$, the set $\CS'(\wt\fch_{\choices,\shmap})$ is a set of representatives for $\wh\CS_\rd(\wt\fch_{\choices,\shmap})$ and $\CS'_\rd(\wt\fch_{\choices,\shmap})$ is the disjoint union $\bigcup_{g\in\Gamma} g.\CS'_\rd(\wt\fch_{\choices,\shmap})$. Further, one easily sees that 
\[
\pi^{-1}\big( \wh\CS_{\st,\rd}\big(\wt\fch_{\choices,\shmap}\big) \big) = \CS_{\st,\rd}\big(\wt\fch_{\choices,\shmap}\big)
\]
and
\[
\pi\big( \CS'_{\st,\rd}\rueck\big(\wt\fch_{\choices,\shmap}\big)\big) = \wh\CS_{\st,\rd}\big(\wt\fch_{\choices,\shmap}\big).
\]
Moreover, $\CS'_{\st,\rd}(\wt\fch_{\choices,\shmap})$ is a set of representatives for $\wh\CS_{\st,\rd}(\wt\fch_{\choices,\shmap})$.
\end{remark}

\begin{example}\label{redsetGamma05}
Recall Example~\ref{intersectionGamma05}. Suppose first that the shift map is $\shmap_1$. Then 
\begin{align*}
I\big(\wt\ch_1\big) & = (-\infty, 1), & I\big(\wt\ch_2\big) & = (0,\infty), & I\big(\wt\ch_4\big) & = \big(\tfrac15,\infty\big), & I\big(\wt\ch_6\big) & = \big(\tfrac25,\infty\big), 
\\
I\big(\wt\ch_5\big) & = \big(\tfrac35,\infty\big), & I\big(\wt\ch_3\big) & = \big(\tfrac45,\infty\big).
\end{align*}
Therefore we have
\begin{align*}
I_\rd\big(\wt\ch_1\big) & = (-\infty, 1), & I_\rd\big(\wt\ch_2\big) & = \big(0,\tfrac15\big], & I_\rd\big(\wt\ch_4\big) & = \big(\tfrac15,\tfrac25\big], 
\\
I_\rd\big(\wt\ch_6\big) & = \big(\tfrac25,\tfrac35\big], & I_\rd\big(\wt\ch_5\big) & = \big(\tfrac35,\tfrac45\big],
& I_\rd\big(\wt\ch_3\big) & = \big(\tfrac45,\infty\big).
\end{align*}
If the shift map is $\shmap_2$, then we find
\begin{align*}
I_\rd\big(\wt\ch_{-1}\big) & = (-\infty, 0), & I_\rd\big(\wt\ch_2\big) & = \big(0,\tfrac15\big], & I_\rd\big(\wt\ch_4\big) & = \big(\tfrac15,\tfrac25\big], 
\\
I_\rd\big(\wt\ch_6\big) & = \big(\tfrac25,\tfrac35\big], & I_\rd\big(\wt\ch_5\big) & = \big(\tfrac35,\tfrac45\big],
& I_\rd\big(\wt\ch_3\big) & = \big(\tfrac45,\infty\big).
\end{align*}
Note that with $\shmap_2$, the sets $I_\rd(\cdot)$ are pairwise disjoint, whereas with $\shmap_1$ they are not.
\end{example}

\begin{lemma}\label{uniqueinter}
Let 
\[
(x,y) \in \bigcup_{\wt\ch\in\wt\fch_{\choices,\shmap}} I_\rd\big(\wt\ch\big)\times J\big(\wt\ch\big).
\]
Then there is a unique $v\in\CS'_\rd(\wt\fch_{\choices,\shmap})$ such that $(x,y)=\big(\gamma_v(\infty),\gamma_v(-\infty)\big)$. Conversely, if $v\in \CS'_\rd(\wt\fch_{\choices,\shmap})$, then 
\[
\big(\gamma_v(\infty),\gamma_v(-\infty)\big) \in \bigcup_{\wt\ch\in\wt\fch_{\choices,\shmap}} I_\rd\big(\wt\ch\big)\times J\big(\wt\ch\big).
\]
\end{lemma}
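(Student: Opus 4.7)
The converse implication is immediate from the definitions: if $v\in\CS'_\rd(\wt\fch_{\choices,\shmap})$, then by the disjoint decomposition of Remark~\ref{redrepset} there is a unique $\wt\ch$ with $v\in\CS'_\rd(\wt\ch)$, and the defining condition of $\CS'_\rd(\wt\ch)$ directly yields $\big(\gamma_v(\infty),\gamma_v(-\infty)\big)\in I_\rd(\wt\ch)\times J(\wt\ch)$. So this direction requires only unwinding definitions.

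For the forward direction, existence is essentially an application of Lemma~\ref{char_intervals}. Given $(x,y)\in I_\rd(\wt\ch)\times J(\wt\ch)$ for some $\wt\ch\in\wt\fch_{\choices,\shmap}$, the inclusion $I_\rd(\wt\ch)\subseteq I(\wt\ch)$ places $(x,y)$ in $I(\wt\ch)\times J(\wt\ch)$. Lemma~\ref{char_intervals} then furnishes a unique $v\in\CS'(\wt\ch)$ with $(\gamma_v(\infty),\gamma_v(-\infty))=(x,y)$, and because $x\in I_\rd(\wt\ch)$ this $v$ lies in $\CS'_\rd(\wt\ch)\subseteq\CS'_\rd(\wt\fch_{\choices,\shmap})$ by definition.

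The real content is the uniqueness. Suppose $v_1\in\CS'_\rd(\wt\ch_1)$ and $v_2\in\CS'_\rd(\wt\ch_2)$ both realize the pair $(x,y)$. The plan is to deduce $\wt\ch_1=\wt\ch_2$, after which Lemma~\ref{char_intervals} applied inside the single set $\CS'(\wt\ch_1)$ forces $v_1=v_2$. To conclude $\wt\ch_1=\wt\ch_2$, I will show that the family $\big\{I_\rd(\wt\ch)\times J(\wt\ch)\ \big\vert\ \wt\ch\in\wt\fch_{\choices,\shmap}\big\}$ is pairwise disjoint. Split $\wt\fch_{\choices,\shmap}$ according to whether its cell belongs to $\mc I_1(\wt\fch_{\choices,\shmap})$ or $\mc I_2(\wt\fch_{\choices,\shmap})$. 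Within $\mc I_1$, the chain $I(\wt\ch_{1,1})\supseteq\cdots\supseteq I(\wt\ch_{1,k_1})$ and the telescoping definition $I_\rd(\wt\ch_{1,j})=I(\wt\ch_{1,j})\smallsetminus I(\wt\ch_{1,j+1})$ make the sets $I_\rd(\wt\ch_{1,j})$ pairwise disjoint, so already the $I_\rd$-coordinate distinguishes distinct members of $\mc I_1$; the same argument handles $\mc I_2$. Between the two subfamilies the $I_\rd$'s can overlap, but the $J$-coordinate separates them: for $\wt\ch\in\mc I_1$ the pair $(x,y)\in I(\wt\ch)\times J(\wt\ch)=(a,\infty)\times(-\infty,a)$ satisfies $x>y$, while for $\wt\ch\in\mc I_2$ one has $(x,y)\in(-\infty,b)\times(b,\infty)$, so $x<y$. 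These two inequalities are incompatible, ruling out any cross-overlap. Thus $\wt\ch_1=\wt\ch_2$, and the proof is complete.

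The only nontrivial step is this disjointness, and the cleanest way to package it is the $\mc I_1/\mc I_2$ dichotomy combined with the observation that $I(\wt\ch)$ and $J(\wt\ch)$ lie on opposite sides of the same real number, so the sign of $x-y$ is determined by which subfamily $\wt\ch$ lies in.
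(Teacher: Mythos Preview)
Your proof is correct and follows the same approach as the paper: existence and per-cell uniqueness via Lemma~\ref{char_intervals}, global uniqueness via pairwise disjointness of the products $I_\rd(\wt\ch)\times J(\wt\ch)$, and the converse from the definitions. The paper simply asserts the disjointness ``by construction'' without further comment, whereas you spell out the argument (telescoping within each chain, sign of $x-y$ between chains); your version is more explicit but not substantively different.
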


\begin{proof}
The combination of the definition of $\CS'_\rd(\wt\fch_{\choices,\shmap})$ and Lemma~\ref{char_intervals} shows that there is at least one $v\in\CS'_\rd(\wt\fch_{\choices,\shmap})$ such that $\big(\gamma_v(\infty),\gamma_v(-\infty)\big) = (x,y)$ and that for each $\wt\ch\in\wt\fch_{\choices,\shmap}$ there there is at most one such $v$.
By construction, 
\[
\left( I_\rd\big(\wt\ch_a\big)\times J\big(\wt\ch_a\big) \right) \cap \left( I_\rd\big(\wt\ch_b\big)\times J\big(\wt\ch_b\big)\right) = \emptyset
\]
for $\wt\ch_a,\wt\ch_b\in\wt\fch_{\choices,\shmap}$, $\wt\ch_a\not=\wt\ch_b$. Hence there is a unique such $v\in\CS'_\rd(\wt\fch_{\choices,\shmap})$. The remaining assertion is clear from the definition of the sets $\CS'_\rd(\wt\ch)$ for $\wt\ch\in\wt\fch_{\choices,\shmap}$.
\end{proof}

Let $l_1(\wt\fch_{\choices,\shmap})$ be the number of connected components of $\BS$ of the form $(a,\infty)$ (geodesic segment) with $a_1\leq a \leq a_{k_1}$ and let $l_2(\wt\fch_{\choices,\shmap})$ be the number of connected components of $\BS$ of the form $(a,\infty)$ with $b_{k_2} \leq a \leq b_1$. Define
\[
l\big(\wt\fch_{\choices,\shmap}\big) \sceq \max \left\{ l_1\big(\wt\fch_{\choices,\shmap}\big), l_2\big(\wt\fch_{\choices,\shmap}\big) \right\}.
\]

\begin{prop}\label{havenext}
Let $v\in \CS'(\wt\fch_{\choices,\shmap})$ and suppose that $\eta$ is the geodesic on $\h$ determined by $v$. Let $(s_j)_{j\in(\alpha,\beta)\cap \Z}$ be the geometric coding sequence of $v$. Suppose that $s_j=(\wt\ch_j,h_j)$ for $j=0,\ldots, \beta-2$. For $j=0,\ldots, \beta-2$ set 
\[
g_{-1}\sceq \id\quad\text{and}\quad g_j\sceq g_{j-1}h_j.
\]
If $\alpha=-1$, then let $\wt\ch_{-1}$ be the shifted cell in $SH$ such that $v\in\CS'(\wt\ch_{-1})$. 
Then 
\[
 s_0 \sceq \min \big\{ t\geq 0 \ \big\vert\  \eta'(t) \in \CS_\rd\big(\wt\fch_{\choices,\shmap}\big) \big\}
\]
exists and 
\[
 \eta'(s_0) \in \bigcup_{l=-1}^{\kappa} g_l.\CS'_\rd\rueck\big(\wt\ch_l\big)
\]
where $\kappa\sceq \min\{ l(\wt\fch_{\choices,\shmap}) -1 , \beta-2\}$. More precisely, $\eta'(s_0)\in g_l.\CS'_\rd(\wt\ch_l)$ for $l\in\{-1,\ldots, \kappa\}$ if and only if $\eta(\infty)\in g_l. I_\rd(\wt\ch_l)$ and $\eta(\infty)\notin g_k. I_\rd(\wt\ch_k)$ for $k=-1,\ldots, l-1$. Moreover, if $v\in\CS'_\st(\wt\fch_{\choices,\shmap})$, then $\eta'(s_0)\in \CS_{\st,\rd}(\wt\fch_{\choices,\shmap})$.
\end{prop}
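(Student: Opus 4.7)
The plan is to reduce the claim to a statement about the location of $\eta(\infty)$ on $\bhg H$. By Proposition~\ref{locint}\eqref{locintiii}, $\eta'(t_{l+1})\in g_l\CS'(\wt\ch_l)$ for every $l\in\{-1,\ldots,\beta-2\}$, where $(t_n)$ denotes the sequence of intersection times of $v$ with $\CS$ (with $t_0=0$). Applying Lemma~\ref{char_intervals} to the representative $g_l^{-1}\eta'(t_{l+1})\in\CS'(\wt\ch_l)$ gives $\eta(\infty)\in g_lI(\wt\ch_l)$, and the definition of $\CS'_\rd(\wt\ch_l)$ translates ``$\eta'(t_{l+1})\in g_l\CS'_\rd(\wt\ch_l)$'' into the additional requirement $\eta(\infty)\in g_lI_\rd(\wt\ch_l)$. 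Since $\CS_\rd(\wt\fch_{\choices,\shmap})\subseteq\CS$, any $t\geq 0$ with $\eta'(t)\in\CS_\rd$ must coincide with some $t_{l+1}$, so the claim reduces to showing that the smallest $l\geq -1$ with $\eta(\infty)\in g_lI_\rd(\wt\ch_l)$ exists and lies in $\{-1,\ldots,\kappa\}$.

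The inductive step I plan to establish is: if $l\in\{-1,\ldots,\beta-3\}$ and $\eta(\infty)\in g_lI(\wt\ch_l)\setminus g_lI_\rd(\wt\ch_l)$, then $\eta(\infty)\in g_{l+1}I(\wt\ch_{l+1})$, with $l+1\leq\beta-1$ so that $\wt\ch_{l+1}$ is defined. The containment is immediate from the first paragraph applied at index $l+1$ once we know $t_{l+2}$ exists; for the existence of $t_{l+2}$ I would invoke Proposition~\ref{CS2}\eqref{CS2i} after verifying that $I(\wt\ch)\cap\bhg\pr(\wt\ch)\subseteq I_\rd(\wt\ch)$ for each $\wt\ch\in\wt\fch_{\choices,\shmap}$. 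This last inclusion follows routinely from the definition $I_\rd(\wt\ch)=I(\wt\ch)\setminus I(\wt\ch')$ for the immediate chain-successor $\wt\ch'$ together with the observation that the real endpoints of the non-vertical sides of $\pr(\wt\ch)$ coincide with the $a_j$'s defining the adjacent $b$-sides, so the boundary points of $\pr(\wt\ch)$ that lie in $I(\wt\ch)$ are squeezed into the reduced part.

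The hardest step will be the termination bound $\kappa\leq l(\wt\fch_{\choices,\shmap})-1$. Iterating the inductive step produces a strictly decreasing nested chain of subintervals $g_{-1}I(\wt\ch_{-1})\supsetneq g_0I(\wt\ch_0)\supsetneq \ldots$ of $\bhg H$, and at each step the newly crossed component $g_lb(\wt\ch_l)$ of $\BS$ is a $\Gamma$-translate of a vertical side of a basal cell. The key is to observe that, after passing to $\Gamma_\infty$-representatives, these successive vertical components fall into the interval $[a_1,a_{k_1}]$ or $[b_{k_2},b_1]$ depending on which of the chains $\mc I_1(\wt\fch_{\choices,\shmap})$, $\mc I_2(\wt\fch_{\choices,\shmap})$ the interval $I(\wt\ch_{-1})$ belongs to, and that each step yields a distinct such component. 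Since there are at most $l_i(\wt\fch_{\choices,\shmap})\leq l(\wt\fch_{\choices,\shmap})$ vertical components of $\BS$ in the relevant range by definition, the iteration terminates within $l(\wt\fch_{\choices,\shmap})$ steps. The remaining boundary case $\kappa=\beta-2$ is handled analogously: either some $l\leq\beta-3$ already succeeds, or at $l=\beta-2$ the terminal label $s_{\beta-1}=\eps$ forces $\eta(\infty)\in g_{\beta-2}\bhg\pr(\wt\ch_{\beta-2})$ by Proposition~\ref{CS2}\eqref{CS2i}, which by the inclusion above lies in $g_{\beta-2}I_\rd(\wt\ch_{\beta-2})$.

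The ``more precisely'' clause follows because the preceding reduction characterizes the unique minimal successful $l$ by precisely the two stated conditions. For the final assertion, $v\in\CS'_\st(\wt\fch_{\choices,\shmap})$ means that $\wh\gamma_v$ intersects $\wh\CS$ infinitely often in both past and future; since this is a property of the underlying geodesic on $Y$ rather than of the chosen tangent vector, $\wh\eta'(s_0)$ inherits it, so $\eta'(s_0)\in\CS_\st\cap\CS_\rd=\CS_{\st,\rd}(\wt\fch_{\choices,\shmap})$.
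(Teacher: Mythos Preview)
Your reduction in the first paragraph and the inductive mechanism in the second paragraph match the paper's argument. The inclusion $I(\wt\ch)\cap\bhg\pr(\wt\ch)\subseteq I_\rd(\wt\ch)$ is true, but your justification is off: the real vertices of $\pr(\wt\ch)$ are \emph{not} the $a_j$'s. What makes the inclusion work is that all real vertices of $\cl(\pr(\wt\ch))$ lie between its two vertical sides $(c_1,\infty)=b(\wt\ch)$ and $(c_2,\infty)$, and $(c_2,\infty)$ cannot lie strictly to the right of the next $b$-side $(a_{p+1},\infty)$ in the chain $\mc I_1$ (otherwise a vertical component of $\BS$ would meet $\pr(\wt\ch)^\circ$, contradicting Lemma~\ref{propsSH}); hence the real vertices lie in $(c_1,c_2]\subseteq(c_1,a_{p+1}]=I_\rd(\wt\ch)$.

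The genuine gap is your termination argument. A strictly decreasing nested chain of intervals does not terminate on its own, and ``passing to $\Gamma_\infty$-representatives'' is a red herring: it only makes sense once you already know that each $g_lb(\wt\ch_l)$ is a \emph{vertical} geodesic, which is exactly what has to be shown. The sentence ``each step yields a distinct such component'' is likewise unsupported without that. The paper closes this gap by a concrete geometric observation that you are missing: if $\eta(\infty)\notin I_\rd(\wt\ch_{-1})=(c_1,c_{j_2}]$, then $\eta(\infty)>c_{j_2}\geq c_2$, where $(c_2,\infty)$ is the \emph{other} vertical side of $\cl(\pr(\wt\ch_{-1}))$; by Corollary~\ref{decomposition} this forces $\eta$ to exit through $(c_2,\infty)$, so $g_0b(\wt\ch_0)=(c_2,\infty)$ is vertical. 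Iterating, the successive $g_lb(\wt\ch_l)$ are exactly the consecutive vertical components $(c_{l+2},\infty)$ of $\BS$ in the strip between $b(\wt\ch_{-1})$ and $(a_{k_1},\infty)$, of which there are at most $l(\wt\fch_{\choices,\shmap})$ by definition. The process reaches $\CS'_\rd$ at the step where $(c_{l+2},\infty)=b(\wt\ch'_i)$ for the unique $i$ with $\eta(\infty)\in I_\rd(\wt\ch'_i)$, and there one has $g_l=\id$ and $\wt\ch_l=\wt\ch'_i$. Your handling of the $\beta<\infty$ case and of the final ``strong'' assertion is fine.
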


\begin{proof}
Let $(t_n)_{n\in (\alpha,\beta)\cap\Z}$ be the sequence of intersection times of $v$ (with respect to $\CS$). Proposition~\ref{locint}\eqref{locintiii} resp.\@ the choice of $\wt\ch_{-1}$ shows that $\eta'(t_n)\in g_{n-1}.\CS'(\wt\ch_{n-1})$ for each $n\in [0,\beta)\cap\Z$. Since $\CS_\rd(\wt\fch_{\choices,\shmap})\subseteq \CS$, the minimum $s_0$ exists if and only if $\eta'(t_m)\in g_{m-1}.\CS'_\rd(\wt\ch_{m-1})$ for some $m\in [0,\beta)\cap\Z$. In this case, $s_0=t_n$ and $\eta'(s_0)\in g_{n-1}.\CS'_\rd(\wt\ch_{n-1})$ where
\[
 n\sceq \min \big\{ m\in [0,\beta)\cap\Z \ \big\vert\ \eta'(t_m)\in g_{m-1}.\CS'_\rd\rueck\big(\wt\ch_{m-1}\big) \big\}.
\]
Suppose that $s_0=t_n$. Note that for each $m\in[0,\beta)\cap\Z$ we have that $\eta(-\infty)$ is in $J(\wt\ch_{m-1})$. Then the definition of $\CS'_\rd(\cdot)$ shows that 
\[
 n= \min\big\{ m\in [0,\beta)\cap\Z \ \big\vert\  \eta(\infty) \in g_{m-1}. I_\rd\big(\wt\ch_{m-1}\big) \big\}.
\]
Hence it remains to show that the element $s_0$ exists and that $s_0=t_n$ for some $n\in\{0,\ldots, \kappa+1\}$.

W.l.o.g.\@ suppose that $I(\wt\ch_{-1})\in \mc I_1(\wt\fch_{\choices,\shmap})$. Then $I(\wt\ch_{-1}) = (a,\infty)$ for some $a\in\R$. Let $c_1<c_2<\ldots <c_k$ be the increasing sequence in $\R$ such that $c_1=a$ and $c_k=a_{k_1}$ and such that the set 
\[
 \{ (c_j,\infty) \mid j=1,\ldots, k\}
\]
of geodesic segments is the set of connected components of $\BS$ of the form $(c,\infty)$ with $a\leq c\leq a_{k_1}$. Then $k\leq l(\wt\fch_{\choices,\shmap})$. Let $\{(c_{j_i},\infty) \mid i=1,\ldots, m\}$ be its subfamily (indexed by $\{1,\ldots, m\}$) of geodesic segments such that for each $i\in\{1,\ldots, m\}$ we have $(c_{j_i},\infty) = b(\wt\ch'_i)$ for some $\wt\ch'_i\in\wt\fch_{\choices,\shmap}$ such that $b(\wt\ch'_i)\in \mc I_1(\wt\fch_{\choices,\shmap})$ and 
\[
c_{j_1} \leq c_{j_2} \leq \ldots \leq c_{j_m}.
\]
The definition of $I_\rd(\cdot)$ shows that $I(\wt\ch_{-1})$ is the disjoint union $\bigcup_{i=1}^m I_\rd(\wt\ch'_i)$. Moreover, $J(\wt\ch'_i) \supseteq J(\wt\ch_{-1})$ for $i=1,\ldots, m$. From Lemma~\ref{char_intervals} we know that 
\[
\big( \eta(\infty), \eta(-\infty) \big) \in I(\wt\ch_{-1})\times J(\wt\ch_{-1}).
\]
Hence there is a unique $i\in\{1,\ldots, m\}$ such that 
\[
\big( \eta(\infty), \eta(-\infty) \big) \in I_\rd(\wt\ch'_i) \times J(\wt\ch'_i).
\]
In turn, $\eta$ intersects $\CS'_\rd(\wt\ch'_i)$. Now, if $\eta$ does not intersect $\CS'_\rd(\wt\ch'_1) = \CS'_\rd(\wt\ch_{-1})$, then $\eta( [0,\infty) )$ intersects $(c_2,\infty)$ and we have $g_0.b(\wt\ch_0) = (c_2,\infty)$. If $\eta$ does not intersect $g_0.\CS'_\rd(\wt\ch_0)$, then $\eta([0,\infty))$ intersects $(c_3,\infty)$ and hence $g_1.b(\wt\ch_1) = (c_3,\infty)$ and so on. This shows that
\[
\CS'(\wt\ch'_i)=\CS'(\wt\ch_{j_i-2}) = g_{j_i-2}.\CS'(\wt\ch_{j_i-2})
\]
and hence $\eta'(t_{j_i-1})\in g_{j_i-2}.\CS'_\rd(\wt\ch_{j_i-2})$, where $j_i-1 \leq k \leq l(\wt\fch_{\choices,\shmap})$. If $\beta<\infty$, then $j_i-1\leq \beta-1$. Thus, $s_0$ exists and $\eta'(s_0)\in \bigcup_{l=-1}^{\kappa}g_l.\CS'_\rd(\wt\ch_{l})$. 
Finally, if $v\in\CS'_\st(\wt\fch_{\choices,\shmap})$, then $\eta'(s_0)\in \CS_\st$ and hence $\eta'(s_0)\in \CS_{\st,\rd}(\wt\fch_{\choices,\shmap})$.
\end{proof}

Recall the shift map $\sigma\colon \Lambda\to\Lambda$ from Section~\ref{sec_geomcodseq}. The following proposition is a consequence of Proposition~\ref{havenext}.

\begin{prop}\label{remainsstrong}
Let $v\in\CS'_{\rd}(\wt\fch_{\choices,\shmap})$ and suppose that $\eta$ is the geodesic on $\h$ determined by $v$. Let $\big( s_j \big)_{j\in (\alpha,\beta)\cap\Z}$ be the geometric coding sequence of $v$. Suppose that $s_j=(\wt\ch_j,h_j)$ for $j=0,\ldots,\beta-2$.
\begin{enumerate}[{\rm (i)}]
\item\label{rsi} Set $g_0\sceq h_0$ and for $j=0,\ldots,\beta-3$ define $g_{j+1}\sceq g_jh_{j+1}$. 
If there is a next point of intersection of $\eta$ and $\CS_\rd(\wt\fch_{\choices,\shmap})$, then this is on
\[
\bigcup_{l=0}^{\kappa} g_l.\CS'_\rd\rueck\big(\wt\ch_l\big)
\]
where $\kappa\sceq \min\{ l(\choices,\shmap), \beta-2\}$.
In this case it is on $g_l.\CS'_\rd(\wt\ch_l)$ if and only if $\eta(\infty)\in g_l. I_\rd(\wt\ch_l)$ and $\eta(\infty)\notin g_k.I_\rd(\wt\ch_k)$ for $k=0,\ldots, l-1$. If $\beta\geq 2$, then there is a next point of intersection of $\eta$ and $\CS_\rd(\wt\fch_{\choices,\shmap})$. 
\item\label{rsii} Suppose that $v\in \CS'_{\st,\rd}(\wt\fch_{\choices,\shmap})$. Let $(t_{n})_{n\in\Z}$ be the sequence of intersection times of $v$ (\wrt $CS$). Then there was a previous point of intersection of $\eta$ and $\CS_\rd(\wt\fch_{\choices,\shmap})$ and this is contained in 
\[
\big\{ \eta'(t_{-n}) \ \big\vert\  n=1,\ldots, l\big(\wt\fch_{\choices,\shmap}\big)+1 \big\}.
\]
\end{enumerate}
\end{prop}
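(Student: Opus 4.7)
The strategy for both parts is to reduce to Proposition~\ref{havenext} by shifting $v$ along its orbit of geometric intersections with $\CS$ and exploiting the $\Gamma$-invariance of $\CS_\rd$ (which is $\pi^{-1}(\wh\CS_\rd(\wt\fch_{\choices,\shmap}))$ and hence $\Gamma$-invariant). For part~(i), the case $\beta=1$ is vacuous: then $s_0=\eps$, so by Proposition~\ref{CS2}(i) $\eta$ has no further intersection with $\CS$ at all, hence none with $\CS_\rd\subseteq\CS$. So assume $\beta\ge 2$. By Proposition~\ref{locint}(iii), $\eta'(t_1)\in g_0\CS'(\wt\ch_0)=h_0\CS'(\wt\ch_0)$, so $w:=h_0^{-1}\eta'(t_1)\in\CS'(\wt\ch_0)\subseteq\CS'(\wt\fch_{\choices,\shmap})$. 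The geodesic $\eta_w(s)=h_0^{-1}\eta(s+t_1)$ has geometric coding sequence $(s_{j+1})_{j\in(\alpha-1,\beta-1)\cap\Z}$ (labels factor through $\pi$ and so are $\Gamma$-invariant), so in the notation of Proposition~\ref{havenext} applied to $w$ one reads off $\wt\ch_j^w=\wt\ch_{j+1}$, $h_j^w=h_{j+1}$, $g_j^w=h_1\cdots h_{j+1}$ for $j\ge 0$, and $\wt\ch_{-1}^w=\wt\ch_0$.

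Since $\eta((0,t_1))\cap\CS=\emptyset$ by the definition of the intersection times, the next reduced intersection of $\eta$ after time $0$ coincides with $\eta'(t_1+s_0^w)$, where $s_0^w$ and the localization of $\eta_w'(s_0^w)$ are furnished by Proposition~\ref{havenext} applied to $w$; this establishes existence when $\beta\ge 2$. Conjugating by $h_0$ and using $h_0\cdot\id=g_0$ and $h_0g_l^w=h_0h_1\cdots h_{l+1}=g_{l+1}$ for $l\ge 0$, the union $\bigcup_{l=-1}^{\kappa^w}g_l^w\CS'_\rd(\wt\ch_{l+1})$ reindexes to $\bigcup_{l'=0}^{\kappa}g_{l'}\CS'_\rd(\wt\ch_{l'})$, with $\kappa=\kappa^w+1=\min\{l(\wt\fch_{\choices,\shmap}),\beta-2\}$ as claimed. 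The ``if and only if'' clause transports verbatim because $\eta(\infty)=h_0\eta_w(\infty)$ translates the condition on $\eta_w(\infty)$ into the stated condition on $\eta(\infty)$, with exactly the same reindexing of the intervals $I_\rd(\wt\ch_{l'})$.

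For part~(ii), $v\in\CS_\st$ forces the geometric coding of $v$ to be two-sided infinite, so all labels $s_{-n}=(\wt\ch_{-n},h_{-n})$ for $n\ge 1$ exist, and Proposition~\ref{locint}(iii) yields $\eta'(t_{-n})\in g_{-n-1}\CS'(\wt\ch_{-n-1})$ with $g_{-n-1}=h_{-1}^{-1}\cdots h_{-n}^{-1}$. Set $n_0:=l(\wt\fch_{\choices,\shmap})+1$ and apply Proposition~\ref{havenext} to $w:=g_{-n_0-1}^{-1}\eta'(t_{-n_0})\in\CS'(\wt\ch_{-n_0-1})$; the shifted coding of $w$ is again two-sided infinite, so the value $\kappa^w=l(\wt\fch_{\choices,\shmap})-1$ is attained. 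Translating the conclusion back via the time shift $s\mapsto s+t_{-n_0}$ identifies the first reduced intersection of $\eta$ in $[t_{-n_0},\infty)$ with $\eta'(t_k)$ for some $k\in\{-n_0,\ldots,-n_0+l(\wt\fch_{\choices,\shmap})\}=\{-l(\wt\fch_{\choices,\shmap})-1,\ldots,-1\}$. In particular some reduced intersection occurs strictly before $v$, so the previous reduced intersection $\eta'(t_m)$ (the largest $m\le-1$ with $\eta'(t_m)\in\CS_\rd$) exists and satisfies $-l(\wt\fch_{\choices,\shmap})-1\le k\le m\le-1$, giving the stated bound. The principal technical obstacle across both parts is the telescoping bookkeeping identity $g_{-n-1}h_{-n}h_{-n+1}\cdots h_{-n+l}=g_{l-n}$, flowing from the recursions $g_{-(n+1)}=g_{-n}h_{-n}^{-1}$ and $g_{j+1}=g_jh_{j+1}$ of Proposition~\ref{locint}: it must be verified carefully (and extended across the sign change at $l-n=0$) so that the union produced by Proposition~\ref{havenext} applied to the auxiliary element $w$ translates to precisely the $\Gamma$-translates stated here.
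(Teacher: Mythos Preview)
Your proof is correct and follows essentially the same approach as the paper's: both parts reduce to Proposition~\ref{havenext} by shifting $v$ along its $\CS$-orbit (to $w=g_0^{-1}\eta'(t_1)$ for part~(i), and to $g_{-n_0-1}^{-1}\eta'(t_{-n_0})$ with $n_0=l(\wt\fch_{\choices,\shmap})+1$ for part~(ii)), invoking Proposition~\ref{havenext} there, and translating back by $\Gamma$-invariance of $\CS_\rd$. Your reindexing bookkeeping is slightly more explicit than the paper's (you spell out the telescoping identity and the final step that the \emph{previous} reduced intersection lies no earlier than the one found), but the argument is the same.
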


Recall from Remark~\ref{outlook} that  $\NIC$ denotes the set of geodesics on $Y$ with at least one endpoint contained in $\pi(\bd)$.

\begin{cor}\label{redcross}
Let $\mu$ be a measure on the space of geodesics on $Y$. 
The sets $\wh\CS_\rd(\wt\fch_{\choices,\shmap})$ and  $\wh\CS_{\st,\rd}(\wt\fch_{\choices,\shmap})$ are cross sections \wrt $\mu$ if and only if $\NIC$ is a $\mu$-null set. Moreover, $\wh\CS_{\st,\rd}(\wt\fch_{\choices,\shmap})$ is the maximal strong cross section contained in $\wh\CS_\rd(\wt\fch_{\choices,\shmap})$.
\end{cor}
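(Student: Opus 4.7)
The plan is to reduce Corollary~\ref{redcross} to Theorem~\ref{geomcross} by transferring intersection information from $\wh\CS$ to $\wh\CS_\rd\big(\wt\fch_{\choices,\shmap}\big)$ via Propositions~\ref{havenext} and \ref{remainsstrong}. First I observe that (\apref{C}{C2}{}) is inherited for free: both $\wh\CS_\rd\big(\wt\fch_{\choices,\shmap}\big)$ and $\wh\CS_{\st,\rd}\big(\wt\fch_{\choices,\shmap}\big)$ are subsets of $\wh\CS$, and Proposition~\ref{gcs1} says $\wh\CS$ already satisfies (\apref{C}{C2}{}); discreteness in time then passes to every subset. All the work concentrates on (\apref{C}{C1}{}) and on maximality.

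For the ``only if'' direction I would argue by monotonicity: if either $\wh\CS_\rd\big(\wt\fch_{\choices,\shmap}\big)$ or $\wh\CS_{\st,\rd}\big(\wt\fch_{\choices,\shmap}\big)$ satisfies (\apref{C}{C1}{}) \wrt $\mu$, then a fortiori so does the larger set $\wh\CS$; combined with (\apref{C}{C2}{}) for $\wh\CS$ this makes $\wh\CS$ a cross section, so Theorem~\ref{geomcross} forces $\mu(\NIC)=0$. For the ``if'' direction, assume $\mu(\NIC)=0$ and let $\wh\gamma$ be any geodesic with $\wh\gamma\notin\NIC$; Proposition~\ref{CS3} then gives that $\wh\gamma$ intersects $\wh\CS$ infinitely often both in past and in future. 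Fix one such intersection and transport it to $v\in\CS'\big(\wt\fch_{\choices,\shmap}\big)$ via Proposition~\ref{CS=CShat}. Proposition~\ref{havenext} supplies a next intersection with $\CS_\rd\big(\wt\fch_{\choices,\shmap}\big)$, reached after at most $l\big(\wt\fch_{\choices,\shmap}\big)+1$ intersections with $\CS$; iterating this step (each time re-applying Proposition~\ref{havenext} at the newly obtained intersection, which by Proposition~\ref{remainsstrong}\eqref{rsi} again lies in $\CS'_\rd(\wt\ch)$ for some $\wt\ch$) produces an infinite sequence of future intersections of $\wh\gamma$ with $\wh\CS_\rd\big(\wt\fch_{\choices,\shmap}\big)$; the uniform bound $l\big(\wt\fch_{\choices,\shmap}\big)$ guarantees that between two successive intersections with $\wh\CS_\rd$ only finitely many of the infinitely many intersections with $\wh\CS$ are consumed, so the return times diverge to $\infty$. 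Symmetrically, Proposition~\ref{remainsstrong}\eqref{rsii} handles the past. Finally, since $\wh\gamma\notin\NIC$, none of its tangent vectors lies in $\mc E$, and every intersection just produced actually lies in $\wh\CS_{\st,\rd}\big(\wt\fch_{\choices,\shmap}\big)$. This shows (\apref{C}{C1}{}) for both reduced sets on the $\mu$-conull set of geodesics outside $\NIC$.

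For the maximality statement, let $\wh D\subseteq \wh\CS_\rd\big(\wt\fch_{\choices,\shmap}\big)$ be a strong cross section and pick $\wh v\in \wh D$. The geodesic through $\wh v$ must then meet $\wh D\subseteq\wh\CS$ infinitely often in past and future, hence by Proposition~\ref{CS3} does not belong to $\NIC$; consequently $\wh v\notin\mc E$ and $\wh v\in\wh\CS_{\st,\rd}\big(\wt\fch_{\choices,\shmap}\big)$. The converse inclusion (that $\wh\CS_{\st,\rd}\big(\wt\fch_{\choices,\shmap}\big)$ is itself a strong cross section, not merely a cross section) is immediate from the construction above: any geodesic meeting $\wh\CS_{\st,\rd}\big(\wt\fch_{\choices,\shmap}\big)$ lies outside $\NIC$, and for such geodesics the iteration argument already produced infinitely many intersections in past and future.

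The main obstacle I expect is the iteration step: one must argue that repeatedly applying Propositions~\ref{havenext} and \ref{remainsstrong}\eqref{rsii} actually pushes the intersection times to $\pm\infty$ and does not accumulate. This is exactly what the uniform bound $l\big(\wt\fch_{\choices,\shmap}\big)$ on the number of intervening $\CS$-intersections is designed for, so the argument is more bookkeeping than a genuine difficulty, but it is the only place where some care is required.
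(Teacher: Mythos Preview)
Your proposal is correct and follows essentially the same route as the paper: reduce to Theorem~\ref{geomcross} via the sandwich $\wh\CS_{\st,\rd}\big(\wt\fch_{\choices,\shmap}\big)\subseteq \wh\CS_\rd\big(\wt\fch_{\choices,\shmap}\big)\subseteq \wh\CS$, using Proposition~\ref{remainsstrong} to show that a geodesic hitting $\wh\CS$ infinitely often already hits $\wh\CS_{\st,\rd}\big(\wt\fch_{\choices,\shmap}\big)$ infinitely often. The paper's proof is terser: it treats the iteration (and the divergence of return times) as immediate from Proposition~\ref{remainsstrong}, whereas you spell out the bookkeeping with the bound $l\big(\wt\fch_{\choices,\shmap}\big)$; but this is a difference in presentation, not in substance.
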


\subsection{Reduced coding sequences and arithmetic symbolic dynamics}\label{sec_redcodseq}

Analogous to the labeling of $\CS$ in Section~\ref{sec_geomcodseq} we define a labeling of $\CS_\rd(\wt\fch_{\choices,\shmap})$. 

Let $v\in\CS'_\rd(\wt\fch_{\choices,\shmap})$ and let $\gamma$ denote the geodesic on $\h$ determined by $v$. Suppose first that $\gamma( (0,\infty) ) \cap \CS_\rd(\wt\fch_{\choices,\shmap}) \not=\emptyset$. Proposition~\ref{remainsstrong} implies that there is a next point of intersection of $\gamma$ and $\CS_\rd(\wt\fch_{\choices,\shmap})$ and that this is on $g.\CS'_\rd(\wt\ch)$ for a (uniquely determined) pair $(\wt\ch, g)\in\wt\fch_{\choices,\shmap}\times\Gamma$. We endow $v$ with the label $(\wt\ch,g)$. 

Suppose now that $\gamma( (0,\infty) ) \cap \CS_\rd(\wt\fch_{\choices,\shmap}) = \emptyset$. Then there is no next point of intersection of $\gamma$ and $\CS_\rd(\wt\fch_{\choices,\shmap})$. We label $v$ by $\eps$.

Let $\wh v\in \wh\CS_\rd(\wt\fch_{\choices,\shmap})$ and let $v\sceq\left( \pi\vert_{\CS'_\rd(\wt\fch_{\choices,\shmap})}\right)^{-1}(\wh v)$. The label of $\wh v$ and of each element in $\pi^{-1}(\wh v)$ is defined to be the label of $v$.

Suppose that $\Sigma_\rd$ denotes the set of labels of $\wh\CS_\rd(\wt\fch_{\choices,\shmap})$. \label{def_labelsred}

\begin{remark}
Recall from Corollary~\ref{prop_labels} that $\Sigma$ is finite. Then Proposition~\ref{remainsstrong} implies that also $\Sigma_\rd$ is finite. Moreover, Remark~\ref{sides_eff} shows that the elements of $\Sigma$ can be effectively determined. From Proposition~\ref{remainsstrong} then follows that also the elements of $\Sigma_\rd$ can be effectively determined.
\end{remark}

The following definition is analogous to the corresponding definitions in Section~\ref{sec_geomcodseq}. 

\begin{defi}
Let $v\in\CS'_\rd(\wt\fch_{\choices,\shmap})$ and suppose that $\gamma$ is the geodesic on $\h$ determined by $v$. Propositions~\ref{havenext} and \ref{remainsstrong} imply that there is a unique sequence $(t_n)_{n\in J}$ in $\R$ which satisfies the following properties:
\begin{enumerate}[(i)]
\item $J=\Z\cap (a,b)$ for some interval $(a,b)$ with $a,b\in\Z\cup\{\pm\infty\}$ and $0\in (a,b)$,
\item the sequence $(t_n)_{n\in J}$ is increasing,
\item $t_0=0$,
\item for each $n\in J$ we have $\gamma'(t_n)\in \CS_\rd(\wt\fch_{\choices,\shmap})$ and 
\[
\gamma'( (t_n,t_{n+1}) )\cap \CS_\rd(\wt\fch_{\choices,\shmap}) = \emptyset\quad\text{and}\quad\gamma'( (t_{n-1},t_n) ) \cap \CS_\rd(\wt\fch_{\choices,\shmap}) = \emptyset
\]
where we set $t_b\sceq \infty$ if $b<\infty$ and $t_a\sceq -\infty$ if $a>-\infty$.
\end{enumerate}
The sequence $(t_n)_{n\in J}$ is said to be the \textit{sequence of intersection times of $v$ with respect to $\CS_\rd(\wt\fch_{\choices,\shmap})$}. 

Let $\wh v\in \wh\CS_\rd(\wt\fch_{\choices,\shmap})$ and set $v\sceq \left( \pi\vert_{\CS'_\rd(\wt\fch_{\choices,\shmap})}\right)^{-1}(\wh v)$. Then the \textit{sequence of intersection times \wrt $\CS_\rd(\wt\fch_{\choices,\shmap})$ of $\wh v$} and \textit{of} each $w\in \pi^{-1}(\wh v)$ is defined to be the sequence of intersection times of $v$ \wrt $\CS_\rd(\wt\fch_{\choices,\shmap})$.

For each $s\in \Sigma_\rd$ set
\begin{align*}
\wh\CS_{\rd,s}\big(\wt\fch_{\choices,\shmap}\big) & \sceq \big\{ \wh v\in \wh\CS_\rd\big(\wt\fch_{\choices,\shmap}\big) \ \big\vert\ \text{$\wh v$ is labeled with $s$}\big\}
\intertext{and}
\CS_{\rd,s}\big(\wt\fch_{\choices,\shmap}\big) & \sceq \pi^{-1}\big(\wh\CS_\rd\big(\wt\fch_{\choices,\shmap}\big)\big) = \big\{ v\in \CS_\rd\big(\wt\fch_{\choices,\shmap}\big) \ \big\vert\ \text{$v$ is labeled with $s$}\big\}.
\end{align*}
Let $\wh v \in \wh\CS_\rd(\wt\fch_{\choices,\shmap})$ and let $(t_n)_{n\in J}$ be the sequence of intersection times of $\wh v$ \wrt $\CS_\rd(\wt\fch_{\choices,\shmap})$. Suppose that $\wh\gamma$ is the geodesic on $Y$ determined by $\wh v$. The \textit{reduced coding sequence} %
\index{reduced coding sequence}\index{coding sequence!reduced}%
of $\wh v$ is the sequence $(a_n)_{n\in J}$ in $\Sigma_\rd$ defined by 
\[
a_n\sceq s\quad \text{if and only if} \quad \wh\gamma'(t_n)\in \wh\CS_{\rd,s}\big(\wt\fch_{\choices,\shmap}\big)
\]
for each $n\in J$. 

Let $w\in \CS_\rd(\wt\fch_{\choices,\shmap})$. The \textit{reduced coding sequence} of $w$ is defined to be the reduced coding sequence of $\pi(w)$.

Let $\Lambda_\rd$  denote the set of reduced coding sequences and let $\Lambda_{\rd,\sigma}$ be the subset of $\Lambda_\rd$ consisting of the reduced coding sequences $(a_n)_{n\in (a,b)\cap \Z}$ with $a,b\in\Z\cup\{\pm\infty\}$ for which $b\geq 2$. Further, let $\Lambda_{\st,\rd}$ denote the set of two-sided infinite reduced coding sequences.  Let $\Sigma_\rd^\all$ be the set of all finite and one- or two-sided infinite sequences in $\Sigma_\rd$. Finally, let $\Seq_\rd\colon \wh\CS_\rd(\wt\fch_{\choices,\shmap}) \to \Lambda_\rd$ be the map which assigns to $\wh v\in \wh\CS_\rd(\wt\fch_{\choices,\shmap})$ the reduced coding sequence of $\wh v$. 
\end{defi}

The proofs of Propositions~\ref{redcommons} are analogous to those of the corresponding statements in Section~\ref{sec_geomcodseq}.

\begin{prop}\label{redcommons}
\begin{enumerate}[{\rm (1)}]
\item \label{redlocint}
Let $v\in\CS'_\rd(\wt\fch_{\choices,\shmap})$. Suppose that $(t_n)_{n\in J}$ is the sequence of intersection times of $v$ and that $(a_n)_{n\in J}$ is the reduced coding sequence of $v$. Let $\gamma$ be the geodesic on $\h$ determined by $v$. Suppose that $J=\Z\cap (a,b)$ with $a,b\in\Z\cup \{\pm \infty\}$.
\begin{enumerate}[{\rm (i)}]
\item If $b=\infty$, then $a_n\in \Sigma_\rd\mminus\{\eps\}$ for each $n\in J$.
\item If $b<\infty$, then $a_n\in \Sigma_\rd\mminus\{\eps\}$ for each $n\in (a,b-2]\cap\Z$ and $a_{b-1}=\eps$.
\item Suppose that $a_n= (\wt\ch_n,h_n)$ for $n\in (a,b-1)\cap\Z$ and set 
\begin{align*}
g_0 &\sceq h_0  && \text{if $b\geq 2$,}
\\
g_{n+1} & \sceq g_nh_{n+1} && \text{for $n\in [0,b-2)\cap\Z$,}
\\
g_{-1} & \sceq \id,
\\
g_{-(n+1)} & \sceq g_{-n}h_{-n}^{-1} && \text{for $n\in [1,-(a+1))\cap \Z$.}
\end{align*}
Then $\gamma'(t_{n+1}) \in g_n.\CS'_\rd(\wt\ch_n)$ for each $n\in (a,b-1)\cap\Z$.
\end{enumerate}
\item \label{redidentical}
Let $v,w\in\CS'_{\st,\rd}(\wt\fch_{\choices,\shmap})$. If the reduced coding sequences of $v$ and $w$ are equal, then $v=w$.
\item \label{redalli}
The left shift $\sigma\colon \Sigma_\rd^\all\to \Sigma_\rd^\all$ induces a partially defined map $\sigma\colon \Lambda_\rd\to\Lambda_\rd$ resp.\@ a map $\sigma\colon \Lambda_{\rd,\sigma}\to \Lambda_\rd$. Moreover, $\Lambda_{\st,\rd}\subseteq \Lambda_{\rd,\sigma}$ and $\sigma$ restricts to a map $\Lambda_{\st,\rd}\to \Lambda_{\st,\rd}$.
\item \label{redallii} The map $\Seq_\rd\vert_{\wh\CS_{\st,\rd}(\wt\fch_{\choices,\shmap})}\colon \wh\CS_{\st,\rd}(\wt\fch_{\choices,\shmap}) \to \Lambda_{\st,\rd}$ is bijective.
\item \label{redalliii} Let $\Cod_\rd\sceq \left( \Seq_\rd\vert_{\wh\CS_{\st,\rd}(\wt\fch_{\choices,\shmap})}\right)^{-1}$. Then the diagrams
\[
\xymatrix{
\wh\CS_\rd\big(\wt\fch_{\choices,\shmap}\big) \ar[r]^R \ar[d]_{\Seq_\rd} & \wh\CS_\rd\big(\wt\fch_{\choices,\shmap}\big)  \ar[d]^{\Seq_\rd} && \wh\CS_{\st,\rd}\big(\wt\fch_{\choices,\shmap}\big) \ar[r]^R & \wh\CS_{\st,\rd}\big(\wt\fch_{\choices,\shmap}\big) 
\\
\Lambda_\rd \ar[r]^\sigma & \Lambda_\rd && \Lambda_{\st,\rd} \ar[r]^\sigma \ar[u]^{\Cod_\rd} & \Lambda_{\st,\rd} \ar[u]_{\Cod_\rd}
}
\]
commute and $(\Lambda_{\st,\rd},\sigma)$ is a symbolic dynamics for the geodesic flow on $Y$.
\end{enumerate}
\end{prop}

We will now show that the reduced coding sequence of $\wh v\in \wh\CS_\rd(\wt\fch_{\choices,\shmap})$ can be completely constructed from the knowledge of the pair $\tau(\wh v)$.

\begin{defi}\label{def_Sigmach}
Let $\wt\ch\in\wt\fch_{\choices,\shmap}$. Define
\[
\Sigma_\rd\big(\wt\ch\big) \sceq \big\{ s\in\Sigma_\rd \ \big\vert\  \exists\, v\in\CS'_\rd\rueck\big(\wt\ch\big)\colon \text{$v$ is labeled with $s$} \big\}
\]
and for $s\in\Sigma_\rd(\wt\ch)$ set\label{def_Ds}
\[
 D_s\big(\wt\ch\big) \sceq I_\rd\big(\wt\ch\big) \cap g.I_\rd\big(\wt\ch'\big) \quad\text{if $s=\big(\wt\ch',g\big)$}
\]
and
\[
 D_\eps\big(\wt\ch\big) \sceq I_\rd\big(\wt\ch\big)\mminus \bigcup \big\{ D_s\big(\wt\ch\big) \ \big\vert\  s\in\Sigma_\rd\big(\wt\ch\big)\mminus\{\eps\}\big\}.
\]
\end{defi}

\begin{example}\label{redlabelsGamma05}
Recall the Example~\ref{intersectionGamma05}. Suppose first that the shift map is $\shmap_1$. 
Then we have
\begin{align*}
\Sigma_\rd\big(\wt\ch_1\big) & = \big\{ \eps, \big(\wt\ch_1,g_5\big), \big(\wt\ch_4,g_4\big), \big(\wt\ch_6,g_4\big), \big(\wt\ch_5,g_4\big), \big(\wt\ch_3,g_4\big)\big\},
\\
\Sigma_\rd\big(\wt\ch_2\big) & = \big\{ \eps, \big(\wt\ch_2,g_1\big), \big(\wt\ch_4,g_1\big), \big(\wt\ch_6,g_1\big), \big(\wt\ch_5,g_1\big), \big(\wt\ch_3,g_1\big) \big\},
\\
\Sigma_\rd\big(\wt\ch_3\big) & = \big\{ \eps, \big(\wt\ch_1,g_5\big), \big(\wt\ch_2,g_6\big), \big(\wt\ch_4,g_6\big), \big(\wt\ch_6,g_6\big), \big(\wt\ch_5,g_6\big), \big(\wt\ch_3,g_6\big) \big\},
\\
\Sigma_\rd\big(\wt\ch_4\big) & = \big\{ \eps, \big(\wt\ch_5,g_2\big), \big(\wt\ch_3,g_2\big) \big\},
\\
\Sigma_\rd\big(\wt\ch_5\big) & = \big\{ \eps, \big(\wt\ch_6,g_4\big), \big(\wt\ch_5,g_4\big), \big(\wt\ch_3,g_4\big)\big\},
\\
\Sigma_\rd\big(\wt\ch_6\big) & = \big\{ \eps, \big(\wt\ch_3,g_3\big) \big\}.
\end{align*}
Hence
\begin{align*}
D_{\big(\wt\ch_1,g_5\big)}\big(\wt\ch_1\big) & = \big(\tfrac45,1\big),
&
D_{\big(\wt\ch_4,g_4\big)}\big(\wt\ch_1\big) & = \big(-\infty, \tfrac35\big],
&
D_{\big(\wt\ch_6,g_4\big)}\big(\wt\ch_1\big) & =  \big(\tfrac35,\tfrac7{10}\big],
\\
D_{\big(\wt\ch_5,g_4\big)}\big(\wt\ch_1\big) & = \big(\tfrac7{10},\tfrac{11}{15}\big],
&
D_{\big(\wt\ch_3,g_4\big)}\big(\wt\ch_1\big) & = \big(\tfrac{11}{15},\tfrac45\big),
& 
D_{\eps}\big(\wt\ch_1\big) & =  \big\{\tfrac45\big\},
\intertext{and}
D_{\big(\wt\ch_2,g_1\big)}\big(\wt\ch_2\big) & = \big(0,\tfrac{1}{10}\big],
&
D_{\big(\wt\ch_4,g_1\big)}\big(\wt\ch_2\big) & = \big(\tfrac1{10},\tfrac{2}{15}\big],
&
D_{\big(\wt\ch_6,g_1\big)}\big(\wt\ch_2\big) & = \big(\tfrac2{15},\tfrac3{20}\big],
\\
D_{\big(\wt\ch_5,g_1\big)}\big(\wt\ch_2\big) & = \big(\tfrac3{20},\tfrac{4}{25}\big],
&
D_{\big(\wt\ch_3,g_1\big)}\big(\wt\ch_2\big) & = \big(\tfrac{4}{25},\tfrac15\big),
&
D_{\eps}\big(\wt\ch_2\big) & = \big\{\tfrac15\big\},
\intertext{and}
D_{\big(\wt\ch_1,g_5\big)}\big(\wt\ch_3\big) & = \big(\tfrac45,1\big),
&
D_{\big(\wt\ch_2,g_6\big)}\big(\wt\ch_3\big) & = \big(1,\tfrac65\big],
&
D_{\big(\wt\ch_4,g_6\big)}\big(\wt\ch_3\big) & = \big(\tfrac65,\tfrac75\big],
\\
D_{\big(\wt\ch_6,g_6\big)}\big(\wt\ch_3\big) & = \big(\tfrac75,\tfrac85\big],
&
D_{\big(\wt\ch_5,g_6\big)}\big(\wt\ch_3\big) & = \big(\tfrac85,\tfrac95\big],
&
D_{\big(\wt\ch_3,g_6\big)}\big(\wt\ch_3\big) & = \big(\tfrac95,\infty\big),
\\
D_{\eps}\big(\wt\ch_3\big) & = \{1\},
\intertext{and}
D_{\big(\wt\ch_5,g_2\big)}\big(\wt\ch_4\big) & = \big(\tfrac15,\tfrac3{10}\big],
&
D_{\big(\wt\ch_3,g_2\big)}\big(\wt\ch_4\big) & = \big(\tfrac{3}{10},\tfrac25\big),
&
D_{\eps}\big(\wt\ch_4\big) & = \big\{\tfrac25\big\},
\intertext{and}
D_{\big(\wt\ch_6,g_4\big)}\big(\wt\ch_5\big) & = \big(\tfrac35,\tfrac7{10}\big],
&
D_{\big(\wt\ch_5,g_4\big)}\big(\wt\ch_5\big) & = \big(\tfrac7{10},\tfrac{11}{15}\big],
&
D_{\big(\wt\ch_3,g_4\big)}\big(\wt\ch_5\big) & = \big(\tfrac{11}{15},\tfrac45\big),
\\
D_{\eps}\big(\wt\ch_5\big) & = \big\{\tfrac45\big\},
\intertext{and}
D_{\big(\wt\ch_3,g_3\big)}\big(\wt\ch_6\big) & = \big(\tfrac25,\tfrac35\big),
&
D_{\eps}\big(\wt\ch_6\big) & = \big\{\tfrac35\big\}.
\end{align*}
Suppose now that the shift map is $\shmap_2$. Then $\Sigma_\rd\big(\wt\ch_2\big), \Sigma_\rd\big(\wt\ch_4\big), \Sigma_\rd\big(\wt\ch_5\big)$ and $\Sigma_\rd\big(\wt\ch_6\big)$ are as for $\shmap_1$. The sets $D_{\ast}\big(\wt\ch_2\big), D_\ast\big(\wt\ch_4\big), D_\ast\big(\wt\ch_5\big)$ and $D_\ast\big(\wt\ch_6\big)$ remain unchanged as well. We have
\begin{align*}
\Sigma_\rd\big(\wt\ch_{-1}\big) & = \big\{\eps, \big(\wt\ch_{-1},g_7\big), \big(\wt\ch_4,g_7\big), \big(\wt\ch_6,g_7\big), \big(\wt\ch_5,g_7\big), \big(\wt\ch_3,g_7\big) \big\}
\intertext{and}
\Sigma_\rd\big(\wt\ch_3\big) & = \big\{ \eps, \big(\wt\ch_{-1},g_4\big), \big(\wt\ch_2,g_2\big), \big(\wt\ch_4,g_6\big), \big(\wt\ch_6,g_6\big), \big(\wt\ch_5,g_6\big), \big(\wt\ch_3,g_6\big)\big\}.
\end{align*}
Therefore
\begin{align*}
D_{\big(\wt\ch_{-1},g_7\big)}\big(\wt\ch_{-1}\big) & =\big(-\tfrac15,0\big),
&
D_{\big(\wt\ch_4,g_7\big)}\big(\wt\ch_{-1}\big) & = \big(-\infty,-\tfrac25\big),
\\
D_{\big(\wt\ch_6,g_7\big)}\big(\wt\ch_{-1}\big) & = \big(-\tfrac25,-\tfrac{3}{10}\big),
&
D_{\big(\wt\ch_5,g_7\big)}\big(\wt\ch_{-1}\big) & = \big(-\tfrac{3}{10},-\tfrac{4}{15}\big),
\\
D_{\big(\wt\ch_3,g_7\big)}\big(\wt\ch_{-1}\big) & = \big(-\tfrac{4}{15},-\tfrac15\big),
&
D_{\eps}\big(\wt\ch_{-1}\big) & = \big\{-\tfrac15\big\},
\intertext{and}
D_{\big(\wt\ch_{-1},g_4\big)}\big(\wt\ch_3\big) & = \big(\tfrac45,1\big),
&
D_{\big(\wt\ch_2,g_6\big)}\big(\wt\ch_3\big) & = \big(1,\tfrac65\big],
\\
D_{\big(\wt\ch_4,g_6\big)}\big(\wt\ch_3\big) & = \big(\tfrac65,\tfrac75\big],
&
D_{\big(\wt\ch_6,g_6\big)}\big(\wt\ch_3\big) & = \big(\tfrac75,\tfrac85\big],
\\
D_{\big(\wt\ch_5,g_6\big)}\big(\wt\ch_3\big) & = \big(\tfrac85,\tfrac95\big],
&
D_{\big(\wt\ch_3,g_6\big)}\big(\wt\ch_3\big) & = \big(\tfrac95,\infty\big),
\\
D_{\eps}\big(\wt\ch_3\big) & = \{1\}.
\end{align*}
\end{example}

The next corollary follows immediately from Proposition~\ref{remainsstrong}.

\begin{cor}\label{reddecomp}
Let $\wt\ch\in\wt\fch_{\choices,\shmap}$. Then $I_\rd(\wt\ch)$ decomposes into the disjoint union $\bigcup\{ D_s(\wt\ch) \mid s\in\Sigma_\rd(\wt\ch)\}$. Let $v\in \CS'_\rd(\wt\ch)$ and suppose that $\gamma$ is the geodesic on $\h$ determined by $v$. Then $v$ is labeled with $s$ if and only if $\gamma(\infty)$ belongs to $D_s(\wt\ch)$.
\end{cor}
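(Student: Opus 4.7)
The plan is to derive the corollary from Proposition~\ref{remainsstrong}\eqref{rsi} by establishing that the reduced label of $v\in\CS'_\rd(\wt\ch)$ is a function of $\gamma_v(\infty)$ alone, and then matching the level sets of this function with the $D_s(\wt\ch)$. First I would observe that the forward geometric coding sequence of $v$ depends only on $\gamma_v(\infty)$: by Proposition~\ref{char_bound}, the first side of $\pr(\wt\ch)$ crossed by $\gamma_v((0,\infty))$ is determined by $\gamma_v(\infty)$, and the same argument applied to each cell subsequently entered determines the whole forward sequence. Combined with the ``if and only if'' clause of Proposition~\ref{remainsstrong}\eqref{rsi} (which identifies the label as $(\wt\ch_l,g_l)$ at the minimal depth $l$ with $\gamma_v(\infty)\in g_l I_\rd(\wt\ch_l)$), this yields a well-defined map $L\colon I_\rd(\wt\ch)\to\Sigma_\rd(\wt\ch)$ whose level sets $E_s:=L^{-1}(s)$ partition $I_\rd(\wt\ch)$.

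Next I would establish the inclusion $E_s\subseteq D_s(\wt\ch)$. For $s=(\wt\ch',g)\neq\eps$, if $v$ is labeled $s$ the next intersection with $\CS_\rd(\wt\fch_{\choices,\shmap})$ lies on $g\CS'_\rd(\wt\ch')$, so Lemma~\ref{char_intervals} applied to $g^{-1}\gamma_v$ at that intersection time gives $g^{-1}\gamma_v(\infty)\in I_\rd(\wt\ch')$; combined with $\gamma_v(\infty)\in I_\rd(\wt\ch)$ this puts $\gamma_v(\infty)\in gI_\rd(\wt\ch')\cap I_\rd(\wt\ch)=D_s(\wt\ch)$. For $s=\eps$, the label forces $\beta=1$ in Proposition~\ref{remainsstrong}\eqref{rsi}, whence $\gamma_v(\infty)\in\bhg\pr(\wt\ch)$ by Proposition~\ref{CS2}\eqref{CS2i}; once disjointness of the non-$\eps$ $D$-sets is proved, this endpoint cannot lie in any of them and thus belongs to $D_\eps(\wt\ch)$ by definition.

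The main obstacle is the pairwise disjointness of $\{D_s(\wt\ch)\mid s\in\Sigma_\rd(\wt\ch)\mminus\{\eps\}\}$; once this is in hand, the chain $I_\rd(\wt\ch)=\bigsqcup_s E_s\subseteq\bigcup_s D_s(\wt\ch)\subseteq I_\rd(\wt\ch)$ together with $E_s\subseteq D_s(\wt\ch)$ forces equality $E_s=D_s(\wt\ch)$ and proves both assertions of the corollary simultaneously. I would establish disjointness by iterating Corollary~\ref{decomposition}: it displays $I(\wt\ch)$ as the disjoint union of $I(\wt\ch)\cap\bhg\pr(\wt\ch)$ and the forward pieces $g_SI(\wt\ch_S)$ indexed by sides $S\neq b(\wt\ch)$, and recursive application to each $\wt\ch_S$ produces a tree of strictly nested decompositions whose branches are indexed by the possible initial geometric coding sequences. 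Along any single branch $((\wt\ch_j,h_j))_{j\geq 0}$ the accumulated intervals $g_lI(\wt\ch_l)$ are strictly nested by construction, so the pieces $g_lI_\rd(\wt\ch_l)$ peeled off at different depths are pairwise disjoint; and pieces from different branches are disjoint by the first-level decomposition. Since every $s\in\Sigma_\rd(\wt\ch)\mminus\{\eps\}$ arises as such a terminal piece $(\wt\ch_l,g_l)$ along some branch, the required disjointness follows.
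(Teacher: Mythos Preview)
Your overall strategy is sound and matches the paper's, which simply records that the corollary follows immediately from Proposition~\ref{remainsstrong}. You correctly reduce everything to the pairwise disjointness of the sets $D_s(\wt\ch)$ for $s\in\Sigma_\rd(\wt\ch)\mminus\{\eps\}$, given your inclusions $E_s\subseteq D_s$ and the fact that the $E_s$ partition $I_\rd(\wt\ch)$.

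However, your disjointness argument has a gap. From the recursive application of Corollary~\ref{decomposition} you obtain a strictly nested chain $g_0I(\wt\ch_0)\supsetneq g_1I(\wt\ch_1)\supsetneq\cdots$ along each branch of the geometric tree, but strict nesting of the intervals $g_lI(\wt\ch_l)$ does \emph{not} by itself force the sub-intervals $g_lI_\rd(\wt\ch_l)$ to be pairwise disjoint: $I_\rd(\wt\ch_l)$ is just some sub-interval of $I(\wt\ch_l)$ as far as Corollary~\ref{decomposition} knows, so nothing prevents $g_{l+1}I_\rd(\wt\ch_{l+1})\subseteq g_{l+1}I(\wt\ch_{l+1})$ from meeting $g_lI_\rd(\wt\ch_l)$. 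The phrase ``peeled off at different depths'' presupposes exactly the disjointness you are trying to prove. What actually does the work is the structure made explicit in the proof of Proposition~\ref{havenext}: after the first geometric step to $g_0\CS'(\wt\ch_0)$, that proof shows $I(\wt\ch_0)$ is the \emph{disjoint} union $\bigsqcup_i I_\rd(\wt\ch'_i)$ over the shifted cells $\wt\ch'_i$ in the same $\mc I$-chain as $\wt\ch_0$, and the pieces $g_lI_\rd(\wt\ch_l)$ appearing in Proposition~\ref{remainsstrong} are exactly the $g_0$-translates of these. Combining this with the first-level disjointness of the $g_SI(\wt\ch_S)$ from Corollary~\ref{decomposition} yields the disjointness of all $D_s(\wt\ch)$, and then your argument closes.
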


Our next goal is to find a discrete dynamical system on the geodesic boundary of $\h$ which is conjugate to $(\wh \CS_\rd(\wt\fch_{\choices,\shmap}), R)$. To that end we set 
\[
 \wt \DS\sceq \bigcup_{\wt\ch\in\wt\fch_{\choices,\shmap}} I_\rd\big(\wt\ch\big)\times J\big(\wt\ch\big).
\]
For $\wt\ch\in\wt\fch_{\choices,\shmap}$ and $s\in\Sigma_\rd(\wt\ch)$ we set 
\[
 \wt D_s\big(\wt\ch\big) \sceq D_s\big(\wt\ch\big) \times J\big(\wt\ch\big).
\]
We define the partial map $\wt F\colon \wt \DS\to \wt \DS$ by 
\[
\wt  F\vert_{\wt D_s(\wt\ch)} (x,y) \sceq (g^{-1}.x,g^{-1}.y)
\]
if $s=(\wt\ch',g)\in \Sigma_\rd(\wt\ch)$ and $\wt\ch\in\wt\fch_{\choices,\shmap}$.

Recall the map 
\[
\tau\colon\wh\CS_\rd\big(\wt\fch_{\choices,\shmap}\big)  \to  \bhg \h\times \bhg \h,\quad                  
\wh v  \mapsto  \big(\gamma_v(\infty), \gamma_v(-\infty)\big)
\]
where $v\sceq \big(\pi\vert_{\CS'_\rd(\wt\fch_{\choices,\shmap})}\big)^{-1}(\wh v)$ and $\gamma_v$ is the geodesic on $\h$ determined by $v$.

\begin{prop}\label{Ftilde}
The set $\wt\DS$ is the disjoint union 
\[
 \wt\DS = \bigcup\left\{ \wt D_s\big(\wt\ch\big) \left\vert\  \wt\ch\in\wt\fch_{\choices,\shmap},\ s\in\Sigma_\rd\big(\wt\ch\big)\right.\right\}.
\]
If $(x,y)\in \wt\DS$, then there is a unique element $v\in\CS'_\rd(\wt\fch_{\choices,\shmap})$ such that 
\[
\big(\gamma_v(\infty), \gamma_v(-\infty)\big)=(x,y).
\]
If and only if $(x,y)\in\wt D_s(\wt\ch)$, the element $v$ is labeled with $s$.
Moreover, the partial map $\wt F$ is well-defined and the discrete dynamical system $(\wt \DS, \wt F)$ is conjugate to $(\wh \CS_\rd(\wt\fch_{\choices,\shmap}), R)$ via $\tau$.
\end{prop}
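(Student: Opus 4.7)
The plan is to reduce every assertion to material already proven, principally Lemma~\ref{uniqueinter}, Corollary~\ref{reddecomp}, Proposition~\ref{remainsstrong}, and Remark~\ref{redrepset}. I would organize the argument into four short steps, each occupying a couple of lines.

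First, for the disjoint decomposition of $\wt\DS$, I would argue in two stages. Across different cells, the sets $I_\rd(\wt\ch_a)\times J(\wt\ch_a)$ and $I_\rd(\wt\ch_b)\times J(\wt\ch_b)$ are already disjoint by the construction in Lemma~\ref{uniqueinter} (which establishes this pairwise disjointness and is where the $\rd$-refinement of the $I(\wt\ch)$'s was designed to force). Within a fixed $\wt\ch$, Corollary~\ref{reddecomp} states that $I_\rd(\wt\ch)$ is the disjoint union $\bigcup_{s\in\Sigma_\rd(\wt\ch)} D_s(\wt\ch)$; crossing with the common second factor $J(\wt\ch)$ preserves disjointness. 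Combining both yields the claimed disjoint union.

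Second, for the bijective correspondence between $\wt\DS$ and $\CS'_\rd(\wt\fch_{\choices,\shmap})$, Lemma~\ref{uniqueinter} already gives that each $(x,y)\in\wt\DS$ admits a unique preimage $v\in\CS'_\rd(\wt\fch_{\choices,\shmap})$ under the endpoint map, and conversely that this map lands in $\wt\DS$. The label identification then follows directly from Corollary~\ref{reddecomp}: if $(x,y)\in\wt D_s(\wt\ch)$ then $x\in D_s(\wt\ch)$, so the corresponding $v\in\CS'_\rd(\wt\ch)$ is labeled with $s$; conversely Corollary~\ref{reddecomp} forces $x=\gamma_v(\infty)\in D_s(\wt\ch)$, hence $(x,y)\in\wt D_s(\wt\ch)$.

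Third, I would check that $\wt F$ is well-defined. Fix $(x,y)\in\wt D_s(\wt\ch)$ with $s=(\wt\ch',g)$, and let $v\in\CS'_\rd(\wt\ch)$ be its unique preimage. By the definition of the labeling of $\wh\CS_\rd$ and by Proposition~\ref{remainsstrong}, the next point of intersection of $\gamma_v$ with $\CS_\rd(\wt\fch_{\choices,\shmap})$ lies in $g\CS'_\rd(\wt\ch')$, so $g^{-1}\gamma_v'(t_0)\in\CS'_\rd(\wt\ch')$, and the geodesic it determines has endpoints $(g^{-1}x,g^{-1}y)$. By the second step this pair lies in $I_\rd(\wt\ch')\times J(\wt\ch')\subseteq\wt\DS$. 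Thus $\wt F$ is well-defined, and partiality corresponds exactly to the case where no such next intersection exists, i.e.\ the $\eps$-label, for which $D_\eps(\wt\ch)\times J(\wt\ch)$ is not in the domain of $\wt F$ by design.

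Fourth, for the conjugacy, the equality $\tau\circ R=\wt F\circ\tau$ is essentially the previous step read as a commuting diagram: applying $R$ to $\wh v$ produces the intersection $\wh\gamma'(t_0)$ whose unique representative in $\CS'_\rd(\wt\fch_{\choices,\shmap})$ is $g^{-1}\gamma_v'(t_0)$, whose endpoint pair is $(g^{-1}x,g^{-1}y)=\wt F(x,y)$. Bijectivity of $\tau$ between $\wh\CS_\rd(\wt\fch_{\choices,\shmap})$ and $\wt\DS$ follows from Remark~\ref{redrepset} (which gives $\CS'_\rd(\wt\fch_{\choices,\shmap})$ as a disjoint set of representatives) together with Lemma~\ref{uniqueinter}. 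I expect no real obstacle: all the subtle points—the necessity of reducing $I(\wt\ch)$ to $I_\rd(\wt\ch)$ in order to make distinct cells produce disjoint endpoint boxes, and the characterization of labels through the decomposition $D_s(\wt\ch)$—have been isolated in the preceding lemmas. The only bookkeeping care needed is to treat the $\eps$-label consistently, making sure the domain of $\wt F$ matches the domain of $R$.
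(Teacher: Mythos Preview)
Your proposal is correct and follows essentially the same approach as the paper: the disjoint decomposition via Corollary~\ref{reddecomp} and the cross-cell disjointness built into the construction (noted in Lemma~\ref{uniqueinter}), the endpoint bijection from Lemma~\ref{uniqueinter}, the label characterization from Corollary~\ref{reddecomp}, and the conjugacy by unwinding the definition of $R$ and the labeling. Your third step adds an explicit verification that $\wt F$ lands in $\wt\DS$, which the paper leaves implicit in the conjugacy argument; otherwise the two proofs are the same.
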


\begin{proof}
The sets $I_\rd(\wt\ch)\times J(\wt\ch)$, $\wt\ch\in\wt\fch_{\choices,\shmap}$, are pairwise disjoint by construction. Corollary~\ref{reddecomp} states that for each $\wt\ch\in\wt\fch_{\choices,\shmap}$ the set $I_\rd(\wt\ch)$ is the disjoint union $\bigcup\{ D_s(\wt\ch)\mid s\in\Sigma_\rd(\wt\ch)\}$. Therefore, the sets $\wt D_s(\wt\ch)$, $\wt\ch\in\wt\fch_{\choices,\shmap}$, $s\in\Sigma_\rd(\wt\ch)$, are pairwise disjoint and
\[
 \wt\DS = \bigcup \left\{ \wt D_s\big(\wt\ch\big) \left\vert\  \wt\ch\in\wt\fch_{\choices,\shmap},\ s\in \Sigma_\rd\big(\wt\ch\big) \right.\right\}.
\]
This implies that $\wt F$ is well-defined. Let $(x,y)\in \wt\DS$. By Lemma~\ref{uniqueinter} there is a unique $\wt\ch\in\wt\fch_{\choices,\shmap}$ and a unique $v\in\CS'_\rd(\wt\ch)$ such that $\big( \gamma_v(\infty), \gamma_v(-\infty) \big) = (x,y)$. Corollary~\ref{reddecomp} shows that $v$ is labeled with $s\in\Sigma_\rd$ if and only if $\gamma_v(\infty) \in D_s(\wt\ch)$, hence if $(x,y)\in \wt D_s(\wt\ch)$. It remains to show that $(\wt\DS,\wt F)$ is conjugate to $(\wh\CS_\rd(\wt\fch_{\choices,\shmap}), R)$ by $\tau$. Lemma~\ref{uniqueinter} shows that $\tau$ is a bijection between $\wh\CS_\rd(\wt\fch_{\choices,\shmap})$ and $\wt\DS$. Let $\wh v\in \wh\CS_\rd$ and $v\sceq \big( \pi\vert_{\CS'_\rd(\wt\fch_{\choices,\shmap})} \big)^{-1}(\wh v)$. Suppose that $\wt\ch\in\wt\fch_{\choices,\shmap}$ is the (unique) shifted cell in $S\h$ such that $v\in \CS'_\rd(\wt\ch)$, and let $(s_j)_{j\in(\alpha,\beta)\cap\Z}$ be the reduced coding sequence of $v$. Recall that $s_0$ is the label of $v$ and $\wh v$. 
Corollary~\ref{reddecomp} shows that $\gamma_v(\infty) \in D_{s_0}(\wt\ch)$. The map $R$ is defined for $\wh v$ if and only if $s_0\not=\eps$. In precisely this case, $\wt F$ is defined for $\tau(\wh v)$. 

Suppose that $s_0\not=\eps$, say $s_0=(\wt\ch',g)$. Then the next intersection of $\gamma_v$ and $\CS_\rd(\wt\fch_{\choices,\shmap})$ is on $g.\CS'_\rd(\wt\ch')$, say it is $w$. Then $R(\wh v) = \pi(w) \seqc \wh w$ and $\big(\pi\vert_{\CS'_\rd(\wt\fch_{\choices,\shmap})} \big)^{-1}(\wh w) = g^{-1}.w$. Let $\eta$ be the geodesic on $\h$ determined by $g^{-1}.w$.  We have to show that $\wt F(\tau(\wh v)) = \big(\eta(\infty),\eta(-\infty)\big)$. To that end note that  $g.\eta(\R) = \gamma_v(\R)$ and  hence $\big( \eta(\infty), \eta(-\infty) \big) = \big( g^{-1}.\gamma_v(\infty), g^{-1}.\gamma_v(-\infty) \big)$. Since $\tau(\wh v)\in \wt D_{s_0}(\wt\ch)$, the definition of $\wt F$ shows that 
\[
\wt F(\tau(\wh v)) = \wt F\big( (\gamma_v(\infty),\gamma_v(-\infty)) \big) = \big( g^{-1}.\gamma_v(\infty), g^{-1}.\gamma_v(-\infty)\big) = \big(\eta(\infty),\eta(-\infty)\big).
\]
Thus, $(\wt\DS, \wt F)$ is conjugate to $(\wh\CS_\rd(\wt\fch_{\choices,\shmap}), R)$ by $\tau$.
\end{proof}

The following corollary proves that we can reconstruct the future part of the reduced coding sequence of $\wh v\in 
\wh \CS_\rd(\wt\fch_{\choices,\shmap})$ from $\tau(\wh v)$.

\begin{cor}
Let $\wh v\in  \wh \CS_\rd(\wt\fch_{\choices,\shmap})$ and suppose that $(s_j)_{j\in J}$ is the reduced coding sequence of $v$. Then 
\[
s_j = s \qquad\text{if and only if}\qquad \wt F^j\big(\tau(\wh v)\big)\in \wt D_s\big(\wt\ch\big)\ \text{for some $\wt\ch\in\wt\fch_{\choices,\shmap}$}
\]
for each $j\in J\cap \N_0$. For $j\in\N_0\mminus J$, the map $\wt F^j$ is not defined for $\tau(\wh v)$.
\end{cor}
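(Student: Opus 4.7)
The plan is to derive everything from the conjugacy of $(\wt\DS,\wt F)$ and $(\wh\CS_\rd(\wt\fch_{\choices,\shmap}),R)$ established in Proposition~\ref{Ftilde}, together with the description of the labeling provided by the same proposition. In other words, nothing new has to be built; one only needs to chain together an induction on $j$ with the characterization that for any $\wh w\in\wh\CS_\rd(\wt\fch_{\choices,\shmap})$ the label of $\wh w$ equals $s$ if and only if $\tau(\wh w)\in\wt D_s(\wt\ch)$ for some (necessarily unique) $\wt\ch\in\wt\fch_{\choices,\shmap}$.

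First I would fix $\wh v$ and observe that, by the definition of the reduced coding sequence, the symbol $s_j$ (for $j\in J\cap\N_0$) is precisely the label of $R^j(\wh v)$, and that $J\cap\N_0$ is exactly the set of $j\in\N_0$ for which $R^j(\wh v)$ is defined in $\wh\CS_\rd(\wt\fch_{\choices,\shmap})$. Proposition~\ref{Ftilde} asserts that $\tau$ conjugates $(\wh\CS_\rd(\wt\fch_{\choices,\shmap}),R)$ to $(\wt\DS,\wt F)$, so by a straightforward induction on $j$ one gets
\[
\wt F^j\big(\tau(\wh v)\big)=\tau\big(R^j(\wh v)\big)
\]
whenever either side is defined, and the domains of definition correspond: $\wt F^j$ is defined at $\tau(\wh v)$ if and only if $R^j$ is defined at $\wh v$, i.e.\ if and only if $j\in J\cap\N_0$. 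This immediately handles the last sentence of the statement, namely that $\wt F^j$ is not defined at $\tau(\wh v)$ for $j\in\N_0\mminus J$.

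For the main equivalence I would then apply the label characterization of Proposition~\ref{Ftilde} to the point $\wh w\sceq R^j(\wh v)$: the label of $\wh w$ is $s$ exactly when $\tau(\wh w)\in\wt D_s(\wt\ch)$ for some $\wt\ch\in\wt\fch_{\choices,\shmap}$. Substituting $\tau(\wh w)=\wt F^j(\tau(\wh v))$ via the conjugacy gives the desired equivalence
\[
s_j=s\quad\Longleftrightarrow\quad \wt F^j\big(\tau(\wh v)\big)\in\wt D_s\big(\wt\ch\big)\ \text{for some $\wt\ch\in\wt\fch_{\choices,\shmap}$}.
\]

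Since all the substantive content (disjointness of the decomposition $\wt\DS=\bigsqcup\wt D_s(\wt\ch)$, well-definedness of $\wt F$, conjugacy via $\tau$, and the labeling rule) has already been packaged into Proposition~\ref{Ftilde}, the proof is essentially a bookkeeping exercise and no step presents any real obstacle; the only thing one must be careful about is the matching of domains of definition on the two sides, which is why I would state the induction on $j$ and the correspondence of domains before invoking the labeling characterization.
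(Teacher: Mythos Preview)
Your proposal is correct and is exactly the intended argument: the paper states this corollary without proof, treating it as an immediate consequence of Proposition~\ref{Ftilde}, and your write-up is precisely the natural unpacking of that implication via the conjugacy $\tau\circ R=\wt F\circ\tau$ and the labeling characterization. The only minor remark is that the identity $s_j=\text{label of }R^j(\wh v)$ is really just the definition of the reduced coding sequence read through the first return map, so even the induction you mention is essentially formal.
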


The next proposition shows that we can also reconstruct the past part of the reduced coding sequence of $\wh v\in \wh \CS_\rd(\wt\fch_{\choices,\shmap})$ from $\tau(\wh v)$. Its proof is constructive.

\begin{prop}\label{back}
\begin{enumerate}[{\rm (i)}]
\item \label{backi} The elements of 
\[
\left\{ g^{-1}.\wt D_{(\wt\ch,g)}\big(\wt\ch'\big) \left\vert\ \wt\ch'\in\wt\fch_{\choices,\shmap},\ \big(\wt\ch,g\big)\in\Sigma_\rd\big(\wt\ch'\big)\right.\right\}
\]
are pairwise disjoint.
\item \label{backii} Let $\wh v\in \wh\CS_\rd(\wt\fch_{\choices,\shmap})$ and suppose that $(a_j)_{j\in J}$ is the reduced coding sequence of $\wh v$. Then $-1\in J$ if and only if 
\[
\tau(\wh v) \in \bigcup \left\{ g^{-1}.\wt D_{(\wt\ch,g)}\big(\wt\ch'\big) \left\vert\ \wt\ch'\in\wt\fch_{\choices,\shmap},\ \big(\wt\ch,g\big)\in\Sigma_\rd\big(\wt\ch'\big)\right.\right\}.
\]
In this case, 
\begin{align*}
a_{-1} = \big(\wt\ch,g\big)\qquad\text{if and only if}\qquad \tau(\wh v) \in g^{-1}.\wt D_{(\wt\ch,g)}\big(\wt\ch'\big)
\end{align*}
for some $\wt\ch'\in\wt\fch_{\choices,\shmap}$ and $(\wt\ch,g)\in\Sigma_\rd(\wt\ch')$.
\end{enumerate}
\end{prop}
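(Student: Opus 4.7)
The plan is to exploit the bijective content of Proposition~\ref{Ftilde} together with the disjoint-union description
\[
\CS_\rd\big(\wt\fch_{\choices,\shmap}\big) = \bigcup_{g\in\Gamma} g\,\CS'_\rd\rueck\big(\wt\fch_{\choices,\shmap}\big)
\]
from Remark~\ref{redrepset}, and to track the correspondence between endpoint data and the previous $\CS_\rd(\wt\fch_{\choices,\shmap})$-intersection.

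For part~\eqref{backi}, I would fix a point $(x,y)$ lying in two such sets $g_j^{-1}\wt D_{(\wt\ch_j,g_j)}(\wt\ch_j')$ for $j\in\{1,2\}$ and show that the triples $(\wt\ch_j, g_j, \wt\ch_j')$ agree. Applying Proposition~\ref{Ftilde} to $(g_jx,g_jy)\in\wt D_{(\wt\ch_j,g_j)}(\wt\ch_j')$ yields a unique $v_j\in\CS'_\rd(\wt\ch_j')$ whose geodesic has these endpoints and whose label is $(\wt\ch_j,g_j)$. By the definition of labels, the next $\CS_\rd$-intersection of $\gamma_{v_j}$ lies in $g_j\CS'_\rd(\wt\ch_j)$, so applying $g_j^{-1}$ to it produces an element of $\CS'_\rd(\wt\ch_j)$ whose geodesic has endpoints $(x,y)$. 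Lemma~\ref{uniqueinter} then forces $\wt\ch_1=\wt\ch_2=:\wt\ch$ with this element independent of $j$, say $v\in\CS'_\rd(\wt\ch)$. Parametrising the geodesic with endpoints $(x,y)$ so that $v=\gamma'(0)$, both $g_1^{-1}v_1$ and $g_2^{-1}v_2$ are tangent vectors on $\gamma$ whose first forward $\CS_\rd$-return is $v$, so they coincide with the common value $u=\gamma'(s)$, where $s<0$ is maximal with $\gamma'(s)\in\CS_\rd$. Then $v_1=g_1u$ and $v_2=g_2u$ both belong to $\CS'_\rd(\wt\fch_{\choices,\shmap})$, and Remark~\ref{redrepset} forces $g_1=g_2$, hence $v_1=v_2$ and $\wt\ch_1'=\wt\ch_2'$.

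For part~\eqref{backii} both directions are essentially the same construction read in opposite directions. For the forward direction, assuming $-1\in J$, I would let $v=(\pi\vert_{\CS'_\rd(\wt\fch_{\choices,\shmap})})^{-1}(\wh v)\in\CS'_\rd(\wt\ch_0)$ and write the previous $\CS_\rd$-intersection $\gamma_v'(t_{-1})$ as $gu$ with $u\in\CS'_\rd(\wt\ch')$ the canonical representative provided by Remark~\ref{redrepset}; then $\gamma_u=g^{-1}\gamma_v$ up to time shift, so the next $\CS_\rd$-intersection of $\gamma_u$ is $g^{-1}v$. The label $a_{-1}=(\wt\ch^*,h^*)$ of $u$ places this next intersection in $h^*\CS'_\rd(\wt\ch^*)$, and Remark~\ref{redrepset} forces $h^*=g^{-1}$ and $\wt\ch^*=\wt\ch_0$. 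Since $\gamma_u$ has endpoints $(g^{-1}x,g^{-1}y)$, Corollary~\ref{reddecomp} together with Lemma~\ref{char_intervals} places this pair in $\wt D_{a_{-1}}(\wt\ch')$, which rewrites as $\tau(\wh v)\in (h^*)^{-1}\wt D_{a_{-1}}(\wt\ch')$, matching the claimed formula. For the backward direction, assuming $\tau(\wh v)\in g^{-1}\wt D_{(\wt\ch,g)}(\wt\ch')$, Proposition~\ref{Ftilde} delivers $v'\in\CS'_\rd(\wt\ch')$ with endpoints $(gx,gy)$ and label $(\wt\ch,g)$; the same uniqueness argument as in~\eqref{backi} identifies $g^{-1}v'$ as the previous $\CS_\rd$-intersection of $\gamma_v$, whence $-1\in J$ and $a_{-1}=(\wt\ch,g)$.

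The only real obstacle is bookkeeping: keeping careful track of which $\Gamma$-translate of $\CS'_\rd(\wt\fch_{\choices,\shmap})$ a given representative lies in and invoking Remark~\ref{redrepset} each time to collapse candidate group elements. No genuinely new geometric input is needed beyond Proposition~\ref{Ftilde}, Corollary~\ref{reddecomp}, Lemma~\ref{uniqueinter} and Lemma~\ref{char_intervals}; the whole argument is a careful transport of the forward-coding bijection of Proposition~\ref{Ftilde} to the backward side.
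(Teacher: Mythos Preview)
Your proposal is correct and uses essentially the same ingredients as the paper (Proposition~\ref{Ftilde}, Lemma~\ref{uniqueinter}, and the set-of-representatives property from Remark~\ref{redrepset}); the only organizational difference is that you prove~\eqref{backi} directly, whereas the paper proves~\eqref{backii} first and observes that~\eqref{backi} follows immediately, since the uniqueness of $a_{-1}$ pins down $(\wt\ch,g)$ and then the disjointness of the products $I_\rd(\wt\ch')\times J(\wt\ch')$ pins down $\wt\ch'$.
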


\begin{proof}
We will prove \eqref{backii}, which directly implies \eqref{backi}.  To that end set 
\[
v\sceq \left(\pi\vert_{\CS'_\rd(\wt\fch_{\choices,\shmap})}\right)^{-1}(\wh v)
\]
and suppose that $(t_j)_{j\in J}$ is the sequence of intersection times of $v$ with respect to $\CS_\rd(\wt\fch_{\choices,\shmap})$. 

Suppose first that $v\in \CS'_\rd(\wt\ch)$ and that $-1\in J$. Then there exists a (unique) pair $(\wt\ch',g)\in\wt\fch_{\choices,\shmap}\times\Gamma$ such that $\gamma'_v(t_{-1})\in g^{-1}.\CS'_\rd(\wt\ch')$. Since the unit tangent vector $\gamma'_v(t_0)=v$ is contained in $\CS'_\rd(\wt\ch)$, the element $\gamma'_v(t_{-1})$ is labeled with $(\wt\ch,g)$. Hence $(\wt\ch,g)\in\Sigma_\rd(\wt\ch')$. Then 
\begin{align*}
\tau(\wh v) = \big(\gamma_v(\infty),\gamma_v(-\infty)\big) & \in \left(g^{-1}.I_\rd\big(\wt\ch'\big) \times g^{-1}.J\big(\wt\ch'\big) \right) \cap \left( I_\rd\big(\wt\ch\big)\times J\big(\wt\ch\big) \right)
\\
& = \left(  g^{-1}. I_\rd\big(\wt\ch'\big) \cap I_\rd\big(\wt\ch\big) \right) \times \left( g^{-1}.J\big(\wt\ch'\big) \cap J\big(\wt\ch\big) \right)
\\
& = g^{-1}.\left( \left(I_\rd\big(\wt\ch'\big)\cap g I_\rd\big(\wt\ch\big) \right) \times\left( J\big(\wt\ch\big)\cap g.J\big(\wt\ch\big) \right)
 \right)
\\
& \subseteq g^{-1}.\wt D_{(\wt\ch,g)}\big(\wt\ch'\big).
\end{align*}
Conversely suppose that $\tau(\wh v)\in g^{-1}.\wt D_{(\wt\ch,g)}(\wt\ch')$ for some $\wt\ch'\in\wt\fch_{\choices,\shmap}$ and some element $(\wt\ch,g)\in \Sigma_\rd(\wt\ch')$. Consider the geodesic $\eta\sceq g.\gamma_v$. Then 
\[
\big( \eta(\infty),\eta(-\infty) \big) = g.\tau(\wh v)\in \wt D_{(\wt\ch,g)}\big(\wt\ch'\big).
\]
By Proposition~\ref{Ftilde}, there is a unique $u\in\CS'_\rd(\wt\fch_{\choices,\shmap})$ such that 
\[
 \big( \gamma_u(\infty), \gamma_u(-\infty)\big) = \big(\eta(\infty),\eta(-\infty)\big).
\]
Moreover, $u$ is labeled with $(\wt\ch,g)$. Let $(s_k)_{k\in K}$ be the sequence of intersection times of $u$ \wrt $\CS_\rd(\wt\fch_{\choices,\shmap})$. Then $1\in K$ and, by Proposition~\ref{Ftilde},
\begin{align*}
\tau\big(\pi(\gamma'_u(s_1))\big) & = \tau\big(R(\pi(u))\big) = \wt F\big(\tau(\pi(u))\big) = \wt F\big(\gamma_u(\infty), \gamma_u(-\infty)\big)
\\
& = \big(g^{-1}.\gamma_u(\infty), g^{-1}.\gamma_u(-\infty) \big) = \big( \gamma_v(\infty),\gamma_v(-\infty) \big) = \tau(\wh v).
\end{align*}
This shows that $\gamma'_u(s_1) = g.v = g.\gamma'_v(t_0)$. Then 
\[
g^{-1}.\gamma'_u(s_0) \in \gamma'_v\big((-\infty,0)\big)\cap \CS_\rd\big(\wt\fch_{\choices,\shmap}\big).
\]
Hence, there was a previous point of intersection of $\gamma_v$ and $\CS_\rd(\wt\fch_{\choices,\shmap})$ and this is $g^{-1}.\gamma'_u(s_0)$. Recall that $g^{-1}.\gamma'_u(s_0)$ is labeled with $(\wt\ch,g)$. This completes the proof.
\end{proof}

Let $\wt F_\bk\colon \wt\DS\to\wt\DS$ be the partial map defined by 
\[
\wt F_\bk\vert_{g^{-1}\wt D_{(\wt\ch,g)}(\wt\ch')}(x,y) \sceq (g.x,g.y)
\]
for $\wt\ch'\in\wt\fch_{\choices,\shmap}$ and $(\wt\ch,g)\in\Sigma_\rd(\wt\ch')$.

\begin{cor}
\begin{enumerate}[{\rm (i)}]
\item The partial map $\wt F_\bk$ is well-defined.
\item Let $\wh v\in\wh\CS_\rd(\wt\fch_{\choices,\shmap})$ and suppose that $(s_j)_{j\in J}$ is the reduced coding sequence of $\wh v$. For each $j\in J\cap (-\infty, -1]$ and each $(\wt\ch,g)\in\Sigma_\rd$ we have
\[
s_j  = (\wt\ch,g)\qquad \text{if and only if}\qquad \wt F_\bk^j\big(\tau(\wh v)\big) \in g^{-1}.\wt D_{ (\wt\ch,g) }\big(\wt\ch'\big)
\]
for some $\wt\ch'\in\wt\fch_{\choices,\shmap}$.
For $j\in \Z_{<0}\mminus J$, the map $\wt F_\bk^j$ is not defined for $\tau(\wh v)$.
\end{enumerate}
\end{cor}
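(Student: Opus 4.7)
The plan is to dispatch part (i) directly from Proposition~\ref{back}\eqref{backi}, and to prove part (ii) by induction on $|j|$, leveraging Proposition~\ref{back}\eqref{backii} as both the base case and the inductive engine, via a shift-by-one observation on reduced coding sequences.

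Part (i) is immediate: the partial map $\wt F_\bk$ is defined piecewise on the family of sets $g^{-1}\wt D_{(\wt\ch,g)}(\wt\ch')$, which Proposition~\ref{back}\eqref{backi} asserts is pairwise disjoint. Hence there is no ambiguity in the definition of $\wt F_\bk$ on its natural domain $\bigcup_{\wt\ch',(\wt\ch,g)} g^{-1}\wt D_{(\wt\ch,g)}(\wt\ch')$.

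For part (ii), set $v\sceq(\pi\vert_{\CS'_\rd(\wt\fch_{\choices,\shmap})})^{-1}(\wh v)$ and let $(t_n)_{n\in J}$ be its sequence of intersection times \wrt $\CS_\rd(\wt\fch_{\choices,\shmap})$. The first step is to establish the shift identity: if $-1\in J$ and $s_{-1}=(\wt\ch,g)$, then
\[
\wt F_\bk\big(\tau(\wh v)\big) \;=\; \tau\big(\pi(\gamma'_v(t_{-1}))\big).
\]
The left-hand side is computable from Proposition~\ref{back}\eqref{backii}, which places $\tau(\wh v)$ inside $g^{-1}\wt D_{(\wt\ch,g)}(\wt\ch')$, so $\wt F_\bk\big(\tau(\wh v)\big) = (g\gamma_v(\infty), g\gamma_v(-\infty))$; the right-hand side unpacks by noting that $g\gamma'_v(t_{-1})\in\CS'_\rd(\wt\ch)$ is the representative of $\pi(\gamma'_v(t_{-1}))$ in $\CS'_\rd(\wt\fch_{\choices,\shmap})$, and that $\tau$ only sees the endpoints of the determining geodesic, which are exactly $(g\gamma_v(\infty), g\gamma_v(-\infty))$. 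Combined with the auxiliary observation that the reduced coding sequence of $\pi(\gamma'_v(t_{-1}))$ is the index-shift by $+1$ of that of $\wh v$ (because the sequence of intersection times transforms as $t_n\mapsto t_n-t_{-1}$ and the index set reindexes accordingly), a straightforward induction on $|j|$ propagates the identity $\wt F_\bk^{|j|}\big(\tau(\wh v)\big) = \tau\big(\pi(\gamma'_v(t_j))\big)$ back through $J\cap(-\infty,-1]$, and at each step Proposition~\ref{back}\eqref{backii} reads off the label $s_j$ from the containment in $g^{-1}\wt D_{(\wt\ch,g)}(\wt\ch')$.

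For the \emph{not defined} clause, suppose $j\in\Z_{<0}\mminus J$ and set $j_0\sceq\min J$, so $j<j_0\leq 0$. Iterating the shift identity $|j_0|$ times yields $\wt F_\bk^{|j_0|}\big(\tau(\wh v)\big) = \tau\big(\pi(\gamma'_v(t_{j_0}))\big)$, and the reduced coding sequence of $\pi(\gamma'_v(t_{j_0}))$ has no entry at index $-1$. By Proposition~\ref{back}\eqref{backii}, this forces $\tau\big(\pi(\gamma'_v(t_{j_0}))\big)$ to lie outside the domain of $\wt F_\bk$, blocking any further iteration and hence forbidding $\wt F_\bk^j\big(\tau(\wh v)\big)$ to be defined. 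The main obstacle will be the shift-by-one lemma itself: conceptually clear from the definitions, but formulating it cleanly so the induction propagates without off-by-one errors requires careful bookkeeping of the fact that the new representative in $\CS'_\rd(\wt\fch_{\choices,\shmap})$ of $\pi(\gamma'_v(t_{-1}))$ is $g\gamma'_v(t_{-1})$ rather than $\gamma'_v(t_{-1})$ itself, and that this $g$ is exactly the group element prescribing the action of $\wt F_\bk$ on the relevant piece.
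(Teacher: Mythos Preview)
Your approach is correct and is exactly what the paper intends: the corollary is stated without proof, as an immediate consequence of Proposition~\ref{back}, and your derivation---part~(i) from the disjointness in Proposition~\ref{back}\eqref{backi}, part~(ii) by iterating the shift identity $\wt F_\bk(\tau(\wh v)) = \tau(\pi(\gamma'_v(t_{-1})))$ combined with Proposition~\ref{back}\eqref{backii}---is the natural unpacking of that implication. Your caveat about off-by-one bookkeeping is well placed: note that Proposition~\ref{back}\eqref{backii} applied at $\tau(\pi(\gamma'_v(t_j)))$ reads off $s_{j-1}$ rather than $s_j$ (since the $(-1)$-entry of the shifted sequence is the $(j-1)$-entry of the original), so the iterate count matching $s_j$ is $|j|-1 = -j-1$ rather than $|j|$; keep this straight when writing out the induction.
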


We end this section with the statement of the discrete dynamical system which is conjugate to the strong reduced cross section $\wh\CS_{\st,\rd}(\wt\fch_{\choices,\shmap})$.

The set of labels of $\wh\CS_{\st,\rd}(\wt\fch_{\choices,\shmap})$ is given by 
\[
 \Sigma_{\st,\rd} \sceq \Sigma_\rd\mminus\{\eps\}.
\]
For each $\wt\ch\in\wt\fch_{\choices,\shmap}$ set
\[
 \Sigma_{\st,\rd}\big(\wt\ch\big) \sceq \Sigma_\rd\big(\wt\ch\big)\mminus\{\eps\}.
\]
Recall the set $\bd$ from Section~\ref{sec_base}. For $s\in \Sigma_{\st,\rd}(\wt\ch)$ set 
\begin{align*}
D_{\st, s}\big(\wt\ch\big) &\sceq D_s\big(\wt\ch\big)\mminus\bd
\intertext{and}
\wt D_{\st,s}\big(\wt\ch\big) & \sceq D_{\st,s}\big(\wt\ch\big) \times \big(J\big(\wt\ch\big)\mminus\bd\big).
\end{align*}
Further let
\[
\wt\DS_\st\sceq \bigcup_{\wt\ch\in\wt\fch_{\choices,\shmap}}\big( I_\rd\big(\wt\ch\big)\mminus\bd\big) \times \big(J\big(\wt\ch\big)\mminus\bd\big)
\]
and define the map $\wt F_\st\colon \wt\DS_\st\to \wt\DS_\st$ by
\[
\wt F_\st\vert_{\wt D_{\st,s}(\wt\ch)}(x,y) \sceq (g^{-1}.x,g^{-1}.y)
\]
if $s=(\wt\ch',g)\in \Sigma_{\st,\rd}(\wt\ch)$ and $\wt\ch\in\wt\fch_{\choices,\shmap}$. The map $\wt F_\st$ is the ``restriction'' of $\wt F$ to the strong reduced cross section $\wh\CS_{\st,\rd}(\wt\fch_{\choices,\shmap})$. In particular, the following proposition is the ``reduced'' analogon of Proposition~\ref{Ftilde}.

\begin{prop}
\begin{enumerate}[{\rm (i)}]
\item The set $\wt\DS_\st$ is the disjoint union
\[
\wt\DS_\st = \bigcup\left\{ \wt\D_{\st,s}\big(\wt\ch\big)   \left\vert\   \wt\ch\in\wt\fch_{\choices,\shmap},\ s\in \Sigma_{\st,\rd}\big(\wt\ch\big)    \vphantom{\wt\D_{\st,s}\big(\wt\ch\big) }       \right.\right\}.
\]
If $(x,y)\in\wt\DS_\st$, then there is a unique element $v\in \CS'_{\st,\rd}(\wt\fch_{\choices,\shmap})$ such that $\big(\gamma_v(\infty),\gamma_v(-\infty)\big) = (x,y)$. If and only if $(x,y)\in\wt D_{\st,s}(\wt\ch)$, the element $v$ is labeled with $s$.
\item The map $\wt F_\st$ is well-defined and the discrete dynamical system $\big(\wt\DS_\st, \wt F_\st\big)$ is conjugate to $\big(\wh\CS_{\st,\rd}(\wt\fch_{\choices,\shmap}),R\big)$ via $\tau$.
\end{enumerate}
\end{prop}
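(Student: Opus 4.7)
The plan is to deduce this proposition from Proposition~\ref{Ftilde} by passing from the weak reduced cross section to its strong part. The key observation is that the strong condition on a representative $v\in\CS'_\rd(\wt\fch_{\choices,\shmap})$ corresponds, under $\tau$, precisely to removing $\bd$ from each endpoint coordinate.

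For (i), I would first note that $\wt D_{\st,s}(\wt\ch)\subseteq\wt D_s(\wt\ch)$, so the disjointness of the new family is inherited from Proposition~\ref{Ftilde}. The combinatorial core is then the identity $I_\rd(\wt\ch)\mminus\bd = \bigcup_{s\in\Sigma_{\st,\rd}(\wt\ch)}D_{\st,s}(\wt\ch)$, which follows from the decomposition of Corollary~\ref{reddecomp} together with the containment $D_\eps(\wt\ch)\subseteq\bd$. The latter I would establish by taking $v\in\CS'_\rd(\wt\ch)$ labeled with $\eps$, noting that by Propositions~\ref{remainsstrong} and \ref{havenext} the absence of a next intersection with $\CS_\rd(\wt\fch_{\choices,\shmap})$ forces the absence of a next intersection with $\CS$ at all, which by Proposition~\ref{CS2}\eqref{CS2i} means $\gamma_v(\infty)\in\bhg\pr(\wt\ch)\subseteq\bd$. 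Crossing with $J(\wt\ch)\mminus\bd$ then yields the required union decomposition of $\wt\DS_\st$. For the uniqueness and labeling statement, I would feed $(x,y)\in\wt\DS_\st\subseteq\wt\DS$ into Proposition~\ref{Ftilde} to obtain a unique $v\in\CS'_\rd(\wt\fch_{\choices,\shmap})$ realizing $(x,y)$ as endpoints; since $x,y\notin\bd$ and $\bd$ is $\Gamma$-invariant, the geodesic $\pi(\gamma_v)$ lies outside $\NIC$ (cf.\ Remark~\ref{outlook}), so $v\in\CS_\st$ and hence $v\in\CS'_{\st,\rd}(\wt\fch_{\choices,\shmap})$. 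The label characterization transfers directly from Proposition~\ref{Ftilde}.

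For (ii), the well-definedness of $\wt F_\st$ is immediate from (i). Corollary~\ref{redcross} identifies $\wh\CS_{\st,\rd}(\wt\fch_{\choices,\shmap})$ as the maximal strong cross section inside $\wh\CS_\rd(\wt\fch_{\choices,\shmap})$, which in particular guarantees that $R$ is defined on all of $\wh\CS_{\st,\rd}(\wt\fch_{\choices,\shmap})$ and preserves it. Hence $\tau$ restricts to the claimed bijection $\wh\CS_{\st,\rd}(\wt\fch_{\choices,\shmap})\to\wt\DS_\st$, and since $\wt F_\st = \wt F\vert_{\wt\DS_\st}$ by construction, the conjugacy $\tau\circ R = \wt F_\st\circ\tau$ is inherited from the analogous identity in Proposition~\ref{Ftilde}.

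I expect no major obstacle: the proof is bookkeeping that piggybacks on Proposition~\ref{Ftilde}. The only non-cosmetic step is the inclusion $D_\eps(\wt\ch)\subseteq\bd$, which chains Propositions~\ref{remainsstrong}, \ref{havenext} and \ref{CS2}\eqref{CS2i}.
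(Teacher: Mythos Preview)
Your proposal is correct and matches the paper's intent: the paper states this proposition without proof, merely remarking that it is the ``reduced analogon'' of Proposition~\ref{Ftilde}, so your derivation from Proposition~\ref{Ftilde} by removing $\bd$ from both coordinates is exactly the implicit argument. The one substantive step you single out, namely $D_\eps(\wt\ch)\subseteq\bd$, is indeed the only thing that needs checking and your chain via Proposition~\ref{remainsstrong}\eqref{rsi} (contrapositive: no next $\CS_\rd$-intersection forces $\beta=1$ in the geometric coding) and Proposition~\ref{CS2}\eqref{CS2i} is correct; the reference to Proposition~\ref{havenext} is not strictly needed there but does no harm.
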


\subsection{Generating function for the future part}\label{sec_gen}

Suppose that the sets $I_\rd(\wt\ch)$, $\wt\ch\in\wt\fch_{\choices,\shmap}$, are pairwise disjoint. Set 
\[
 \DS\sceq \bigcup_{\wt\ch\in\wt\fch_{\choices,\shmap}} I_\rd\big(\wt\ch\big)
\]
and consider the partial map $F\colon \DS\to\DS$ given by
\[
 F\vert_{D_s(\wt\ch)} x \sceq g^{-1}.x
\]
if $s=(\wt\ch',g)\in \Sigma_\rd(\wt\ch)$ and $\wt\ch\in\wt\fch_{\choices,\shmap}$.

\begin{prop}\label{gen}
\begin{enumerate}[{\rm (i)}]
\item\label{geni} The set $\DS$ is the disjoint union 
\[
\DS = \bigcup\left\{ D_s\big(\wt\ch\big) \left\vert\ \wt\ch\in\wt\fch_{\choices,\shmap},\ s\in\Sigma_\rd\big(\wt\ch\big) \vphantom{ D_s\big(\wt\ch\big)}   \right.\right\}.
\]
If $x\in\DS$, then there is (a non-unique) $v\in\CS'_\rd(\wt\fch_{\choices,\shmap})$ such that $\gamma_v(\infty)=x$. Suppose that $v\in \CS'_\rd(\wt\fch_{\choices,\shmap})$ with $\gamma_v(\infty)=x$ and let $(a_n)_{n\in J}$ be the reduced coding sequence of $v$. Then $a_0=s$ if and only if $x\in D_s(\wt\ch)$ for some $\wt\ch\in\wt\fch_{\choices,\shmap}$.
\item The partial map $F$ is well-defined.
\end{enumerate}
\end{prop}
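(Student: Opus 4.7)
The plan is to assemble the proposition out of already-established decomposition results, with the disjointness hypothesis on the $I_\rd(\wt\ch)$ doing the bookkeeping that was missing in the general reduced setting.

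For part~\eqref{geni}, I would first address the decomposition of $\DS$. By hypothesis the sets $I_\rd(\wt\ch)$, $\wt\ch\in\wt\fch_{\choices,\shmap}$, are pairwise disjoint, so
\[
\DS = \bigcup_{\wt\ch\in\wt\fch_{\choices,\shmap}} I_\rd\big(\wt\ch\big)
\]
is a disjoint union. Corollary~\ref{reddecomp} then partitions each $I_\rd(\wt\ch)$ further as $\bigcup\{D_s(\wt\ch)\mid s\in\Sigma_\rd(\wt\ch)\}$, and stitching these two partitions together yields the claimed disjoint decomposition of $\DS$ indexed by pairs $(\wt\ch,s)$.

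Next I would produce the vector $v$. Given $x\in\DS$, choose the (unique) $\wt\ch\in\wt\fch_{\choices,\shmap}$ with $x\in I_\rd(\wt\ch)$, and pick any $y\in J(\wt\ch)$. Then $(x,y)\in I_\rd(\wt\ch)\times J(\wt\ch)$, and Lemma~\ref{uniqueinter} supplies a unique $v\in\CS'_\rd\rueck\big(\wt\fch_{\choices,\shmap}\big)$ with $\big(\gamma_v(\infty),\gamma_v(-\infty)\big)=(x,y)$; the non-uniqueness in the statement reflects the freedom in the choice of $y$. For the labeling equivalence, let $v\in\CS'_\rd\rueck\big(\wt\fch_{\choices,\shmap}\big)$ be any element with $\gamma_v(\infty)=x$, and let $\wt\ch'$ be the (unique) shifted cell with $v\in\CS'_\rd(\wt\ch')$. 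By Lemma~\ref{char_intervals} we have $x=\gamma_v(\infty)\in I_\rd(\wt\ch')$; since the $I_\rd(\wt\ch)$ are pairwise disjoint, $\wt\ch'$ is the same as the $\wt\ch$ singled out by $x$. Corollary~\ref{reddecomp} then gives $a_0=s$ iff $\gamma_v(\infty)=x\in D_s(\wt\ch)$, as required.

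For part~(ii), well-definedness has two components. First, the partition in~\eqref{geni} shows that the pieces $D_s(\wt\ch)$ cover $\DS$ disjointly, so the value $g^{-1}x$ is determined unambiguously by $x$: the pair $(\wt\ch,s)$ with $x\in D_s(\wt\ch)$ is unique, and hence so is $g$ when $s=(\wt\ch',g)\neq\eps$. Second, I need to verify that $g^{-1}x\in\DS$. This follows from Proposition~\ref{Ftilde}: choosing any $y\in J(\wt\ch)$, the pair $(x,y)$ lies in $\wt D_s(\wt\ch)\subseteq\wt\DS$, and $\wt F(x,y)=(g^{-1}x,g^{-1}y)\in\wt\DS$ means in particular that $g^{-1}x\in I_\rd(\wt\ch')\subseteq\DS$. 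Thus $F$ is a well-defined partial self-map of $\DS$. No step presents a genuine obstacle here; the only subtlety is recognising that the disjointness hypothesis on the $I_\rd(\wt\ch)$ is exactly what is needed to project the two-dimensional decomposition of $\wt\DS$ from Proposition~\ref{Ftilde} down to a one-dimensional decomposition of $\DS$, thereby allowing the future-part dynamics to descend from $\wt F$ to $F$.
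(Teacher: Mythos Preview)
Your proof is correct and follows essentially the same route as the paper: disjointness of the $I_\rd(\wt\ch)$ plus Corollary~\ref{reddecomp} gives the partition, and Lemma~\ref{uniqueinter}/Proposition~\ref{Ftilde} handles existence of $v$ and the labeling. You are more explicit than the paper in verifying that $g^{-1}x$ lands back in $\DS$ (the paper simply says ``hence $F$ is well-defined'' after disjointness), and your citation of Lemma~\ref{char_intervals} for $\gamma_v(\infty)\in I_\rd(\wt\ch')$ should really be the definition of $\CS'_\rd(\wt\ch')$, but neither point affects the argument.
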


\begin{proof}
Suppose that $\wt\ch_1,\wt\ch_2\in\wt\fch_{\choices,\shmap}$ and $s_1\in\Sigma_\rd(\wt\ch_1)$, $s_2\in\Sigma_\rd(\wt\ch_2)$ such that $D_{s_1}(\wt\ch_1)\cap D_{s_2}(\wt\ch_2)\not=\emptyset$. Pick $x\in D_{s_1}(\wt\ch_1)\cap D_{s_2}(\wt\ch_2)$. Then $x\in I_\rd(\wt\ch_1) \cap I_\rd(\wt\ch_2)$, which implies that $\wt\ch_1=\wt\ch_2$. Now Corollary~\ref{reddecomp} yields that $s_1=s_2$. Therefore the union in \eqref{geni} is disjoint and hence $F$ is well-defined. Corollary~\ref{reddecomp} shows that the union equals $\DS$. 

Let $(x,y)\in\wt\DS$. Then $(x,y)\in\wt D_s(\wt\ch)$ if and only if $x\in D_s(\wt\ch)$. Proposition~\ref{Ftilde} implies the remaining statements of \eqref{geni}.
\end{proof}

Proposition~\ref{gen} shows that 
\[
\Big(F, (\DS_s(\wt\ch))_{\wt\ch\in\wt\fch_{\choices,\shmap}, s\in \Sigma_\rd(\wt\ch)}\Big)
\]
is like a generating function for the future part of the symbolic dynamics $(\Lambda_\rd, \sigma)$. In comparison with a real generating function, the map $i\colon \Lambda_\rd\to \DS$ is missing. Indeed, if there are strip precells in $\h$, then there is no unique choice for the map $i$. To overcome this problem, we restrict ourselves to the strong reduced cross section $\wh\CS_{\st,\rd}(\wt\fch_{\choices,\shmap})$. 

\begin{prop}\label{onlyforward}
Let $v,w\in \CS'_{\st,\rd}(\wt\fch_{\choices,\shmap})$. Suppose that $(a_n)_{n\in\Z}$ is the reduced coding sequence of $v$  and $(b_n)_{n\in\Z}$ that of $w$. If $(a_n)_{n\in \N_0} = (b_n)_{n\in\N_0}$, then $\gamma_v(\infty) = \gamma_w(\infty)$.
\end{prop}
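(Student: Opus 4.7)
The plan is to argue by contradiction. Suppose $v,w\in\CS'_{\st,\rd}(\wt\fch_{\choices,\shmap})$ share the future coding $(a_n)_{n\in\N_0}$, with $a_n=(\wt\ch_n,h_n)$, but that $x_v\sceq\gamma_v(\infty)\ne\gamma_w(\infty)\seqc x_w$. Because $\infty$ is a vertex of every cell (and hence $\infty\in\bd$), the strong condition on $v,w$ forces $x_v,x_w\in\R$. Writing $(t_n)_{n\in\Z}$ and $(s_n)_{n\in\Z}$ for the intersection-time sequences of $v$ and $w$ with $\CS_\rd$ and setting $g_{-1}\sceq\id$, $g_n\sceq h_0h_1\cdots h_n$ for $n\ge 0$, Proposition~\ref{redlocint}(iii) yields $\gamma_v'(t_{n+1}),\gamma_w'(s_{n+1})\in g_n\CS'_\rd(\wt\ch_n)$ for every $n\ge 0$; consequently the base points $\pr(\gamma_v'(t_{n+1}))$ and $\pr(\gamma_w'(s_{n+1}))$ both lie on the common complete geodesic segment $S_n\sceq g_n b(\wt\ch_n)$.

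Next, I would show that the segments $S_n$ are pairwise distinct. Indeed, two distinct complete geodesic segments in $H$ meet in at most one point, so $S_n=S_m$ for some $n\ne m$ would force $\gamma_v(\R)=S_n\subseteq\BS$, making $v\in\CS$ tangent to $\BS$, contradicting the very definition of $\CS$. Moreover, as $t_n,s_n\to\infty$, one has $\gamma_v(t_n)\to x_v$ and $\gamma_w(s_n)\to x_w$ in $\hg$, so in particular $\height(\gamma_v(t_n))\to 0$ and $\height(\gamma_w(s_n))\to 0$.

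I would then split into cases by the shape of $S_n$. If $S_n=[\alpha_n,\infty]$ is vertical, the real parts of both base points equal $\alpha_n$, so $\alpha_n\to x_v$ and $\alpha_n\to x_w$ simultaneously, giving $x_v=x_w$. Otherwise $S_n$ is a Euclidean half-circle with centre $m_n\in\R$ and radius $r_n>0$, and a point on $S_n$ whose height tends to $0$ must approach one of the endpoints $m_n\pm r_n$. Since $\pr(\gamma_v'(t_{n+1}))$ lies at Euclidean distance $r_n$ from $m_n$, the sequence $m_n$ is bounded, and if $r_n\to 0$ along a subsequence then along that subsequence $m_n\to x_v$; but then $\pr(\gamma_w'(s_{n+1}))\in S_n$ also converges to $x_v$, contradicting $\pr(\gamma_w'(s_{n+1}))\to x_w\ne x_v$.

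The remaining and, I expect, main obstacle is the case $\liminf r_n>0$ with $S_n$ a half-circle. Along a suitable subsequence the endpoints $\{m_n+r_n,m_n-r_n\}$ must accumulate at $\{x_v,x_w\}$, which forces $r_n\to r^*\sceq|x_v-x_w|/2>0$ and $m_n\to m^*\sceq(x_v+x_w)/2$; consequently the apexes $m_n+ir_n\in S_n$ converge to $m^*+ir^*\in H$. Since the $S_n$ are pairwise distinct connected components of the totally geodesic submanifold $\BS$, infinitely many of them would meet any sufficiently small neighbourhood of $m^*+ir^*$, contradicting the local finiteness of $\BS$ in $H$. This local finiteness follows from Proposition~\ref{BStotgeod} together with Corollary~\ref{cellsHtess} and the fact that each cell is a finite-sided convex polyhedron, so that only finitely many $\Gamma$-translates of cells — and therefore only finitely many of their sides — can meet any compact subset of $H$. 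This yields $x_v=x_w$, completing the proof.
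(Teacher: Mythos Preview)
Your argument is correct, but it takes a genuinely different and more laborious route than the paper's. The paper simply remarks that the proof is analogous to that of Proposition~\ref{geomsequnique}, whose key ingredient is Lemma~\ref{findsec}: given two distinct points $x_v,x_w\in\bhg H\mminus\bd$, there exists a single non-vertical connected component $S=[a,b]$ of $\BS$ with $x_v$ and $x_w$ lying on opposite sides of $S$. Since every other component of $\BS$ lies entirely in one of the open half-planes determined by $S$, and since the common future coding forces $\pr(\gamma_v'(t_{n+1}))$ and $\pr(\gamma_w'(s_{n+1}))$ to lie on the same component $S_n$ for every $n\geq 0$, one geodesic must eventually cross $S$ into the opposite half-plane while the other cannot; this immediately yields the contradiction.

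Your approach instead tracks the sequence $S_n=g_n b(\wt\ch_n)$ directly and extracts a contradiction from the asymptotic geometry (vertical/non-vertical dichotomy, radii tending to zero or not, and finally local finiteness of the components of $\BS$). This works, and your invocation of local finiteness is justified---it follows from the proof of Proposition~\ref{BStotgeod}, which shows that around any point of $H$ there is a ball meeting at most one component of $\BS$. However, the case analysis (in particular the passage ``the endpoints $\{m_n\pm r_n\}$ must accumulate at $\{x_v,x_w\}$'') requires several subsequence extractions and implicit boundedness arguments that you only sketch; these can be made rigorous but are unnecessary once Lemma~\ref{findsec} is available. The paper's route is shorter and isolates the essential geometric fact (existence of a separating side) in a reusable lemma, whereas yours reproves a consequence of that fact from scratch.
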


\begin{proof}
The proof of Proposition~\ref{geomsequnique} shows the corresponding statement for geometric coding sequences. The proof of the present statement is analogous.
\end{proof}

We set 
\[
 \DS_\st \sceq \bigcup_{\wt\ch\in\wt\fch_{\choices,\shmap}} I_\rd\big(\wt\ch\big)\mminus\bd
\]
and define the map $F_\st\colon \DS_\st\to \DS_\st$ by 
\[
 F_\st\vert_{D_{\st,s}(\wt\ch)} x \sceq g^{-1}.x
\]
if $s=(\wt\ch',g)\in \Sigma_{\st,\rd}(\wt\ch)$ and  $\wt\ch\in\wt\fch_{\choices,\shmap}$. Further define $i\colon \Lambda_{\st,\rd} \to \DS_\st$ by 
\[
 i\big( (a_n)_{n\in\Z} \big) \sceq \gamma_v(\infty),
\]
where $v\in \CS'_{\st,\rd}(\wt\fch_{\choices,\shmap})$ is the unit tangent vector with reduced coding sequence $(a_n)_{n\in\N}$. Proposition~\ref{redidentical} shows that $v$ is unique, and Proposition~\ref{onlyforward} shows that $i$ only depends on $(a_n)_{n\in\N_0}$. Therefore 
\[
\left( F_\st, i, (D_s(\wt\ch))_{\wt\ch\in\wt\fch_{\choices,\shmap},s\in \Sigma_{\st,\rd}(\wt\ch)}\right)
\]
is a generating function for the future part of the symbolic dynamics $(\Lambda_{\st,\rd},\sigma)$.

\begin{example}\label{genfutHecke}
For the Hecke triangle group $G_n$ and its family of shifted cells $\wt\fch_{\choices,\shmap} = \{ \wt\ch\}$ from Example~\ref{labelsHecke} we have $I(\wt\ch) = I_\rd(\wt\ch)$ and $\Sigma_\rd = \Sigma$. Obviously, the associated symbolic dynamics $(\Lambda,\sigma)$ has a generating function for the future part. Recall the set $\bd$ from Section~\ref{sec_base}. Here we have $\bd = G_n.\infty$. Then
\[
 \DS = \R^+ \quad\text{and}\quad \DS_\st = \R^+\mminus G_n.\infty.
\]
Since there is only one (shifted) cell in $S\h$, we omit $\wt\ch$ from the notation in the following. We have
$D_g = (g.0,g.\infty)$ and
\[
D_{\st,g} = (g.0,g.\infty)\mminus G_n.\infty 
\quad\text{for $g\in \big\{ U_n^kS \ \big\vert\  k=1,\ldots, n-1 \big\}$.} 
\]
The generating function for the future part of $(\Lambda,\sigma)$ is $F\colon \DS \to \DS$, 
\[
F\vert_{D_g} x \sceq g^{-1}. x \quad\text{for $g\in \big\{ U_n^kS \ \big\vert\  k=1,\ldots, n-1\big\}$.}. 
\]
For the symbolic dynamics $(\Lambda_\st, \sigma)$ arising from the strong cross section $\wh\CS_\st$ the generating function for the future part is $F_\st\colon \DS_\st\to\DS_\st$, 
\[
F_\st\vert_{D_{\st,g}} x \sceq g^{-1}.x \quad\text{for $g\in \big\{ U_n^kS \ \big\vert\  k=1,\ldots, n-1 \big\}$.}
\]
\end{example}

\begin{example}
Recall Example~\ref{redsetGamma05}. If the shift map is $\shmap_2$, then the sets $I_\rd(\cdot)$ are pairwise disjoint and hence there is a generating function for the future part of the symbolic dynamics. In contrast, if the shift map is $\shmap_1$, the sets $I_\rd(\cdot)$ are not disjoint. Suppose that $\gamma$ is a geodesic on $H$ such that $\gamma(\infty) = \tfrac12$. Then $\gamma$ intersects $\CS'_\rd\big(\wt\fch_{\choices,\shmap_1}\big)$ in, say, $v$. Example~\ref{redlabelsGamma05} shows that one cannot decide whether $v\in\CS'\big(\wt\ch_6\big)$ and hence is labeled with $\big(\wt\ch_3,g_3\big)$, or whether $v\in\CS'\big(\wt\ch_1\big)$ and thus is labeled with $\big(\wt\ch_4,g_4\big)$. This shows that the symbolic dynamics arising from the shift map $\shmap_1$ does not have a generating function for the future part.
\end{example}

\section{Transfer Operators}\label{sec_transop}

Suppose that $(X,f)$ is a discrete dynamical system, where $X$ is a set and $f$ is a self-map of $X$. Further let $\psi\colon X\to \C$ be a function. The transfer operator $\mc L$ of $(X,f)$ with potential $\psi$ is defined by
\[
\mc L \varphi(x) = \sum_{y\in f^{-1}(x)} e^{\psi(y)} \varphi(y)
\]
with some appropriate space of complex-valued functions on $X$ as domain of definition. They originate from thermodynamic formalism. For more background information, we refer to \eg \cite[Section~14]{Chaosbook}, \cite{Ruelle}, \cite{Mayer_survey}. 

Here, we only consider potentials of the form
\[
 \psi(y) = -\beta \log |f'(y)|,
\]
where $\beta\in \C$.

Let $\Gamma$ be a geometrically finite subgroup of $\PSL(2,\R)$ of which $\infty$ is a cuspidal point and which satisfies \eqref{A4}. Suppose that the set of relevant isometric spheres is non-empty. Fix a basal family $\fpch$ of precells in $\h$ and let $\fch$ be the family of cells in $\h$ assigned to $\fpch$. Let $\choices$ be a set of choices associated to $\fpch$ and suppose that $\wt\fch_\choices$ is the family of cells in $S\h$ associated to $\fpch$ and $\choices$. Let $\shmap$ be a shift map for $\wt\fch_\choices$ and let $\wt\fch_{\choices,\shmap}$ denote the family of cells in $S\h$ associated to $\fpch$, $\choices$ and $\shmap$.

We restrict ourselves to the strong reduced cross section $\wh\CS_{\st,\rd}(\wt\fch_{\choices,\shmap})$. Recall the discrete dynamical system $(\DS_\st, \wt F_\st)$ from Section~\ref{sec_redcodseq} as well as the set $\Sigma_{\st,\rd}$, its subsets $\Sigma_{\st,\rd}(\wt\ch)$ and the sets $\wt D_{\st,s}(\wt\ch)$. The local inverses of $\wt F_\st$ are
\[
\wt F_{\wt\ch,s} \sceq \left( \wt F_\st\vert_{\wt D_{\st,s}\big(\wt\ch\big)}\right)^{-1} \colon\left\{
\begin{array}{ccl}
\wt F_{\st}\big( \wt D_{\st,s}\big(\wt\ch\big)\big) & \to & \wt D_{\st,s}\big(\wt\ch\big)
\\
(x,y) & \mapsto & (g.x,g.y)
\end{array}
\right.
\]
for $s\in \Sigma_{\st,\rd}(\wt\ch)$, $\wt\ch\in\wt\fch_{\choices,\shmap}$. To abbreviate, we set
\[
 \wt E_{\wt\ch,s}  \sceq \wt F_{\st}\big(\wt D_{\st,s}\big(\wt\ch\big)\big)
\]
for $s\in \Sigma_{\st,\rd}(\wt\ch)$, $\wt\ch\in \wt\fch_{\choices,\shmap}$.

For two sets $M, N$ let $\Fct(M,N)$ denote the set of functions from $M$ to $N$. 
Then the transfer operator with parameter $\beta\in \C$ 
\[
\mc L_\beta \colon \Fct(\wt D_{\st},\C) \to \Fct(\wt D_{\st},\C)
\]
associated to $\wt F_\st$ is given by 
\[
\left( \mc L_\beta \varphi \right) (x,y) \sceq \sum_{\wt\ch\in \wt\fch_{\choices,\shmap}} \sum_{s\in \Sigma_{\st,\rd}(\wt\ch)} \big| \det\big( \wt F_{\wt\ch,s}'(x,y)\big)\big|^\beta \varphi\big( \wt F_{\wt\ch,s}(x,y) \big) 1_{\wt E_{\wt\ch,s}}(x,y),
\]
where $1_{\wt E_{\wt\ch,s}}$ is the characteristic function of $\wt E_{\wt\ch,s}$. Note that the set $\wt E_{\wt\ch,s}$, the domain of definition of $\wt F_{\wt\ch,s}$, in general is not open. A priori, it is not even clear whether $E_{\wt\ch,s}$ is dense in itself. To avoid any problems with well-definedness, the derivative of $\wt F_{\wt\ch,s}$ shall be defined as the restriction to $\wt E_{\wt\ch,s}$ of the derivative of $F_{\wt\ch}\colon \wt F(\wt D_s) \to \wt D_s$, $(x,y)\mapsto (g.x,g.y)$. Moreover, the maps $\wt F_{\wt\ch,s}$ and $\wt F'_{\wt\ch,s}$ are extended arbitrarily on $\wt D_{\st}\mminus \wt E_{\wt\ch,s}$.

Let $\beta\in\C$ and consider the map
\[
\tau_{2,\beta}\colon\Gamma \times \Fct(\wt D_\st,\C)  \to  \Fct(\wt D_\st, \C),\quad (g,\varphi)  \mapsto  \tau_{2,\beta}(g)\varphi
\]
where
\[
 \tau_{2,\beta}(g^{-1})\varphi(x,y) \sceq |g'(x)|^\beta |g'(y)|^\beta \varphi(g.x,g.y).
\]
The map $\tau_{2,\beta}$ is an action of $\Gamma$ on $\Fct(\wt D_\st,\C)$, and reminiscent of principal series representations. The transfer operator becomes
\[
\mc L_\beta\varphi = \sum_{\wt\ch\in\wt\fch_{\choices,\shmap}} \sum_{(\wt\ch',g)\in \Sigma_{\st,\rd}(\wt\ch)} 1_{\wt E_{\wt\ch,(\wt\ch',g)}}\cdot \tau_{2,\beta}(g^{-1})\varphi.
\]
Suppose now that the sets $I_\rd(\wt\ch)$, $\wt\ch\in \wt\fch_{\choices,\shmap}$, are pairwise disjoint so that the map $F_\st$ from Section~\ref{sec_gen} is a generating function for the future part of $(\Lambda_{\st,\rd},\sigma)$. Its local inverses are
\[
 F_{\wt\ch,s}\sceq \left(F_\st\vert_{D_{\st,s}\big(\wt\ch\big)} \right)^{-1}\colon\left\{
\begin{array}{ccl}
F_\st\big(D_{\st,s}\big(\wt\ch\big)\big) & \to & D_{\st,s}\big(\wt\ch\big)
\\
x & \mapsto & g.x
\end{array}
\right.
\]
for $s=(\wt\ch',g)\in \Sigma_{\st,\rd}(\wt\ch)$, $\wt\ch\in\wt\fch_{\choices,\shmap}$. If we set
\[
 E_{\wt\ch,s} \sceq F_\st\big(D_{\st,s}\big(\wt\ch\big)\big),
\]
then the transfer operator with parameter $\beta$ associated to $F_\st$ is the map 
\[
 \mc L_\beta\colon \Fct(\DS_\st,\C) \to \Fct(\DS_\st,\C)
\]
given by
\[
 \mc L_\beta\varphi = \sum_{\wt\ch\in\wt\fch_{\choices,\shmap}}\sum_{(\wt\ch',g)\in\Sigma_{\st,\rd}(\wt\ch)} 1_{E_{\wt\ch,(\wt\ch',g)}} \tau_\beta(g^{-1})\varphi,
\]
where
\[
 \tau_\beta\colon\Gamma\times\Fct(\DS_\st,\C)  \to  \Fct(\DS_\st,\C),\quad
(g,\varphi)  \mapsto  \tau_\beta(g)\varphi
\]
with
\[
 \tau_\beta(g^{-1})\varphi(x) = |g'(x)|^\beta \varphi(g.x).
\]
For extensive examples of arising transfer operators we refer to \cite{Hilgert_Pohl, Pohl_mcf_Gamma0p, Pohl_mcf_general}.

\bibliography{ap_bib}
\bibliographystyle{amsalpha}
\end{document}